\theoremstyle{plain}
        \newtheorem{theorem}[equation]{Theorem}
        \newtheorem{conjecture}[equation]{Conjecture}
        \newtheorem{lemma}[equation]{Lemma}
        \newtheorem{proposition}[equation]{Proposition}
        \newtheorem{corollary}[equation]{Corollary}
        \newtheorem{assumption}[equation]{Assumption}
	    \newtheorem{definition}[equation]{Definition}
        \newtheorem{notation}[equation]{Notation}
\theoremstyle{definition}
        \newtheorem{remark}[equation]{Remark}
        \newtheorem{sinnadastandard}[equation]{}
\numberwithin{equation}{section}	
\newcommand{\cc}{\mathcal}
\newcommand{\mr}[1]{\buildrel {#1} \over \longrightarrow}
\newcommand{\xr}[1]{\xrightarrow {#1}}
\newcommand{\ml}[1]{\buildrel {#1} \over \longleftarrow}
\newcommand{\Mr}[1]{\buildrel {#1} \over \Longrightarrow}
\newcommand{\dd}{\ar@2{-}[d]}
\newcommand{\adjuntos}[2][]{ \ar@/^1ex/[r]^{#1} \ar@{}[r]|{\bot} \ar@{<-}@/_1ex/[r]_{#2} }
\newcommand{\adjuntosd}[2]{ \ar@{<-}@/_1ex/[d]_{#1} \ar@{}[d]|{\dashv} \ar@/^1ex/[d]^{#2} }
\newcommand{\dosflechasr}[2]{ \ar@<.5ex>[r]^{#1} \ar@<-.5ex>[r]_{#2} } %\ar@{}[r]|{\bot} 
\newcommand{\igu}[2][]{ \llbracket #1 \! = \! #2 \rrbracket }
\newcommand{\df}[3]{ \ar@/^1ex/[#1]^{#2} \ar@{<-}@/_1ex/[#1]_{#3} } %\ar@{}[r]|{\bot} 
\newcommand{\dfbis}[3]{ \ar@/_1ex/[#1]_{#2} \ar@{<-}@/^1ex/[#1]^{#3} } 
\newcommand{\op}[1]
          {
           \ar@{-}[ld] 
           \ar@{-}[rd] 
           \ar@{}[d]|{#1}  
          }
\newcommand{\cl}[1]
          { 
           \ar@{-}[ur] 
           \ar@{}[u]|{#1} 
           \ar@{-}[ul] 
          }
\newcommand{\ig}[1]
          {
           \ \ \ \ar@{}[d]|{\stackrel{#1}{=}}
          }
\newcommand{\dcell}[1]
          {
           \ar@<4pt>@{-}'+<0pt,-6pt>[d] 
           \ar@<-4pt>@{-}'+<0pt,-6pt>[d]
           \ar@{}[d]|{#1}
          }
\newcommand{\dcellb}[1]
          {
           \ar@<5pt>@{-}'+<0pt,-6pt>[d] 
           \ar@<-5pt>@{-}'+<0pt,-6pt>[d]
           \ar@{}[d]|{#1}
          }
\newcommand{\dcellbb}[1]
          {
           \ar@<6pt>@{-}'+<0pt,-6pt>[d] 
           \ar@<-6pt>@{-}'+<0pt,-6pt>[d]
           \ar@{}[d]|{#1}
          }
\newcommand{\did}       % identidad (down)%
         {
          \ar@{=}[d]
         }
\newcommand{\pmr}[2]
{
\xymatrix@C=5ex@R=2.4ex
         {
          {} \ar@<1.6ex>[r]^{#1} 
	     \ar@<-1.1ex>[r]^{#2} & {}
         }
}
\newcommand{\pml}[2]
{
\xymatrix@C=5ex@R=2.4ex
         {
            {} 
          & {} \ar@<1.0ex>[l]_{#1} 
	       \ar@<-1.7ex>[l]_{#2}
         }
}
\newcommand{\cellr}[3]
{
\xymatrix@C=7ex@R=2.4ex
         {
          {} \ar@<1.6ex>[r]^{#1} 
          \ar@{}@<-1.3ex>[r]^{\!\! #2 \, \!\Downarrow}
                                         \ar@<-1.1ex>[r]_{#3} & {}
         }
}
\newcommand{\celll}[3]
{
\xymatrix@C=7ex@R=2.4ex
         {
            {} 
          & {} \ar@<1.0ex>[l]^{#1} 
          \ar@{}@<-1.7ex>[l]^{\!\! #2 \, \!\Downarrow}
	                                 \ar@<-1.7ex>[l]_{#3}
         }
}
\newcommand{\dl}    % una raya para poner a la izquierda (inclinada) % 
          {                        
           \ar@<-2pt>@{-}[d]+<4pt,8pt>
          }
\newcommand{\dr}    % una raya para poner a la derecha (inclinada) % 
          {                        
           \ar@<2pt>@{-}[d]+<-4pt,8pt> 
          }
\newcommand{\dc}[1]    % para poner el label en el centro % 
          {                        
           \ar@{}[d]|{#1}  
          }
\newcommand{\dcr}[1]    % para poner el label en el centro a la derecha para tamaÃ±os impares% 
          {                        
           \ar@{}[dr]|{#1}  
          }
\newcommand{\du}[1]  % celda para unidades %
          {
					 \ar@<-2pt>@{-}[d]+<-4pt,8pt> 
           \ar@<3pt>@{-}[d]+<4pt,8pt>
           \ar@{}[d]|{#1}
          }
\newcommand{\drho}[1]  % celda para comodulos %
          {
			      \ar@{-}[d] 
           \ar@{-}[lld] 
           \ar@{}[ld]|{#1}  
          }          
\newcommand{\colim}[2] {\displaystyle \lim_{\overrightarrow{#1}} {#2}}
\newcommand{\mmr}[1]{\buildrel {#1} \over \hookrightarrow}
\newcommand{\BE}{\begin{equation}}
\newcommand{\EE}{\end{equation}}
	\def \O{\mathcal{O}}
	\def \Sat{\mathcal{S}}
	\def \Cat{\mathcal{C}}
	\def \Eat{\mathcal{E}}
	\def \Xat{\mathcal{X}}
	\def \Vat{\mathcal{V}}
	\def \be{\begin{enumerate}}
	\def \en{\end{enumerate}}
	\def \eps{\varepsilon}
  \def \-{\textdblhyphenchar}
	\def \Brr {\ar@{}[rr]|*+<.6ex>[o][F]{\scriptscriptstyle{B}}}
	\def \Brrr {\ar@{}[rrr]|*+<.6ex>[o][F]{\scriptscriptstyle{B}}}
\def \de{definition} 
\def \prop{proposition}
\date{}
\title{Tannaka Theory for Topos}
\author{Eduardo J. Dubuc and Mar\'in Szyld.}
\begin{document}

%\begin{frontmatter}
% \begin{center}
%  
% \textbf{\large Tannaka Theory for Sup-Lattices}
% 
% \vspace{2ex}
% 
% EDUARDO J. DUBUC, MART\'IN SZYLD
% 
% \end{center}
% 
% \vspace{5ex}
\maketitle

\begin{abstract}
We consider locales $B$ as algebras in the tensor category $s\ell$ of sup-lattices. We show the \mbox{equivalence} between the Joyal-Tierney descent theorem for open localic surjections 
\mbox{$sh(B) \mr{q} \cc{E}$} 
%with inverse image \mbox{$\cc{E} \mr{F} sh(B)$} 
in \mbox{Galois} theory [\emph{An extension of the Galois Theory of Grothendieck}, AMS Memoirs 151] and a Tannakian recognition theorem over $s\ell$ for the $s\ell$-functor 
\mbox{$Rel(E) \mr{Rel(q^*)}  Rel(sh(B)) \cong (B$-$Mod)_0$} 
into the \mbox{$s\ell$-category} of
%(discrete \mbox{$B$-modules}). 
discrete \mbox{$B$-modules}. 
Thus, a new Tannaka recognition theorem is obtained, essentially different from those known so far. 
This equivalence follows from two independent results. We develop an explicit construction of  the localic groupoid   
\mbox{$G: \xymatrix@C=3ex{G \stackrel[G_0]{\textcolor{white}{G_0}}{\times} G \ar[r] & G \ar@<1.3ex>[r] \ar@<-1.3ex>[r] & G_0 \ar[l]}$}
 associated by Joyal-Tierney to 
$q$, 
%$F$, 
and do an exhaustive comparison with the Deligne Tannakian construction of the Hopf algebroid $L$: 
$\xymatrix@C=3ex
       {
        L \stackrel[B]{\textcolor{white}{B}}{\otimes} L 
      & L \ar[l] 
          \ar[r]   
      & B \ar@<1.3ex>[l] 
          \ar@<-1.3ex>[l]
       }
$ 
associated to 
$Rel(q^*)$, 
%$Rel(F)$, 
and show they are isomorphic, that is, $L \cong \cc{O}(G)$.  On the other hand, we show that the $s\ell$-category of relations of the classifying topos of any localic groupoid $G$, is equivalent to the $s\ell$-category of $L$-comodules with discrete subjacent $B$-module, \mbox{where $L = \cc{O}(G)$.} 
%
% This work opens the door to the formulation and proof of an intrinsic recognition theorem for Tannakian categories over $s\ell$, which will yield as a particular case the Joyal-Tierney descent theorem. We conjecture that any bounded complete \mbox{$s\ell$-category} $\cc{A}$ furnished with an open and faithful $s\ell$-functor
% $\cc{A} \mr{T} (B$-$Mod)_0$ is Tannakian.
%

We are forced to work over an arbitrary base topos because, contrary to the neutral case developed over Sets in [\emph{A Tannakian Context for Galois Theory}, Advances in Mathematics 234], here change of base techniques are unavoidable. 
\end{abstract}

%\begin{keyword}
%  Tannaka \sep Galois \sep Sup-lattice \sep Locale \sep Topos  
%% MSC codes here, in the form: \MSC code \sep code
%% or \MSC[2008] code \sep code (2000 is the default)
%\end{keyword}

%\end{frontmatter}

\noindent \textbf{ Introduction}

\vspace{1ex}
%Following Joyal and Tierney in \cite{JT}, Introduction p.vii, we fix an elementary topos $\Sat$ (with sub-object classifier $\Omega$), and work in this universe in the internal language of this topos, as we would in naive set theory (but without axiom of choice or law of the excluded middle). 

\noindent {\bf Galois context.}
In \cite[Expos\'e V section 4]{G2}, ``Conditions axiomatiques d'une theorie de Galois'' (see also \cite{DSV}), Grothendieck interprets Artin formulation of Galois Theory as a theory of representation for suitable categories $\cc{A}$ furnished with a functor (fiber functor) into the category of finite sets 
$\cc{A} \mr{F} \cc{S}_{<\infty} \subset \cc{S}$. He explicitly constructs the group $G$ of automorphisms of 
$F$ as a pro-finite group, and shows that the lifting 
\mbox{$\cc{A} \mr{\widetilde{F}} \beta^G_{<\infty}$} into the category of continuous (left) actions on finite sets is an equivalence. The proof is based on inverse limit techniques. Under Grothendieck assumptions the subcategory \mbox{$\cc{C} \subset \cc{A}$} of non-empty connected objects is an atomic site and the restriction 
\mbox{$\cc{C} \mr{F} \cc{S}_{<\infty} \subset \cc{S}$}  is a point (necessarily open surjective). The SGA1 result in this language means that the lifting
$$
\xymatrix
        {
         \cc{A} \subset \Eat \ar[rr]^{\widetilde{F}} \ar[rd]_{F} 
        && \beta^G \ar[ld]
        \\
        & \cc{S}
        }
$$
is an equivalence. Here $\cc{E}$ is the atomic topos of sheaves on $\cc{C}$, $F$ is the inverse image of the point, and $\beta^G$ is the topos of all continuous (left) actions on sets, the classifying topos of $G$ ($\cc{A}$ becomes the subcategory of finite coproducts of connected objects).

\vspace{1ex}

\noindent
{\bf \emph{Neutral Galois context.}} Joyal-Tierney in \cite{JT} generalize this result to any %arbitrary 
pointed atomic topos. They viewed it as a descent theorem, $G$ is now a localic group, and $\beta^G$, as before, is the topos of continuous (left) actions on sets, i.e., the classifying topos of $G$.  
Dubuc in \cite{D1} gives a proof based, as in SGA1, on an explicit construction of the (localic) group $G$ of automorphisms of $F$ (which under the finiteness assumption is in fact a profinite group).
\emph{Given any pointed atomic topos $\cc{S} \mr{} \cc{E}$, the lifting (of the inverse image functor) is an equivalence.}
\vspace{1ex}

\noindent 
{\bf \emph{General Galois context.}} More generally, Joyal-Tierney in \cite{JT} consider a localic point \mbox{$shH \mr{} \Eat $} ($H$ a locale in $\cc{S}$) over an arbitrary Grothendieck topos $\Eat \mr{} \cc{S}$ over $\cc{S}$, with inverse image  \mbox{$\Eat \mr{F} shH$.} They obtain a localic groupoid $G$ and a lifting into $\beta^G$, the classifying topos of $G$: 
$$
\xymatrix
        {
         \Eat \ar[rr]^{\widetilde{F}} \ar[rd]_{F} 
        && \beta^G \ar[ld]
        \\
        & shH
        }
$$
and prove the following: 
%
%\hspace{-2ex} 
%
%\begin{center}
\emph{Given a localic open surjective point $shH \mr{} \Eat \;$,  the lifting (of the inverse image functor) is an equivalence.} 
%\end{center}

This is a descent theorem for open surjections of topoi. When $H = \Omega,\; shH = \cc{S}$, then the point is open surjective precisely when the topos is atomic. Thus this particular case furnishes the theorem for the neutral Galois context. 

\vspace{1ex}

\noindent {\bf Tannakian context.} 
Saavedra Rivano \cite{Sa}, Deligne \cite{De} and Milne \cite{DM} interpret Tannaka theory \cite{T} as a theory of representations of (affine) $K$-schemas. 
%The context is determined by a field $K$, the category $K$-$\cc{V}ec$ of 
%$K$-vector spaces and $K$-linear categories $\cc{X}$. 

\vspace{1ex}

\noindent 
{\bf \emph{General Tannakian context.}} Deligne in \cite[6.1, 6.2, 6.8]{De} considers a field $K$, 
a $K$-algebra $B$, and a linear functor 
\mbox{$\Xat \mr{T} B$-${Mod}_{ptf}$,} from a linear category $\cc{X}$ into the category of projective 
$B$-modules of finite type (note that these modules have a dual module). He constructs  a \emph{cog\`ebro\"ide} $L$ \emph{sur} $B$ and a lifting 
$$
\xymatrix
        {
         \cc{X} \ar[rr]^{\widetilde{T}} \ar[rd]_{T} 
        && \cc{C}md_{ptf}(L) \ar[ld]
        \\
        & B\text{-}{Mod}_{ptf}
        }
$$
into the category of $L$-comodules (called representations of $L$) whose subjacent \mbox{$B$-module} is in 
$B$-${Mod}_{ptf}$. He proves the following: 
%
%\begin{center}
%\emph{Recognition theorem}: 
\emph{if $\cc{X}$ is tensorielle sur $K$ (\cite[1.2, 2.1]{De}) and $T$ is faithful and exact, the lifting is an equivalence.}
%\end{center}

%We call $\Xat \mr{T} B$-$Mod_{ptf}$ a (general) Tannakian context.

%\vspace{1ex}

\noindent 
{\bf \emph{Neutral Tannakian context.}} If $B = K$, 
$\;B\text{-}{Mod}_{ptf} \,=\, K$-$Vec_{<\infty}$, 
$$
\xymatrix
        {
         \cc{X} \ar[rr]^{\widetilde{T}} \ar[rd]_{T} 
        && \cc{C}md_{<\infty}(L) \ar[ld]
        \\
        & K\text{-}Vec_{<\infty}
        }
$$
In this case $L$ is a $K$-coalgebra. Joyal-Street in \cite{JS} give an explicit coend construction of $L$ as the \mbox{$K$-coalgebra} of endomorphisms of $T$, and they prove:
%\begin{center}
\emph{if $\cc{X}$ is abelian and $T$ is faithful and exact, the lifting is an equivalence.}
%\end{center}

%We call $\Xat \mr{T} K$-$Vec_{<\infty}$ a (neutral) Tannakian context.

\vspace{1ex}

\noindent {\bf Tannakian context over $\cc{V}$.} 
The general Tannakian context can be developed for a cocomplete monoidal closed category, abbreviated \emph{cosmos}, $(\cc{V},\: \otimes,\: K)$ and $\cc{V}$-categories $\cc{X}$ (\cite{SP} \cite{McC}, \cite{SC}). Although the constructions of Tannaka theory and some of its results regarding for example the \emph{reconstruction theorem} (see \cite{Day}, \cite{McC}) have been obtained, it should be noted that no proof has been made so far of a
\emph{recognition theorem} of the type described above for a cosmos $\cc{V}$ essentially different to the known linear cases. In particular, these results can't be applied to obtain a recognition theorem over the cosmos 
$s\ell$ since in this case the unit of the tensor
product is not of finite presentation.
.% (that is, filtered colimits in
%the tensor category are constructed as in the category of sets), 
%\red{comentar aqui Shapi y su teorema con K de presentation finita)}. 
 
In appendix A we develop the Tannakian context for an arbitrary $\cc{V}$ in a way that isn't found in the literature, following closely the lines of Deligne in the linear case \cite{De}. Consider an algebra $B$ in $\cc{V}$, a category $B$-${Mod}_0$ of $B$-modules admitting a right dual, and a $\cc{V}$-category $\cc{X}$ furnished with a $\cc{V}$-functor (fiber functor) 
$\cc{X} \mr{T}$ $B$-${Mod}_0$. We obtain a coalgebra $L$ in the monoidal category of $B$-bimodules (i.e, a 
$B$-bimodule with a coassociative comultiplication and a counit, a \emph{cog\`ebro\"ide agissant sur $B$} in the 
$K$-linear case) and a lifting 
$$
\xymatrix
        {
         \cc{X} \ar[rr]^{\widetilde{T}} \ar[rd]_{T} 
        && \cc{C}md_{0}(L) \ar[ld]
        \\
        & B \text{-} {Mod}_0
        }
$$
where $\cc{C}md_{0}(L)$ is the $\cc{V}$-category of discrete $L$-comodules, that is, $B$-modules in $B$-${Mod}_0$ furnished with a co-action of $L$. 
Adding extra hypothesis on $\cc{C}$ and $T$, $L$ acquires extra structure:

\vspace{-1.5ex}

\begin{itemize}
 \item[(a)] If $\cc{X}$ and $T$ are monoidal, and $\cc{V}$ has a symmetry, then $L$ is a $B\otimes B$-algebra.
 
\vspace{-1.5ex}
 
\item[(b)] If $\cc{X}$ has a symmetry and $T$ respects it, then  $L$ is commutative (as an algebra).

\vspace{-1.5ex}

\item[(c)] If $\cc{X}$ has a duality, then $L$ has an antipode. 
\end{itemize}
%\vspace{1ex}

% The neutral case is when $B = K$, $B \text{-} {Mod}_0 \,=\, \cc{V}_0$, and 
% $L$ is a coalgebra: 
% $$
% \xymatrix
%         {
%          \cc{X} \ar[rr]^{\widetilde{T}} \ar[rd]_{T} 
%         && \cc{C}md_{0}(L) \ar[ld]
%         \\
%         & \cc{V}_0
%         }
% $$
% The following phenomena is worth mentioning: A non-neutral \emph{commutative} Tannakian context is in a sense ``neutralized'' over the base category of $B$-bimodules. Instead of a fiber functor, we now have two fiber functors corresponding to the two inclusions $B \mr{} B \otimes B$ (see \ref{gen=2neutral} \red{explicar esto en A.2.3 o un nuevo item en el apendice }).

%\vspace{2ex}

\noindent {\bf On the relations between both theories.}  
Strong similarities are evident to the naked eye, and have been long observed between different versions of Galois and Tannaka representation theories. However, these similarities are just of form, and don't allow to transfer any result from one theory to another, in particular Galois Theory and Tannaka theory (over vector spaces) remain independent. 
 
Observing that the category of relations of a Grothendieck topos is a category enriched over sup-lattices, we take this fact as the starting point for our research: 
%
%Various approaches to relate Tannaka and Galois Theory are developed for example in \cite{R} and \cite{JAS}, where the existence of Galois closures (disguised in one form or another) is essential, and which cover Galois topoi but not the Joyal-Tierney extension to atomic or arbitrary topoi. \textcolor{red}{quiero ver el paper JAS para ver que hacen y si realmente corresponde aqui, imagino que ni siquiera cubre el caso profinito de Rochard, pero solo el caso discreto (F representable)}
%
%
%\vspace{1ex}
%\begin{center}
\emph{The Galois context should be related to the Tannakian context over the cosmos $s\ell$ of sup-lattices.}
%\end{center}

%\vspace{1ex}

In \cite{DSz} we developed this idea and obtained an equivalence between the recognition theorems of Galois and Tannaka in the neutral case over the category of Sets. In this paper we develop the general case. We are forced to work over an arbitrary base topos because here \emph{change of base} techniques become essential and unavoidable.

%Following Joyal and Tierney in \cite{JT}, we fix an elementary topos $\Sat$ (with subobject classifier $\Omega$), and work in this universe using the internal language of this topos, as we would in naive set theory (but without axiom of choice or law of the excluded middle). The category $\cc{V} = s\ell(\cc{S}) = s\ell$ is the symmetric cosmos of sup-lattices in $\cc{S}$. Recall that the free sup-lattice 
%$\ell(X)$ on an object $X \in \cc{S}$ is given by the singleton map $X \mr{\{-\}} \Omega^X$ and that $\ell(X)$ admit a dual sup-lattice (in fact, $\ell(X)$ is self-dual). 
 
%\begin{sinnadastandard} {\bf Our main theorem.}

\vspace{1ex}

\noindent {\bf The content of the paper.}

\vspace{1ex}

\noindent \emph{Notation.} Following Joyal and Tierney in \cite{JT}, we fix an elementary topos $\Sat$ (with subobject classifier $\Omega$), and work in this universe using the internal language of this topos, as we would in naive set theory (but without axiom of choice or law of the excluded middle). The category $\cc{V} = s\ell(\cc{S}) = s\ell$ is the symmetric cosmos of sup-lattices in $\cc{S}$. 
%Recall that the free sup-lattice $\ell(X)$ on an object $X \in \cc{S}$ is given by the singleton map $X \mr{\{-\}} \Omega^X$ and that $\ell(X)$ admit a dual sup-lattice (in fact, $\ell(X)$ is self-dual).

In particular, given $X \in \Sat$ and elements $x,x' \in X$, we will denote $\llbracket x \! = \! x' \rrbracket = \delta(x,x') \in \Omega$, where $\delta$ is the characteristic function of the diagonal $X \mr{\triangle} X \times X$. 
Recall that a sup-lattice structure correspond to an $\Omega$-module structure, and that $\Omega$ is the initial locale. Given a locale $H$ we think of $\Omega$ as a sub-locale of $H$, omitting to write the inclusion.

We use the \emph{elevators calculus} described in Appendix \ref{ascensores} to denote arrows and write equations in symetric monoidal categories. 

\vspace{1ex}

\noindent \emph{Section \ref{relintopos}.} This section concerns a single elementary topos that we denote $\cc{S}$. For a locale $G$ in $\cc{S}$, we study G-modules and their duality theory. For any object $X \in \cc{S}$, we show how $G^X$ is self-dual. We consider relations with values in $G$, that is, maps $X \times Y \mr{\lambda} G$, that we call $\ell$-relations, and we study the four Gavin Wright axioms \cite{GW} expressing when a $\ell$-relation is \emph{everywhere defined}, \emph{univalued}, \emph{surjective} and \emph{injective}. We establish in particular that univalued everywhere defined relations correspond exactly with actual arrows in the topos. Finally, we introduce two type of diagrams, the $\rhd$ and $\lozenge$ diagrams, which express certain equations between $\ell$-relations, and that will be extensively used to relate natural transformations with \emph{coend} constructions (not with the usual \emph{end} formula).

\vspace{1ex}

\noindent \emph{Section \ref{sub:EshP}.} This section is the most technical section of the paper. Given a locale $P$ in 
$\cc{S}$ we consider the geometric morphism $sh{P} \mr{\gamma} \cc{S}$ and show how to transfer statements in the topos $sh(P)$ to equivalent statements in $\cc{S}$.   
Recall that Joyal and Tierney develop in \cite{JT} the \emph{change of base} for sup-lattices and locales. In particular they show that  $s\ell(shP) \mr{\gamma_*} P$-$Mod$ is a tensor \mbox{$s\ell$-equivalence} that restricts to a \mbox{$s\ell$-equivalence} $Loc(shP) \mr{\gamma_*} P$-$Loc$. 
%Also, they characterize etale spaces in $\cc{S}$ as those whose corresponding locale is of the form $\gamma_*(\Omega_P^X)$, for $X \in shP$, here $\Omega_P$ is the subobject classifier of $shP$. We denote $X_d := \gamma_*(\Omega_P^X) = \Omega_P^X(1)$.
We further these studies by examining how $\ell$-relations \mbox{behave} under these equivalences. We examine the correspondence between relations  \mbox{$\gamma^* X \times \gamma^* Y \mr{} \Omega_P$} in $shP$ and $\ell$-relations $X \times Y \mr{} P = \gamma_*\Omega_P$ in $\cc{S}$. We also consider 
$\ell$-relations in $shP$ and the four Gavin Wraith  axioms, and establish how they transfer to formulae in 
$\Sat$. We also transfer the formulae which determine the self-duality of $\Omega_P^X$. 
 
\vspace{1ex}

\noindent \emph{Section \ref{sec:cones}.}
In this section we introduce the notions of $\rhd$- and $\lozenge$-cones in a topos and study how they relate. This allows us to consider natural transformations between functors in terms of their associated cones of relations.   
Concerning the existence of the large coends needed in the Tannakian constructions, we show that cones defined over a site of a topos can be extended uniquely to cones defined over the whole topos. 

\vspace{1ex}

\noindent \emph{Section \ref{sec:Cmd=Rel}.} In this section we establish the relation between the Galois concept of action of a groupoid and the Tannaka concept of comodule of a Hopf algebroid. Given a \emph{localic groupoid} \mbox{$G$: 
$\xymatrix{G \stackrel[G_0]{\textcolor{white}{G_0}}{\times} G \ar[r] & G \ar@<1.3ex>[r]^{\partial_0} \ar@<-1.3ex>[r]_{\partial_1} & G_0 \ar[l]|{\ \! i \! \ }}$}
(we abuse notation by using the same letter $G$ for the object of arrows of $G$), we consider its formal dual \emph{localic Hopf algebroid} $L$:  
$\xymatrix
       {
        L \stackrel[B]{\textcolor{white}{B}}{\otimes} L 
      & L \ar[l] 
          \ar[r]|{\ \! i^* \! \ }   
      & B \ar@<1.3ex>[l]^{\partial_0^*} 
          \ar@<-1.3ex>[l]_{\partial_1^*}
       }
$,
$L = \cc{O}(G)$, $B = \cc{O}(G_0)$.
We establish the equivalence between discrete $G$-actions (i.e, actions on an \emph{etale} family $X \mr{} G_0$, $\cc{O}(X) = Y_d = \gamma_*\Omega_B^Y$, $Y \in sh(B)$), and discrete $L$-comodules (i.e, a comodule structure 
$Y_d \mr{\rho} L \otimes_B Y_d$ on a $B$-module of the form $Y_d$). We also show that  
comodule morphisms correspond to relations in the category of discrete actions. 

All this subsumes in the establishment of a tensor $s\ell$-equivalence  
\mbox{$Rel(\beta^G) \cong Cmd_0(\cc{O}(G))$} between the tensor $s\ell$-categories of relations of the classifying topos of $G$ and that of $\cc{O}(G)$-comodule whose underlying module is discrete.   

\vspace{1ex}

\noindent \emph{Section \ref{sec:Contexts}.} In this section we establish the relation between Joyal-Tierney's Galoisian construction of localic categories (groupoids) $G$ associated to a pair of inverse-image functors 
$\xymatrix{\cc{E} \dosflechasr{F}{F'} & \cc{F}}$, and Deligne's Tannakian construction of cog\`ebro\"ides (Hopf algebroids) $L$ associated to the pair of \mbox{$s\ell$-functors} $\xymatrix{Rel(\cc{E}) \dosflechasr{Rel(F)}{Rel(F')} & Rel(\cc{F})}$. 
Using the results of sections 2 and 3 we show that Joyal-Tierney's construction of $G$ satisfies a universal property equivalent to the universal property which defines $L$. An isomorphism $\cc{O}(G) \cong L$ follows.
 
\vspace{1ex}

\noindent \emph{Section \ref{sec:MainTheorems}.}
A localic point of a topos $shB \mr{q} \cc{E}$, with inverse image $\cc{E} \mr{F} shB$, determines the situation described in the following commutative diagram, where the isomorphisms 
labeled ``a'' 
and ``b'' are obtained in sections \ref{sec:Cmd=Rel} and \ref{sec:Contexts}.

%\begin{equation} 
$$ %\label{diagramacompleto}
\xymatrix
        {
          \beta^G            \ar[r]%^-{\ell} 
                             \ar[rdd]
        & \cc{R}el(\beta^G)  \ar[rr]^{\cong \, a}  
                             \ar[rdd]
        &           
        & Cmd_0(L)           \ar[ldd]
        \\
        & \cc{E}             \ar[d]^{F} 
                             \ar[r]%^(.3){\ell}     
                             \ar[lu]_{\widetilde{F}}
        & \cc{R}el(\cc{E})   \ar[d]^T
                             \ar[ur]^{\widetilde{T}}
                         \ar[ul]_{\cc{R}el(\widetilde{F})}
        \\
        & shB            \ar[r]%^-{\ell}
        & Rel(shB) \cong_{b} (B\hbox{-}Mod)_0 
        %&  \hspace{-8ex} = s \ell_0 \subset s \ell.
       }
$$
%\end{equation}

\vspace{1ex}

\noindent
%where $G = Aut(F)$, $T = \cc{R}el(F)$, $H = End^\lor(T)$ and
Here $T = \cc{R}el(F)$, $L$ is the 
%coalgebra (cog\`ebro\"ide agissant sur $B$)
Hopf algebroid of the Tannakian context over $s\ell$, and $G$  is the localic groupoid of Joyal-Tierney's Galois context. Observe that the triangle on the left is the one of the Galois context, and the triangle on the right is the one of the Tannakian context. 
%In this paper we explain, describe, and build this diagram. We prove that $G = L$, the equivalence labeled $a$, and the isomorphism labeled $b$. Observe that it will follow that $\widetilde{F}$ is an equivalence if and only if $\widetilde{T}$ is an equivalence, showing the equivalence of the Tannaka and Galois recognition theorems.
It follows the equivalence between the Joyal-Tierney recognition theorem for the inverse image functor $F$ of a localic point, and the Tannaka recognition theorem for the $s\ell$-functor $T = Rel(F)$. When the point is open surjective, the first holds, yielding the validity of a Tannaka recognition theorem for $s\ell$-categories of the form $Rel(\cc{E})$.
By the results in \cite{Pitts} this theorem can be interpreted as a recognition theorem for a bounded complete    
distributive category of relations $\cc{A}$ furnished with an open and faithful morphism $\cc{A} \mr{T} (B$-$Mod)_0$.

We end the paper by considering the possible validity of a recognition theorem for general $s\ell$-enriched categories, and conjecture that it will hold for any bounded complete 
\mbox{$s\ell$-category} $\cc{A}$ furnished with an open and faithful $s\ell$-functor $\cc{A} \mr{T} (B$-$Mod)_0$.

\vspace{1ex}

\noindent {\bf Acknowledgements.} The first author thanks Andr\'e Joyal for many stimulating and helpful discussions on the subject of this paper.

\section{Preliminaries on $\ell$-relations in a topos} \label{relintopos}

We begin this paper by showing how the results of \cite[sections 2 and 3]{DSz}, which are developed in $\cc{S}et$, can also be developed in $\Sat$ without major difficulties. This is done with full details in \mbox{\cite[chapters 2 and 3]{tesis},} and we include here only the main results that we will need later. 
%with the first previous steps necessary to develop the results of section \ref{sec:Neutral} over an arbitrary elementary topos.

\noindent The following lemma will be the key for many following computations (see \mbox{\cite[Lemma 2.11]{tesis}).}

\begin{lemma} \label{ecuacionenL}
If $H$ is a $\Omega$-module (i.e. a sup-lattice), then any arrow $f \in H^X$ satisfies $$
\forall \ x,y \in X \quad \quad \delta( x , y)  \cdot f(x) = \delta( x , y)  \cdot f(y); \quad \quad i.e. \quad \quad 
\llbracket x \! = \! y\rrbracket  \cdot f(x) = \llbracket x \! = \! y\rrbracket  \cdot f(y). \quad \quad \square $$
\end{lemma}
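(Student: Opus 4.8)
The plan is to argue in the internal language of $\Sat$, exactly as one would in (intuitionistic) naive set theory. Recall first that the $\Omega$-module structure on a sup-lattice $H$ is given by $p \cdot h = \bigvee \{\, h \mid p \,\}$ for $p \in \Omega$, $h \in H$; that is, $p\cdot h$ is the join in $H$ of the family indexed by the subterminal object determined by $p$ and constant at $h$. The only features of this action I intend to use are the two boundary cases $\top \cdot h = h$, $\bot \cdot h = \bot$, together with the fact --- immediate from the formula --- that for a map $g\colon Z \to H$ the element $p \cdot g(z)$ depends on $g$ only through its restriction to the subobject of $Z$ classified by $p$.

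First I would record the substitution principle that makes the statement almost tautological. Since $\delta$ is the characteristic map of the diagonal $X \mr{\triangle} X\times X$, the predicate $\llbracket x=y\rrbracket$ holds precisely on the diagonal, and on that subobject the two maps $(x,y)\mapsto f(x)$ and $(x,y)\mapsto f(y)$ --- namely $f\circ\pi_1$ and $f\circ\pi_2$ --- coincide, because $(f\circ\pi_1)\circ\triangle = f = (f\circ\pi_2)\circ\triangle$. With this in hand the computation is
$$\delta(x,y)\cdot f(x) = \bigvee\{\, f(x) \mid x=y \,\} = \bigvee\{\, f(y) \mid x=y \,\} = \delta(x,y)\cdot f(y),$$
where the middle equality is Leibniz substitution of $y$ for $x$ within the scope of the hypothesis $x=y$; equivalently, it is the restriction-invariance of the action noted above, applied to $f\circ\pi_1$ and $f\circ\pi_2$, which agree after pullback along $\triangle$. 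This yields the identity for all $x,y\in X$.

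The only delicate point is to ensure that the substitution step is legitimate rather than a sleight of hand: one must know that the join $\bigvee\{\, f(x)\mid x=y \,\}$ genuinely sees $f$ only where $x=y$, so that replacing $f(x)$ there by the provably equal $f(y)$ does not alter the value. I expect this to be the main obstacle, and I would settle it once and for all by the diagrammatic restatement --- factoring both $\delta\cdot(f\circ\pi_1)$ and $\delta\cdot(f\circ\pi_2)$ through the action $\Omega\otimes H \to H$ and invoking that $f\circ\pi_1$ and $f\circ\pi_2$ become equal on the subobject classified by $\delta$. After that no case analysis on ``$x=y$ versus $x\neq y$'' is required, which is essential since the internal logic is not assumed Boolean and such a split would be unavailable.
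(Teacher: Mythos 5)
The paper states this lemma without proof (the $\square$ defers it to \cite[Lemma 2.11]{tesis}), so there is no in-paper argument to compare against; your proof is correct and is the argument one would expect. Reducing $\llbracket x\!=\!y\rrbracket\cdot f(x)=\llbracket x\!=\!y\rrbracket\cdot f(y)$ to the fact that the subsingleton join $\bigvee\{\,f(x)\mid x=y\,\}$ sees $f\circ\pi_1$ and $f\circ\pi_2$ only after restriction along the diagonal, where they coincide, is a legitimate use of Leibniz substitution in the internal logic, and you correctly identify that the one move to avoid is the Boolean case split on $x=y$ versus $x\neq y$.
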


A \emph{relation} between $X$ and $Y$ is a subobject $R \mmr{} X \times Y$ or, equivalently, an arrow $X \times Y \mr{\lambda} \Omega$. We have a category $Rel = Rel(\cc{S})$ of relations in $\cc{S}$. 
A generalization of the concept of relation, that we will call $\ell$-relation, is obtained by letting $\Omega$ be any sup-lattice $H$ (we omit to write the $\ell$ for the case $H=\Omega$).

\begin{\de}
Let $H \in s \ell$. An $\ell$-relation (in $H$) is an arrow 
\mbox{$X \times Y \mr{\lambda} H$.} 
\end{\de}

\begin{assumption}
 In the sequel, whenever we consider the $\wedge$ or the $1$ of $H$, we assume implicitly that $H$ is a locale.
\end{assumption}

\begin{sinnadastandard} %{\bf The four axioms for relations.}
The following axioms for $\ell$-relations are considered in \cite{GW} (for relations), see also \cite{DSz} and compare with \cite{F} and \cite[16.3]{McLarty}.
%Recall also that $\Omega$ is the initial locale, then $[x \! = \! y]$ can be considered as an element of any locale $L$ vie the unique morphism $\Omega \mr{} L$.

\begin{\de} \label{4axioms}
An $\ell$-relation $X \times Y \mr{\lambda} H$ is:

\vspace{1ex}

\noindent ed) Everywhere defined, if for each $x \in X$, $\displaystyle \bigvee_{y \in Y} \lambda(x,y) = 1$.

\vspace{1ex}

\noindent uv) Univalued, if for each $x \in X$, $y_1,y_2 \in Y$, $\lambda(x,y_1) \wedge \lambda(x,y_2) \leq \llbracket y_1 \! = \! y_2\rrbracket $.

\vspace{1ex}

\noindent su) Surjective, if for each $y \in Y$, $\displaystyle \bigvee_{x \in X} \lambda(x,y) = 1$.

\vspace{1ex}

\noindent in) Injective, if for each $y \in Y$, $x_1,x_2 \in X$, $\lambda(x_1,y) \wedge \lambda(x_2,y) \leq \llbracket x_1 \! = \! x_2\rrbracket $.

\end{\de}

\begin{remark} 
Notice the symmetry between ed) and su), and between uv) and in). Many times in this paper we will work with axioms ed) and uv), but symmetric statements always hold with symmetric proofs. 
\end{remark}

\begin{remark} 
Axiom uv) is equivalent to: 

\noindent \emph{uv) for each $x \in X$, $y_1,y_2 \in Y$, $\lambda(x,y_1) \wedge \lambda(x,y_2) = \llbracket y_1 \! = \! y_2\rrbracket  \cdot \lambda(x,y_1)$.} \qed
\end{remark}

\begin{definition} \label{function}
We say that an $\ell$-relation $X \times Y \mr{\lambda} H$ is an \begin{itemize}
\item \emph{$\ell$-function} if it is $uv)$ and $ed)$, 
\item \emph{$\ell$-op-function} if it is $in)$ and $su)$,
\item \emph{$\ell$-bijection} if it is simultaneously an $\ell$-function and an $\ell$-op-function.%, that is, if it satisfies the four axioms.
\end{itemize}
\end{definition}

\begin{sinnadastandard} {\bf On the structure of $H^X$.} We fix a locale $H$. $H^X$ has the locale structure given pointwise by the structure of $H$. The arrow $H \otimes H^X \mr{\cdot} H^X$ given by $(a \cdot \theta) (x) = a \wedge \theta(x)$ is a $H$-module structure for $H^X$.
We have a $H$-singleton $X \mr{ \{ \}_H } H^X$ defined by $\{x\}_H(y)= \llbracket x \! = \! y \rrbracket$.

\begin{\prop} [{\cite[2.45]{tesis}}] \label{formulainternaparaG} 
For each $\theta \in H^X$, $\displaystyle \theta = \bigvee_{x \in X} \theta(x) \cdot \{x\}_H$. This shows how any arrow $X \mr{f} M$ into a $H$-module can be extended uniquely to $H^X$ as $\displaystyle f(\theta) = \bigvee_{x \in X} \theta(x) \cdot f(x)$, so the $H$-singleton $X \mr{ \{ \}_H } H^X$ is a free-$H$-module structure. \qed
\end{\prop}

\begin{remark}
 A $H$-module morphism $H^X \mr{} M$ is completely determined by its restriction to $\Omega^X$ as in the diagram 
 $\xymatrix{\Omega^X \lhook\mkern-7mu \ar[r] & H^X \ar[r]^f & M \\
	      & X \ar@/^2ex/[ul]^{\{-\}} \ar[u]_{\{-\}_H} \ar@/_2ex/[ur]_f }$
\end{remark}

\begin{lemma} [{\cite[2.46]{tesis}}] \label{reldomegaYparaG}
The $H$-singleton arrow  $Y \mr{\{-\}_H} H^Y$ determines a \emph{presentation} of the $H$-locale $H^Y$ in the following sense:
$$
 i)\;  1 = \displaystyle \bigvee_{y \in Y} \{y\}_H, \quad \quad \quad 
 ii) \; \{x\}_H \wedge \{y\}_H \leq \llbracket x \! = \! y\rrbracket .
$$
Given any other arrow $Y \mr{f} L$ into a $H$-locale $L$ such that: 
$$
 i) \; 1 = \bigvee_y f(y), \quad \quad \quad 
 ii)\; f(x) \wedge f(y) \leq \llbracket x \! = \! y\rrbracket 
$$
there exists a unique $H$-locale morphism $H^Y \mr{f} L$ such that $f(\{y\}_H) = f(y)$. \qed
\end{lemma}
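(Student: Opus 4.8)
The plan is to separate the statement into its two assertions: that the singleton satisfies the presentation conditions $i)$ and $ii)$, and the universal property. Both parts I would argue pointwise where possible, using that the locale structure of $H^Y$ is computed coordinatewise and that the $H$-action is ``via the top'', i.e. $a \cdot \theta = (a\cdot 1)\wedge\theta$. For the singleton, evaluating condition $i)$ at $z \in Y$ gives $\big(\bigvee_{y}\{y\}_H\big)(z) = \bigvee_{y}\llbracket y\!=\!z\rrbracket \geq \llbracket z\!=\!z\rrbracket = 1$, so the join is the top (constant) function $1$; evaluating $ii)$ at $z$ gives $\llbracket x\!=\!z\rrbracket\wedge\llbracket y\!=\!z\rrbracket \leq \llbracket x\!=\!y\rrbracket$ by transitivity of equality, whence $\{x\}_H\wedge\{y\}_H \leq \llbracket x\!=\!y\rrbracket\cdot 1$. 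Both are immediate.

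For the universal property, the key observation is that a morphism of $H$-locales is in particular a morphism of $H$-modules, so I would first construct the map as a module morphism and only afterwards check it is a frame map. By Proposition \ref{formulainternaparaG}, $H^Y$ is the free $H$-module on $Y$, so there is a \emph{unique} $H$-module morphism $\bar f \colon H^Y \to L$ with $\bar f(\{y\}_H)=f(y)$, given explicitly by $\bar f(\theta)=\bigvee_{y}\theta(y)\cdot f(y)$; by construction this $\bar f$ preserves arbitrary joins and the $H$-action. Uniqueness of the desired $H$-locale morphism follows at once, since any such morphism is already determined as a module morphism. It remains only to verify that $\bar f$ preserves finite meets.

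Preservation of the top is immediate from the two versions of $i)$: $\bar f(1)=\bar f\big(\bigvee_y\{y\}_H\big)=\bigvee_y f(y)=1$. The heart of the argument, and the step I expect to be the main obstacle, is preservation of binary meets. I would reduce it to the generators via the frame distributive law and two identities. First, transitivity of equality (equivalently Lemma \ref{ecuacionenL}) gives in $H^Y$ the sharpening of $ii)$, namely $\{x\}_H\wedge\{y\}_H=\llbracket x\!=\!y\rrbracket\cdot\{x\}_H$; second, combining condition $ii)$ for $f$ with Lemma \ref{ecuacionenL} applied to the arrow $Y\to L$ (using that $L$ is a sup-lattice) gives in $L$ the equality $f(x)\wedge f(y)=\llbracket x\!=\!y\rrbracket\cdot f(x)$, where one inclusion is $ii)$ and the other uses $\llbracket x\!=\!y\rrbracket\cdot f(x)=\llbracket x\!=\!y\rrbracket\cdot f(y)$.

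To conclude, write $\theta=\bigvee_x\theta(x)\cdot\{x\}_H$ and $\psi=\bigvee_y\psi(y)\cdot\{y\}_H$. Since the action is via the top and $L,H^Y$ are frames, distributivity gives $\theta\wedge\psi=\bigvee_{x,y}(\theta(x)\wedge\psi(y))\cdot(\{x\}_H\wedge\{y\}_H)$. Applying $\bar f$, which preserves joins and the action, and using the first identity, each term becomes $(\theta(x)\wedge\psi(y)\wedge\llbracket x\!=\!y\rrbracket)\cdot f(x)$. On the other side, distributing in $L$ yields $\bar f(\theta)\wedge\bar f(\psi)=\bigvee_{x,y}(\theta(x)\wedge\psi(y))\cdot\big(f(x)\wedge f(y)\big)$, and the second identity rewrites each term as the same $(\theta(x)\wedge\psi(y)\wedge\llbracket x\!=\!y\rrbracket)\cdot f(x)$. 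Hence $\bar f(\theta\wedge\psi)=\bar f(\theta)\wedge\bar f(\psi)$, so $\bar f$ preserves finite meets and is therefore a morphism of $H$-locales, completing the proof.
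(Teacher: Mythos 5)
Your proof is correct and follows essentially the route the paper itself indicates (the proof is deferred to the thesis, but Remark \ref{reldomegaYremarkparaG} immediately after the lemma spells out the same decomposition): obtain the unique $H$-module morphism from the freeness statement of Proposition \ref{formulainternaparaG}, then verify that condition $i)$ gives preservation of $1$ and condition $ii)$ (sharpened to the equality $f(x)\wedge f(y)=\llbracket x\!=\!y\rrbracket\cdot f(x)$ via Lemma \ref{ecuacionenL}) gives preservation of $\wedge$ on generators and hence everywhere by distributivity. No gaps.
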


\begin{remark} \label{reldomegaYremarkparaG} %[{\cite[2.47]{tesis}}] 
The previous lemma can be divided into the following two statements: given any arrow $Y \mr{f} L$ into a $H$-locale, its extension as a $H$-module morphism to $H^Y$  preserves $1$ if and only if equation $i)$ holds in $L$, and preserves $\wedge$ if and only if equation $ii)$ holds in $L$.
\end{remark}
\end{sinnadastandard}

\begin{sinnadastandard} {\bf The inverse and the direct image of an $\ell$-relation.} We have the correspondence between an $\ell$-relation, its direct image and its inverse image given by proposition \ref{formulainternaparaG}:

\begin{equation} \label{lambdaphipsiG} %\label{lambdaphiGformula}
\begin{tabular}{c} 
$X \times Y \mr{\lambda} H$ an $\ell$-relation \\ \hline \noalign{\smallskip}
$H^Y \mr{\lambda^*} H^X$ a $H$-$Mod$ morphism \\ \hline \noalign{\smallskip}
$H^X \mr{\lambda_*} H^Y$  a $H$-$Mod$ morphism \\
\\
$\lambda^*(\{y\}_H)(x) = \lambda(x, y) = \lambda_*(\{x\}_H)(y)$
\end{tabular}
\end{equation}

% ERA formulaparalambdaestrella
$$ \lambda^*(\{y\}_H) = \bigvee_{x \in X} \lambda(x,y) \cdot \{x\}_H, \quad \lambda_*(\{x\}_H) = \bigvee_{y \in Y} \lambda(x,y) \cdot \{y\}_H$$

\end{sinnadastandard}

Since the locale structure of $H^X$ is given pointwise, remark \ref{reldomegaYremarkparaG} immediately implies

\begin{proposition} [{\cite[2.50]{tesis}}]  \label{edyuvdafunctionG} In the correspondence \eqref{lambdaphipsiG}, $\lambda^*$ respects $1$ (resp $\wedge$) if and only if $\lambda$ satisfies axiom $ed)$ (resp. $uv)$).
In particular an $\ell$-relation $\lambda$ is a 
$\ell$-function if and only if its inverse image $H^Y \mr{\lambda^*} H^X$  is a $H$-locale morphism. \qed
\end{proposition}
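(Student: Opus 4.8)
The plan is to reduce the whole statement to the presentation result of Lemma~\ref{reldomegaYparaG}, read off through Remark~\ref{reldomegaYremarkparaG}, and then to pass from global (in)equalities in the locale $H^X$ to pointwise (in)equalities in $H$, using that the locale structure of $H^X$ is computed coordinatewise. The two displayed equivalences (preservation of $1$ versus $ed)$, and of $\wedge$ versus $uv)$) are handled in parallel, and the ``in particular'' clause then drops out formally.

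First I would observe that, by the correspondence \eqref{lambdaphipsiG}, the $H$-module morphism $\lambda^*$ is exactly the $H$-module extension (in the sense of Proposition~\ref{formulainternaparaG}) of the arrow $f\colon Y \to H^X$ given by $f(y) = \lambda^*(\{y\}_H) = \bigvee_{x \in X} \lambda(x,y) \cdot \{x\}_H$, so that $f(y)(x) = \lambda(x,y)$. Applying Remark~\ref{reldomegaYremarkparaG} to this $f$ with target the $H$-locale $L = H^X$, the extension $\lambda^*$ preserves $1$ if and only if equation $i)$ of Lemma~\ref{reldomegaYparaG} holds, i.e. $1 = \bigvee_{y \in Y} f(y)$ in $H^X$, and preserves $\wedge$ if and only if equation $ii)$ holds, i.e. $f(y_1) \wedge f(y_2) \leq \llbracket y_1 \! = \! y_2 \rrbracket$ in $H^X$.

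The remaining step is to unwind these two conditions pointwise. Since the lattice operations of $H^X$ are defined coordinatewise, and $\llbracket y_1 \! = \! y_2 \rrbracket$ sits in $H^X$ as the corresponding constant function, an (in)equality holds in $H^X$ exactly when it holds in $H$ after evaluation at each $x \in X$. Evaluating $i)$ at $x$ gives $1 = \bigvee_{y \in Y} \lambda(x,y)$, which is axiom $ed)$; evaluating $ii)$ at $x$ gives $\lambda(x,y_1) \wedge \lambda(x,y_2) \leq \llbracket y_1 \! = \! y_2 \rrbracket$, which is axiom $uv)$. This establishes both equivalences. The ``in particular'' clause is then immediate: since $\lambda^*$ is already a $H$-module morphism, it is a $H$-locale morphism precisely when it additionally preserves $1$ and $\wedge$, and by Definition~\ref{function} an $\ell$-relation is an $\ell$-function precisely when it satisfies both $ed)$ and $uv)$.

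The only point requiring genuine care --- and, I suspect, the real content hiding behind ``immediately implies'' --- is the legitimacy of the coordinatewise passage. I would check that evaluation $H^X \to H$ at a fixed $x$ both preserves and reflects the relevant structure, so that each direction of the ``if and only if'' survives: concretely, that the arbitrary supremum in equation $i)$ and the binary meet in equation $ii)$ are genuinely computed pointwise in $H^X$, and that $\llbracket y_1 \! = \! y_2 \rrbracket$ is interpreted in $H^X$ as a constant independent of $x$. Both facts are exactly what the pointwise locale structure on $H^X$ recorded above provides, so no further computation is needed once this is made explicit.
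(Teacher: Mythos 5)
Your proposal is correct and follows essentially the same route as the paper, which derives the proposition in one line from Remark~\ref{reldomegaYremarkparaG} together with the observation that the locale structure of $H^X$ is given pointwise. Your explicit identification of $\lambda^*$ as the $H$-module extension of $y \mapsto \lambda^*(\{y\}_H)$ and your care about the pointwise evaluation (including the constant interpretation of $\llbracket y_1 \! = \! y_2 \rrbracket$ in $H^X$) simply spell out what the paper leaves implicit.
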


\begin{remark} \label{edyuvdafunctionmedioG}
 We can also consider $H = \Omega$ in \ref{formulainternaparaG} to obtain the equivalences 
 
 \begin{equation} 
\begin{tabular}{c} 
$X \times Y \mr{\lambda} H$ an $\ell$-relation \\ \hline  \noalign{\smallskip}
$\Omega^Y \mr{\lambda^*} H^X$ a $s \ell$ morphism \\ \hline  \noalign{\smallskip}
$\Omega^X \mr{\lambda_*} H^Y$ a $s \ell$ morphism \\
\end{tabular}
\end{equation}
 
\noindent A symmetric reasoning shows that $\lambda$ is an $\ell$-op-function if and only if $\lambda_*$ is a locale morphism.
\end{remark}
\end{sinnadastandard}

\begin{sinnadastandard} \label{arrowsvsfunctions} {\bf Arrows versus functions.} Consider an arrow $X \mr{f} Y$ in the topos $\Sat$. We define its \emph{graph} 
$R_f = \{(x,y) \in X \times Y \ | \ f(x) = y\}$, and denote its characteristic function by $X \times Y \mr{\lambda_f} \Omega$,  \mbox{$\lambda_f(x,y) =  \igu[f(x)]{y}$.}

\begin{remark} \label{1erremarkbasico}
 Using the previous constructions, we can form commutative diagrams
 
 $$\vcenter{\xymatrix{\Sat \ar[r]^{\lambda_{(-)}} \ar@/_1pc/[rr]_{P} & Rel \ar[r]^{(-)_*} & s \ell & \quad}}
 \vcenter{\xymatrix{\quad & \Sat \ar[r]^{\lambda_{(-)}} \ar@/_1pc/[rr]_{\Omega^{(-)}} & Rel \ar[r]^{(-)^*} & s \ell^{op} }     }$$
 
%  Given $X \mr{f} Y$, $P(f)$ is the extension of $X \mr{f} Y \mr{\{\}} \Omega^Y$ to $\Omega^X$, and $\Omega^{f}: \Omega^Y \mr{} \Omega^X$ is given by precomposition with $f$. Then we have the commutativities because for each $x \in X$, $y \in Y$,
%  
%  $$(\lambda_f)_*(\{x\})(y) = \lambda_f(x,y) =  \llbracket f(x) \! = \! y\rrbracket  = \{f(x)\}(y) = P(f) (\{x\}) (y), \hbox{ and}$$
%  
%  $$(\lambda_f)^*(\{y\})(x) = \lambda_f(x,y) =  \llbracket f(x) \! = \! y\rrbracket  = \{y\}(f(x)) = \Omega^f (\{y\})(x).$$
%   
 In other words, $P(f)$ is the direct image of (the graph of) $f$, and $\Omega^f$ is its inverse image. We will use the notations $f_* = P(f) = (\lambda_f)_* \:$, $\ f^* = \Omega^f = (\lambda_f)^*$.
\end{remark}
 
The relations which are the graphs of arrows of the topos are characterized as follows, for example in \cite[theorem 16.5]{McLarty}. 

 \begin{proposition} \label{arrowsiipi1}
Consider a relation $X \times Y \mr{\lambda} \Omega$, the corresponding subobject   $R \hookrightarrow X \times Y$ and the span $X \ml{p} R \mr{q} Y$ obtained by composing with the projections from the product. There is an arrow $X \mr{f} Y$ of the topos such that  $\lambda = \lambda_f$ if and only if $p$ is an isomorphism, and in this case $f = q \circ p^{-1}$.  \qed
 \end{proposition}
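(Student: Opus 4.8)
The plan is to prove both implications by identifying $R_f$ with the image of the \emph{graph map} $g_f = \langle 1_X, f \rangle : X \mr{} X \times Y$, and then comparing the two spans through the universal property of the product. Recall first that for an arrow $X \mr{f} Y$ the subobject $R_f \mmr{} X \times Y$ is exactly the image of $g_f$ (equivalently, the subobject on which $\pi_2$ agrees with $f \circ \pi_1$), and that its characteristic function is $\lambda_f$. Since $\pi_1 \circ g_f = 1_X$, the map $g_f$ is a split monomorphism, so it factors as an isomorphism $X \xrightarrow{\sim} R_f$ followed by the inclusion $R_f \mmr{} X \times Y$.

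For the direction ($\Rightarrow$), I would assume $\lambda = \lambda_f$, so that $R = R_f$. Under the isomorphism $g_f : X \xrightarrow{\sim} R_f = R$ one computes $p \circ g_f = \pi_1 \circ g_f = 1_X$, whence $p$ is an isomorphism with $p^{-1} = g_f$, and then $q \circ p^{-1} = \pi_2 \circ g_f = f$. This gives both that $p$ is iso and the asserted formula $f = q \circ p^{-1}$.

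For the direction ($\Leftarrow$), I would assume $p$ is an isomorphism and define $f := q \circ p^{-1} : X \mr{} Y$. Writing $\iota : R \mmr{} X \times Y$ for the inclusion, set $j := \iota \circ p^{-1} : X \mr{} X \times Y$. Its components are $\pi_1 \circ j = p \circ p^{-1} = 1_X$ and $\pi_2 \circ j = q \circ p^{-1} = f$, so by the universal property of the product $j = \langle 1_X, f\rangle = g_f$. Since $p^{-1}$ is an isomorphism (hence epi) and $\iota$ is a monomorphism, $j$ is monic with the same image as $\iota$, namely $R$; on the other hand the image of $j = g_f$ is by definition $R_f$. Therefore $R = R_f$ as subobjects of $X \times Y$, i.e. $\lambda = \lambda_f$, and $f = q \circ p^{-1}$, as claimed.

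The genuinely routine parts are the two component computations; the only step needing care is the identification of subobjects $R = R_f$, which rests on the facts that precomposition with an isomorphism preserves images and that the graph map is monic. I expect the main conceptual point to be simply recalling that $R_f$ is the image of the graph map --- equivalently, reading ``$p$ is an isomorphism'' in the internal language as the total-functional condition $\forall\, x \in X\ \exists!\, y \in Y,\ \lambda(x,y) = 1$, which is precisely what distinguishes the graphs of actual arrows among all relations (and connects this statement with axioms $ed)$ and $uv)$ via Proposition \ref{edyuvdafunctionG}).
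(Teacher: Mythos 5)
Your proof is correct. The paper does not actually prove this proposition --- it is stated with a reference to McLarty's Theorem 16.5 and a \qed --- and your argument via the graph map $\langle 1_X, f\rangle$ (identifying $R_f$ as its image, so that $p$ is split by the corestriction of the graph map in one direction, and $\iota \circ p^{-1}$ is forced to be a graph map in the other) is exactly the standard proof that such a reference supplies; all the steps you flag as needing care (images are stable under precomposition with isomorphisms, subobjects correspond bijectively to characteristic functions) do hold in any topos.
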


We will now show that $p$ is an isomorphism if and only if $\lambda$ is ed) and uv), concluding in this way that functions correspond to actual arrows of the topos. Even though this is a folklore result (see for example \cite[2.2(iii)]{Pitts}), we include a proof because we couldn't find an appropriate reference.
%for the sake of completeness. 
 
\begin{remark} \label{hojafinal}
 Let $Y \mr{f} X$. For each subobject $A \hookrightarrow X$, with characteristic function $X \mr{\phi_A} \Omega$, by pasting the pull-backs, it follows that the characteristic function of the subobject $f^{-1}A \hookrightarrow Y$ is $\phi_{f^{-1}A} = \phi_A \circ f$.
 This means that the square $\vcenter{\xymatrix{Sub(X) \ar@<1ex>[d]^{f^{-1}} \ar@{}[d]|{\dashv} \ar[r]^{\phi_{(-)}}_{\cong} & [X,\Omega] \ar@<1ex>[d]^{f^*} \ar@{}[d]|{\dashv} \\
					Sub(Y) \ar@<1ex>[u]^{Im_f} \ar[r]^{\phi_{(-)}}_{\cong} & [Y,\Omega] \ar@<1ex>[u]^{\exists_f}}}$ is commutative when considering the arrows going downwards, then also when considering the left adjoints going upwards.

% In particular for a relation $X \times Y \mr{\lambda} \Omega$ with corresponding subobject $R \hookrightarrow X \times Y$, and the projection $X \times Y \mr{\pi_1} X$, we have $\phi_{Im_{\pi_1}(R)} = \exists_{\pi_1}(\lambda)$, in particular $R \mr{p} X$ is an epimorphism if and only if $\exists_{\pi_1} (\lambda)(x) = 1$ for each $x \in X$.
%the commutativity of the square $\vcenter{\xymatrix{Sub(X)  \ar[r]^{\cong} & [X,\Omega]  \\
%					Sub(X \times Y) \ar@<1ex>[u]^{Im_{\pi_1}} \ar[r]^{\cong} & [X \times Y,\Omega] \ar@<1ex>[u]^{\exists_{\pi_1}}}}$ identifies $Im_{\pi_1}(R)$ with $\exists_{\pi_1} (\lambda)$, in particular $R \mr{p} X$ is an epimorphism if and only if $\exists_{\pi_1} (\lambda)(x) = 1$ for each $x \in X$.
\end{remark}

\begin{proposition} \label{parafunctionconallegories}
%Consider as in proposition \ref{arrowsiipi1} a relation $X \times Y \mr{\lambda} \Omega$, the corresponding subobject $R \hookrightarrow X \times Y$ and the arrow $R \mr{p} X$. 
In the hypothesis of proposition \ref{arrowsiipi1}, $\lambda$ is ed) if and only if $p$ is epi, and $\lambda$ is uv) if and only if $p$ is mono. 
\end{proposition}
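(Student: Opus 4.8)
The plan is to treat the two equivalences separately, using the image factorization to handle the epi part and the kernel pair to handle the mono part. Write $R \hookrightarrow X \times Y$ for the subobject classified by $\lambda$, so that $p = \pi_1 \circ m$ and $q = \pi_2 \circ m$, where $m$ is the inclusion and $\pi_1, \pi_2$ are the projections of $X \times Y$. In both arguments the point is to translate a statement about subobjects into the corresponding statement about characteristic functions, for which I will lean on the adjoint correspondence $\phi_{Im_f(A)} = \exists_f(\phi_A)$ recorded in Remark \ref{hojafinal}.

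First I would prove that $\lambda$ is ed) if and only if $p$ is epi. Since $\cc{S}$ is a topos, $p$ is epi if and only if its image equals the whole of $X$, i.e. $Im(p) = X$ as subobjects. By definition $Im(p) = Im(\pi_1 \circ m) = \exists_{\pi_1}(R)$, the image of $R$ under the projection $\pi_1 \colon X \times Y \mr{} X$. By Remark \ref{hojafinal} applied to $f = \pi_1$, the characteristic function of $\exists_{\pi_1}(R)$ is $\exists_{\pi_1}(\lambda)$, which is computed fiberwise as the join $x \mapsto \bigvee_{y \in Y} \lambda(x,y)$. Hence $Im(p) = X$ holds precisely when this characteristic function is constantly $1$, that is, when $\bigvee_{y \in Y} \lambda(x,y) = 1$ for every $x$; this is exactly axiom ed).

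Next I would prove that $\lambda$ is uv) if and only if $p$ is mono. Here I use that $p$ is mono if and only if its kernel pair has equal projections (equivalently, the diagonal $R \mr{} R \times_X R$ is an isomorphism). The kernel pair $R \times_X R$ consists of those $((x,y_1),(x,y_2))$ with both pairs in $R$, so it is canonically the subobject $S \hookrightarrow X \times Y \times Y$ classified by $(x,y_1,y_2) \mapsto \lambda(x,y_1) \wedge \lambda(x,y_2)$. The two projections $R \times_X R \rightrightarrows R \hookrightarrow X \times Y$ automatically agree in the $X$-coordinate, so they agree if and only if they agree in the $Y$-coordinate, i.e. if and only if the composite $S \mr{} Y \times Y$, $(x,y_1,y_2) \mapsto (y_1,y_2)$, factors through the diagonal $\triangle_Y$. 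Comparing characteristic functions (the pullback of $\triangle_Y$ to $X \times Y \times Y$ is classified by $\llbracket y_1 \! = \! y_2 \rrbracket$), this factorization is equivalent to the containment $\lambda(x,y_1) \wedge \lambda(x,y_2) \leq \llbracket y_1 \! = \! y_2 \rrbracket$, which is exactly axiom uv).

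The step I expect to be the main obstacle is the bookkeeping in the mono part: namely the clean identification of the kernel pair $R \times_X R$ with the subobject $S$ of $X \times Y \times Y$ classified by $\lambda(x,y_1) \wedge \lambda(x,y_2)$, and the passage from ``the two projections coincide'' to the order-theoretic inequality uv) via comparison of classifying maps. The epi part, by contrast, is essentially a direct reading of Remark \ref{hojafinal}. Finally, I note that the companion statements for su) and in) in terms of $q$ follow by the evident symmetry exchanging the roles of $X$ and $Y$ (and of $p$ and $q$), so there is no need to repeat the arguments.
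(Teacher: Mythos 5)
Your proof is correct and follows essentially the same route as the paper's: both equivalences are reduced to Remark \ref{hojafinal} applied to $\pi_1$, and to the same subobject comparison inside $X \times Y \times Y$ (your kernel-pair formulation of the mono condition is just the categorical rephrasing of the paper's elementwise statement that $(x,y_1)\in R$ and $(x,y_2)\in R$ imply $y_1=y_2$). The only step you assert rather than verify is the identity $\exists_{\pi_1}(\lambda) = \bigvee_{y \in Y}\lambda(-,y)$, which the paper checks in two lines by uniqueness of left adjoints; this is standard and does not affect correctness.
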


\begin{proof}
For each $\alpha \in \Omega^X$, $\bigvee_{y \in Y} \lambda(-,y) \leq \alpha $ if and only if $\forall x \in X, y \in Y$, $\lambda(x,y) \leq \alpha(x)$, which happens if and only if 
$\lambda \leq {\pi_1}^* (\alpha)$. It follows that $\exists_{\pi_1} (\lambda) = \bigvee_{y \in Y} \lambda(-,y)$. %\displaystyle

% 
% The quantifier $\exists_{\pi_1}$ of remark \ref{hojafinal} is given by the suprema $\bigvee_Y$ as follows: for each $X \mr{\alpha} \Omega$
% 
% \begin{center}
% \begin{tabular}{c}
%  $\bigvee_{y \in Y} \lambda(-,y) \leq \alpha $ \\ \hline \noalign{\smallskip}
%  for each $x \in X$, $\bigvee_{y \in Y} \lambda(x,y) \leq \alpha(x)$ \\ \hline \noalign{\smallskip}
%  for each $x \in X, y \in Y$, $\lambda(x,y) \leq \alpha(x)$ \\ \hline \noalign{\smallskip}
%  $\lambda \leq {\pi_1}^* (\alpha)$
%  \end{tabular}
% \end{center}

%We obtain that for each $x \in X$, $ \displaystyle \exists_{\pi_1} (\lambda) (x) = \bigvee_{y \in Y} \lambda(x,y)$.

Now, 
by remark \ref{hojafinal} applied to the projection $X \times Y \mr{\pi_1} X$, 
 %In particular for a relation $X \times Y \mr{\lambda} \Omega$ with corresponding subobject $R \hookrightarrow X \times Y$, and the projection $X \times Y \mr{\pi_1} X$, 
 we have $\phi_{Im_{\pi_1}(R)} = \exists_{\pi_1}(\lambda)$, in particular $R \mr{p} X$ is an epimorphism if and only if $\exists_{\pi_1} (\lambda)(x) = 1$ for each $x \in X$. 
It follows that $\lambda$ is ed) if and only if $p$ is epi.

% By unicity of the adjoint, we obtain for each $\lambda \in \Omega^{X \times Y}$, $x \in X$,
% 
% \begin{equation} \label{existslambda}
%  \exists_{\pi_1} \lambda (x) = \bigvee_{y \in Y} \lambda(x,y)
% \end{equation}
% 
% By remark \ref{hojafinal}, we conclude that $\lambda$ is ed) if and only if $p$ is epi.

\vspace{.3cm}

Also by remark \ref{hojafinal}, the characteristic functions of $(X \times \pi_1)^{-1} R$ and $(X \times \pi_2)^{-1} R $ are respectively $\lambda_1(x,y_1,y_2) = \lambda(x,y_1)$ and $\lambda_2(x,y_1,y_2) = \lambda(x,y_2)$.

Then axiom uv) is equivalent to stating that for each $x \in X$, $y_1, y_2 \in Y$, 
$$
\lambda_1(x,y_1,y_2) \wedge \lambda_2(x,y_1,y_2) \leq \llbracket y_1 \! = \! y_2 \rrbracket,
$$
i.e. that we have an inclusion of subobjects of $X \times Y \times Y$
$$ 
(X \times \pi_1)^{-1} R \cap (X \times \pi_2)^{-1} R \subseteq X \times \triangle_Y. 
$$
But this inclusion is equivalent to stating that for each $x \in X$, $y_1, y_2 \in Y$, $(x, y_1) \in R$ and $(x,y_2) \in R$ imply that $y_1 = y_2$, i.e. that $p$ is mono.
\end{proof}

Combining proposition \ref{parafunctionconallegories} with \ref{arrowsiipi1}, we obtain 

\begin{proposition} \label{functionconallegories}
 A relation $\lambda$ is a function if and only if there is an arrow $f$ of the topos such that $\lambda = \lambda_f$. \qed
\end{proposition}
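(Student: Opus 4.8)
The plan is to combine the two previously established results in the obvious way. Proposition \ref{functionconallegories} is essentially a corollary, so the main work has already been done in Propositions \ref{arrowsiipi1} and \ref{parafunctionconallegories}; what remains is to chain the equivalences together and verify that the intermediate objects match up correctly.

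First I would recall the setup: a relation $X \times Y \mr{\lambda} \Omega$ corresponds to a subobject $R \hookrightarrow X \times Y$, which yields a span $X \ml{p} R \mr{q} Y$ by composing with the two projections. By Definition \ref{function}, saying $\lambda$ is a \emph{function} means precisely that it is simultaneously $ed)$ and $uv)$. So the statement to prove unwinds to: $\lambda$ is $ed)$ and $uv)$ if and only if $\lambda = \lambda_f$ for some arrow $f$ of the topos.

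The forward direction of the argument runs as follows. By Proposition \ref{parafunctionconallegories}, $\lambda$ is $ed)$ if and only if $p$ is epi, and $\lambda$ is $uv)$ if and only if $p$ is mono. Hence $\lambda$ is a function if and only if $p$ is both mono and epi. In an (elementary) topos a morphism that is simultaneously mono and epi is an isomorphism, so this condition is equivalent to $p$ being an isomorphism. But Proposition \ref{arrowsiipi1} states exactly that $p$ is an isomorphism if and only if there is an arrow $X \mr{f} Y$ with $\lambda = \lambda_f$ (and moreover $f = q \circ p^{-1}$). Stitching these three equivalences together gives the claim.

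The only genuine subtlety — and the one step I would be careful about rather than treat as routine — is the passage from ``$p$ mono and epi'' to ``$p$ iso.'' In a general category mono $+$ epi does not force invertibility, but a topos is balanced, so this is legitimate; I would state this balancedness explicitly as the hinge of the proof rather than leave it implicit. Everything else is a direct citation of the two preceding propositions, so no further calculation is needed and the result follows at once.
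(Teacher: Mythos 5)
Your proof is correct and is exactly the paper's argument: the paper derives this proposition by the single sentence "Combining proposition \ref{parafunctionconallegories} with \ref{arrowsiipi1}, we obtain..." Your explicit appeal to the balancedness of a topos to pass from "mono and epi" to "iso" is the one implicit hinge in the paper's version, and you are right to flag it.
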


\begin{remark}\label{simetria}
A symmetric arguing shows that a relation $\lambda$ is an op-function if and only if $\lambda^{op}$ corresponds to an actual arrow in the topos.

Then a relation $\lambda$ is a bijection if and only if there are two arrows in the topos such that $\lambda = \lambda_f$, $\lambda^{op} = \lambda_g$. Then we have that for each $x \in X$, $y \in Y$, 
$$\llbracket f(x) \! = \! y\rrbracket  = \lambda_f(x,y) = \lambda(x,y) = \lambda^{op} (y,x) = \lambda_g(y,x) = \llbracket g(y) \!  =  \! x\rrbracket ,$$
i.e. $f(x)=y$ if and only if $g(y)=x$, in particular $fg(y)=y$ and $gf(x)=x$, i.e.
 $f$ and $g$ are mutually inverse. In other words, bijections correspond to isomorphisms in the topos in the usual sense.
\end{remark}
\end{sinnadastandard}

\begin{sinnadastandard} \label{aplication} {\bf An application to the inverse image.} As an application of our previous results, we will give an elementary proof of \cite[IV.2 Prop. 1]{JT}. %This is a different characterization of arrows of $\Sat$: they are the relations whose inverse image is not only a sup-lattice morphism, but a locale one. 
 The \emph{geometric aspect of the concept of locale} is studied in op. cit. by considering the category of spaces $S \! p = Loc^{op}$ \cite[IV, p.27]{JT}. If $H \in Loc$, we denote its corresponding space by $\overline{H}$, and if $X \in S \! p$ we denote its corresponding locale (of open parts) by $\cc{O}(X)$. If $H \mr{f} L$, then we denote $\overline{L} \mr{\overline{f}} \overline{H}$, and if $X \mr{f} Y$ then we denote $\cc{O}(Y) \mr{f^{-1}} \cc{O}(X)$.
 %We also denote the dual of a locale morphism $f$ by $\overline{f}$, and the dual of a space morphism $f$ by $f^{-1}$.

 We have the points functor $S \! p \mr{| \:\; |} \Sat$, $|\overline{H}| = S \! p(1,\overline{H}) = Loc(H,\Omega)$. It's not hard to see that a left adjoint $(-)_{dis}$ of $| \:\; |$ has to map $X \mapsto X_{dis} = \overline{\Omega^X}$, $f \mapsto \overline{f^*}$ (see \cite[p.29]{JT}).
 
%  
%  \begin{remark} \label{remarkdiscretespace}
% Note that $\Omega^X$ is also the locale of open parts of the discrete space $X_{dis}$ (\cite{JT}, IV.2 p.28). We have the functor $\Eat \mr{(-)_{dis}} Sp$, mapping $X \mapsto X_{dis}$, left adjoint to the points functor $Sp \mr{| \ |} \Eat$, $|X| = Sp(1,X)$. The formulation of this adjunction in the algebraic side is that, for $L \in Loc$, $X \in \Eat$ 
% 
% \begin{equation} \label{adjunctiondiscrete}
%  \underline{\quad L \mr{} \Omega^X \hbox{ in } Loc \quad }
% \end{equation}
% 
% \vspace{-3.7ex}
% 
% $$X \mr{} Loc(L,\Omega) \hbox{ in } \Eat,$$
% 
% which is simply given by the exponential law. It is not hard to see that, given $X' \mr{f} X$ in $\Eat$, the arrow corresponding to $X' \mr{f} X \mr{} Loc(L,\Omega)$ via \eqref{adjunctiondiscrete} is $L \mr{} \Omega^X \mr{f^*} \Omega^{X'}$ (but $f^*$ is, a priori, just a sup-lattice morphism).
% \end{remark}
 Combining propositions \ref{edyuvdafunctionG} and \ref{functionconallegories}, we obtain that a relation $\lambda$ is of the form $\lambda_f$ for an arrow $f$ if and only if its inverse image is a locale morphism. Then we obtain: %the functor $\Omega^{(-)}$ from remark \ref{1erremarkbasico} yields the desired functor $(-)_{dis}$:

\begin{\prop} [{cf. \cite[IV.2 Prop. 1]{JT}}] \label{discretespace}
 We have a full and faithful functor $\Sat \mr{(-)_{dis}} S \! p$, satisfying $(-)_{dis} \dashv | \;\: |$, that maps $X \mapsto X_{dis} = \overline{\Omega^X}$, $f \mapsto \overline{f^*}$. \qed
\end{\prop}

% \begin{proof}
% By propositions \ref{edyuvdafunctionG} and \ref{functionconallegories}, the functor $\Omega^{(-)}$ from remark \ref{1erremarkbasico} co-restricts to $S \! p$ as a full and faithful functor $\Sat \mr{(-)_{dis}} S \! p$.
% \end{proof}

\end{sinnadastandard}

\begin{sinnadastandard} \label{imageninversausandoautodualparaG} {\bf The self-duality of $H^X$.}
We show now that $H^X$ is self-dual as a $H$-module. We then show how this self-duality relates with the inverse (and direct) image of an $\ell$-relation.
%use this self-duality to construct the inverse (and direct) image of an $\ell$-relation in a different way. 

\begin{remark} 
 Given $X,Y \in \Sat$, $H^X \stackrel[H]{}{\otimes} H^Y$ is the free $H$-module on $X \times Y$, with the singleton given by the composition of $X \times Y \xrightarrow{<\{-\}_H,\{-\}_H>} H^X \times H^Y$ with the universal bi-morphism \mbox{$H^X \times H^Y \mr{} H^X \stackrel[H]{}{\otimes} H^Y$} (see \cite[II.2 p.8]{JT}). %We will denote this composition by $\{-\}_H \stackrel[H]{}{\otimes} \{-\}_H$.
\end{remark}

\begin{proposition} [{\cite[2.55]{tesis}}] \label{autodualparaG}
$H^X$ is self-dual as a $H$-module%(see definition \ref{moddual} and remark \ref{nuevodualcomobimod})
, with $H$-module morphisms   \mbox{$H \mr{\eta} H^X \stackrel[H]{}{\otimes} H^X$,} $H^X \stackrel[H]{}{\otimes} H^X \mr{\eps} H$ given by the formulae
$$\eta(1) = \bigvee_{x \in X} \{x\}_H \otimes \{x\}_H, \quad \eps(\{x\}_H \otimes \{y\}_H ) = \llbracket x \! = \! y\rrbracket . $$

\vspace{-.8cm}

\qed
\end{proposition}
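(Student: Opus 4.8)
The plan is to check that $\eta$ and $\eps$ are well-defined $H$-module morphisms and then to verify the two triangular (zig-zag) identities, which is exactly what it means for $(H^X,\eta,\eps)$ to exhibit $H^X$ as its own dual in the monoidal category of $H$-modules.

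First I would dispatch well-definedness. By the remark preceding the statement, $H^X \otimes_H H^X$ is the free $H$-module on $X \times X$, with singleton $(x,y) \mapsto \{x\}_H \otimes \{y\}_H$. Hence, by the universal property in Proposition \ref{formulainternaparaG}, the assignment $\{x\}_H \otimes \{y\}_H \mapsto \llbracket x \! = \! y\rrbracket$ extends uniquely to an $H$-module morphism $\eps$. For $\eta$, since $H$ is the free $H$-module on one generator, an $H$-module map out of $H$ is determined by the image of $1$, so $\eta(1) = \bigvee_{x} \{x\}_H \otimes \{x\}_H$ defines a unique $H$-module morphism.

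Next, since $H^X$ is generated as an $H$-module by the singletons $\{z\}_H$ (Proposition \ref{formulainternaparaG}) and every map involved is $H$-linear, it suffices to verify the identities on each $\{z\}_H$. For the first zig-zag I would compute $(1 \otimes \eps)\circ(\eta \otimes 1)(\{z\}_H)$: inserting $\eta(1)$ and applying $\eps$ to the appropriate pair of tensor factors yields $\bigvee_{x} \llbracket x \! = \! z\rrbracket \cdot \{x\}_H$. The key observation is that this is precisely the expansion of $\{z\}_H$ furnished by Proposition \ref{formulainternaparaG} (take $\theta = \{z\}_H$, so $\theta(x) = \llbracket z \! = \! x\rrbracket$), hence it equals $\{z\}_H$. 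The second zig-zag $(\eps \otimes 1)\circ(1 \otimes \eta)(\{z\}_H)$ is computed identically and collapses to the same expression by the symmetry of $\eta$ and $\eps$ together with the commutativity of $\otimes$.

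The only real care needed is the bookkeeping of the associativity and unit coherence isomorphisms of $\otimes_H$ when steering $\eps$ onto the correct two of the three tensor factors, together with the justification that testing on singletons suffices. Once each expression reduces to $\bigvee_{x} \llbracket x \! = \! z\rrbracket \cdot \{x\}_H$, Proposition \ref{formulainternaparaG} closes both identities at once; thus the computational heart is a single instance of that proposition (equivalently, the transitivity/symmetry identity $\bigvee_{x}(\llbracket x \! = \! z\rrbracket \wedge \llbracket x \! = \! y\rrbracket) = \llbracket z \! = \! y\rrbracket$ for the equality predicate, which also follows from Lemma \ref{ecuacionenL}).
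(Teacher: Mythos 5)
Your proof is correct and is exactly the argument the paper defers to (the statement is quoted from \cite[2.55]{tesis} with the proof itself omitted): both maps are determined by the freeness of $H^X$ and $H^X \stackrel[H]{}{\otimes} H^X$ on singletons, and each zig-zag identity, evaluated on a generator $\{z\}_H$, reduces to the expansion $\{z\}_H = \bigvee_{x \in X} \llbracket x \! = \! z\rrbracket \cdot \{x\}_H$ supplied by Proposition \ref{formulainternaparaG}. Nothing further is needed.
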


\begin{\prop}  [{\cite[2.56]{tesis}}]  \label{prop:imageninversausandoautodualparaG}
 Consider the extension of an $\ell$-relation $\lambda$ as a $H$-module morphism $H^X \stackrel[H]{}{\otimes} H^Y \mr{\lambda} H$, and the corresponding $H$-module morphism $H^Y \mr{\mu} H^X$ given by the self-duality of $H^X$. Then $\mu = \lambda^*$. \qed
\end{\prop}

\begin{corollary}  [{\cite[2.57]{tesis}}]  \label{dualintercambiaparaG}
 Taking dual interchanges direct and inverse image, i.e. 
 
 %\begin{center}
 $$H^X \xr{\lambda_* = (\lambda^*)^{\lor}} H^Y, \quad H^Y \xr{\lambda^* = (\lambda_*)^{\lor}} H^X. $$
 %\end{center}

 \vspace{-.7cm}

\qed
 
 \end{corollary}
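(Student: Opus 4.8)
The plan is to deduce this directly from Proposition \ref{prop:imageninversausandoautodualparaG} together with its left--right symmetric version, invoking only the formal properties of the self-duality furnished by Proposition \ref{autodualparaG}. Proposition \ref{prop:imageninversausandoautodualparaG} identifies $\lambda^*$ as the $H$-module morphism $H^Y \mr{} H^X$ obtained from the pairing $H^X \otimes_H H^Y \mr{\lambda} H$ by currying against the self-duality $(\eta,\eps)$ of $H^X$. Running the same statement with the roles of $X$ and $Y$ exchanged identifies $\lambda_*$ as the morphism $H^X \mr{} H^Y$ obtained by currying the \emph{same} pairing against the self-duality of $H^Y$. Thus the entire content of the corollary is that the two curryings of a single pairing between self-dual objects are transpose to one another.

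First I would record the general fact, valid in any symmetric monoidal category, that for a pairing $\beta : M \otimes N \to K$ between self-dual objects the two mates $\beta^\sharp : N \to M$ and $\beta^\flat : M \to N$ satisfy $\beta^\flat = (\beta^\sharp)^\lor$. This is an immediate consequence of the snake (triangle) identities for $\eta_M,\eps_M,\eta_N,\eps_N$: writing out $(\beta^\sharp)^\lor = (\eps_N \otimes \mathrm{id}_M)(\mathrm{id}_N \otimes \beta^\sharp \otimes \mathrm{id}_M)(\mathrm{id}_N \otimes \eta_M)$ and substituting the definition of $\beta^\sharp$, one occurrence of $\eta$ cancels against one of $\eps$ by a snake identity and the rest collapses to $\beta^\flat$. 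In the elevators calculus of Appendix \ref{ascensores} this is a one-line diagram move, so I would present it there.

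Alternatively, and more transparently for the reader, I would verify $(\lambda^*)^\lor = \lambda_*$ by evaluating both sides on the free generators $\{x\}_H$. Using $\eta(1) = \bigvee_{y} \{y\}_H \otimes \{y\}_H$ and $\eps(\{x\}_H \otimes \{x'\}_H) = \llbracket x \! = \! x' \rrbracket$ from Proposition \ref{autodualparaG}, the composite $(\eps_{H^X} \otimes \mathrm{id})(\mathrm{id} \otimes \lambda^* \otimes \mathrm{id})(\mathrm{id} \otimes \eta_{H^Y})$ sends $\{x\}_H$ to $\bigvee_{y,\, x'} \llbracket x \! = \! x' \rrbracket \cdot \lambda(x',y) \cdot \{y\}_H$. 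Collapsing the inner join with Lemma \ref{ecuacionenL} (so that $\llbracket x \! = \! x'\rrbracket \cdot \lambda(x',y) = \llbracket x \! = \! x'\rrbracket \cdot \lambda(x,y)$) together with $\bigvee_{x'} \llbracket x \! = \! x'\rrbracket = 1$ leaves $\bigvee_y \lambda(x,y) \cdot \{y\}_H = \lambda_*(\{x\}_H)$, which is exactly the defining formula for $\lambda_*$ in \eqref{lambdaphipsiG}. Since both $(\lambda^*)^\lor$ and $\lambda_*$ are $H$-module morphisms, agreement on the generating singletons suffices.

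The second identity $\lambda^* = (\lambda_*)^\lor$ then requires no separate work: it is either the symmetric statement (exchanging $X$ and $Y$) or follows formally from $(g^\lor)^\lor = g$ for self-dual objects. The only genuine subtlety---the step I would treat most carefully---is the bookkeeping of the two \emph{distinct} self-dualities (on $H^X$ and on $H^Y$) and making sure the transpose $(-)^\lor$ is taken with respect to the matching pair; once the snake identity is applied with the correct $\eta$ and $\eps$, the rest is forced.
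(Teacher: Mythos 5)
Your proposal is correct and follows the route the paper intends: the corollary is an immediate consequence of Proposition \ref{prop:imageninversausandoautodualparaG} (and its $X$--$Y$ symmetric form) together with the formal transpose properties of the self-dualities of Proposition \ref{autodualparaG}, and your explicit verification on the generators $\{x\}_H$ via Lemma \ref{ecuacionenL} correctly recovers the defining formula $\lambda_*(\{x\}_H)=\bigvee_{y}\lambda(x,y)\cdot\{y\}_H$ of \eqref{lambdaphipsiG}. The only blemish is a typing slip in your schematic mate formula (the inner $\beta^\sharp$ should be applied to the $N$-factor produced by $\eta_N$, not to an $M$-factor), but your concrete computation carries the argument and handles the two self-dualities correctly.
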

\end{sinnadastandard}

\begin{sinnadastandard} 
{\bf $\lozenge$ and $\rhd$ diagrams.} 
 Like we mentioned before, the definitions and propositions of \cite[section 3]{DSz}, can also be developed in an arbitrary elementary topos $\Sat$ without major difficulties. 
 Consider the following situation (cf. \cite[3.1]{DSz}).
\end{sinnadastandard} 
 
\begin{sinnadastandard} \label{diagramadiamante12}
Let $X \times Y \mr{\lambda} H$, \mbox{$X' \times
Y'\mr{\lambda'}H$,} be two $\ell$-relations and $X \mr{f} X'$, $Y \mr{g} Y'$ be two maps, or, more generally, consider two spans, $X \ml{p} R \mr{p'} X'$, $Y \ml{q} S \mr{q'} Y'$, (which induce relations that we also denote $R = p' \circ p^{op}$, $S = q' \circ q^{op}$),
% $$
% \xymatrix@C=3ex@R=1ex
%          {
%          {} & R \ar[dl]_{p} \ar[dr]^{p'}
%          \\
%          X & {} & X',
%          }
% \hspace{3ex}                  
% \xymatrix@C=3ex@R=1ex
%          {
%          {} & S \ar[dl]_{q} \ar[dr]^{q'}
%          \\
%          Y & {} & Y'\;
%          }
% \hspace{3ex}
% \xymatrix@C=2ex@R=1ex
%          {
%           {}
%           \\
%           R = p' \circ p^{op}, \;\; S = q' \circ q^{op}\,,
%          }
% $$
and a third $\ell$-relation $R \times S \mr{\theta} H$. 
These data give rise to the following diagrams in $Rel(\Sat)$: % := Rel(\Sat)

\noindent
\begin{tabular}{cccc} %\label{triangleequation}
  \\ $\rhd(f,g)$ & $\lozenge = \lozenge(R,S)$ & $\lozenge_1 = \lozenge_1(f,g)$ & $\lozenge_2 = \lozenge_2(f,g)$ \\  \noalign{\smallskip}
  $\!\!\!\! \vcenter{\xymatrix@C=3ex@R=1ex { X \times Y  \ar[rrd]^{\lambda} \ar[dd]_{f \times g} \\
                     & \hspace{-4ex} {}^{\!\!\geq} & H\,,  \\
                    X' \times Y'  \ar[rru]_{\lambda'} }} \!\!\!\!  $   &
  $\vcenter{\xymatrix@C=1.4ex@R=3ex { & X \times Y  \ar[rd]^{\lambda} \\
	    X \times Y' \ar[rd]_{R \times Y'} \ar[ru]^{X \times S^{op}} & \equiv & H\,, \\
			       & X' \times Y' \ar[ru]_{\lambda '} }}$ &
  $\vcenter{\xymatrix@C=1.4ex@R=3ex{ & X \times Y  \ar[rd]^{\lambda} \\
	    X \times Y' \ar[rd]_{f \times Y'} \ar[ru]^{X \times g^{op}} & \equiv & H\,, \\
			       & X' \times Y' \ar[ru]_{\lambda '}  }} \!\!\!\!  $    & 
  $\vcenter{\xymatrix@C=1.4ex@R=3ex { & X \times Y  \ar[rd]^{\lambda} \\
	    X' \times Y \ar[rd]_{X' \times g} \ar[ru]^{f^{op} \times Y} & \equiv & H\,, \\
			       & X' \times Y' \ar[ru]_{\lambda '} }} \!\!\!\!  $   
\end{tabular}

% The following equations replace the ones in \cite{DSz}, p.532.
\end{sinnadastandard}

\begin{remark}  \label{equations}
 The diagrams above correspond to the following equations:
 
\begin{tabular}{ccccc} \label{ecuacionesdiagramas}
\\$\rhd:$ & $\hbox{ for each } a \in X, b \in Y,$ &
$\lambda( a,b ) $ & $\leq$ & $ \lambda'( f(a),g(b) ), $ \\ \noalign{\smallskip}
$\lozenge:$ & $\hbox{ for each } a \in X, b' \in Y', $ &
           $\displaystyle \bigvee_{y \in Y} \llbracket ySb'\rrbracket \cdot \lambda( a,y ) 
         $ & = & $ \displaystyle \bigvee_{x' \in X'} \llbracket aRx'\rrbracket \cdot \lambda'( x',b' ),$ \\ \noalign{\smallskip}
$\lozenge_1:$ & $\hbox{ for each } a \in X, b' \in Y',$ &
$\lambda'( f(a),b' ) $ & = & $ \displaystyle \bigvee_{y \in Y} \llbracket g(y) \! = \! b'\rrbracket \cdot \lambda( a,y ), $ \\ \noalign{\smallskip}
$\lozenge_2:$ & $\hbox{ for each } a' \in X', b \in Y,$ &
$\lambda'( a',g(b) )  $ & = & $  \displaystyle \bigvee_{x \in X} \llbracket f(x) \! = \! a'\rrbracket \cdot \lambda( x,b ). $ 
\end{tabular}

\qed
\end{remark}

The proof that the Tannaka and the Galois constructions of the group (or groupoid) of automorphisms of the fiber functor yield isomorphic structures is based in an analysis of the relations between the          
 $\rhd$ and $\lozenge$ diagrams. 
         
%\end{sinnadastandard}

\begin{proposition} \label{diamante12igualdiamante}
 Diagrams $\lozenge_1$ and  $\lozenge_2$ are particular cases of \mbox{diagram $\lozenge$.} Also, the general $\lozenge$ diagram follows from these two particular cases:
let $R$, $S$ be any two spans connected by an \mbox{$\ell$-relation} $\theta$ as above. If  $\lozenge_1(p',q')$ and $\lozenge_2(p,q)$ hold, then so does 
$\lozenge(R,S)$. This last statement is actually a corollary of the more general fact, observed to us by A. Joyal, that $\lozenge$ diagrams respect composition of relations.  \qed
\end{proposition}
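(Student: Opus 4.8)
The plan is to reduce everything to one elementary identity, the \emph{reproduction formula}
\[
\bigvee_{z \in Z} \llbracket c \! = \! z \rrbracket \cdot h(z) = h(c) \qquad (c \in Z,\ h \colon Z \to H),
\]
which follows from Lemma \ref{ecuacionenL} together with $\bigvee_z \llbracket c \! = \! z\rrbracket = 1$ (it is the case $M=H$ of Proposition \ref{formulainternaparaG}), plus the fact that the $\Omega$-action $\Omega\otimes H \to H$ preserves joins in each variable and satisfies $c_1\cdot(c_2\cdot h)=(c_1\wedge c_2)\cdot h$. Throughout I read $\lozenge(R,S)$ as the equation of Remark \ref{equations}. \textbf{First claim (the special cases).} I would take the spans with identity left legs, $R=\mathrm{graph}(f)$ and $S=\mathrm{graph}(g)$, so that $\llbracket a R x'\rrbracket=\llbracket f(a)\!=\!x'\rrbracket$ and $\llbracket ySb'\rrbracket=\llbracket g(y)\!=\!b'\rrbracket$. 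Substituting into the $\lozenge$-equation, the right-hand join $\bigvee_{x'}\llbracket f(a)\!=\!x'\rrbracket\cdot\lambda'(x',b')$ collapses by the reproduction formula to $\lambda'(f(a),b')$, and what remains is exactly $\lozenge_1(f,g)$. Symmetrically, taking op-graphs $R=f^{op}$, $S=g^{op}$ (equivalently, reading the diamond along the opposite diagonal, with the roles of $\lambda,\lambda'$ exchanged) collapses the left-hand join and yields $\lozenge_2(f,g)$. This proves the first assertion and records the identifications $\lozenge_1(f,g)=\lozenge(\mathrm{graph}\,f,\mathrm{graph}\,g)$ and $\lozenge_2(f,g)=\lozenge(f^{op},g^{op})$ used below.

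\textbf{Composition lemma (Joyal's observation).} The core step is to prove that $\lozenge$ respects composition of relations: given $\ell$-relations $\lambda_0,\lambda_1,\lambda_2$ on $X_i\times Y_i$ and relations $R_1\colon X_0\rightharpoonup X_1$, $S_1\colon Y_0\rightharpoonup Y_1$, $R_2\colon X_1\rightharpoonup X_2$, $S_2\colon Y_1\rightharpoonup Y_2$, if $\lozenge(R_1,S_1)$ holds between $\lambda_0,\lambda_1$ and $\lozenge(R_2,S_2)$ holds between $\lambda_1,\lambda_2$, then $\lozenge(R_2\circ R_1,\,S_2\circ S_1)$ holds between $\lambda_0,\lambda_2$. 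I would prove this by a direct manipulation of the left-hand side of $\lozenge(R_2R_1,S_2S_1)$: expand $\llbracket y\,(S_2S_1)\,b'\rrbracket=\bigvee_{v}\llbracket yS_1v\rrbracket\wedge\llbracket vS_2b'\rrbracket$, pull the $\Omega$-action through the joins, apply $\lozenge(R_1,S_1)$ at $(a,v)$, then $\lozenge(R_2,S_2)$ at $(x_1,b')$, and re-contract the double join into $\bigvee_{x''}\llbracket a\,(R_2R_1)\,x''\rrbracket\cdot\lambda_2(x'',b')$. Each hypothesis is used exactly once, and no extra structure on $H$ is needed.

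\textbf{Second claim (the converse), as a corollary.} Write each span-relation as a composite through its apex, $R=p'\circ p^{op}$ and $S=q'\circ q^{op}$, as in \ref{diagramadiamante12}. With $\theta$ as intermediate relation, the identifications of the first claim read: the hypothesis $\lozenge_2(p,q)$ is precisely $\lozenge(p^{op},q^{op})$ between $\lambda$ and $\theta$, and the hypothesis $\lozenge_1(p',q')$ is precisely $\lozenge(p',q')$ between $\theta$ and $\lambda'$; both are immediate from the reproduction formula as above. Composing these two diagrams by the composition lemma yields $\lozenge(p'\circ p^{op},\,q'\circ q^{op})=\lozenge(R,S)$ between $\lambda$ and $\lambda'$, which is the assertion.

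\textbf{Main obstacle.} The only genuine difficulty is bookkeeping, not computation: one must pin down the variances so that $\theta$ really occupies the middle slot $\lambda_1$ of the composition lemma, and so that each leg $p^{op},p'$ (resp. $q^{op},q'$) carries the correct graph/op-graph direction; reversing a single leg silently turns a true identity into a false one. A fully self-contained alternative that avoids the composition formalism is to expand both sides of $\lozenge(R,S)$ directly: after substituting the existential descriptions of $\llbracket aRx'\rrbracket$ and $\llbracket ySb'\rrbracket$, applying the reproduction formula, and then using $\lozenge_1(p',q')$ on the right and $\lozenge_2(p,q)$ on the left, both sides reduce to the common expression $\bigvee_{r\in R,\,s\in S}\big(\llbracket p(r)\!=\!a\rrbracket\wedge\llbracket q'(s)\!=\!b'\rrbracket\big)\cdot\theta(r,s)$.
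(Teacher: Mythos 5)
Your proposal is correct and takes essentially the route the statement itself indicates: you verify the special cases by the reproduction formula, prove that $\lozenge$ diagrams compose (Joyal's observation), and then obtain $\lozenge(R,S)$ by composing $\lozenge_2(p,q)=\lozenge(p^{op},q^{op})$ with $\lozenge_1(p',q')=\lozenge(\mathrm{graph}\,p',\mathrm{graph}\,q')$ through the apex relation $\theta$; the paper omits the proof (deferring to \cite{DSz} and \cite{tesis}), but your handling of the variance issue for $\lozenge_2$ and your direct-expansion alternative both check out.
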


% \begin{sinnadastandard} {\bf $\rhd$-diagrams.} 
%  In the situation described in \ref{diagramadiamante12} for every pair of maps $f,g$ we have the following $\rhd$ diagram (in particular por $p,q$ and $p',q'$):
%  
% \begin{center}
% \begin{tabular}{ccc} %\label{triangleequation}
%   \\  & $\rhd(p,q)$ & $\rhd(p',q')$ \\  \noalign{\smallskip}
% &
% $\quad \xymatrix@C=3ex@R=1ex { R \times S  \ar[rrd]^{\theta}  \ar[dd]_{p \times q} \\
% 		     & \hspace{-4ex} {}^{\!\!\geq} & H\,, \quad \\
%                     X \times Y  \ar[rru]_{\lambda} }$ &
% $\quad \xymatrix@C=3ex@R=1ex { R \times S  \ar[rrd]^{\theta} \ar[dd]_{p' \times q'} \\
%                     & \hspace{-4ex} {}^{\!\!\geq} & H \quad \\
%                     X' \times Y'  \ar[rru]_{\lambda '} }$
% \end{tabular}
% \end{center}
%  
% \end{sinnadastandard}

Either $\lozenge_1(f,g)$ or $\lozenge_2(f,g)$ imply the $\rhd(f,g)$ diagram, and the converse holds under some extra hypothesis, in particular if $\lambda$ and $\lambda'$ are $\ell$-bijections (see \cite[3.8, 3.9]{tesis} for details). 

\begin{sinnadastandard} \label{ida}
Assume (in \ref{diagramadiamante12}) that $\lambda$ and $\lambda'$ are bijections, and that the $\rhd(p,q)$ and $\rhd(p',q')$ diagrams hold. Then, if $\theta$ is an $\ell$-bijection, we obtain from $\rhd(p,q)$ and $\rhd(p',q')$ the diagrams $\lozenge_2(p,q)$ and $\lozenge_1(p',q')$, which together imply $\lozenge(R,S)$ (see \ref{diamante12igualdiamante}).
\end{sinnadastandard}

The \mbox{product relation} $\lambda \boxtimes \lambda'$ is defined 
%as in \cite{DSz}, 3.6.
as the following composition (where $\psi$ is the symmetry) $$X \times X' \times Y \times Y' 
 \mr{X \times \psi \times Y'} X \times Y \times X' \times Y' 
 \mr{\lambda \times \lambda'} H \times H \mr{\wedge} H.$$

When $R$, $S$ are relations, it makes sense to consider $\theta$ the restriction of $\lambda \boxtimes \lambda'$ to $R \times S$. For this $\theta$, $\rhd(p,q)$ and $\rhd(p',q')$ hold trivially, and the converse of the implication in \ref{ida} holds. We summarize this in the following proposition.
 
\begin{proposition} \label{combinacion} 
Let $R \subset X \times X'$, $S \subset Y \times Y'$ be any two relations, and $X \times Y \mr{\lambda} H$, 
$X' \times Y' \mr{\lambda'} H$ be $\ell$-bijections.  Let $R \times S \mr{\theta} H$ be the restriction of $\lambda \boxtimes \lambda'$ to $R \times S$. Then, 
$\lozenge(R, S)$ holds if and only if $\theta$ is an $\ell$-bijection. \qed
\end{proposition}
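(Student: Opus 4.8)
The plan is to unwind ``$\theta$ is an $\ell$-bijection'' into the four axioms of Definition \ref{4axioms} and test them one at a time against $\lozenge(R,S)$, exploiting that $\theta$ is pointwise the meet of two $\ell$-bijections. Writing an element of $R$ as a pair $(a,a')$ with $\llbracket aRa'\rrbracket=1$ and of $S$ as $(b,b')$ with $\llbracket bSb'\rrbracket=1$, we have $\theta\big((a,a'),(b,b')\big)=\lambda(a,b)\wedge\lambda'(a',b')$. First I would observe that the univalued and injective axioms for $\theta$ hold unconditionally: since $\llbracket (b_1,b_1')\!=\!(b_2,b_2')\rrbracket=\llbracket b_1\!=\!b_2\rrbracket\wedge\llbracket b_1'\!=\!b_2'\rrbracket$, the inequality defining uv) for $\theta$ is just the meet of uv) for $\lambda$ with uv) for $\lambda'$, and dually for in). This reduces the proposition to the equivalence $\lozenge(R,S)\Longleftrightarrow(\theta\text{ is ed) and su)})$.

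For the implication $\lozenge(R,S)\Rightarrow\theta$ ed), I would fix $(a,a')\in R$ and compute $\bigvee_{(b,b')\in S}\theta\big((a,a'),(b,b')\big)$ directly. Pulling the factor $\lambda'(a',b')$ out of the join over $b$ (meet distributes over join in the locale $H$, and the $\Omega$-action distributes over $\wedge$), the inner join $\bigvee_{b}\llbracket bSb'\rrbracket\cdot\lambda(a,b)$ is exactly the left-hand side of the $\lozenge$ equation of \ref{equations}. Substituting its right-hand side $\bigvee_{x'}\llbracket aRx'\rrbracket\cdot\lambda'(x',b')$, the summand $x'=a'$ survives because $\llbracket aRa'\rrbracket=1$ and contributes $\lambda'(a',b')$; meeting this against $\lambda'(a',b')$ and joining over $b'$ leaves $\bigvee_{b'}\lambda'(a',b')=1$ by ed) of $\lambda'$. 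The su) axiom for $\theta$ follows by the symmetric computation, now invoking su) of $\lambda$ in the last step.

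For the converse I would proceed in either of two ways. The quick route is to note that $\theta\leq\lambda$ and $\theta\leq\lambda'$ along the projections of the spans $X\ml{p}R\mr{p'}X'$ and $Y\ml{q}S\mr{q'}Y'$, so that $\rhd(p,q)$ and $\rhd(p',q')$ hold trivially; then, with $\lambda,\lambda'$ and $\theta$ all $\ell$-bijections, \ref{ida} upgrades these to $\lozenge_2(p,q)$ and $\lozenge_1(p',q')$, which combine via \ref{diamante12igualdiamante} into $\lozenge(R,S)$. The self-contained route is to prove the two inequalities of the $\lozenge$ equation separately: for $\mathrm{LHS}\leq\mathrm{RHS}$ I would activate su) of $\theta$ at $(y,b')$ through the factor $\llbracket ySb'\rrbracket$ and collapse the resulting summation variable $a_0$ by the injective identity $\lambda(a,y)\wedge\lambda(a_0,y)=\llbracket a\!=\!a_0\rrbracket\cdot\lambda(a,y)$, which forces $a_0=a$ and leaves precisely $\lambda(a,y)\wedge\bigvee_{x'}\llbracket aRx'\rrbracket\cdot\lambda'(x',b')$; the reverse inequality is dual, activating ed) of $\theta$ and collapsing with the univalued identity for $\lambda'$.

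I expect the converse to be the only real obstacle. The forward direction and the uv)/in) reduction are essentially formal once the $\Omega$-action is distributed over $\wedge$, but in the converse one must be careful with the internal sup-lattice logic: the $\theta$-axioms are statements about elements of $R$ and $S$, and they must be switched on through the characteristic-function factors $\llbracket aRx'\rrbracket$ and $\llbracket ySb'\rrbracket$ before a bound variable can be collapsed against $\llbracket a\!=\!a_0\rrbracket$. The \ref{ida} route hides exactly this bookkeeping inside the external $\rhd\Rightarrow\lozenge$ upgrade for $\ell$-bijections, which is why I would cite it rather than redo the collapse by hand unless a fully elementary argument is wanted.
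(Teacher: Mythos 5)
Your proposal is correct, and for the direction ``$\theta$ an $\ell$-bijection $\Rightarrow\lozenge(R,S)$'' it takes exactly the paper's route: $\rhd(p,q)$ and $\rhd(p',q')$ hold trivially for the restriction of $\lambda\boxtimes\lambda'$, and \ref{ida} together with \ref{diamante12igualdiamante} upgrades them to $\lozenge(R,S)$. Where you add genuine content is in the other direction, which the paper only asserts (deferring the details to \cite[3.8, 3.9]{tesis}): your preliminary observation that uv) and in) for $\theta$ hold unconditionally --- because $\llbracket(b_1,b_1')\!=\!(b_2,b_2')\rrbracket=\llbracket b_1\!=\!b_2\rrbracket\wedge\llbracket b_1'\!=\!b_2'\rrbracket$ on the subobject $S$, so the $\theta$-inequalities are just meets of those for $\lambda$ and $\lambda'$ --- cleanly reduces the equivalence to $\lozenge(R,S)\Leftrightarrow(\mathrm{ed)}\wedge\mathrm{su)})$ for $\theta$, and your computation deriving ed) from the $\lozenge$ equation of \ref{equations} (distribute the $\Omega$-action over $\wedge$, substitute the right-hand side, and pick out the summand $x'=a'$ using $\llbracket aRa'\rrbracket=1$ and ed) of $\lambda'$) is exactly the missing argument. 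You are also right that the only delicate point is the internal-logic bookkeeping when a $\theta$-axiom, quantified over elements of $R$ or $S$, is invoked through the factors $\llbracket aRx'\rrbracket$, $\llbracket ySb'\rrbracket$; citing \ref{ida} for the converse, as you do, is the economical way to avoid redoing that collapse by hand.
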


\section{The case $\Eat = sh P$} \label{sub:EshP}

\begin{sinnadastandard} \label{enumeratedeJT} Assume now we have a locale $P \in Loc := Loc(\Sat)$ and we consider $\Eat = sh P$. We recall from \cite[VI.2 and VI.3, p.46-51]{JT} the different ways in which we can consider objects, sup-lattices and locales in $\Eat$.

\begin{enumerate}
 \item We consider the inclusion of topoi $shP \hookrightarrow \Sat^{P^{op}}$ given by the adjunction $\# \dashv i$. A sup-lattice $H \in s \ell(shP)$ yields a sup-lattice $iH \in \Sat^{P^{op}}$, in which the supremum of a sub-presheaf $S \mr{} iH$ is computed as the supremum of the corresponding sub-sheaf $\#S \mr{} H$ (see \cite[VI.1 Proposition 1 p.43]{JT}). The converse actually holds, i.e. if $iH \in s \ell(\Sat^{P^{op}})$ then $H \in s \ell(shP)$, see \cite[VI.3 Lemma 1 p.49]{JT}.
 
 %\tc{red}{el item 1 reemplazaria, de ser correcto, a (1) en la hoja 1 escaneada (hoja 4 del .pdf) que esta despues de la hoja 27 impresa con correcciones. Mi unica duda es porque en \cite{JT}, VI.1 Proposition 1 p.43 asumen que tienen un topos $\cc{E}$ definido sobre $\cc{S}$, no solo un morfismo de topos $\cc{S} \mr{} \cc{E}$, pero despues en la prueba de \cite{JT}, VI.3 Proposition 1 p.49. usan esta prop para un subtopos $\cc{E}' \mr{} \cc{E}$}
 
 \item We omit to write $i$ and consider a sheaf $H \in shP$ as a presheaf $P^{op} \mr{H} \Sat$ that is a sheaf, i.e. that believes covers are epimorphic families. A sup-lattice structure for $H \in shP$ corresponds in this way to a sheaf $P^{op} \mr{H} s \ell$ satisfying the following two conditions (these are the conditions 1) and 2) in \cite[VI.2 Proposition 1 p.46]{JT} for the particular case of a locale):
 
 \begin{itemize}
  \item[a)] For each $p' \leq p$ in $P$, the $s \ell$-morphism $H_{p'}^p: H(p) \mr{} H(p')$, that we will denote by $\rho_{p'}^p$, has a left adjoint $\Sigma_{p'}^p$. 
  \item[b)] For each $q \in P$, $p \leq q$, $p' \leq q$, we have $\rho_{p'}^q \Sigma_p^q = \Sigma_{p \wedge p'}^{p'} \rho_{p \wedge p'}^p$.
 \end{itemize}

 Sup-lattice morphisms correspond to natural transformations that commute with the $\Sigma$'s. 
 
  When interpreted as a presheaf, $\Omega_P(p) = P_{\leq p} := \{ q \in P | q \leq p\}$, with $\rho_q^p = (-) \wedge q$ and $\Sigma_q^p$ the inclusion. The unit $1 \mr{1} \Omega_P$ is given by $1_p = p$.
 
 \item If $H \in s \ell(\cc{S}^{P^{op}})$%(in particular if $H \in s \ell(shP)$)
 , the supremum of a sub-presheaf $S \mr{} H$ can be computed in $\cc{S}^{P^{op}}$ as the global section $1 \mr{s} H$, $s_q = \displaystyle \bigvee_{\stackrel{p \leq q}{x \in S(p)}} \Sigma_p^q x$ (see   \mbox{\cite[VI.2 proof of proposition 1, p.47]{JT}).}
 
 \item Locales $H$ in $shP$ correspond to sheaves $P^{op} \mr{H} Loc$ such that, in addition to the $s \ell$ \mbox{condition,} satisfy Frobenius reciprocity: if $q \leq p$, $x \in H(p)$, $y \in H(q)$, then   \mbox{$\Sigma^p_{q} (\rho^p_{q} (x) \wedge y) = x \wedge \Sigma^p_{q} y$.}
 
 Note that since $\rho \Sigma = id$, Frobenius implies that if $q \leq p$, $x,y \in H(q)$ then   $\Sigma^p_{q} (x \wedge y) = \Sigma^p_{q} (\rho^p_{q} \Sigma^p_{q} (x) \wedge y) = \Sigma^p_{q} x \wedge \Sigma^p_{q} y$, in other words that $\Sigma$ commutes with 
$\wedge$.
 
 \item The direct image functor establishes an equivalence of tensor categories   $(s \ell(sh P),\otimes) \mr{\gamma_*} (P$-$Mod,\otimes_P)$ (\cite[VI.3 Proposition 1 p.49]{JT}). Given $H \in s \ell(sh P)$ and $p \in P$ multiplication by $p$ in $\gamma_* H = H(1)$ is given by $\Sigma_p^1 \rho_p^1$ (\cite[VI.2 Proposition 3 p.47]{JT}). 
 
 The pseudoinverse of this equivalence is $P$-$Mod \mr{\widetilde{(-)}} s \ell(shP)$, $M \mapsto \widetilde{M}$ defined by the formula \mbox{$\widetilde{M}(p) = \{x \in M \ | \ p \cdot x = x\}$} for $p \in P$.
 
 \item The equivalence of item 5 restricts to an equivalence $Loc(sh P) \mr{\gamma_*} P$-$Loc$, where the last category is the category of locale extensions $P \mr{} M$ (\cite[VI.3 Proposition 2 p.51]{JT}).
 %applying $\gamma_*$ to $\Omega_P \otimes H \mr{\cong} H$ we obtain the $P$-module structure $P \
 %When we interpret $H \in s \ell(shP)$ as a presheaf, the corresponding $P$-module is given by $H(1)$ (and this restricts to locales). The tensor product $\otimes$ of sup-lattices corresponds to the tensor product $\otimes_P$ of $P$-modules.

 \end{enumerate}

\end{sinnadastandard}

\begin{sinnadastandard} \label{relativizarellrelations} 
We will now consider relations in the topos $shP$ and prove that $\ell$-functions $X \times Y \mr{} P$ in $\Sat$ correspond to arrows $\gamma^*X \mr{} \gamma^*Y$ of the topos $shP$. 
%Let $P \in Loc(\Eat)$. 

\noindent The unique locale morphism $\Omega \mr{\gamma} P$ induces a topoi morphism $\xymatrix{\Sat \cong sh\Omega \quad \ar@/^2ex/[r]^{\gamma^*} \ar@{}[r]|{\bot} & sh P \ar@/^2ex/[l]^{\gamma_*} }$. Let's denote by $\Omega_P$ the subobject classifier of $sh P$. Since $\gamma_* \Omega_P = P$, we have the correspondence

\begin{center}
 \begin{tabular}{c}
  $X \times Y \mr{\lambda} P$ an $\ell$-relation \\ \hline \noalign{\smallskip}
  $\gamma^*Y \times \gamma^*X \mr{\varphi} \Omega_P$ a relation in $sh P$
 \end{tabular}
\end{center}

\begin{\prop} \label{prop:relativizarellrelations}
In this correspondence, $\lambda$ is an $\ell$-function if and only if $\varphi$ is a function. Then, by proposition \ref{functionconallegories}, 
%preserves the notion of ($\ell$-)function the axioms $ed)$, $uv)$, $su)$ and $in)$, 
$\ell$-functions correspond to arrows $\gamma^*X \mr{\varphi} \gamma^*Y$ in the topos $shP$, and by remark \ref{simetria} $\ell$-bijections correspond to isomorphisms.
\end{\prop}

\begin{proof}
 
Consider the extension $\widetilde{\lambda}$ of $\lambda$ as a $P$-module, and $\widetilde{\varphi}$ of $\varphi$ as a $\Omega_P$-module, i.e. in $s \ell(shP)$ (we add the $\widetilde{(-)}$ to avoid confusion). 
%Recall from \cite{JT}, VI.3 Proposition 1 p.49, that $\gamma_*$ extends as an equivalence of categories $s \ell(shP) \mr{\gamma_*} P$-$Mod$. 
We have the binatural correspondence between $\widetilde{\lambda}$ and $\widetilde{\varphi}$:

\begin{center}
 \begin{tabular}{c}
  $\xymatrix@C=3.5pc{  X \times Y \ar@/^4ex/[rr]^{\lambda}  \ar[r]_{\{-\}_P \otimes \{-\}_P }  & P^X \stackrel[P]{}{\otimes} P^Y \ar[r]_>>>>>>>>{\widetilde{\lambda}} & P }$   \\ \hline \noalign{\smallskip}
  $\xymatrix@C=3pc{  \gamma^* X \times  \gamma^*Y  \ar@/_4ex/[rr]_{\varphi} \ar[r]^{\{-\} \otimes \{-\}}  & \Omega_P^{\gamma^*X} \otimes \Omega_P^{\gamma^*Y} \ar[r]^>>>>>>>>{\widetilde{\varphi}} & \Omega_P} $
 \end{tabular}
\end{center}

given by the adjunction $\gamma^* \dashv \gamma_*$. But $\gamma_*(\Omega_P^{\gamma^*X}) = (\gamma_* \Omega_P)^X = P^X$ and $\gamma_*$ is a tensor functor, then $\gamma_*(\Omega_P^{\gamma^*X} \otimes \Omega_P^{\gamma^*Y}) = P^X \stackrel[P]{}{\otimes} P^Y$ and $\gamma_*(\ \widetilde{\varphi} \ ) = \widetilde{\lambda}$.

Now, the inverse images $\lambda^*,\varphi^*$ are constructed from $\widetilde{\lambda}, \widetilde{\varphi}$ using the self-duality of $\Omega_P^{\gamma_*X}, P^X$ (see proposition  \ref{prop:imageninversausandoautodualparaG}), and since $\gamma^*$ is a tensor functor that maps $\Omega_P^{\gamma_*X} \mapsto P^X$ we can take $\eta$, $\eps$ of the self-duality of $P^X$ as $\gamma^*(\eta')$, $\gamma^*(\eps')$, where $\eta'$, $\eps'$ are the self-duality structure of $\Omega_P^{\gamma_*X}$. It follows that $\gamma_*(\varphi^*) = \lambda^*$, then by \ref{enumeratedeJT} (item 6) we obtain that $\varphi^*$ is a locale morphism if and only if $\lambda^*$ is so. Proposition \ref{edyuvdafunctionG} finishes the proof.
\end{proof}

\begin{remark} \label{remark:relativizarellrelations}
 Though we will not need the result with this generality, we note that proposition \ref{prop:relativizarellrelations} 
 %(and therefore corollary \ref{conodeellrelationsesnattransf}) 
 also holds for an arbitrary topos over $\Sat$, $\cc{H} \mr{h} \Sat$, in place of $shP$. Consider $P = h_* \Omega_{\cc{H}}$, the hyperconnected factorization $\vcenter{\xymatrix{\cc{H} \ar[rr]^q \ar[rd]_{h} && shP \ar[dl]^{\gamma} \\ & \cc{S} }}$ (see \cite[VI. 5 p.54]{JT}) and recall that $q_* \Omega_{\cc{H}} \cong \Omega_{P}$ and that the counit map $q^* q_* \Omega_{\cc{H}} \mr{} \Omega_{\cc{H}}$ is, up to isomorphism, the comparison morphism $q^* \Omega_{P} \mr{} \Omega_{\cc{H}}$ of remark \ref{caractsubobjeto} (see \cite[1.5, 1.6]{JohnstoneFactorization}). The previous results imply that the correspondence between relations $X \times Y \mr{} \Omega_P$ and relations $q^* X \times q^* Y \mr{} \Omega_{\cc{H}}$ given by the adjunction $q^* \dashv q_*$ is simply the correspondence between a relation \mbox{$R \hookrightarrow X \times Y$} in $shP$ and 
its image by the full 
and 
faithful morphism $q^*$, therefore functions correspond to functions. Since by proposition \ref{prop:relativizarellrelations} we know that the same happens for $shP \mr{\gamma} \cc{S}$, by composing the adjunctions it follows for $\cc{H} \mr{h} \cc{S}$.
 %for an hpyerconnected morphism $\cc{H} \mr{\gamma} \cc{S}$  
\end{remark}

\end{sinnadastandard}

\begin{notation} \label{abusosvarios}
 Let $p \in P$, we identify by Yoneda $p$ with the representable presheaf $p = [-,p]$. If $q \in P$, then $[q,p] = \llbracket q \! \leq \! p \rrbracket \in \Omega$. In particular if $a \leq p$ then $[a,p] = 1$. %In this case we omit to write the element of $[a,p]$, in particular we omit to write $id \in [p,p]$.
Given $X \in shP$, and $a \leq p \in P$, $x \in X(p)$, consider $X(p) \mr{X_a^p} X(a)$ in $\Sat$. We will denote $x|_a := X_a^p(x)$. % (the value of the functor $X$ in the element of $[q,p]$ which we omit to write)
% for $p,q \in P$, we denote elements of $[q,p]$ as $f_q^p \in [q,p]$.
% 
% Since $[q,p] = \llbracket q \! \leq \! p \rrbracket \in \Omega$, if $q \leq p$ then $[q,p]=1$. In this case we omit to write the element $f_q^p$, in particular $f_p^p = id$.
% 
% $.
\end{notation}

%\begin{sinnadastandard} We describe now the sup-lattice (and locale) structure of $\Omega_P$, the subobject classifier of $shP$, and of $G^X$, if $X \in shP$, $G \in s \ell(shP)$ (or $G \in Loc(shP)$).

%\begin{remark} \label{1deOmegaP}

%\end{remark}

Consider now a sup-lattice $H$ in $shP$, we describe now the sup-lattice structure of the exponential $H^X$. Recall that as a presheaf, $H^X(p) = [p \times X, H]$, and note that if $\theta \in H^X(p)$, and $a \leq p$, by notation \ref{abusosvarios} we have $X(a) \mr{\theta_a} H(a)$.

\begin{sinnadastandard} \label{defderhopq} 
$\theta$ corresponds to \mbox{$X \mr{\hat{\theta}} H^p$, $X(q) \mr{\hat{\theta}_q} H^p(q) \cong [q \wedge p, H] \cong H(q \wedge p)$} % by Yoneda lemma, %Following $\theta$ through this correspondences, it follows that $X(q) \mr{\hat{\theta}_q} H(q \wedge p)$ is defined by 
by the exponential law,
under this correspondence we have $\hat{\theta}_q(x) = \theta_{q \wedge p} (x|_{q \wedge p}).$ %= \theta_{q \wedge p} (id,x|_{q \wedge p}) \stackrel{\ref{abusosvarios}}{=}
This implies that $\theta \in H^X(p)$ is completely characterized by its components $\theta_a$ for $a \leq p$. From now on we make this identification, i.e. we consider $\theta \in H^X(p)$ as a family \mbox{$\{X(a) \mr{\theta_a} H(a)\}_{a \leq p}$} natural in $a$. Via this identification, if $q \leq p$, the morphism \mbox{$H^X(p) \mr{\rho_q^p} H^X(q)$} is given by \mbox{$\{X(a) \mr{\theta_a} H(a)\}_{a \leq p} \mapsto \{X(a) \mr{\theta_a} H(a)\}_{a \leq q}$.}% (the restriction of $\theta$ to $G_{\leq q}$).
\end{sinnadastandard}

\begin{lemma} \label{exponencialenshP}
 Let $X \in shP$, $H \in s \ell(shP)$. Then the sup-lattice structure of $H^X$ is given as follows: 
 \begin{enumerate}
  \item For each $p \in P$, $H^X(p) = \{ \{X(a) \mr{\theta_a} H(a)\}_{a \leq p} \hbox{ natural in a}\}$ is a sup-lattice pointwise, i.e. for a family $\{\theta_i\}_{i \in I}$ in $H^X(p)$, for $a \leq p$,
  $\left( \bigvee_{i \in I} \theta_i \right) _a = \bigvee_{i \in I} {(\theta_i)}_a $
  \item If $q \leq p$ the morphisms $\xymatrix{H^X(q) \adjuntos[\Sigma_q^p]{\rho_q^p} & H^X(p)}$ are defined by the formulae (for $\theta \in H^X(p)$, $\xi \in H^X(q)$):
%   diagrams 
%     $$   \xymatrix{[a,p] \times X(a) \ar[r]^>>>>{\theta_{a}} & G(a) \\
% 		   [a,q] \times X(a) \ar[u] \ar[ru]_{(\rho_q^p \theta)_{a} }  }
% 	  \xymatrix{[a \wedge q,q] \times X(a \wedge q) \ar[r]^>>>>{\xi_{a \wedge q}} & G(a \wedge q) \ar[d]^{\Sigma_{a \wedge q}^{a} } \\
% 		    [a,p] \times X(a) \ar[u]^{((-) \wedge q) \times X_{a \wedge q}^{a}} \ar[r]_>>>>{(\Sigma_q^p \xi)_{a}} & G(a)   }   $$
%  
%  We abuse the notation by omitting the symbolic element of $[a,p]$ if $a \leq p$. Then the diagrams above yield the 
%  
%  formulae: 
%  
$\quad \quad \quad F\rho) \quad \quad (\ \rho_q^p \ \theta)_{a} (x) = \theta_{a} (x)$ for $x \in X(a)$, $a \leq q$.
 
 \noindent $\quad \quad \quad \quad \quad \quad \quad \quad F\Sigma) \quad \quad (\Sigma_q^p \ \xi)_{a} (x) = \Sigma_{a \wedge q}^{a} \ \xi_{a \wedge q} \ (x|_{a \wedge q})$ for $x \in X(a)$, $a \leq p$.
\end{enumerate}
 \end{lemma}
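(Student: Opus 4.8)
The plan is to rely on the identification already set up in \ref{defderhopq}, namely that an element of $H^X(p)$ is a natural family $\{X(a) \mr{\theta_a} H(a)\}_{a \leq p}$ and that the presheaf restriction $H^X(p) \mr{\rho_q^p} H^X(q)$ is the truncation $\{\theta_a\}_{a \leq p} \mapsto \{\theta_a\}_{a \leq q}$. This truncation is exactly the formula $F\rho)$, so the description of $\rho_q^p$ in part (2) requires no separate argument. For part (1), I would define the join of a family $\{\theta_i\}_{i \in I}$ in $H^X(p)$ pointwise, $(\bigvee_i \theta_i)_a := \bigvee_i (\theta_i)_a$, using that each $H(a)$ is a sup-lattice. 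The only thing to verify is that this pointwise join is again \emph{natural}, hence really lies in $H^X(p)$: for $a' \leq a \leq p$ and $x \in X(a)$ one computes $\rho^a_{a'}\big(\bigvee_i (\theta_i)_a(x)\big) = \bigvee_i \rho^a_{a'}\big((\theta_i)_a(x)\big) = \bigvee_i (\theta_i)_{a'}(x|_{a'})$, the first equality because the transition maps of $H$ are $s\ell$-morphisms and the second by naturality of each $\theta_i$. The same computation shows $\rho^p_q$ preserves these joins, so $H^X$ is a sheaf valued in $s\ell$ with pointwise suprema, as claimed.

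It then remains to treat $\Sigma_q^p$. First I would check that $F\Sigma)$ really defines an element $\Sigma^p_q\xi \in H^X(p)$, i.e. that the family $(\Sigma^p_q\xi)_a(x) = \Sigma^a_{a \wedge q}\,\xi_{a \wedge q}(x|_{a \wedge q})$, $a \leq p$, is natural. This is the crux of the proof and the step where the Beck--Chevalley identity (condition b of item 2 of \ref{enumeratedeJT}) is indispensable. Given $a' \leq a \leq p$ and $x \in X(a)$, applying that identity with top $a$ and subobjects $a \wedge q$ and $a'$ — whose meet is $(a \wedge q)\wedge a' = a' \wedge q$ since $a' \leq a$ — yields $\rho^a_{a'}\Sigma^a_{a\wedge q} = \Sigma^{a'}_{a'\wedge q}\rho^{a \wedge q}_{a'\wedge q}$. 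Evaluating at $\xi_{a \wedge q}(x|_{a\wedge q})$, then using naturality of $\xi$ to pass $\rho^{a\wedge q}_{a'\wedge q}$ through $\xi$, and finally functoriality of the restriction maps of $X$ to rewrite $X^{a\wedge q}_{a'\wedge q}(x|_{a\wedge q}) = (x|_{a'})|_{a'\wedge q}$, produces exactly $(\Sigma^p_q\xi)_{a'}(x|_{a'})$. Hence $\Sigma^p_q\xi$ is natural and well defined.

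Finally I would verify the adjunction $\Sigma^p_q \dashv \rho^p_q$. Since the order on each $H^X(\cdot)$ is pointwise, this reduces to an equivalence of pointwise inequalities, proved from the local adjunctions $\Sigma^a_{a\wedge q} \dashv \rho^a_{a\wedge q}$ of $H$. For the direction $\xi \leq \rho^p_q\theta \Rightarrow \Sigma^p_q\xi \leq \theta$: for $a \leq p$ and $x \in X(a)$ the desired $\Sigma^a_{a\wedge q}\xi_{a\wedge q}(x|_{a\wedge q}) \leq \theta_a(x)$ transposes, via the local adjunction, to $\xi_{a\wedge q}(x|_{a\wedge q}) \leq \rho^a_{a\wedge q}\theta_a(x) = \theta_{a\wedge q}(x|_{a\wedge q})$, the last equality being naturality of $\theta$; this holds by hypothesis because $a \wedge q \leq q$. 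The converse is immediate by evaluating the hypothesis at indices $a \leq q$, where $a \wedge q = a$ and $\Sigma^a_a = \mathrm{id}$, so that $(\Sigma^p_q\xi)_a = \xi_a$. This establishes both formulae and the adjunction; the Frobenius/Beck--Chevalley condition for the $\Sigma^p_q$ of $H^X$ itself, which reconfirms $H^X \in s\ell(shP)$, follows by the same kind of index bookkeeping as in the naturality check — the only genuinely delicate point of the argument.
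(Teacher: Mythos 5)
Your proof is correct and follows essentially the same route as the paper: the formula $F\rho)$ is read off from the identification in \ref{defderhopq}, and the adjunction $\Sigma_q^p \dashv \rho_q^p$ is reduced pointwise to the local adjunctions $\Sigma_{a\wedge q}^{a} \dashv \rho_{a\wedge q}^{a}$ of $H$ together with naturality of $\theta$, exactly as in the paper. The only difference is that you additionally verify the well-definedness steps (naturality of the pointwise join and, via condition b) of \ref{enumeratedeJT} item 2, naturality of $\Sigma_q^p\xi$) which the paper leaves implicit; these checks are carried out correctly.
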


\begin{proof}
 It is immediate from \ref{defderhopq} that $\rho_q^p$ satisfies $F\rho)$.
 
 We have to prove that if $\Sigma_q^p$ is defined by $F\Sigma)$ then the adjunction holds, i.e. that 
 
 $$A: \Sigma_q^p \ \xi \leq \theta \quad \quad \hbox{ if and only if } \quad \quad B: \xi \leq \rho_q^p \ \theta.$$
 
 \noindent By $F\Sigma)$, $A$ means that for each $a \leq p$, for each $x \in X(a)$ we have $\Sigma_{a \wedge q}^{a} \ \xi_{a \wedge q} \ (x|_{a \wedge q}) \leq \theta_{a}(x)$.% in $G(a)$.
 
 \noindent By $F\rho)$, $B$ means that for each $a \leq q$, for each $x \in X(a)$ we have $\xi_{a}(x) \leq \theta_{a}(x)$. 
 
 Then $A$ implies $B$ since if $a \leq q$ then $a \wedge q = a$, and $B$ implies $A$ since for each $a \leq p$, for each $x \in X(a)$, by the adjunction $\Sigma \dashv \rho$ for $H$, $\Sigma_{a \wedge q}^{a} \ \xi_{a \wedge q} \ (x|_{a \wedge q}) \leq \theta_{a}(x)$ holds in $H(a)$ if and only if $\xi_{a \wedge q} \ (x|_{a \wedge q}) \leq \rho_{a \wedge q}^{a} \  \theta_{a}(x)$ holds in $H(a \wedge q)$, but this inequality is implied by $B$ since by naturality of $\theta$ we have $\rho_{a \wedge q}^{a} \ \theta_{a}(x) = \theta_{a \wedge q} \ (x|_{a \wedge q})$.
\end{proof}

% \begin{\prop} 
%  If $X \in shP$, $H \in Loc(shP)$, then the sup-lattice structure of $H^X$ defined above satisfies Frobenius reciprocity as in \ref{enumeratedeJT} item 4, yielding in this way a locale structure for $H^X$.
% %  ve the unique locale morphism $\Omega_P \mr{} G$ which we omit when we think of the elements of $\Omega_P$ in $G$. If we also omit to write the inclusion, the commutative diagram $\xymatrix{ \Omega_P(p) \ar[r] \ar[d]^{\iota} & G(p) \ar[d]_{\Sigma_p^1} \\
% % 								  \Omega_P(1) \ar[r] & G(1)}$ 
% %  induces us to also omit $\Sigma_p^1$ when applied to elements of $\Omega_P$ since $\Sigma_p^1(q) = q$ for $q \leq p$.
% %  
% \end{\prop}
% 
% \begin{proof}
% For $q \leq p$, $\theta \in H^X(p)$, $\xi \in H^X(q)$, we have to check  $\Sigma^p_{q} (\rho^p_{q} (\theta) \wedge \xi) = \theta \wedge \Sigma^p_{q} \xi$. By $F\rho)$ and $F\Sigma)$ above, it suffices to check that for each $a \leq p$, $x \in X(a)$,
% 
% $$\Sigma_{a \wedge q}^a (\theta_{a \wedge q} (x|_{a \wedge q}) \wedge \xi_{a \wedge q}(x|_{a \wedge q})) = \theta_a(x) \wedge \Sigma_{a \wedge q}^a \xi_{a \wedge q}(x|_{a \wedge q}),$$
% 
% which follows from Frobenius reciprocity (for $H$) with $x = \theta_a(x)$, $y = \xi_{a \wedge q}(x|_{a \wedge q})$.
% \end{proof}

\begin{remark} \label{1deGalaX}
If $X \in shP$, $H \in Loc(shP)$, the unit $1 \in H^X$ is a global section given by the arrow $X \mr{} 1 \mr{1} \Omega_P \mr{} H$, which by \ref{enumeratedeJT} item 2 maps $1_p(x) = p$ for each $p \in P$, $x \in X(p)$.
\end{remark}

%\end{sinnadastandard}

%that shouldn't be lost
\begin{sinnadastandard} For the remainder of this section, the main idea (to have in mind during the computations) is to consider some of the situations defined in section \ref{relintopos} for the topos $shP$, and to ``transfer'' them to the base topos $\Sat$. In particular we will transfer the four axioms for an $\ell$-relation in $shP$ (which are expressed in the internal language of the topos $shP$) to equivalent formulae in the language of $\Sat$ (proposition \ref{propaxiomparamodulos}), and also transfer the self-duality of $\Omega_P^X$ in $s \ell(shP)$ to an self-duality of $P$-modules (proposition \ref{formulaeetaeps}). These results will be used in section \ref{sec:Cmd=Rel}. 
\end{sinnadastandard}

 Consider $X \in shP$, $H \in s \ell(shP)$ and an arrow $X \mr{\alpha} H$. We want to compute the internal supremum $\displaystyle \bigvee_{x \in X} \alpha(x) \in H$. This supremum is the supremum of the subsheaf of $H$ given by the image of $\alpha$ in $shP$, which is computed as $\#S \hookrightarrow H$, where $S$ is the sub-presheaf of $H$ given by $S(p) = \{\alpha_p(x) \ | \ x \in X(p)\}$. Now, by \ref{enumeratedeJT} item 1 (or, it can be easily verified), this supremum coincides with the supremum of the sub-presheaf $S \hookrightarrow H$, which by \ref{enumeratedeJT} item 3 is computed as the global section $1 \mr{s} H$, $s_q = \displaystyle \bigvee_{\stackrel{p \leq q}{x \in X(p)}} \Sigma_p^q \alpha_p(x)$. Applying the equivalence $\gamma_*$ of \ref{enumeratedeJT}, item 5 it follows:
 
 % when $G$ is interpreted as a presheaf as above. , therefore the results of \cite{JT}, VI.2, p.46-47 (proof of proposition 1) can be applied, computing the suprema as an external (i.e. in $\Sat$) global section $1 \mr{s} G$, given by $s_q = \displaystyle \bigvee_{\stackrel{p \leq q}{x \in X(p)}} \Sigma_p^q \alpha_p(x)$ for each $q \in P$. Applying $\gamma_*$, we obtain the following result that we will refer to later:
 
\begin{\prop} \label{calcsup} Let $X \mr{\alpha} H$ as above. Then at the level of \mbox{$P$-modules,} the element $s \in H(1)$ corresponding to the internal supremum $\displaystyle \bigvee_{x \in X} \alpha(x) \;\;$ is $ \; \displaystyle \bigvee_{\stackrel{p \in P}{x \in X(p)}} \Sigma_p^1 \alpha_p(x) $. \qed
  %We will compute the element of $G(1)$ corresponding to this suprema. We obtain this way that the element of $G(1)$ corresponding 
  %$X \in shP$, $G \in s \ell(shP)$ and an arrow 
\end{\prop}

\begin{definition} \label{defdeXd}
 Given $X \in shP$, recall that we denote by $\Omega_P$ the object classifier of $shP$ and consider the sup-lattice in $shP$, $\Omega_P^X$ (which is also a locale). We will denote by $X_d$ the $P$-module (which is also a locale extension $P \mr{} X_d$) corresponding to $\Omega_P^X$, in other words \mbox{$X_d := \gamma_*(\Omega_P^X) = \Omega_P^X(1)$.}
  Given $p \in P$, $x \in X(p)$ we define the element $\delta_x := \Sigma_p^1 \{x\}_p \in X_d$.
\end{definition}

Consider now $\theta \in X_d$, that is $\theta \in \Omega_P^X(1)$, i.e. $X \mr{\theta} \Omega_P$ in $shP$. Let $\alpha$ be $X \mr{\theta \cdot \{-\}} \Omega_P^X$, \mbox{$\alpha(x) = \theta(x) \cdot \{x\}$.} Then proposition \ref{formulainternaparaG} states that $\theta = \displaystyle \bigvee_{x \in X} \alpha(x)$ (this is internally in $shP$). Applying proposition \ref{calcsup} we compute in $X_d$:

$$\theta = \bigvee_{\stackrel{p \in P}{x \in X(p)}} \Sigma_p^1 (\theta_p(x) \cdot \{x\}_p) = \bigvee_{\stackrel{p \in P}{x \in X(p)}} \theta_p(x) \cdot \Sigma_p^1 \{x\}_p = \bigvee_{\stackrel{p \in P}{x \in X(p)}} \theta_p(x) \cdot \delta_x.$$

We have proved the following:

\begin{\prop} \label{formula}
The family $\{\delta_x\}_{p \in P, x \in X(p)}$ generates $X_d$ as a $P$-module, and furthermore,
for each $\theta \in X_d$, we have $\theta = \displaystyle \bigvee_{\stackrel{p \in P}{x \in X(p)}} \theta_p(x) \cdot \delta_x$. \qed
\end{\prop}
% \begin{proof}
% $\theta$ yields a (global) element of $\Omega_P^X$, then by proposition \ref{formulainterna}, internally in $shP$, $\theta = \displaystyle \bigvee_{x \in X} \theta(x) \cdot \{x\}$. Applying remark \ref{calcsup} it follows that in $X_d$, $$\theta = \bigvee_{\stackrel{p \in P}{x \in X(p)}} \Sigma_p^1 (\theta_p(x) \cdot \{x\}_p) = \bigvee_{\stackrel{p \in P}{x \in X(p)}} \theta_p(x) \cdot \Sigma_p^1 \{x\}_p = \bigvee_{\stackrel{p \in P}{x \in X(p)}} \theta_p(x) \cdot \delta_x.$$
% \end{proof}

%Recall from \cite{JT}, VI.3, p.49 that 
%, $s \ell(shG) = G$-$Mod$ and $Loc(shG) = G$-$Loc$, locales over $G$ (in $\Sat$).% Then we have $P \mr{\rho} G$.
%Therefore, $G \in s \ell(shP)$ (resp. $G \in Loc(shP)$) corresponds to $G \in P$-$Mod$ (resp $G \in P$-$Loc$). 

%We want to explicitly compute $\rho^*X$ and $\rho^*Y$. 

\begin{remark} \label{pavada}
 Given $q \leq p \in P$, $x \in X(p)$, by naturality of $X \mr{\{-\}} \Omega_P^X$ we have $\{x|_q\}_q = \rho_q^p \{x\}_p$.
\end{remark}

\begin{lemma} \label{restringirlosdelta}
 For $p,q \in P$, $x \in X(p)$, we have $q \cdot \delta_x = \delta_{x|_{p \wedge q}}$. In particular $p \cdot \delta_x = \delta_x$.
\end{lemma}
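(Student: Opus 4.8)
The plan is to unfold the $P$-module structure on $X_d = \Omega_P^X(1)$ and reduce the whole statement to the two structural identities already recorded in \ref{enumeratedeJT} (item 2, condition (b)) and in Remark \ref{pavada}, together with functoriality of the restrictions. First I would recall from \ref{enumeratedeJT} item 5 that multiplication by $q \in P$ on the $P$-module $\gamma_* H = H(1)$ is the composite $\Sigma_q^1 \rho_q^1$. Applying this with $H = \Omega_P^X$ to the element $\delta_x = \Sigma_p^1 \{x\}_p$ gives
$$ q \cdot \delta_x = \Sigma_q^1\, \rho_q^1\, \Sigma_p^1\, \{x\}_p, $$
so the entire computation hinges on simplifying the middle composite $\rho_q^1 \Sigma_p^1$.

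Next I would invoke the commutation relation (b) of \ref{enumeratedeJT} item 2, which in the instance $r = 1$, with the two lower indices taken to be $p$ and $q$, reads $\rho_q^1 \Sigma_p^1 = \Sigma_{p \wedge q}^q\, \rho_{p \wedge q}^p$. Feeding $\{x\}_p$ into this and using Remark \ref{pavada} — naturality of the singleton $X \mr{\{-\}} \Omega_P^X$, which yields $\rho_{p \wedge q}^p \{x\}_p = \{x|_{p \wedge q}\}_{p \wedge q}$ since $p \wedge q \leq p$ — the composite becomes $\Sigma_{p \wedge q}^q \{x|_{p \wedge q}\}_{p \wedge q}$. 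Substituting back and using that the left adjoints $\Sigma$ compose along $p \wedge q \leq q \leq 1$ (because the restrictions $\rho$ are functorial, hence so are their left adjoints, giving $\Sigma_q^1 \Sigma_{p \wedge q}^q = \Sigma_{p \wedge q}^1$), I arrive at
$$ q \cdot \delta_x = \Sigma_{p \wedge q}^1 \{x|_{p \wedge q}\}_{p \wedge q} = \delta_{x|_{p \wedge q}}, $$
the last equality being exactly the definition of $\delta$ (Definition \ref{defdeXd}) applied to the element $x|_{p \wedge q} \in X(p \wedge q)$.

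Finally, the ``in particular'' clause follows by specializing $q = p$: then $p \wedge q = p$ and $x|_p = X_p^p(x) = x$ since $X_p^p = \mathrm{id}$, whence $p \cdot \delta_x = \delta_{x|_p} = \delta_x$. I do not expect any genuine obstacle here: every step is a direct instance of a formula established earlier. The only point that needs care is the correct index-matching in the Beck--Chevalley-type identity (b) — making sure the roles of $p$, $q$ and the ambient level $1$ are assigned consistently — and the bookkeeping of which $\Sigma$ and $\rho$ live over which object; once those indices are pinned down the computation collapses to the displayed chain.
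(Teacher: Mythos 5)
Your proof is correct. The route differs slightly from the paper's: you unfold $q\cdot\delta_x=\Sigma_q^1\rho_q^1\Sigma_p^1\{x\}_p$ directly and discharge the middle composite with the Beck--Chevalley-type identity 2(b) of \ref{enumeratedeJT} ($\rho_q^1\Sigma_p^1=\Sigma_{p\wedge q}^q\rho_{p\wedge q}^p$), then recombine the $\Sigma$'s by functoriality; the ``in particular'' clause then falls out as the special case $q=p$. The paper instead proves $p\cdot\delta_x=\delta_x$ \emph{first} (using only the retraction $\rho_p^1\Sigma_p^1=\mathrm{id}$), rewrites $q\cdot\delta_x=q\cdot p\cdot\delta_x=(p\wedge q)\cdot\delta_x$ using multiplicativity of the $P$-action, and then factors $\rho_{p\wedge q}^1=\rho_{p\wedge q}^p\rho_p^1$ so that the same retraction identity does all the work --- it never needs the full strength of 2(b) here (that identity is saved for Lemma \ref{multporr}). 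Both arguments are one-line computations resting on Remark \ref{pavada} for the final step; yours trades the preliminary reduction to $p\wedge q$ for a single application of the stronger commutation relation, and the index bookkeeping you flag as the only delicate point is indeed handled consistently.
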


\begin{proof}
 Recall that multiplication by $a \in P$ is given by $\Sigma_a^1 \ \rho_a^1$, and that $\rho_a^1 \ \Sigma_a^1 = id$. Then   \mbox{$p \cdot \delta_x = \Sigma_p^1 \ \rho_p^1 \ \Sigma_p^1 \ \{x\}_p = \Sigma_p^1 \ \{x\}_p = \delta_x$,} and
 
 \flushleft $q \cdot \delta_x = q \cdot p \cdot \delta_x = (p \wedge q) \cdot \delta_x = \Sigma_{p \wedge q}^1 \ \rho_{p \wedge q}^1 \ \Sigma_p^1 \ \{x\}_p = $
 
 \hfill $ = \Sigma_{p \wedge q}^1 \ \rho_{p \wedge q}^p \ \rho_p^1 \ \Sigma_p^1 \ \{x\}_p =  \Sigma_{p \wedge q}^1 \ \rho_{p \wedge q}^p \ \{x\}_p \stackrel{\ref{pavada}}{=} \Sigma_{p \wedge q}^1 \ \{x|_{p \wedge q}\}_{p \wedge q} = \delta_{x|_{p \wedge q}}. $ 
 \end{proof}

\begin{corollary} \label{pyqsaltan}
 For $X,Y \in shP$, $p,q \in P$, $x \in X(p)$, $y \in Y(q)$, we have \mbox{$\delta_x \otimes \delta_y = \delta_{x|_{p \wedge q}} \otimes \delta_{y|_{p \wedge q}}$}.% in $X_d \otimes_P Y_d$.
\end{corollary}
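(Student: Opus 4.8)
The statement is a direct consequence of Lemma~\ref{restringirlosdelta} together with the fact that the relevant tensor product is taken over $P$. Recall from Definition~\ref{defdeXd} that $X_d$ and $Y_d$ are $P$-modules and that $\delta_x = \Sigma_p^1\{x\}_p$; here $\otimes$ denotes $\otimes_P$, so the defining relations of the tensor product of $P$-modules let any scalar $a \in P$ pass freely across the tensor sign, $(a\cdot u)\otimes v = u\otimes(a\cdot v)$. Since $P$ is a locale its multiplication is $\wedge$, which is idempotent, and this idempotence is exactly what will let me install the \emph{same} scalar $p\wedge q$ on both tensor factors.

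The plan is to start from $\delta_x\otimes\delta_y$ and rewrite it as $\delta_{x|_{p\wedge q}}\otimes\delta_{y|_{p\wedge q}}$ through a short chain of equalities. First, using the \emph{in particular} part of Lemma~\ref{restringirlosdelta} (namely $p\cdot\delta_x=\delta_x$ and $q\cdot\delta_y=\delta_y$), I replace $\delta_x\otimes\delta_y$ by $(p\cdot\delta_x)\otimes(q\cdot\delta_y)$. Moving the scalar $q$ from the right factor across to the left and using $q\wedge p = p\wedge q$ turns this into $((p\wedge q)\cdot\delta_x)\otimes\delta_y$. Next, exploiting idempotence to write $(p\wedge q)\cdot\delta_x = (p\wedge q)\cdot\big((p\wedge q)\cdot\delta_x\big)$ and pushing one copy of $p\wedge q$ back across the tensor, I reach $((p\wedge q)\cdot\delta_x)\otimes((p\wedge q)\cdot\delta_y)$. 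Finally I identify each factor by the main part of Lemma~\ref{restringirlosdelta}: $(p\wedge q)\cdot\delta_x = q\cdot(p\cdot\delta_x) = q\cdot\delta_x = \delta_{x|_{p\wedge q}}$, and symmetrically $(p\wedge q)\cdot\delta_y = p\cdot(q\cdot\delta_y) = p\cdot\delta_y = \delta_{y|_{p\wedge q}}$, which gives the claim.

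There is no genuine obstacle here: the argument is purely formal, a two-line shuffle of the idempotent scalar $p\wedge q$ across the tensor symbol. The only point that requires care — and the only place where something could go wrong — is ensuring that the scalar manipulations are legitimate, i.e. that $\otimes$ really is the tensor product of $P$-modules (so that $P$-balancing applies) and that the $P$-action on $X_d$ and $Y_d$ is the one for which Lemma~\ref{restringirlosdelta} was established. Both are guaranteed by Definition~\ref{defdeXd} and by item 5 of~\ref{enumeratedeJT}, so once these identifications are in place the corollary follows immediately.
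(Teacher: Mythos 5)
Your proof is correct and follows essentially the same route as the paper's: both rest on the $P$-balancing of $\otimes_P$ together with Lemma \ref{restringirlosdelta}. The paper's version is just a more compact shuffle — it swaps $p$ and $q$ directly across the tensor to get $q\cdot\delta_x\otimes p\cdot\delta_y$ and applies the lemma to each factor, whereas you take one extra (harmless) step through $((p\wedge q)\cdot\delta_x)\otimes((p\wedge q)\cdot\delta_y)$ via idempotence.
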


\begin{proof}
 $\delta_x \otimes \delta_y = p \cdot \delta_x \otimes q \cdot \delta_y = q \cdot \delta_x \otimes p \cdot \delta_y =  \delta_{x|_{p \wedge q}} \otimes \delta_{y|_{p \wedge q}}$.
\end{proof}

\begin{\de} \label{notacioncorchetes}
Consider now $X \times X \mr{\delta_X} \Omega_P$ in $shP$, for each $a \in P$ we have $$X(a) \times X(a) \mr{{\delta_X}_a} \Omega_P(a) \stackrel{\ref{enumeratedeJT}, \ item \ 2.}{=} P_{\leq a}.$$
If $x \in X(p)$, $y \in X(q)$ with $p,q \in P$, we denote $$\llbracket x \! = \! y \rrbracket_P := \Sigma_{p \wedge q}^1 {\delta_X}_{p \wedge q} (x|_{p \wedge q}, y|_{p \wedge q}) \in P.$$
\end{\de}

This shouldn't be confused with the internal notation $\llbracket x \! = \! y \rrbracket \in \Omega_P$ in the language of $shP$ introduced in section \ref{relintopos}, %though it is similar to how one would compute it using sheaf semantics; 
here all the computations are \emph{external}, i.e. in $\Sat$, and $x,y$ are variables in the language of $\Sat$. %in the definition of $\llbracket x \! = \! y \rrbracket_P$
%$\llbracket x \! = \! y \rrbracket_P$ is the element of $\Omega_P(p \wedge q) = P|_{\leq p \wedge q}$ obtained by applying the natural transformation $\delta$,

\begin{lemma}[cf. lemma \ref{ecuacionenL}] \label{ecuacionenOmegaX}
 For $p,q \in P$, $x \in X(p)$, $y \in X(q)$, $\quad \llbracket x \! = \! y\rrbracket_P  \cdot \delta_x = \llbracket x \! = \! y\rrbracket_P  \cdot \delta_y$. 
\end{lemma}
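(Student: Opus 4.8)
The plan is to read this statement as the external shadow, under the equivalence $\gamma_*\colon s\ell(shP) \to P\text{-}Mod$, of the internal Lemma \ref{ecuacionenL} applied to the sup-lattice $\Omega_P^X$ in $shP$; but rather than transfer that proof term by term, I would argue directly. Write $r = p \wedge q$ and $w = \llbracket x \! = \! y\rrbracket_P \in P$. The first move is to eliminate the two products $\llbracket x \! = \! y\rrbracket_P \cdot \delta_x$ and $\llbracket x \! = \! y\rrbracket_P \cdot \delta_y$ in favour of restricted generators: once I know that $w \leq p$ and $w \leq q$, Lemma \ref{restringirlosdelta} gives $w \cdot \delta_x = \delta_{x|_w}$ and $w \cdot \delta_y = \delta_{y|_w}$, so the whole statement reduces to the single equality $\delta_{x|_w} = \delta_{y|_w}$ of elements of $X_d$.

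Next I would identify $w$ explicitly. By Definition \ref{notacioncorchetes} together with \ref{enumeratedeJT}(item 2), where $\Sigma_r^1$ on $\Omega_P$ is just the inclusion $P_{\leq r} \hookrightarrow P$, we have $w = {\delta_X}_r(x|_r, y|_r) \in P_{\leq r}$, so in particular $w \leq r \leq p,q$, which is what the reduction above required. Since $\delta_X\colon X \times X \to \Omega_P$ is the characteristic map of the diagonal of the sheaf $X$, its value at $(x|_r, y|_r)$ is the usual truth value of equality in $shP$, namely $w = \bigvee \{ s \leq r \mid x|_s = y|_s \}$. Thus $w$ is the join of exactly those sublocales on which $x$ and $y$ already restrict to the same section.

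The crux is then to show that this join is attained, i.e. that $x|_w = y|_w$; granting this, $\delta_{x|_w} = \delta_{y|_w}$ is immediate and, by the reduction above, the proof is complete. This is the only real obstacle, and it is exactly where the sheaf hypothesis on $X$ enters: the family $S = \{ s \leq r \mid x|_s = y|_s \}$ covers $w = \bigvee S$, and by definition $x$ and $y$ restrict to the same section on each member of $S$; since $X$ is separated, two sections over $w$ agreeing on a cover of $w$ must coincide, whence $x|_w = y|_w$. Assembling the pieces yields $\llbracket x \! = \! y\rrbracket_P \cdot \delta_x = w \cdot \delta_x = \delta_{x|_w} = \delta_{y|_w} = w \cdot \delta_y = \llbracket x \! = \! y\rrbracket_P \cdot \delta_y$, as claimed.
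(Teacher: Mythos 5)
Your proof is correct, but it takes a genuinely different route from the paper's. The paper stays entirely at the formal level: it applies the internal Lemma \ref{ecuacionenL} to the singleton $X \mr{\{-\}} \Omega_P^X$ inside $shP$, obtaining an identity in $\Omega_P^X(p\wedge q)$, then pushes it forward along $\Sigma_{p\wedge q}^1$ (using that the action $\Omega_P\otimes\Omega_P^X\mr{\cdot}\Omega_P^X$ is an $s\ell$-morphism and hence commutes with $\Sigma$) and finishes with Lemma \ref{restringirlosdelta}; at no point does it open up the subobject classifier. You instead use Lemma \ref{restringirlosdelta} at the outset to reduce everything to the single equality $\delta_{x|_w}=\delta_{y|_w}$ with $w=\llbracket x\!=\!y\rrbracket_P$, and then prove the sharper, more geometric fact $x|_w=y|_w$ by unwinding $\delta_X$ as the characteristic map of the diagonal, identifying $w=\bigvee\{s\le p\wedge q\mid x|_s=y|_s\}$, and invoking separatedness of $X$ to see that the join is attained. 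All the ingredients check out: $w\le p\wedge q$ does follow from $\Sigma$ on $\Omega_P$ being the inclusion $P_{\le r}\hookrightarrow P$ (\ref{enumeratedeJT}, item 2), and $w\cdot\delta_x=\delta_{x|_{p\wedge w}}=\delta_{x|_w}$ is exactly Lemma \ref{restringirlosdelta}. What your argument buys is transparency --- it exhibits the reason the lemma holds, namely that $x$ and $y$ literally agree over $w$ --- at the cost of relying on the concrete description of $\Omega_P$ and the sheaf condition; the paper's argument buys uniformity with the rest of Section \ref{sub:EshP}'s transfer machinery and reuses an already-proved internal statement, so it would survive in settings where the subobject classifier is less explicit.
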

\begin{proof}
 Applying lemma \ref{ecuacionenL} to $X \mr{\{ \}} \Omega_P^X$ it follows that for each $p,q \in P$, $x \in X(p)$, $y \in X(q)$, $${\delta_X}_{p \wedge q} (x|_{p \wedge q}, y|_{p \wedge q})  \cdot \{x|_{p \wedge q}\}_{p \wedge q} = {\delta_X}_{p \wedge q} (x|_{p \wedge q}, y|_{p \wedge q})  \cdot \{y|_{p \wedge q}\}_{p \wedge q}$$
 in $\Omega_P^X(p \wedge q)$, where ``$\cdot$'' is the $p \wedge q$-component of the natural isomorphism $\Omega_P \otimes \Omega_P^X \mr{\cdot} \Omega_P^X$. Apply now $\Sigma_{p \wedge q}^1$ and use that ``$\cdot$'' is a $s \ell$-morphism (therefore it commutes with $\Sigma$) to obtain 
 $$\llbracket x \! = \! y\rrbracket_P  \cdot \delta_{x|_{p \wedge q}} = \llbracket x \! = \! y\rrbracket_P  \cdot \delta_{y|_{p \wedge q}}.$$
 Then, by lemma \ref{restringirlosdelta},
 $$\llbracket x \! = \! y\rrbracket_P  \cdot q \cdot \delta_{x} = \llbracket x \! = \! y\rrbracket_P  \cdot p \cdot \delta_{y},$$
 which since $\llbracket x \! = \! y\rrbracket_P \leq p \wedge q$ is the desired equation.
 \end{proof}

\begin{sinnadastandard} \label{axiomasparamodulos}
Let $X,Y \in shP, H \in Loc(shP)$, then we have the correspondence

$$\begin{tabular}{c}
 $X \times Y \mr{\lambda} H$ an $\ell$-relation \\ \hline \noalign{\smallskip}
 $\Omega_P^X \otimes \Omega_P^Y \mr{\lambda} H$ a $s \ell$-morphism \\ \hline \noalign{\smallskip}
 $X_d \otimes_P Y_d \mr{\mu} H(1)$ a morphism of $P$-$Mod$
\end{tabular}$$

The following propositions show how $\mu$ is computed from $\lambda$ and vice versa.% in the correspondence \eqref{axiomasparamodulos}.
\end{sinnadastandard}

\begin{proposition} \label{muapartirdelambda}
 In \ref{axiomasparamodulos}, for $p,q \in P$, $x \in X(p)$, $y \in Y(q)$, $\mu(\delta_x \otimes \delta_y) = \Sigma_{p \wedge q}^1 \lambda_{p \wedge q} (x|_{p \wedge q},y|_{p \wedge q}).$
\end{proposition}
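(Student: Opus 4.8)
The plan is to compute $\mu(\delta_x \otimes \delta_y)$ by transporting the element back to the internal tensor $\Omega_P^X \otimes \Omega_P^Y$ along the equivalence $\gamma_*$ of \ref{enumeratedeJT}, and then exploiting that $\mu = \gamma_*(\lambda)$ is nothing but the component $\lambda_1$ of the internal $s\ell$-morphism $\lambda$ at the top $1 \in P$ (this is the setup of \ref{axiomasparamodulos}). First I would reduce to the case in which $x$ and $y$ live over a common $r := p \wedge q$: by Corollary \ref{pyqsaltan} we have $\delta_x \otimes \delta_y = \delta_{x'} \otimes \delta_{y'}$ with $x' := x|_r \in X(r)$ and $y' := y|_r \in Y(r)$, so it suffices to prove $\mu(\delta_{x'} \otimes \delta_{y'}) = \Sigma_r^1 \lambda_r(x', y')$.

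The crux is to rewrite the element $\delta_{x'} \otimes \delta_{y'}$ of $X_d \otimes_P Y_d = \gamma_*(\Omega_P^X \otimes \Omega_P^Y)$ as a \emph{single} application of $\Sigma_r^1$ to a singleton. Since $\Omega_P^X \otimes \Omega_P^Y$ is the free sup-lattice on $X \times Y$, its singleton is $(x',y') \mapsto \{x'\} \otimes \{y'\}$, obtained from the universal bimorphism $\beta\colon \Omega_P^X \times \Omega_P^Y \to \Omega_P^X \otimes \Omega_P^Y$; and under the monoidal structure of $\gamma_*$ the $P$-bilinear map defining $X_d \otimes_P Y_d$ is exactly $\gamma_*(\beta) = \beta_1$. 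Hence, using $\delta_{x'} = \Sigma_r^1\{x'\}_r$ and $\delta_{y'} = \Sigma_r^1\{y'\}_r$ (Definition \ref{defdeXd}), we have $\delta_{x'} \otimes \delta_{y'} = \beta_1(\Sigma_r^1\{x'\}_r,\, \Sigma_r^1\{y'\}_r)$. Because $\beta$ is bilinear, its partial map $\beta(-,\delta_{y'})$ is a sup-lattice morphism, hence commutes with $\Sigma$ in the sense of \ref{enumeratedeJT} (item 2); applying this in the first variable and using $\rho_r^1\Sigma_r^1 = \mathrm{id}$ (so that the parameter restricts as $\rho_r^1\delta_{y'} = \{y'\}_r$) gives
$$\beta_1(\Sigma_r^1\{x'\}_r,\, \delta_{y'}) = \Sigma_r^1\, \beta_r(\{x'\}_r,\, \rho_r^1\delta_{y'}) = \Sigma_r^1\, \beta_r(\{x'\}_r, \{y'\}_r) = \Sigma_r^1(\{x'\}_r \otimes \{y'\}_r).$$
Thus $\delta_{x'} \otimes \delta_{y'} = \Sigma_r^1(\{x'\}_r \otimes \{y'\}_r)$ inside $\gamma_*(\Omega_P^X \otimes \Omega_P^Y)$.

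With this identity the conclusion is immediate. As $\mu = \gamma_*(\lambda) = \lambda_1$ and $\lambda$ is a morphism of sheaves of sup-lattices, its components commute with $\Sigma$ (\ref{enumeratedeJT}, item 2), so $\lambda_1 \circ \Sigma_r^1 = \Sigma_r^1 \circ \lambda_r$. Therefore
$$\mu(\delta_{x'} \otimes \delta_{y'}) = \lambda_1\big(\Sigma_r^1(\{x'\}_r \otimes \{y'\}_r)\big) = \Sigma_r^1\, \lambda_r(\{x'\}_r \otimes \{y'\}_r) = \Sigma_r^1\, \lambda_r(x', y'),$$
where the last equality uses that the $s\ell$-extension $\lambda$ restricts to the original $\ell$-relation on singletons (the correspondence \eqref{lambdaphipsiG} together with Proposition \ref{formulainternaparaG}). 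Substituting $r = p \wedge q$, $x' = x|_{p\wedge q}$, $y' = y|_{p\wedge q}$ yields precisely the claimed formula. I expect the main obstacle to be the middle paragraph: justifying $\delta_{x'} \otimes \delta_{y'} = \Sigma_r^1(\{x'\}_r \otimes \{y'\}_r)$ requires pinning down the monoidal structure of $\gamma_*$ on free generators and verifying the Frobenius/projection identity for the universal bimorphism $\beta$; once that bookkeeping is settled, the remainder is a direct application of the $\Sigma$-naturality of $s\ell$-morphisms.
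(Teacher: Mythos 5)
Your proof is correct and follows essentially the same route as the paper's: reduce to $r=p\wedge q$ via Corollary \ref{pyqsaltan}, identify $\mu$ with $\lambda_1$, rewrite $\delta_{x'}\otimes\delta_{y'}$ as $\Sigma_r^1(\{x'\}_r\otimes\{y'\}_r)$, and commute $\Sigma_r^1$ past the $s\ell$-morphism $\lambda$. The only difference is that you explicitly justify the middle identity via the universal bimorphism and $\rho_r^1\Sigma_r^1=\mathrm{id}$, a step the paper's proof asserts without comment.
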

\begin{proof}
 $\mu(\delta_x \otimes \delta_y) \stackrel{\ref{pyqsaltan}}{=} \lambda_1(\delta_{x|_{p \wedge q}} \otimes \delta_{y|_{p \wedge q}}) = \lambda_1 \ \Sigma_{p \wedge q}^1 \ (\ \{x|_{p \wedge q}\}_{p \wedge q} \otimes \{y|_{p \wedge q}\}_{p \wedge q} \ ) =$
 
 \vspace{1ex}
 
\hfill $  =   \Sigma_{p \wedge q}^1 \ \lambda_{p \wedge q} \ (\ \{x|_{p \wedge q}\}_{p \wedge q} \otimes \{y|_{p \wedge q}\}_{p \wedge q} \ ) = \Sigma_{p \wedge q}^1 \ \lambda_{p \wedge q} \ (\ x|_{p \wedge q} \ ,\ y|_{p \wedge q} \ ).$
\end{proof}

\begin{corollary} \label{lambdaapartirdemu}
 Applying $\rho_{p \wedge q}^1$ and using $\rho_{p \wedge q}^1 \ \Sigma_{p \wedge q}^1 = id$, we obtain the reciprocal computation
 $$\lambda_{p \wedge q} \ (\ x|_{p \wedge q} \ , \ y|_{p \wedge q} \ ) = \rho_{p \wedge q}^1 \ \mu(\delta_x \otimes \delta_y). $$
 
 \vspace{-4ex}
 
 $\hfill \qed$
\end{corollary}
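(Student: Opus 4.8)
The plan is to read this off directly from Proposition \ref{muapartirdelambda}, which asserts that $\mu(\delta_x \otimes \delta_y) = \Sigma_{p \wedge q}^1 \lambda_{p \wedge q}(x|_{p \wedge q}, y|_{p \wedge q})$. The value $\lambda_{p\wedge q}(x|_{p\wedge q}, y|_{p\wedge q})$ lives in $H(p\wedge q)$, whereas $\mu(\delta_x \otimes \delta_y)$ lives in $H(1)$, and the two are related precisely by the $\Sigma_{p\wedge q}^1$ appearing on the right. So the only thing to do is to undo this transition. First I would apply the restriction morphism $\rho_{p \wedge q}^1$ to both sides of the displayed equality of Proposition \ref{muapartirdelambda}; on the right-hand side this yields $\rho_{p \wedge q}^1 \Sigma_{p \wedge q}^1 \lambda_{p \wedge q}(x|_{p \wedge q}, y|_{p \wedge q})$.

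Then I would invoke the identity $\rho_{p \wedge q}^1 \Sigma_{p \wedge q}^1 = id$, which is already recorded and used in the proof of Lemma \ref{restringirlosdelta} and holds because the unit of the adjunction $\Sigma_a^b \dashv \rho_a^b$ from \ref{enumeratedeJT} (item 2) is the identity. This collapses the right-hand side to $\lambda_{p \wedge q}(x|_{p \wedge q}, y|_{p \wedge q})$, yielding exactly the claimed equation $\lambda_{p \wedge q}(x|_{p \wedge q}, y|_{p \wedge q}) = \rho_{p \wedge q}^1 \mu(\delta_x \otimes \delta_y)$. There is essentially no obstacle here: the statement is a one-line corollary whose entire content is the cancellation $\rho\Sigma = id$, and the route is already spelled out in the hint contained in the corollary's own phrasing. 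The only point deserving a word of care is to confirm that $\rho_{p\wedge q}^1$ may legitimately be applied to the equality, i.e.\ that it is a well-defined component of the $P$-module $H(1)=\gamma_* H$, which is immediate from the presheaf structure described in \ref{enumeratedeJT}.
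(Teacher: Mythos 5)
Your proof is correct and is exactly the argument the paper intends: apply $\rho_{p\wedge q}^1$ to the identity of Proposition \ref{muapartirdelambda} and cancel via $\rho_{p\wedge q}^1\,\Sigma_{p\wedge q}^1 = id$, which is precisely the route indicated in the corollary's own phrasing. No gaps.
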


\begin{remark} \label{notacionlinda}
In \ref{axiomasparamodulos}, if $\lambda = \delta_X: X \times X \mr{} \Omega$, then \mbox{$\mu(\delta_{x_1} \otimes \delta_{x_2}) = \igu[x_1]{x_2}_P$} (recall \ref{notacioncorchetes}).
\end{remark}

\begin{lemma} \label{multporr}
  In \ref{axiomasparamodulos}, for each $p,q,r \in P$, $x \in X(p)$, $y \in Y(q)$, 
   $$r \cdot \mu(\delta_x \otimes \delta_y) = \Sigma_{p \wedge q \wedge r}^1 \; \rho_{p \wedge q \wedge r}^{p \wedge q} \; \lambda_{p \wedge q} \ (\ x|_{p \wedge q}\ ,\ y|_{p \wedge q}\ ) = \Sigma_{p \wedge q \wedge r}^1 \; \lambda_{p \wedge q \wedge r} \ ( \ x|_{p \wedge q \wedge r} \ , \ y|_{p \wedge q \wedge r} \ ).$$
\end{lemma}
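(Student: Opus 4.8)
The plan is to prove Lemma \ref{multporr} by combining the already-established formula for $\mu$ (Proposition \ref{muapartirdelambda}) with the explicit description of multiplication by an element of $P$ in a $P$-module (namely $r \cdot (-) = \Sigma_r^1 \rho_r^1$, from \ref{enumeratedeJT} item 5). First I would start from $\mu(\delta_x \otimes \delta_y) = \Sigma_{p \wedge q}^1 \lambda_{p \wedge q}(x|_{p \wedge q}, y|_{p \wedge q})$ and apply $r \cdot (-) = \Sigma_r^1 \rho_r^1$, so that I must compute $\Sigma_r^1 \rho_r^1 \Sigma_{p \wedge q}^1 \lambda_{p \wedge q}(\cdots)$.

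The key technical step is to simplify $\rho_r^1 \Sigma_{p \wedge q}^1$ using the compatibility condition \ref{enumeratedeJT} item 2(b), the Beck--Chevalley-type identity $\rho_{p'}^q \Sigma_p^q = \Sigma_{p \wedge p'}^{p'} \rho_{p \wedge p'}^p$. Applying this with the top element $1$ in the role of $q$ gives $\rho_r^1 \Sigma_{p \wedge q}^1 = \Sigma_{r \wedge (p \wedge q)}^{r} \rho_{r \wedge (p \wedge q)}^{p \wedge q} = \Sigma_{p \wedge q \wedge r}^{r} \rho_{p \wedge q \wedge r}^{p \wedge q}$. Substituting this in and then composing with the outer $\Sigma_r^1$, I would use functoriality of the $\Sigma$'s, $\Sigma_r^1 \Sigma_{p \wedge q \wedge r}^{r} = \Sigma_{p \wedge q \wedge r}^1$, to arrive at $r \cdot \mu(\delta_x \otimes \delta_y) = \Sigma_{p \wedge q \wedge r}^1 \rho_{p \wedge q \wedge r}^{p \wedge q} \lambda_{p \wedge q}(x|_{p \wedge q}, y|_{p \wedge q})$, which is the first claimed equality.

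For the second equality I would invoke naturality of $\lambda$ as a morphism of presheaves (equivalently, that the family $\{\lambda_a\}$ defining the $\ell$-relation is natural in $a$): the restriction square gives $\rho_{p \wedge q \wedge r}^{p \wedge q} \lambda_{p \wedge q}(x|_{p \wedge q}, y|_{p \wedge q}) = \lambda_{p \wedge q \wedge r}(x|_{p \wedge q \wedge r}, y|_{p \wedge q \wedge r})$, where on the right I have used the presheaf identity $(x|_{p \wedge q})|_{p \wedge q \wedge r} = x|_{p \wedge q \wedge r}$ and similarly for $y$. Plugging this in turns the middle expression into $\Sigma_{p \wedge q \wedge r}^1 \lambda_{p \wedge q \wedge r}(x|_{p \wedge q \wedge r}, y|_{p \wedge q \wedge r})$, completing the proof.

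I expect the main obstacle to be applying the identity 2(b) with the correct indices: one must be careful that it is exactly the commutation rule $\rho_{p'}^q \Sigma_p^q = \Sigma_{p \wedge p'}^{p'} \rho_{p \wedge p'}^p$ specialized to $q = 1$ and the pair $(p', p) = (r, p \wedge q)$, and that the meets collapse correctly as $r \wedge (p \wedge q) = p \wedge q \wedge r$. The rest is routine bookkeeping with the functoriality of $\Sigma$ and $\rho$ and the naturality of $\lambda$, so no deeper idea beyond Proposition \ref{muapartirdelambda} and the structure recalled in \ref{enumeratedeJT} is needed.
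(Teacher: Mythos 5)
Your proposal is correct and follows essentially the same route as the paper's proof: apply Proposition \ref{muapartirdelambda}, write $r\cdot(-)=\Sigma_r^1\rho_r^1$, simplify $\rho_r^1\Sigma_{p\wedge q}^1$ via the commutation identity of \ref{enumeratedeJT} item 2(b), compose the $\Sigma$'s, and get the second equality from naturality of $\lambda$. No differences worth noting.
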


\begin{proof} The second equality is just the naturality of $\lambda$. To prove the first one, we compute:

\flushleft $r \cdot \mu(\delta_x \otimes \delta_y) \stackrel{\ref{muapartirdelambda}}{=} \Sigma_r^1 \ \rho_r^1 \ \Sigma_{p \wedge q}^1 \ \lambda_{p \wedge q} \ (\ x|_{p \wedge q} \ , \ y|_{p \wedge q} \ ) \stackrel{\ref{enumeratedeJT} \ item \ 2.b)}{=}$
 
\hfill $ = \Sigma_r^1 \ \Sigma_{p \wedge q \wedge r \ }^r \ \rho_{p \wedge q \wedge r \ }^{p \wedge q} \ \lambda_{p \wedge q} \ (\ x|_{p \wedge q} \ , \ y|_{p \wedge q} \ ) = \Sigma_{p \wedge q \wedge r \ }^1 \ \rho_{p \wedge q \wedge r \ }^{p \wedge q} \ \lambda_{p \wedge q} \ (\ x|_{p \wedge q} \ , \ y|_{p \wedge q} \ )$.
\end{proof}

The following proposition expresses the corresponding formulae for the four axioms of an $\ell$-relation $X \times Y \mr{\lambda} H$ in $shP$ (see definition \ref{4axioms}), at the level of $P$-modules. %In order to do this, it is convenient to consider $X,Y,G$ as presheaves $P^{op} \mr{} \Sat$. ***

\begin{\prop} \label{propaxiomparamodulos}
%Let $X,Y \in shP, G \in Loc(shP)$, and an $\ell$-relation $X \times Y \mr{\lambda} G$. Consider the corresponding $P$-module morphism $X_d \otimes_P Y_d \mr{\mu} G(1)$ as in \eqref{axiomasparamodulos}. Then 
In \ref{axiomasparamodulos},
$\lambda$ is \emph{ed} (resp. \emph{uv, su, in}) if and only if:
\begin{itemize}
 \item ed) for each $p \in P$, $x \in X(p)$, $\displaystyle \bigvee_{\stackrel{q \in P}{y \in Y(q)}} \mu(\delta_x \otimes \delta_y) = p$.
 \item uv) for each $p,q_1,q_2 \in P$, $x \in X(p)$, $y_1 \in Y(q_1)$, $y_2 \in Y(q_2)$, 
 
 \hfill $\mu(\delta_x \otimes \delta_{y_1}) \wedge \mu(\delta_x \otimes \delta_{y_2}) \leq \llbracket y_1 \! = \! y_2\rrbracket_P.$%  \cdot \mu(\delta_x \otimes \delta_{y_1})$.
 \item su) for each $q \in P$, $y \in Y(q)$, $\displaystyle \bigvee_{\stackrel{p \in P}{x \in X(p)}} \mu(\delta_x \otimes \delta_y) = q$.
 \item in) for each $p_1,p_2,q \in P$, $x_1 \in X(p_1)$, $x_2 \in X(p_2)$, $y \in Y(q)$, 

 \hfill $\mu(\delta_{x_1} \otimes \delta_y) \wedge \mu(\delta_{x_2} \otimes \delta_y) \leq \llbracket x_1 \! = \! x_2\rrbracket_P.$ %  \cdot \mu(\delta_{x_1} \otimes \delta_y)$.
\end{itemize}
%\flushright \qed
\end{\prop}

\begin{proof}

By proposition \ref{edyuvdafunctionG} and remark \ref{reldomegaYremarkparaG}, $\lambda$ is $ed)$ if and only if $\displaystyle \bigvee_{y \in Y} \lambda^*(y) = 1$ in $H^X$.
By proposition \ref{calcsup} and remark \ref{1deGalaX}, this is an equality of global sections $\displaystyle \bigvee_{\stackrel{q \in P}{y \in Y(q)}} \Sigma_q^1 \lambda^*_q (y) = 1$ in \mbox{$H^X(1) \stackrel[\textcolor{white}{M}]{}{=} [X,H]$.} Then $\lambda$ is $ed)$ if and only if for each $p \in P$, $x \in X(p)$, $\displaystyle \bigvee_{\stackrel{q \in P}{y \in Y(q)}} (\Sigma_q^1 \lambda^*_q (y))_p (x) = p$ in $H(p)$. But by $F\Sigma)$ in lemma \ref{exponencialenshP} we have 
$$(\ \Sigma_q^1 \ \lambda^*_q (y) \ )_p (x) \stackrel[\textcolor{white}{M}]{}{=}  \Sigma_{p \wedge q}^p \ (\ \lambda^*_q (y) \ )_{p \wedge q} \ (x|_{p \wedge q}) = \Sigma_{p \wedge q}^p \ \lambda_{p \wedge q} \ ( \ x|_{p \wedge q} \ ,\  y|_{p \wedge q} \ ),$$
where last equality holds since by definition of $\lambda^*$ we have $(\lambda^*_q (y))^{\textcolor{white}{M}}_{p \wedge q} (x|_{p \wedge q}) = \lambda_{p \wedge q} (x|_{p \wedge q}, y|_{p \wedge q})$.

\vspace{.2cm}

We conclude that $\lambda$ is $ed)$ if and only if for each $p \in P$, $x \in X(p)$, 
$$\displaystyle \bigvee_{\stackrel{q \in P}{y \in Y(q)}} \Sigma_{p \wedge q}^p \ \lambda_{p \wedge q} \ (\ x|_{p \wedge q} \ , \ y|_{p \wedge q} \ ) = p \hbox{ in } H(p).$$ 
Since $\rho_p^1 \ \Sigma_p^1 = id$, this holds if and only if it holds after we apply $\Sigma_p^1$. Then, proposition \ref{muapartirdelambda} yields the desired equivalence.

\vspace{.3cm}

\noindent We now consider axiom uv):

$\lambda$ is $uv)$ if and only if for each $p,q_1,q_2 \in P$, $x \in X(p)$, $y_1 \in Y(q_1)$, $y_2 \in Y(q_2)$,

\vspace{.2cm}

\noindent $ \displaystyle \rho_{p \wedge q_1 \wedge q_2}^{p \wedge q_1} \ \lambda_{p \wedge q_1} \ ( x|_{p \wedge q_1} \ , \ y_1|_{p \wedge q_1} ) \; \wedge \; \rho_{p \wedge q_1 \wedge q_2}^{p \wedge q_2} \ \lambda_{p \wedge q_2} \ ( x|_{p \wedge q_2} \ , \ y_2|_{p \wedge q_2} ) \; \; \leq $

\vspace{.1cm}

\hfill $\rho_{p \wedge q_1 \wedge q_2}^{q_1 \wedge q_2} \ {\delta_Y}_{q_1 \wedge q_2} \ ( y_1|_{q_1 \wedge q_2} \ , \ y_2|_{q_1 \wedge q_2} )$.

\vspace{.2cm}

We apply $\Sigma_{p \wedge q_1 \wedge q_2}^1$ and use that it commutes with $\wedge$ to obtain that this happens if and only if 

\vspace{.2cm}

\noindent $ \displaystyle \Sigma_{p \wedge q_1 \wedge q_2}^1 \ \rho_{p \wedge q_1 \wedge q_2}^{p \wedge q_1} \ \lambda_{p \wedge q_1} \ ( x|_{p \wedge q_1} \ , \ y_1|_{p \wedge q_1}  ) \; \wedge \; \Sigma_{p \wedge q_1 \wedge q_2}^1 \ \rho_{p \wedge q_1 \wedge q_2}^{p \wedge q_2} \ \lambda_{p \wedge q_2} \ ( x|_{p \wedge q_2} \ , \ y_2|_{p \wedge q_2} ) \;\; \leq $

\vspace{.1cm}

\hfill $ \Sigma_{p \wedge q_1 \wedge q_2}^1 \ \rho_{p \wedge q_1 \wedge q_2}^{q_1 \wedge q_2} \ {\delta_Y}_{q_1 \wedge q_2} \ ( y_1|_{q_1 \wedge q_2} \ , \ y_2|_{q_1 \wedge q_2} )$,

\vspace{.2cm}

\noindent which by lemma \ref{multporr} (see remark \ref{notacionlinda}) is equation 

$$ \displaystyle q_2 \cdot \mu(\delta_x \otimes \delta_{y_1} ) \wedge q_1 \cdot \mu(\delta_x \otimes \delta_{y_2} ) \leq p \cdot \igu[y_1]{y_2}_P,$$
but since $q_i \cdot \delta_{y_i} = \delta_{y_i}$ ($i=1,2$), this is equivalent to the equation % by corollary \ref{pyqsaltan}
$$\mu(\delta_{x} \otimes \delta_{y_1} ) \wedge \mu(\delta_{x} \otimes \delta_{y_2} ) \leq p \cdot \llbracket y_1 \! = \! y_2 \rrbracket_P.$$

This equation is equivalent to the desired one since the right term is smaller or equal than $\llbracket y_1 \! = \! y_2 \rrbracket_P$, and multiplying by $p$ the left term doesn't affect it.
\end{proof}

\begin{\de} \label{definicionesparamu}
%Let $X,Y \in shP, G \in Loc(shP)$, and an $\ell$-relation $X \times Y \mr{\lambda} G$. Consider the corresponding $P$-module morphism $X_d \otimes_P Y_d \mr{\mu} G(1)$. 
In \ref{axiomasparamodulos}, we say that $\mu$ is \emph{ed} (resp. \emph{uv, su, in}) if it satisfies the corresponding condition of proposition \ref{propaxiomparamodulos} above. We say that $\mu$ is an \emph{$\ell$-function} if it is $ed$ and $uv$, and that $\mu$ is an $\ell$-bijection if it is $ed$, $uv$, $su$ and $in$.

Note that $\mu$ has each of the properties defined above if and only if $\lambda$ does.
\end{\de}

Consider now the self-duality of $\Omega_P^X$ in $s \ell(shP)$ given by proposition \ref{autodualparaG}. Applying the tensor equivalence $s \ell(shP) \mr{\gamma_*} P$-$Mod$ it follows that $X_d$ is self-dual as a $P$-module, (see definition \ref{moddual} and remark \ref{nuevodualcomobimod}). We will now give the formulae for the $\eta$, $\eps$ of this duality.

\begin{\prop} \label{formulaeetaeps}
 The $P$-module morphisms $P \mr{\eta} X_d \stackrel[P]{}{\otimes} X_d$, $X_d \stackrel[P]{}{\otimes} X_d \mr{\eps} P$ are given by the formulae $\eta(1) = \displaystyle \bigvee_{\stackrel{p \in P}{x \in X(p)}} \delta_x \otimes \delta_x$, $\eps(\delta_x \otimes \delta_y) = \llbracket x \! = \! y \rrbracket_P$ for each $p,q \in P$, $x \in X(p)$, $y \in X(q)$.
\end{\prop}

\begin{proof}
 The internal formula for $\eta$ given in the proof of proposition \ref{autodualparaG}, together with proposition \ref{calcsup} yield the desired formula for $\eta$. The internal formula for $\eps$, together with our definition of the notation \mbox{$\llbracket x \! = \! y \rrbracket_P$} yield that for each $p,q \in P$, $x \in X(p)$, $y \in X(q)$, we have 
 \mbox{$\eps_{p \wedge q}(\{x|_{p \wedge q}\}_{p \wedge q} \otimes \{y|_{p \wedge q}\}_{p \wedge q}) = \llbracket x \! = \! y \rrbracket_P$} in $\Omega_P(p \wedge q)$. Apply $\Sigma_{p \wedge q}^1$, use that it commutes with the $s \ell$-morphism $\eps$ and recall remark \ref{1deGalaX} to obtain 
 \mbox{$\eps_1( \delta_{x|_{p \wedge q}} \otimes \delta_{y|_{p \wedge q}}) = \llbracket x \! = \! y \rrbracket_P$ in $P$,} which by corollary \ref{pyqsaltan} is the desired equation.
 \end{proof}

\begin{sinnadastandard} \label{particular} {\bf A particular type of $\ell$-relation.} 
%We now analize the following particular case, which will be used in this paper in the case in which the two locales are the same, $L=H$.
Assume $P$ is a coproduct of two locales, \mbox{$P = A \otimes B$.} Then the inclusions into the coproduct yield projections from the product of topoi   \mbox{$shA \stackrel{\pi_1}{\longleftarrow} sh(A \otimes B) \stackrel{\pi_2}{\longrightarrow} shB$.}
\end{sinnadastandard}

Consider now $X \in shA,$ $Y \in shB$, $H \in Loc(sh(A \otimes B))$. We can consider an \mbox{$\ell$-relation} \mbox{$\pi_1^*X \times \pi_2^*Y \mr{\lambda} H$,} and the corresponding $(A \otimes B)$-module morphism \mbox{$(\pi_1^*X)_d \stackrel[A \otimes B]{}{\otimes} (\pi_2^*Y)_d \mr{\mu} H(1)$.}

To compute $(\pi_1^*X)_d$, note that $X_d$ is the $A$-module corresponding to the locale of open parts of the discrete space $X_{dis}$ (recall corollary \ref{discretespace}). By \cite[VI.3 Proposition 3, p.51]{JT}, $A \mr{} X_d$ is the morphism of locales corresponding to the etale (over $\overline{A}$) space $X_{dis} = \Omega_A^X$. 
%recall that $(\pi_1^*X)_d$ is the locale corresponding to the etale (over $A$) space $X_{dis}$. 
Then we have the following pull-back of spaces (push-out of locales)

$$\xymatrix{(\pi_1^*X)_{dis} \ar[r] \ar[d] & X_{dis} \ar[d] \\
	      \overline{A \otimes B} \ar[r] & \overline{A}} \quad \quad \quad \quad
 \xymatrix{(\pi_1^*X)_d & X_d \ar[l]  \\
	      A \otimes B \ar[u] & A \ar[l] \ar[u] }$$       

\noindent which shows that $(\pi_1^*X)_d = X_d \otimes B$, and similarly $(\pi_2^*Y)_d = A \otimes Y_d$. Then we have 
$$(\pi_1^*X)_d \stackrel[A \otimes B]{}{\otimes} (\pi_2^*Y)_d = (X_d \otimes B) \stackrel[A \otimes B]{}{\otimes} (A \otimes Y_d) \cong X_d \otimes Y_d,$$
where the last tensor product is the tensor product of sup-lattices in $\Sat$, i.e. as $\Omega$-modules. The isomorphism maps $\delta_x \otimes \delta_y \mapsto (\delta_x \otimes 1) \otimes (1 \otimes \delta_y)$, then we have the following instance of proposition \ref{propaxiomparamodulos}. %to this particular situation.

\begin{\prop} \label{propaxiomparamodulos2}
Let $X \in shA$, $Y \in shB, H \in Loc(sh(A \otimes B))$, and an $\ell$-relation   \mbox{$\pi_1^*X \times \pi_2^*Y \mr{\lambda} H$.} Consider the corresponding $(A \otimes B)$-module morphism \mbox{$X_d \otimes Y_d \mr{\mu} H(1)$.} Then $\lambda$ is $ed, uv, su, in$ resp. if and only if: 
\begin{itemize}
 \item ed) for each $a \in A$, $x \in X(a)$, $\displaystyle \bigvee_{\stackrel{b \in B}{y \in Y(b)}} \mu(\delta_x \otimes \delta_y) = a$.
 \item uv) for each $a \in A$, $b_1,b_2 \in B$, $x \in X(a)$, $y_1 \in Y(b_1)$, $y_2 \in Y(b_2)$, 
 
 \hfill $\mu(\delta_x \otimes \delta_{y_1}) \wedge \mu(\delta_x \otimes \delta_{y_2}) \leq \llbracket y_1 \! = \! y_2\rrbracket_P.$%  \cdot \mu(\delta_x \otimes \delta_{y_1})$.
 \item su) for each $b \in B$, $y \in Y(b)$, $\displaystyle \bigvee_{\stackrel{a \in A}{x \in X(a)}} \mu(\delta_x \otimes \delta_y) = b$.
 \item in) for each $a_1,a_2 \in A$, $b \in B$, $x_1 \in X(a_1)$, $x_2 \in X(a_2)$, $y \in Y(b)$, 
 
 \hfill $\mu(\delta_{x_1} \otimes \delta_y) \wedge \mu(\delta_{x_2} \otimes \delta_y) \leq \llbracket x_1 \! = \! x_2\rrbracket_P.$%  \cdot \mu(\delta_{x_1} \otimes \delta_y)$.
\end{itemize}
\qed
\end{\prop}

\section{$\rhd$ and $\lozenge$ cones} \label{sec:cones}

In this section we generalize the results of \cite[4.]{DSz}, in two ways, both needed for our purpose. Like before, we work over any arbitrary topos $\Sat$ instead of over $Set$, and we develop a theory of $\rhd$ and $\lozenge$ cones for two different functors $F,F'$ instead of just one. As in the previous section, we omit the proofs when they are easily obtained adapting the ones in op. cit.

\begin{sinnadastandard} \label{extensionarel}
Recall that the category $Rel(\Eat)$ of relations of a topos $\Eat$ is a $s \ell$-category. 
An inverse image $\xymatrix{\cc{E} \ar[r]^{F} & \cc{S}}$ of a geometric morphism respects products and subobjects, thus it induces a \mbox{$s \ell$-functor} $Rel(\cc{E}) \xr{T = Rel(F)} Rel(\cc{S})$. On objects $TX = FX$, and the value of $T$ in a relation $R \mmr{} X \times Y$ in $\cc{E}$ is the relation $FR \mmr{} FX \times FY$ in $\cc{S}$. In particular, for arrows $f$ in $\cc{E}$, $T(R_f) = R_{F(f)}$ (see \ref{arrowsvsfunctions}), or, if we abuse the notation by identifying $f$ with the relation given by its graph, $T(f) = F(f)$. It is immediate from the definition that for every relation $R$ in $\cc{E}$ we have $T(R^{op}) = (TR)^{op}$.
\end{sinnadastandard}

% \begin{sinnadastandard} \label{extensionarel}
% Consider a geometric morphism $\xymatrix{\cc{S} \ar[r] & \cc{E}}$, with inverse image $\xymatrix{\cc{E} \ar[r]^{F} & \cc{S}}$.
% Consider the extension $T$ of $F$ to the relation categories (see \cite{DSz}, 4.1). 
% 
% \end{sinnadastandard} 
% 
% Consider a geometric morphism $\xymatrix{\cc{S} \ar[r] & \cc{E}}$, with inverse image   $\xymatrix{\cc{E} \ar[r]^{F} & \cc{S}}$. 
% Consider the extension $T$ of $F$ to $Rel(\cc{E})$ as in the following commutative diagram (recall remark \ref{1erremarkbasico}):
% $$
% \xymatrix{\cc{E} \ar[r]^{\lambda_{(-)}} \ar[d]_F 
%          & \cc{R}el(\cc{E}) \ar[d]^T 
%          \\
%            \cc{S} \ar[r]^{\lambda_{(-)}}  \ar@/_3ex/[rr]_P
%          & \cc{R}el \; \ar@{^(->}[r]^{(-)_{*}} 
%          & s \ell   }
% $$
% On objects $TX = FX$, and the value of $T$ in a relation $R \mmr{} X \times Y$ in $\cc{E}$ is the relation $FR \mmr{} FX \times FY$ in $\cc{S}$. In particular, for arrows $f$ in $\cc{E}$, $T(R_f) = R_{F(f)}$ (see \ref{arrowsvsfunctions}), or, if we abuse the notation by identifying $f$ with the relation given by its graph, $T(f) = F(f)$.
% 
% Since $F$ preserves products, $T$ is a tensor functor (recall \ref{otro1erremarkbasico}). From that fact (since tensor functors preserve dualities, see \ref{dualintercambia}), or immediately from the definition, we obtain that for every relation $R$ in $\cc{E}$ we have $T(R^{op}) = (TR)^{op}$.

Consider now two geometric morphisms with inverse images $\xymatrix{\cc{E} \ar@<1ex>[r]^{F} \ar@<-1ex>[r]_{F'} & \cc{S}}$, and their respective extensions to the $Rel$ categories 
 $\xymatrix{Rel(\cc{E}) \ar@<1ex>[r]^{T} \ar@<-1ex>[r]_{T'} & Rel(\cc{S})}$.
%$T$, $T'$.

\begin{\de} \label{defdeconos}
Let $H$ be a sup-lattice in $\Sat$. A cone $\lambda$ (with vertex $H$) is a family of $\ell$-relations $FX \times F'X \mr{\lambda_{X}} H$, one for each $X \in \cc{E}$. Note that, a priori, a cone is just a family of arrows without any particular property. This isn't standard terminology, but we do this in order to use a different prefix depending on which diagrams commute. Each arrow $X \mr{f} Y$ in 
$\cc{E}$ and each arrow 
$X \mr{R} Y$ in $\cc{R}el(\cc{E})$ (i.e relation 
$R \mmr{} X \times Y$ in $\cc{E}$), determine the following diagrams:

\begin{center}
 \begin{tabular}{cc}
  $\rhd(f) = \rhd(F(f),F'(f))$ & $\lozenge(R) = \lozenge(TR,T'R)$ \\ \noalign{\smallskip}
  $\xymatrix@C=4ex@R=3ex
        {
         FX \times F'X \ar[rd]^{\lambda_{X}}  
                      \ar[dd]_{F(f) \times F'(f)}  
        \\
         {} \ar@{}[r]^(.3){\geq}
         &  H    
        \\
         FY \times F'Y \ar[ru]_{\lambda_{Y}} 
         } $ &
  $\xymatrix@C=4ex@R=3ex
        {
         & TX \times T'X \ar[rd]^{\lambda_{X}}  
        \\
           TX \times T'Y 
               \ar[rd]_{TR \times T'Y  \hspace{2.5ex}} 
			   \ar[ru]^{TX \times T'R^{op} \hspace{2.5ex}} 
	     & \equiv 
         & H
        \\
         & TY \times T'Y \ar[ru]_{\lambda_{Y}} 
         }$
\\ \noalign{\smallskip} \noalign{\smallskip} 
$\lozenge_1(f) = \lozenge_1(F(f),F'(f))$ & $\lozenge_2(f) = \lozenge_2(F(f),F'(f)) $ \\ \noalign{\smallskip}
 ${ \xymatrix@C=1.4ex@R=3ex {
                  & FX \times F'X  \ar[rd]^{\lambda_X} 
                  \\
			        FX \times F'Y \ar[rd]_{F(f) \times F'Y \hspace{2.5ex}} 
			                    \ar[ru]^{FX \times F'(f)^{op} \hspace{2.5ex}} & \equiv & H
			      \\
			       & FY \times F'Y \ar[ru]_>>>>{\lambda_Y} 
			      } 
}$ &
$\xymatrix@C=1.4ex@R=3ex
                 {
                  & FX \times F'X  \ar[rd]^{\lambda_X} 
                  \\
			        FY \times F'X \ar[rd]_{FY \times F'(f) \hspace{2.5ex}} 
			                    \ar[ru]^{F(f)^{op} \times F'X \hspace{2.5ex}} & \equiv & H
			      \\
			       & FY \times F'Y \ar[ru]_>>>>{\lambda_Y} 
		} $
 \end{tabular}
\end{center}

\noindent $\lambda$ is a \emph{$\rhd$-cone} if the $\rhd(f)$ diagrams  hold, and $\lambda$ is a 
\emph{$\lozenge$-cone} if the $\lozenge(R)$ diagrams hold. Similarly we define \emph{$\lozenge_1$-cones} and \emph{$\lozenge_2$-cones} if the 
$\lozenge_1(f)$ and $\lozenge_2(f)$ \mbox{diagrams} hold. %We will abbreviate $\lozenge(R) = \lozenge(TR,T'R)$, and similarly $\rhd(f)$, $\lozenge_1(f)$ and $\lozenge_2(f)$. 
If $H$ is a locale and the $\lambda_X$ are $\ell$-functions, $\ell$-bijections, we say that we have a cone \textit{of $\ell$-functions, $\ell$-bijections}.
\end{\de}

The propositions \ref{diamante12igualdiamante} and \ref{combinacion} yield the following corresponding results for cones.
  
\begin{proposition} \label{dim1ydim2esdim}
A cone $FX \times F'X \mr{\lambda} H$ is a $\lozenge$-cone if and only if it is both a $\lozenge_1$ and a $\lozenge_2$-cone. \qed
\end{proposition}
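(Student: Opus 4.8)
The plan is to read off both implications directly from Proposition \ref{diamante12igualdiamante}, after matching the cone diagrams of \ref{defdeconos} against the single-relation diagrams of \ref{diagramadiamante12}. The dictionary is the assignment $(X,Y,X',Y') \mapsto (FX, F'X, FY, F'Y)$: for an object $X \in \cc{E}$ the $\ell$-relation $FX \times F'X \mr{\lambda_X} H$ plays the role of $\lambda$, and for a morphism or relation out of $X$ the target $Y$ contributes $\lambda_Y$ in the role of $\lambda'$. Under this dictionary the cone diagram $\lozenge(R)$ for a relation $R \mmr{} X \times Y$ in $\cc{E}$ is precisely the single diagram $\lozenge(TR, T'R)$, while $\lozenge_1(f)$ and $\lozenge_2(f)$ for an arrow $f$ are $\lozenge_1(F(f),F'(f))$ and $\lozenge_2(F(f),F'(f))$. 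The mathematical content is entirely in \ref{diamante12igualdiamante}; what remains is bookkeeping.

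First I would treat the implication \textbf{$\lozenge$-cone $\Rightarrow$ $\lozenge_1$- and $\lozenge_2$-cone}. Fix an arrow $X \mr{f} Y$ in $\cc{E}$. By the first assertion of \ref{diamante12igualdiamante}, $\lozenge_1$ and $\lozenge_2$ are particular instances of $\lozenge$. Concretely, using \ref{extensionarel} (so that $T(R_f) = R_{F(f)}$ and $T(R^{op}) = (TR)^{op}$) one checks that the cone diagram $\lozenge_1(f)$ coincides with $\lozenge(R_f)$, and that $\lozenge_2(f)$ coincides with $\lozenge(R_f^{op})$, where $R_f \mmr{} X \times Y$ is the graph of $f$ and $R_f^{op} \mmr{} Y \times X$ its opposite. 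The check is a direct comparison of the equations in \ref{equations}, the only nonformal step being a singleton-collapse of the form $\bigvee_{x'} \llbracket f(a) \! = \! x' \rrbracket \cdot \lambda'(x',b') = \lambda'(f(a),b')$, which is the kind of identity supplied by Lemma \ref{ecuacionenL}. Since $\lambda$ is a $\lozenge$-cone, both $\lozenge(R_f)$ and $\lozenge(R_f^{op})$ hold, hence so do $\lozenge_1(f)$ and $\lozenge_2(f)$; as $f$ was arbitrary, $\lambda$ is a $\lozenge_1$- and a $\lozenge_2$-cone.

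For the converse I would unwind a relation into a span and invoke the second assertion of \ref{diamante12igualdiamante}. Let $R \mmr{} X \times Y$ be a relation in $\cc{E}$, presented as a span $X \ml{p} |R| \mr{p'} Y$ with $p,p'$ arrows of $\cc{E}$ and $R = p' \circ p^{op}$. Applying the $s\ell$-functors $T,T'$, which respect composition and opposites by \ref{extensionarel}, gives $TR = F(p') \circ F(p)^{op}$ and $T'R = F'(p') \circ F'(p)^{op}$; that is, $TR$ and $T'R$ are exactly the two spans obtained by applying $F$ and $F'$ to $p,p'$, with apex $\ell$-relation $\theta = \lambda_{|R|}$ (forced by the cone structure at the object $|R|$). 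Proposition \ref{diamante12igualdiamante} then yields $\lozenge(TR, T'R)$ from $\lozenge_1(F(p'),F'(p'))$ and $\lozenge_2(F(p),F'(p))$, which are the cone diagrams $\lozenge_1(p')$ and $\lozenge_2(p)$. Both hold by hypothesis, so $\lozenge(R)$ holds; since $R$ was arbitrary, $\lambda$ is a $\lozenge$-cone.

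I expect the only real obstacle to be notational rather than conceptual: keeping the four corners $FX, F'X, FY, F'Y$ and the two spans aligned when identifying the cone diagrams with the instances of \ref{diagramadiamante12}. The delicate point is $\lozenge_2(f) = \lozenge(R_f^{op})$, where the opposite relation enters so that the middle vertex $FY \times F'X$ is reached through $R_f^{op}\colon Y \to X$ rather than through $R_f$; once this identification is made, everything reduces to a per-object, per-relation application of \ref{diamante12igualdiamante}.
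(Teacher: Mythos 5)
Your proposal is correct and follows exactly the route the paper intends: the paper states this proposition without proof because it is the object-by-object, relation-by-relation application of Proposition \ref{diamante12igualdiamante} (as announced in the sentence preceding it), which is precisely your dictionary identifying $\lozenge_1(f)$ with $\lozenge(R_f)$, $\lozenge_2(f)$ with $\lozenge(R_f^{op})$, and, for the converse, presenting a relation $R$ as a span with apex $\ell$-relation $\lambda_{|R|}$ and invoking the second assertion of \ref{diamante12igualdiamante}. The bookkeeping you flag (in particular that $T=Rel(F)$ preserves composition and opposites, per \ref{extensionarel}) is exactly what makes the transfer legitimate.
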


\begin{proposition}\label{trianguloesdiamante}
A cone $FX \times F'X \mr{\lambda} H$ of $\ell$-bijections is a $\rhd$-cone if and only if it is a $\lozenge$-cone. \qed%over a locale $H$ 
\end{proposition}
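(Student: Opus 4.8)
The plan is to deduce both implications from the single-relation results of Section~\ref{relintopos}, read off at each index $X \in \cc{E}$. The two directions are of genuinely different character: the implication ``$\lozenge$-cone $\Rightarrow$ $\rhd$-cone'' is formal and uses nothing about the $\lambda_X$, whereas the reverse implication is precisely where the hypothesis that $\lambda$ is a cone \emph{of $\ell$-bijections} gets consumed, together with Proposition~\ref{ida}.

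For the direction $\lozenge \Rightarrow \rhd$ I would argue as follows. By Proposition~\ref{dim1ydim2esdim} a $\lozenge$-cone is in particular a $\lozenge_1$-cone, so every diagram $\lozenge_1(f)$ holds. Since $\lozenge_1(f,g)$ always forces $\rhd(f,g)$ (the remark recorded right after Proposition~\ref{diamante12igualdiamante}), applied with $f$ replaced by $F(f)$ and $g$ by $F'(f)$ this yields $\rhd(f)$ for every arrow $f$ of $\cc{E}$; hence $\lambda$ is a $\rhd$-cone. No $\ell$-bijection hypothesis enters here.

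For the substantive direction $\rhd \Rightarrow \lozenge$ I would fix a relation $R \colon X \to Y$ in $\cc{E}$, present it by its subobject $R \hookrightarrow X \times Y$ with projections $X \ml{\pi} R \mr{\pi'} Y$ (so $R = \pi' \circ \pi^{op}$ in $Rel(\cc{E})$), and apply Proposition~\ref{ida}. Because $F$ and $F'$ are inverse images they preserve finite products and subobjects (\ref{extensionarel}), so $TR$ is the span $FX \ml{F\pi} FR \mr{F\pi'} FY$ and $T'R$ is the span $F'X \ml{F'\pi} F'R \mr{F'\pi'} F'Y$. I would then run \ref{ida} with $\lambda := \lambda_X$, $\lambda' := \lambda_Y$, these two spans, and---this is the one genuine choice---the connecting $\ell$-relation $\theta := \lambda_R$, the value of the cone at the subobject $R$. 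The hypotheses translate directly: $\lambda_X$, $\lambda_Y$ and $\theta = \lambda_R$ are $\ell$-bijections because $\lambda$ is a cone of $\ell$-bijections, while the diagrams $\rhd(F\pi, F'\pi)$ and $\rhd(F\pi', F'\pi')$ demanded by \ref{ida} are exactly the cone diagrams $\rhd(\pi)$ and $\rhd(\pi')$ for the two projections, which hold since $\lambda$ is a $\rhd$-cone. Thus \ref{ida} produces $\lozenge_2(F\pi, F'\pi)$ and $\lozenge_1(F\pi', F'\pi')$, which combine (through the second half of Proposition~\ref{diamante12igualdiamante}, already packaged inside \ref{ida}) into $\lozenge(TR, T'R)$, that is, the cone diagram $\lozenge(R)$. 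As $R$ was arbitrary, $\lambda$ is a $\lozenge$-cone.

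The hard part will not be any computation but the bookkeeping in this second direction: one must see that the apex of each span is the image under $T$ (resp.\ $T'$) of the \emph{common} subobject $R$, that the two projections of $R$ supply the maps whose $\rhd$-diagrams the hypothesis provides, and---most importantly---that the auxiliary $\ell$-bijection $\theta$ required by \ref{ida} is available at no cost as the cone component $\lambda_R$, its $\ell$-bijectivity being exactly the content of ``cone of $\ell$-bijections''. Note that taking instead the canonical restriction of $\lambda_X \boxtimes \lambda_Y$ as $\theta$ would not work, since for that $\theta$ the $\rhd$ diagrams hold trivially and the $\rhd$-cone hypothesis would never be used; the choice $\theta = \lambda_R$ is precisely what lets both hypotheses do their job. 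Once these identifications are set up correctly, the conclusion is immediate.
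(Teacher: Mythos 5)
Your proof is correct and is essentially the unfolding the paper intends: Proposition \ref{trianguloesdiamante} is stated without proof as a consequence of \ref{diamante12igualdiamante} and \ref{combinacion}, and your two directions ($\lozenge\Rightarrow\rhd$ via \ref{dim1ydim2esdim} together with $\lozenge_1(f)\Rightarrow\rhd(f)$; $\rhd\Rightarrow\lozenge$ by applying \ref{ida} to the spans $FX\leftarrow FR\rightarrow FY$, $F'X\leftarrow F'R\rightarrow F'Y$ with $\theta=\lambda_R$) are exactly the right reductions, with the bookkeeping done correctly. The only point I would push back on is your closing assertion that taking $\theta$ to be the restriction of $\lambda_X\boxtimes\lambda_Y$ ``would not work'' because the $\rhd$-cone hypothesis would go unused. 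That choice does work, and it is the one the cited Proposition \ref{combinacion} is built around: for that $\theta$ the axioms uv) and in) hold automatically (from uv), in) of $\lambda_X$, $\lambda_Y$ and the monicity of $FR\hookrightarrow FX\times FY$ and $F'R\hookrightarrow F'X\times F'Y$), while ed) and su) follow because $\rhd(\pi)$ and $\rhd(\pi')$ give $\lambda_R\leq\theta$ and $\lambda_R$ is ed) and su). So the $\rhd$-cone hypothesis is still consumed with that choice --- not in the diagrams $\rhd(p,q)$, $\rhd(p',q')$ (which indeed hold trivially there) but in verifying that $\theta$ is an $\ell$-bijection, which is what \ref{combinacion} requires. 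Both choices of the connecting $\ell$-relation lead to $\lozenge(TR,T'R)$; yours routes through \ref{ida}, the paper's through \ref{combinacion}.
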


\begin{sinnadastandard} {\bf Cones and natural transformations.} In order to express the universal property defining the groupoid of \cite[VIII.3 Theorem 2]{JT} as a universal property of $\rhd$-cones (see theorem \ref{JTGeneral}), it is necessary to relate cones with natural transformations and to analyze their behavior through topoi morphisms. The following proposition shows that $\lozenge_1$-cones of functions correspond to natural transformations. 

\begin{proposition} \label{naturaligualcone}
 Consider a family of arrows $FX \mr{\theta_X} F'X$, one for each $X \in \cc{E}$. Each $\theta_X$ corresponds by proposition \ref{functionconallegories} to a function $FX \times F'X \xr{\varphi_X = \lambda_{\theta_X}} \Omega$ yielding in this way a cone $\varphi$. Then $\theta$ is a natural transformation if and only if $\varphi$ is a $\lozenge_1$-cone. %n $\ell$-
\end{proposition}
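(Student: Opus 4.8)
The plan is to unwind the defining equation of the diagram $\lozenge_1(f)$ into the internal language of $\Sat$ and to show that, after collapsing one join, it becomes exactly the commutativity of the naturality square of $\theta$ at $f$. Fix $X \mr{f} Y$ in $\cc{E}$. Reading off the equation attached to $\lozenge_1(f) = \lozenge_1(F(f), F'(f))$ from Remark \ref{equations} under the substitution $X \rightsquigarrow FX,\ Y \rightsquigarrow F'X,\ X' \rightsquigarrow FY,\ Y' \rightsquigarrow F'Y,\ f \rightsquigarrow F(f),\ g \rightsquigarrow F'(f),\ \lambda \rightsquigarrow \varphi_X,\ \lambda' \rightsquigarrow \varphi_Y$, and substituting $\varphi_X(a,y) = \llbracket \theta_X(a) = y \rrbracket$ and $\varphi_Y(a',b') = \llbracket \theta_Y(a') = b' \rrbracket$, the diagram $\lozenge_1(f)$ asserts precisely that, for all $a \in FX$ and $b' \in F'Y$,
$$ \llbracket \theta_Y(F(f)(a)) = b' \rrbracket \;=\; \bigvee_{y \in F'X} \llbracket F'(f)(y) = b' \rrbracket \cdot \llbracket \theta_X(a) = y \rrbracket . $$

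The crux is the collapse of the join on the right. Viewing $y \mapsto \llbracket F'(f)(y) = b' \rrbracket$ as an arrow $F'X \mr{\psi} \Omega$ into the sup-lattice $\Omega$, Proposition \ref{formulainternaparaG} (with $H = \Omega$) says that the free extension of $\psi$ along the singleton sends $\{y_0\} \mapsto \psi(y_0)$, i.e. $\bigvee_{y} \llbracket y_0 = y \rrbracket \cdot \psi(y) = \psi(y_0)$; taking $y_0 = \theta_X(a)$ turns the right-hand side above into $\llbracket F'(f)(\theta_X(a)) = b' \rrbracket$. Thus $\lozenge_1(f)$ is equivalent to $\llbracket (\theta_Y \circ F(f))(a) = b' \rrbracket = \llbracket (F'(f) \circ \theta_X)(a) = b' \rrbracket$ for all $a, b'$, i.e. to the equality of the graphs of the two arrows $\theta_Y \circ F(f)$ and $F'(f) \circ \theta_X$ from $FX$ to $F'Y$. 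Since an arrow is recovered from its graph (Proposition \ref{arrowsiipi1}), this is exactly the identity $\theta_Y \circ F(f) = F'(f) \circ \theta_X$, the naturality square at $f$.

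Letting $f$ range over all arrows of $\cc{E}$ then yields the proposition: $\varphi$ is a $\lozenge_1$-cone iff every naturality square commutes iff $\theta$ is natural. I expect no real obstacle here; the computation is elementary and mirrors the Set-level argument of \cite[4.]{DSz}, which transfers verbatim because the internal language of $\Sat$ is used as naive set theory. The only point demanding care is matching the orientations and the $(-)^{op}$ occurring in the $\lozenge_1(f)$ diagram against the template of \ref{diagramadiamante12}--\ref{equations} before applying the singleton reduction.
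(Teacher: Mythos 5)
Your proof is correct, but it takes a different route from the paper's. You unwind the $\lozenge_1(f)$ equation of Remark \ref{equations} in the internal language, collapse the join $\bigvee_{y}\llbracket F'(f)(y) \! = \! b'\rrbracket\cdot\llbracket \theta_X(a) \! = \! y\rrbracket$ to $\llbracket F'(f)(\theta_X(a)) \! = \! b'\rrbracket$ via the singleton-extension formula of Proposition \ref{formulainternaparaG} (equivalently Lemma \ref{ecuacionenL}), and conclude by observing that equality of graphs forces equality of arrows; all of these steps are constructively valid in $\Sat$, so the argument goes through. The paper instead works entirely at the level of the monoidal category $s\ell$: it encodes the passage between $\theta_X$ and $\varphi_X$ through the self-duality of $\ell F'X$ (Proposition \ref{prop:imageninversausandoautodualparaG}) and derives each implication $N(f)\Leftrightarrow\lozenge_1(f)$ formally, by substituting one presentation into the other, composing with $\eps$ or $\eta$, and invoking a triangle identity in the elevator calculus. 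Your version is more elementary and makes the content of the equivalence transparent pointwise; the paper's duality-based formulation is element-free and is what gets reused downstream --- in \ref{sin:naturaligualconeconrho} and Corollary \ref{naturaligualconeconrhoatravesdeunmorfismodetopos} the same argument is transported along an inverse image $h^*$ precisely because tensor functors preserve the duality data $\eta$, $\eps$, whereas an elementwise proof would have to be redone internally in each topos. The only point to keep straight in your approach, as you note, is the matching of orientations and of $F'(f)^{op}$ against the template of \ref{diagramadiamante12}, which you have done correctly.
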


\begin{proof}

By proposition \ref{prop:imageninversausandoautodualparaG} (recall also remark \ref{1erremarkbasico}) we have that the correspondence between $\ell FX \xr{P(\theta_X) = (\varphi_X)_*} \ell F'X$ and $\ell FX \otimes \ell F'X  \mr{\varphi_X} \Omega$ is given by the self-duality of $F'X$. %(see proposition \ref{autodualparaG}) 
As with every duality, this correspondence is given by the following diagrams in the monoidal category $s\ell$ (we omit to write the $\ell$, and think of these diagrams as diagrams of relations):

$$\vcenter{\xymatrix{ \varphi_X:    }} 
\vcenter{\xymatrix@C=-.3pc{  FX \dcell{\theta_X} && F'X \did \\ 
			    F'X && F'X \\
			  & \quad \cl{\eps} }}
\vcenter{\xymatrix{ \quad\quad \quad \theta_X:    }}
\vcenter{\xymatrix@C=-.3pc{  FX \did && & \op{\eta} \\
		    FX && F'X && F'X \did \\
		    & \quad  \cl{\varphi_X} &&& F'X    }}$$

Also, the naturality $N$ of theta and the $\lozenge_1$ diagrams (recall from corollary \ref{dualintercambiaparaG} that $f^{op} = f^{\wedge}$) can be expressed as follows: for each $X \mr{f} Y$, 
$$\vcenter{\xymatrix{ N(f): \;\; }}
\vcenter{\xymatrix@C=-0.3pc{  FX \dcellbb{F(f)} \\ FY \dcellb{\theta_Y} \\ F'Y  }}
\vcenter{\xymatrix{ \;\;\; = \;\;\; }}
\vcenter{\xymatrix@C=-0.3pc{  FX \dcellb{\theta_X} \\ F'X \dcellbb{F' \! (f)} & \quad , \;\;\;\; \\ F'Y  }}
\vcenter{\xymatrix{  \lozenge_1(f):  }}
\vcenter{\xymatrix@C=-0.3pc{ FX \did &&& \op{\eta} &&& F'Y \did \\              
	      FX \did && F'X \did && F'X \dcellbb{F' \! (f)} && F'Y \did \\
	      FX && F'X  && F'Y && F'Y  \\ 
            & \quad  \cl{\varphi_X}  &&&&   \quad \cl{\eps}          }}
\vcenter{\xymatrix{ \;\;\; = \;\;\; }}
\vcenter{\xymatrix@C=-0.3pc{FX \dcellbb{F(f)} & & F'Y \did
          \\              
             FY & & F'Y
          \\
           & \quad  \cl{\varphi_Y} }}$$

\noindent $\underline{N(f) \Rightarrow \lozenge_1(f):}$ replace $\theta$ as in the correspondence above in $N(f)$ to obtain

$$\vcenter{\xymatrix@C=-0.3pc{ FX \did &&& \op{\eta} \\
			    FX && F'X && F'X \dcellbb{F' \! (f)} \\
			     & \quad \cl{\varphi_X} &&& F'Y }}
\vcenter{\xymatrix@C=-0.3pc{ \quad \stackrel{N(f)}{=} \quad }}
\vcenter{\xymatrix@C=-0.3pc{   FX \dcellbb{F(f)} &&& \op{\eta} \\
			    FY && F'Y && F'Y \did \\
			    & \quad \cl{\varphi_Y} &&& F'Y }} $$

Compose with $\eps$ and use a triangular identity to obtain $\lozenge_1(f)$.

\noindent $\underline{\lozenge_1(f) \Rightarrow N(f):}$ replace $\varphi$  as in the correspondence above in $\lozenge_1(f)$ to obtain
$$\vcenter{\xymatrix@C=-0.3pc{ FX \dcellbb{F(f)} && F'Y \did \\
				      FY \dcell{\theta_Y} && F'Y \did \\
				      F'Y && F'Y \\
				      & \cl{\eps} }}
\vcenter{\xymatrix@C=-0.3pc{ \stackrel{\quad \lozenge_1(f)}{=} \quad }}
\vcenter{\xymatrix@C=-0.3pc{  FX \dcell{\theta_X} &&& \op{\eta} &&& F'Y \did \\
				      F'X && F'X && F'X \dcellbb{F' \! (f)} && F'Y \did \\
				      & \cl{\eps} &&& F'Y && F'Y \\
				      &&&&& \cl{\eps} }}
\vcenter{\xymatrix@C=-0.3pc{\quad \stackrel{\triangle}{=} \quad}}
\vcenter{\xymatrix@C=-0.3pc{ FX \dcell{\theta_X} && F'Y \did \\
				    F'X \dcellbb{F(f)} && F'Y \did \\
				    F'Y && F'Y \\
				    & \cl{\eps} }}$$

Compose with $\eta$ and use a triangular identity to obtain $N(f)$.
\end{proof}

\begin{sinnadastandard} \label{sin:naturaligualconeconrho}
Consider now the previous situation together with a topos over $\Sat$, 
$$\vcenter{\xymatrix@C=3.5pc@R=3.5pc{& \cc{H}  \adjuntosd{h^*}{h_*}  \\ \cc{E} \dosflechasr{F}{F'} & \cc{S} }}$$
By proposition \ref{naturaligualcone}, a natural transformation $h^* FX \mr{\theta_X} h^* F'X$ corresponds to a $\lozenge_1$-cone of functions \mbox{$h^* FX \times h^* F'X \mr{\varphi_X} \Omega_{\cc{H}}$} in $\cc{H}$. As established in \ref{extensionarel}, $h^*$ can be extended to \mbox{$Rel = s \ell_0$} as a tensor functor (therefore preserving duals), then using the naturality of the adjunction $h^* \dashv h_*$ it follows that $h^* FX \times h^* F'X \mr{\varphi_X} \Omega_{\cc{H}}$ is a $\lozenge_1$-cone if and only if $FX \times F'X \mr{\lambda_X  } h_* \Omega_{\cc{H}}$ is a $\lozenge_1$-cone (in $\cc{S}$). We have:

\begin{\prop} \label{naturaligualconeconrho}
 A family of arrows $h^* FX \mr{\theta_X} h^* F' X$ (one for each $X \in \cc{E}$) is a natural transformation if and only if the corresponding cone $FX \times F'X \mr{\lambda_X} h_* \Omega_{\cc{H}}$ is a $\lozenge_1$-cone. \qed %of $\ell$-functions 
\end{\prop}
\end{sinnadastandard}

Combining propositions \ref{naturaligualconeconrho} and \ref{prop:relativizarellrelations} we obtain the following corollary for the case $\cc{H} = shP$ (which by remark \ref{remark:relativizarellrelations} also holds for an arbitrary $\cc{H}$ as in \ref{sin:naturaligualconeconrho}):

\begin{corollary} \label{conodeellrelationsesnattransf} Given 
$\vcenter{\xymatrix@C=2.5pc@R=2.5pc{& shP  \adjuntosd{\gamma^*}{\gamma_*}  \\ \cc{E} \dosflechasr{F}{F'} & \cc{S} }}$, 
 the adjunction $\gamma^* \dashv \gamma_*$ yields a bijective correspondence between $\lozenge_1$-cones of $\ell$-functions (resp $\ell$-bijections) $FX \times F'X \mr{\lambda_X} P$ and natural transformations (resp. isomorphisms) $\gamma^* F \Mr{\varphi} \gamma^* F'$. \qed
\end{corollary}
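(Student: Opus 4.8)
The plan is to assemble the two cited correspondences, both of which are induced by the single adjunction $\gamma^* \dashv \gamma_*$, and then to check that they agree componentwise. First I would specialize Proposition \ref{naturaligualconeconrho} to $\cc{H} = shP$, $h = \gamma$; since $\gamma_* \Omega_P = P$, this yields that a family of arrows $\gamma^* FX \mr{\theta_X} \gamma^* F'X$ is a natural transformation if and only if the corresponding cone $FX \times F'X \mr{\lambda_X} P$ is a $\lozenge_1$-cone. This settles the ``$\lozenge_1$-cone'' aspect of the claim; what remains is to identify precisely which $\ell$-relations $\lambda_X$ arise in this manner.

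That identification is exactly Proposition \ref{prop:relativizarellrelations}: under the correspondence given by $\gamma^* \dashv \gamma_*$, an $\ell$-relation $FX \times F'X \mr{\lambda_X} P$ is an $\ell$-function precisely when the relation it determines in $shP$ is the graph of an arrow $\gamma^* FX \to \gamma^* F'X$, and it is an $\ell$-bijection precisely when that arrow is an isomorphism (the last point by remark \ref{simetria}). I would therefore observe that the arrows $\theta_X$ occurring in Proposition \ref{naturaligualconeconrho} are exactly those determined by $\ell$-functions $\lambda_X$, and conversely every cone of $\ell$-functions gives rise to such a family. Composing the two correspondences then produces the desired bijection between natural transformations $\gamma^* F \Mr{\varphi} \gamma^* F'$ and $\lozenge_1$-cones of $\ell$-functions.

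For the parenthetical ``resp.'' statement I would note that a natural transformation is an isomorphism if and only if each component $\theta_X$ is invertible, which by the above (again remark \ref{simetria}) holds if and only if each $\lambda_X$ is an $\ell$-bijection; hence the bijection restricts to one between natural isomorphisms and $\lozenge_1$-cones of $\ell$-bijections. The same reasoning applies verbatim to an arbitrary topos $\cc{H}$ over $\cc{S}$ in place of $shP$, using the generalization of Proposition \ref{prop:relativizarellrelations} recorded in remark \ref{remark:relativizarellrelations}.

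The only point requiring genuine care — and hence the main (if modest) obstacle — is to verify that the correspondence of Proposition \ref{naturaligualconeconrho}, between a family $\{\theta_X\}$ and a cone $\{\lambda_X\}$, agrees on each component $X$ with the correspondence of Proposition \ref{prop:relativizarellrelations}, between an arrow $\theta_X$ and an $\ell$-function $\lambda_X$. This compatibility holds because both maps are built from the same adjunction $\gamma^* \dashv \gamma_*$ together with the passage between an arrow and its graph effected by the self-duality of $\gamma^* F'X$ (Proposition \ref{naturaligualcone}); as $\gamma^*$ is a tensor functor it transports this self-duality to the one used on the $\cc{S}$-side, so the two prescriptions coincide. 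Once this is noted, the corollary follows with no further computation.
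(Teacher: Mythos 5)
Your proposal is correct and follows essentially the same route as the paper, which obtains the corollary precisely by combining Proposition \ref{naturaligualconeconrho} (specialized to $\cc{H}=shP$) with Proposition \ref{prop:relativizarellrelations}. The compatibility check you flag at the end — that both correspondences are induced by the same adjunction and the same self-duality — is left implicit in the paper but is the right thing to observe.
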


\begin{sinnadastandard} \label{atravesdeunmorfismodetopos}
 Consider finally the previous situation together with a morphism $\cc{F} \mr{} \cc{H}$ of topoi over $\Sat$, as in the following commutative diagram:

$$\xymatrix@R=3.5pc{& \cc{H} \df{rr}{{{a}}^*}{{{a}}_*} && \cc{F}  \\
	  \cc{E} \ar@<.5ex>[rr]^F \ar@<-.5ex>[rr]_{F'} && \cc{S} \dfbis{ul}{h^*}{h_*} \dfbis{ur}{f^*}{f_*}  }$$

Consider the locales in $\Sat$ of subobjects of $1$ in $\cc{H}$, resp. $\cc{F}$, $H:= h_* \Omega_{\cc{H}}$, $L:= f_* \Omega_{\cc{F}}$. Since ${{a}}^*$ is an inverse image, it maps subobjects of $1$ to subobjects of $1$ and thus induces a locale morphism $H \mr{{{a}}^*} L$. 
\end{sinnadastandard}

\begin{remark} \label{caractsubobjeto}
Consider the comparison morphism ${{a}}^* \Omega_{\cc{H}} \mr{\phi_1} \Omega_{\cc{F}}$, which is the characteristic function of the subobject $1 \hookrightarrow {{a}}^*\Omega_{\cc{H}}$  given by ${{a}}^*(t)$. %(see for example \cite[A.2.1 p.69]{Johnstone}). 
Let $A \hookrightarrow X$ be a subobject in $\cc{H}$. We will apply (the first part of) remark \ref{hojafinal} with $f = {{a}}^*(\phi_A)$, to the subobject $1 \hookrightarrow {{a}}^*\Omega_{\cc{H}}$. Since ${{a}}^*$ preserves pull-backs, ${{a}}^*A = f^{-1}1$. We obtain that $\phi_{{{a}}^*A} = \phi_1 \circ {{a}}^* (\phi_A)$.
% Then, for any subobject $\vcenter{\xymatrix{M \ \ \ar@{>->}[r] \ar[d] & X \ar[d]^{\phi_M} \\
% 										      1 \ar[r]^{1} & \Omega_{\cc{H}}  }}$ 
% by composing the pull-backs $\vcenter{\xymatrix{ {{a}}^*M \ \ \ar@{>->}[r] \ar[d] & {{a}}^*X \ar[d]^{{{a}}^*(\phi_M)} \\
% 				     1 \ \ \ar@{>->}[r]^{{{a}}^*(1)} \ar[d] & {{a}}^*\Omega_{\cc{H}} \ar[d]^{\phi_1} \\
% 				     1 \ar[r]^1 & \Omega_{\cc{F}}  }}$ it follows that the characteristic function of 
% 				     
% \vspace{.3cm}
% 				     
% \noindent the subobject ${{a}}^*M$ is $\phi_{{{a}}^*M} = \phi_1 \circ {{a}}^* (\phi_M)$.
\end{remark}

\begin{proposition} \label{paraelcoro}
In the situation of \ref{atravesdeunmorfismodetopos}, for $X,Y \in \Sat$, if $X \times Y \mr{\lambda} H$ corresponds to the relation  \mbox{$h^* X \times h^* Y \mr{\varphi} \Omega_{\cc{H}}$} via the adjunction $h^* \dashv h_*$, then $X \times Y \mr{\lambda} H \mr{{{a}}^*} L$ corresponds to the relation \mbox{$f^* X \times f^* Y \mr{{{a}}^*(\varphi)} {{a}}^* \Omega_{\cc{H}} \mr{\phi_1} \Omega_{\cc{F}}$} via the adjunction $f^* \dashv f_*$.
\end{proposition}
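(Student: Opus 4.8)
The plan is to read both occurrences of ``corresponds via the adjunction'' as transpositions and to reduce the statement to the naturality of the relevant units. Write $\eta^h$ and $\eta^a$ for the units of $h^* \dashv h_*$ and $a^* \dashv a_*$. Since the square in \ref{atravesdeunmorfismodetopos} commutes over $\cc{S}$ we have $f^* \cong a^* h^*$ and $f_* \cong h_* a_*$, and the unit of $f^* \dashv f_*$ factors as $\eta^f = h_*(\eta^a) \circ \eta^h$; under these identifications $f^* X \times f^* Y \cong f^*(X \times Y)$, so the transpose bijections $\cc{S}(X \times Y, H) \cong \cc{H}(h^*(X \times Y), \Omega_{\cc{H}})$ and $\cc{S}(X \times Y, L) \cong \cc{F}(f^*(X \times Y), \Omega_{\cc{F}})$ carry $\lambda \leftrightarrow \varphi$ and $a^* \circ \lambda \leftrightarrow \phi_1 \circ a^*(\varphi)$ respectively, and these are what must be matched.

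First I would pin down the locale morphism $H \mr{a^*} L$ of \ref{atravesdeunmorfismodetopos} concretely. Let $\psi \colon \Omega_{\cc{H}} \to a_* \Omega_{\cc{F}}$ be the transpose of the comparison map $\phi_1$ under $a^* \dashv a_*$, that is $\psi = a_*(\phi_1) \circ \eta^a_{\Omega_{\cc{H}}}$, and set $h_*(\psi) \colon H = h_* \Omega_{\cc{H}} \to h_* a_* \Omega_{\cc{F}} = L$. I claim $h_*(\psi)$ is exactly the locale morphism $a^*$. By Yoneda it suffices to compare the induced maps $\cc{S}(Z, H) \to \cc{S}(Z, L)$ for all $Z$; transposing along $h^* \dashv h_*$ and $f^* \dashv f_*$ turns $h_*(\psi) \circ (-)$ into the assignment $\phi \mapsto \phi_1 \circ a^*(\phi)$ on characteristic functions $h^* Z \to \Omega_{\cc{H}}$, which by remark \ref{caractsubobjeto} is $\phi_A \mapsto \phi_{a^* A}$, i.e. the action $A \mapsto a^* A$ of $a^*$ on subobjects defining the locale morphism.

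With this identification the verification is a one-line chase. By definition $\lambda = h_*(\varphi) \circ \eta^h$, so
$$a^* \circ \lambda = h_*(\psi) \circ h_*(\varphi) \circ \eta^h = h_*(\psi \circ \varphi) \circ \eta^h.$$
On the other hand the transpose of $\phi_1 \circ a^*(\varphi)$ under $f^* \dashv f_*$ is
$$f_*\big(\phi_1 \circ a^*(\varphi)\big) \circ \eta^f = h_*\big(a_*(\phi_1) \circ a_*(a^* \varphi) \circ \eta^a_{h^*(X \times Y)}\big) \circ \eta^h.$$
Naturality of $\eta^a$ at $\varphi$ gives $a_*(a^* \varphi) \circ \eta^a_{h^*(X \times Y)} = \eta^a_{\Omega_{\cc{H}}} \circ \varphi$, so the inner composite collapses to $a_*(\phi_1) \circ \eta^a_{\Omega_{\cc{H}}} \circ \varphi = \psi \circ \varphi$; hence this transpose equals $h_*(\psi \circ \varphi) \circ \eta^h = a^* \circ \lambda$, as desired.

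The main obstacle is the middle step: the morphism $H \mr{a^*} L$ is given in \ref{atravesdeunmorfismodetopos} only through its effect on subobjects of $1$, and one must recognise it as the concrete arrow $h_*(\psi)$ assembled from the comparison map $\phi_1$. Remark \ref{caractsubobjeto} is precisely the bridge making this identification, after which the closing computation is purely formal, resting only on the triangle decomposition of $\eta^f$ and the naturality of $\eta^a$.
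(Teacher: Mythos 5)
Your proposal is correct and follows essentially the same route as the paper: decompose $f^*\dashv f_*$ as the composite of $h^*\dashv h_*$ and $a^*\dashv a_*$, transpose in two steps, and reduce everything to identifying $h_*(\psi)$ (the paper's $h_*(\psi_1)$) with the locale morphism $a^*$ via remark \ref{caractsubobjeto}. The only cosmetic difference is that you write out the unit-naturality computation explicitly and check the key identification on generalized elements over all $Z$ by Yoneda, whereas the paper performs the same check in the internal language on subobjects of $1$.
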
 %between the subobjects of $1$

\begin{proof} The adjunction $f^* \dashv f_*$ consists of composing the adjunctions $h^* \dashv h_*$ and ${{a}}^* \dashv {{a}}_*$, then we have:

\begin{center}
 \begin{tabular}{c}
  ${{a}}^* h^* X \times {{a}}^* h^* Y  \xr{{{a}}^*(\varphi)} {{a}}^* \Omega_{\cc{H}} \mr{\phi_1} \Omega_{\cc{F}}$ \\ \hline  \noalign{\smallskip}
  $h^* X \times h^* Y  \mr{\varphi} \Omega_{\cc{H}} \mr{\psi_1} {{a}}_* \Omega_{\cc{F}}$ \\ \hline  \noalign{\smallskip}
  $X \times Y  \mr{\lambda} H \xr{h_*(\psi_1)} L, $
 \end{tabular}
\end{center}

\noindent where $\psi_1$ corresponds to $\phi_1$ in the adjunction ${{a}}^* \dashv {{a}}_*$. So we have to check that \mbox{$h_*(\psi_1) = {{a}}^*$.} 
%${{a}}$ is ${{a}}^*$ applied to a subobject $U \hookrightarrow 1$.
Let a subobject $U \hookrightarrow 1$. 
This subobject can be considered in $H = h_* \Omega_{\cc{H}} = [1, \Omega_{\cc{H}}]$ via its characteristic function $\phi_U$. Now, $h_* (\psi_1) (\phi_U)$ is the composition $1 \mr{\phi_U} \Omega_{\cc{H}} \mr{\psi_1} {{a}}_* \Omega_{\cc{F}}$ in $h_* {{a}}_* \Omega_{\cc{F}}$, and the corresponding arrow $1 \mr{} \Omega_{\cc{F}}$ is given by the adjunction ${{a}}^* \dashv {{a}}_*$. But this arrow is $1 \xr{{{a}}^* (\phi_U)} {{a}}^* \Omega_{\cc{H}} \mr{\phi_1} \Omega_{\cc{F}}$, which by remark \ref{caractsubobjeto} is $\phi_{{{a}}^*U}$, and we are done.
\end{proof}

\begin{corollary}  \label{naturaligualconeconrhoatravesdeunmorfismodetopos}
 In the hypothesis of \ref{atravesdeunmorfismodetopos}, consider a natural transformation \mbox{$h^* FX \mr{\theta_X} h^* F'X$} and the corresponding $\lozenge_1$-cone $FX \times F'X \mr{\lambda_X} H$ obtained in proposition \ref{naturaligualconeconrho}. Then the $\lozenge_1$-cone with vertex $L$ corresponding by proposition \ref{naturaligualconeconrho} to the horizontal composition $id_{{{a}}^*} \circ \theta$ of natural transformations, whose components are \mbox{$f^* FX  \xr{{{a}}^*(\theta_X)} f^* F'X$, is $FX \times F'X \mr{\lambda_X} H \mr{{{a}}^*} L$.} 
\end{corollary}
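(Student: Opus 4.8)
The plan is to unwind both instances of the correspondence of Proposition \ref{naturaligualconeconrho} and then reduce the claim to a single application of Proposition \ref{paraelcoro}. Write $\varphi_X = \lambda_{\theta_X} : h^* FX \times h^* F'X \to \Omega_{\cc{H}}$ for the graph relation of $\theta_X$, so that by the construction underlying Proposition \ref{naturaligualconeconrho} the cone $\lambda_X : FX \times F'X \to H$ is precisely the one corresponding to $\varphi_X$ under $h^* \dashv h_*$. Likewise write $\psi_X = \lambda_{a^*(\theta_X)} : f^* FX \times f^* F'X \to \Omega_{\cc{F}}$ for the graph relation of the components $a^*(\theta_X)$ of the whiskered transformation $id_{a^*} \circ \theta$ (recall $f^* = a^* h^*$). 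By definition the $\lozenge_1$-cone $\mu_X : FX \times F'X \to L$ attached to $id_{a^*} \circ \theta$ is the one corresponding to $\psi_X$ under $f^* \dashv f_*$. What must be shown is therefore the single equality $\mu_X = a^* \circ \lambda_X$.

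The key step is the identity $\psi_X = \phi_1 \circ a^*(\varphi_X)$. Since $a^*$ is an inverse image it preserves finite limits, hence pullbacks and in particular graphs of arrows; thus it carries the subobject $R_{\theta_X} \hookrightarrow h^* FX \times h^* F'X$ to $a^* R_{\theta_X} = R_{a^*(\theta_X)} \hookrightarrow f^* FX \times f^* F'X$. Applying Remark \ref{caractsubobjeto} with $A = R_{\theta_X}$ and $\phi_A = \varphi_X$ gives $\phi_{a^* R_{\theta_X}} = \phi_1 \circ a^*(\varphi_X)$; since the left-hand side is by the definition in \ref{arrowsvsfunctions} the characteristic function $\lambda_{a^*(\theta_X)} = \psi_X$, this is exactly the desired identity.

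Finally, Proposition \ref{paraelcoro}, applied to the objects $FX, F'X \in \Sat$ together with the relation $\lambda_X$ and its transpose $\varphi_X$, asserts precisely that $a^* \circ \lambda_X : FX \times F'X \to H \to L$ corresponds, under $f^* \dashv f_*$, to $\phi_1 \circ a^*(\varphi_X)$. By the previous paragraph this is $\psi_X$, which is exactly the relation to which $\mu_X$ corresponds. As the transposition $f^* \dashv f_*$ is a bijection, we conclude $\mu_X = a^* \circ \lambda_X$, as claimed. The only point requiring genuine care is the compatibility of the whiskering $id_{a^*} \circ \theta$ with the graph construction, namely that $a^*$ sends the graph of $\theta_X$ to the graph of $a^*(\theta_X)$ and that its characteristic function is recovered through the comparison map $\phi_1$; both are immediate from exactness of the inverse image $a^*$ and Remark \ref{caractsubobjeto}, so no serious obstacle remains once Proposition \ref{paraelcoro} is invoked.
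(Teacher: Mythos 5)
Your proof is correct and follows essentially the same route as the paper's: identify the graph relation $\varphi_X$ of $\theta_X$, observe that $a^*$ carries its graph to the graph of $a^*(\theta_X)$ with characteristic function $\phi_1 \circ a^*(\varphi_X)$ via Remark \ref{caractsubobjeto}, and conclude by Proposition \ref{paraelcoro}. The only difference is that you spell out explicitly (via left-exactness of $a^*$) why graphs are preserved, which the paper leaves implicit.
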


\begin{proof} Each $h^* FX \mr{\theta_X} h^* F'X$ corresponds to a relation $h^* FX \times h^* F'X \mr{\varphi_X} \Omega_{\cc{H}}$, which corresponds to $FX \times F'X \mr{\lambda_X} H$ via the adjunction $h^* \dashv h_*$. Denote by \mbox{$R_X \hookrightarrow h^* FX \times h^* F'X$} the subobject corresponding to $\varphi_X$.
 
 The subobject corresponding to  $f^* FX \xr{{{a}}^*(\theta_X)} f^* F'X$, is ${{a}}^*R_X \hookrightarrow f^* FX \times f^* F'X$, whose characteristic function (use remark \ref{caractsubobjeto}) is the relation $f^* FX \times f^* F'X \xr{{{a}}^*(\varphi_X)} {{a}}^*\Omega_{\cc{H}} \mr{\phi_1} \Omega_{\cc{F}}$. 
 Proposition \ref{paraelcoro} (with $X = FX$, $Y=F'X$) finishes the proof.
\end{proof}
\end{sinnadastandard}

\begin{sinnadastandard} {\bf Cones over a site.} 
Consider a topos $\cc{E}$ over $\cc{S}$, and a small site of definition $\cc{C}$ for $\cc{E}$. We will show that 
$\lozenge$-cones defined over $\cc{C}$ can be uniquely extended to $\lozenge$-cones defined over $\cc{E}$. This will provide existence theorems for constructions defined by universal properties quantified over large (external) sets, see proposition \ref{EndhasX}.

Let $\cc{C} \mr{F} \cc{S}$ be (the inverse image of) a point of the site, and $\cc{C}^{op} \mr{X} \cc{S}$ be a sheaf, $X \in \cc{E}$. Let 
$\Gamma_F \mr{} \cc{C}$ be the (small) diagram (discrete fibration) of $F$, recall that it is a cofiltered category whose objects are pairs $(c,C)$ with $c \in FC$, and whose arrows $(c,C) \mr{f} (d,D)$ are arrows $C \mr{f} D$ that satisfy $F(f)(c)=d$. Abuse notation and denote also by $F$, 
$\cc{E} \mr{F} \cc{S}$, the inverse image of the corresponding morphism of topoi. Recall the formulae:
\BE \label{yonedaII}
FX = X \otimes_\cc{C} F = \int^{C} XC \times FC 
\;\; \cong \;\; \colim{(c,C) \in \Gamma_F}{XC} \;\; \ml{\rho} \;\; \coprod_{C \in \cc{C}} XC \times FC
\EE
By Yoneda we have $\cc{E}(C, X) \mr{\cong} XC$, and under this identification we have, 

$$\text{for} \;\; C \mr{f} X \;\; \text{and} \;\;
c \in FC, \;\; F(f)(c) = \rho(f, c) \in FX,$$

\vspace{-5ex}

\BE \label{masvale}
\EE 

\vspace{-5ex}

$$\text{for} \;\; E \mr{h} C \text{ in } \cc{C}, \;\; X(h)(f) = fh.$$

\begin{remark} \label{rhoepi}
 Let $a \in FX$. Since $\rho$ is an epimorphism, there exist $C, \; f \in XC$ and $c \in FC$ such that $F(f)(c)=a$.
\end{remark}

\begin{remark} \label{FX}
Let $C, D \in \cc{C}$, $f \in XC$, $c \in FC$,  
$g \in XD$, $d \in FD$, be such that \mbox{$F(f)(c) = F(g)(d)$,} i.e. $\rho(f, c) = \rho(g, d)$. Since the category 
$\Gamma_F$ is cofiltered, by construction of filtered colimits there exist $E,\; e \in FE$ and $E \mr{h} C$,
 $E \mr{\ell} D$ such that $F(h)(e) = c$, 
 $F(\ell)(e) = d$ and $X(h)(f) = X(\ell)(g)$, i.e.
 $f h = g \ell$. \qed
\end{remark}

The following is the key result for the existence theorems in section \ref{sec:Contexts}.

\begin{proposition} \label{extension} Consider a small site of definition $\cc{C}$ of the topos $\cc{E}$. 
Then suitable cones defined over $\cc{C}$ can be extended to $\cc{E}$, more precisely:

1) Let $TC \times T'C \mr{\lambda_C} H$ be a $\lozenge_1$-cone (resp. 
$\lozenge_2$-cone, resp. $\lozenge$-cone) defined over $\cc{C}$. Then, $H$ can be uniquely furnished with $\ell$-relations $\lambda_X$ for all objects $X \in \cc{E}$ in such a way to determine a $\lozenge_1$-cone (resp. $\lozenge_2$-cone, resp. $\lozenge$-cone) over $\cc{E}$ extending $\lambda$. 
 
2) If $H$ is a locale and $\lambda_C$ ($C \in \cc{C}$) is a $\lozenge_1$-cone of $\ell$-functions (resp. $\lozenge_2$-cone of $\ell$-opfunctions, resp. $\lozenge$-cone of $\ell$-bijections), so is $\lambda_X$ ($X \in \cc{E}$).
\end{proposition}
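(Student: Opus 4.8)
The plan is to pin down $\lambda_X$ by the only formula the cone axioms allow, prove it is well defined, and then check the required diagrams.

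\emph{Definition and uniqueness.} Fix $X\in\cc{E}$ and $a\in FX$. By Remark \ref{rhoepi} I may write $a=F(f)(c)=\rho(f,c)$ with $f\colon C\to X$ an arrow of $\cc{E}$, $C\in\cc{C}$ and $c\in FC$. If $\lambda$ is to be a $\lozenge_1$-cone extending $\lambda_C$, then $\lozenge_1(f)$ applied to the arrow $f\colon C\to X$ forces, for every $a'\in F'X$,
\[ \lambda_X(a,a')=\bigvee_{w\in F'C}\llbracket F'(f)(w)\!=\!a'\rrbracket\cdot\lambda_C(c,w). \]
Since $\rho$ is epi every $a$ has such a presentation, so this formula determines $\lambda_X$ and yields uniqueness at once. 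I take it as the \emph{definition} of $\lambda^{(1)}_X$ in the $\lozenge_1$ case (and the mirror formula, decomposing the second variable through $F'$, as the definition of $\lambda^{(2)}_X$ in the $\lozenge_2$ case).

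\emph{Well-definedness and the $\lozenge_1$ diagrams.} To see that the formula does not depend on the presentation, suppose $\rho(f,c)=\rho(g,e)$ with $g\colon D\to X$; by Remark \ref{FX} there are $h\colon E\to C$, $\ell\colon E\to D$ and $\varepsilon\in FE$ with $F(h)(\varepsilon)=c$, $F(\ell)(\varepsilon)=e$ and $fh=g\ell$. Expanding $\lambda_C(c,w)$ by $\lozenge_1(h)$ (which holds over $\cc{C}$) and collapsing the inner supremum by the substitution identity $\bigvee_{w}\llbracket w\!=\!t\rrbracket\cdot\phi(w)=\phi(t)$ (Lemma \ref{ecuacionenL} together with $\bigvee_w\llbracket w\!=\!t\rrbracket=1$), both presentations reduce to $\bigvee_{w\in F'E}\llbracket F'(fh)(w)\!=\!a'\rrbracket\cdot\lambda_E(\varepsilon,w)$, which is symmetric since $fh=g\ell$. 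The same substitution identity, applied to a presentation $a=F(f)(c)$ and to $\lambda_X(a,y)$ occurring inside the right-hand side, verifies $\lozenge_1(u)$ for an arbitrary arrow $u\colon X\to Y$ of $\cc{E}$: both sides collapse to $\bigvee_{w}\llbracket F'(uf)(w)\!=\!b'\rrbracket\cdot\lambda_C(c,w)$. This settles part 1) in the $\lozenge_1$ case, and the $\lozenge_2$ case is symmetric.

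\emph{The $\lozenge$ case.} Here I expect the only real subtlety. Given a $\lozenge$-cone over $\cc{C}$ I extend it by the $\lozenge_1$-formula to $\lambda^{(1)}$ and by the mirror formula to $\lambda^{(2)}$; by the previous paragraph $\lambda^{(1)}$ is a $\lozenge_1$-cone and $\lambda^{(2)}$ a $\lozenge_2$-cone over $\cc{E}$. The key step is to prove $\lambda^{(1)}=\lambda^{(2)}$. Writing $a=F(f)(c)$ with $f\colon C\to X$ and $a'=F'(g)(d)$ with $g\colon D\to X$, I consider the relation $R\colon C\to D$ given by the subobject $\{(s,t)\,:\,f(s)=g(t)\}\hookrightarrow C\times D$ (that is, $R=g^{\mathrm{op}}\!\circ f$), a relation between objects of $\cc{C}$. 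Since $F$ and $F'$ preserve this pullback, $\llbracket c\,(FR)\,v\rrbracket=\llbracket a\!=\!F(g)(v)\rrbracket$ and $\llbracket w\,(F'R)\,d\rrbracket=\llbracket F'(f)(w)\!=\!a'\rrbracket$, so the equation $\lozenge(R)$ — valid because $\lambda$ is a $\lozenge$-cone over $\cc{C}$ — reads exactly $\lambda^{(1)}_X(a,a')=\lambda^{(2)}_X(a,a')$. Hence the common family is simultaneously a $\lozenge_1$- and a $\lozenge_2$-cone over $\cc{E}$, so a $\lozenge$-cone by Proposition \ref{dim1ydim2esdim}; uniqueness follows from the $\lozenge_1$ uniqueness already proved.

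\emph{Part 2).} For the locale case it remains to transport the axioms of Definition \ref{4axioms}. This can be read off the defining formula: \emph{ed}) for $\lambda_X$ follows from $\bigvee_{a'}\llbracket F'(f)(w)\!=\!a'\rrbracket=1$ and \emph{ed}) for $\lambda_C$, while \emph{uv}) follows from \emph{uv}) for $\lambda_C$ after using $\lambda_C(c,w_1)\wedge\lambda_C(c,w_2)=\llbracket w_1\!=\!w_2\rrbracket\cdot\lambda_C(c,w_1)$ to identify $w_1$ with $w_2$; \emph{su}) and \emph{in}) are symmetric. More conceptually, when $H=P$ is a locale Corollary \ref{conodeellrelationsesnattransf} identifies $\lozenge_1$-cones of $\ell$-functions over $\cc{C}$ with natural transformations $\gamma^*F\Rightarrow\gamma^*F'$ restricted to $\cc{C}$; since $\gamma^*F$ and $\gamma^*F'$ are cocontinuous, such a transformation extends uniquely along the site to all of $\cc{E}$, and this extension is, by uniqueness, our $\lambda^{(1)}$, whence it is again a cone of $\ell$-functions (and of $\ell$-bijections in the $\lozenge$/isomorphism case).
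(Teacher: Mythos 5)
Your proof is correct and, for most of its steps --- the forced formula for $\lambda_X$, the independence of the presentation via Remark \ref{FX} and the $\lozenge_1(h)$ diagrams over $\cc{C}$, the verification of $\lozenge_1(u)$ for an arbitrary arrow $u$ of $\cc{E}$, and the direct computation of ed) and uv) in part 2 --- it coincides with the paper's argument. The one place where you take a genuinely different route is the proof that the two extensions agree in the $\lozenge$ case. The paper uses only that the cone is simultaneously a $\lozenge_1$- and a $\lozenge_2$-cone on \emph{arrows} of $\cc{C}$: since each definition is independent of the chosen presentation, both are shown to equal the manifestly symmetric supremum $\bigvee_{C\to X}\bigvee_{c}\bigvee_{y}\llbracket T(f)(c)\!=\!a\rrbracket\cdot\llbracket T'(f)(y)\!=\!b\rrbracket\cdot\lambda_C(c,y)$ over all presentations. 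You instead invoke the full $\lozenge(R)$ hypothesis for the comparison relation $R=g^{\mathrm{op}}\circ f\hookrightarrow C\times D$ between two site objects, which (since $F$ and $F'$ preserve the defining pullback) literally reads $\lambda^{(1)}_X(a,a')=\lambda^{(2)}_X(a,a')$. Both are valid: $R$ is an arrow of the full subcategory of $Rel(\cc{E})$ on objects of $\cc{C}$, so $\lozenge(R)$ is indeed part of the hypothesis of being a $\lozenge$-cone over $\cc{C}$; the paper's symmetrization is slightly more economical in that it consumes only the arrow-indexed diagrams, while yours is more direct but uses the relation-indexed ones. Finally, your ``more conceptual'' alternative for part 2 via Corollary \ref{conodeellrelationsesnattransf} should be treated as a heuristic only: that corollary is stated for cones over all of $\cc{E}$, and transporting it to cones over the site would itself lean on the extension result being proved; the direct computation you give first is the one that matches the paper and is the one to keep.
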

\begin{proof} 

1) Recall that $T = F$ on $\cc{C}$. % and on $\cc{E}$ so that 
%$\Gamma_T = \Gamma_F$.   
Let $X \in \cc{E}$, then $TX = FX$, $T'X = F'X$, and let   \mbox{$(a,\,b) \in TX \times T'X$.}  By \eqref{yonedaII}, \eqref{masvale} and remark \ref{rhoepi} we can take $C \mr{f} X$ and $c \in TC$ such that $a = T(f)(c) = F(f)(c)$ (see \ref{extensionarel})
If $\lambda_X$ were defined so that the 
$\lozenge_1(f)$ diagram commutes, the equation 
$$
(1) \hspace{2ex} \lambda_X( a,\,b)  = \bigvee_{y \in T'C} \igu[T'(f)(y)]{b} \cdot \lambda_C( c,\,y)
$$ 
should hold (see \eqref{ecuacionesdiagramas}). We define $\lambda_X$ by this equation. This definition is independent of the choice of $c, \, C,$ and $f$. In fact, let $D \mr{g} X$ and $d \in TD$ be such that 
$a = T(g)(d)$. By remark \ref{FX} we can take  
$(e,\, E)$ in the diagram of $T$ (or $F$), $E \mr{h} C$, $E \mr{\ell} D$ such that 
$T(h)(e) = c$, $T(\ell)(e) = d$ and $fh=g\ell$. Then we compute

\vspace{1ex}

\noindent $ \displaystyle
 \bigvee_{y \in T'C} \igu[T'(f)(y)]{b} \cdot \lambda_C( c,\,y) \;\; \stackrel{\lozenge_1(h)}{=} \;\; \bigvee_{y \in T'C} \;\;\bigvee_{w \in T'E} \igu[T'(f)(y)]{b} \cdot \igu[T'(h)(w)]{y} \cdot \lambda_E( e,\,w) = $

%\vspace{-1.5ex}
 
 \hfill $ \displaystyle
= \bigvee_{w \in T'E} \igu[T'(fh)(w)]{b} \cdot \lambda_E( e,\,w).
$

\vspace{.3cm}

From this and the corresponding computation with $d, \, D,$ and $\ell$, it follows:
$$
\bigvee_{y \in T'C} \igu[T'(f)(y)]{b} \cdot \lambda_C( c,\,y) = \bigvee_{y \in T'D} \igu[T'(g)(y)]{b} \cdot \lambda_{D}( d,\,y).
$$
Given $X \mr{g} Y$ in $\cc{E}$, we check that the  
$\lozenge_1(g)$ diagram commutes: Let $(a,\,b) \in TX \times T'Y$, take $C \mr{f} X$, $c \in TC$ such that $a = T(f)(c)$, and let \mbox{$d = T(g)(a) = T(gf)(c)$.} Then 

$$ \lambda_Y ( d,b ) = \bigvee_{z \in T'C} \igu[T'(gf)(z)]{b} \cdot \lambda_C ( c,z ) = \bigvee_{z \in T'C} \bigvee_{x \in X} \igu[T'(f)(z)]{x} \cdot \igu[T'(g)(x)]{b} \cdot \lambda_C ( c,z ) = $$

\begin{flushright}
$\displaystyle = \bigvee_{x \in T'X} \igu[T'(g)(x)]{b} \cdot \bigvee_{z \in T'Z} \igu[T'(f)(z)]{x} \cdot \lambda_C ( c,z ) = $
$ \displaystyle \bigvee_{x \in T'X} \igu[T'(g)(x)]{b} \cdot \lambda_X ( a,x ).$
\end{flushright}

Clearly a symmetric argument can be used if we assume at the start that the $\lozenge_2$ diagram commutes. In this case, $\lambda_X$ would be defined by taking  $C \mr{f} X$ and $c \in T'C$ such that \mbox{$b = T'(f)(c)$} and computing:
$$
(2) \hspace{2ex}  \lambda_X( a,\,b)  = \bigvee_{y \in TC} \igu[T(f)(y)]{a} \cdot \lambda_{C}( y,\,c).
$$  

If the $TC \times T'C \mr{\lambda_C} H$ form a $\lozenge$-cone (i.e. a $\lozenge_1$-cone and a 
$\lozenge_2$-cone), definitions (1) and (2) coincide. In fact, since they are independent of the chosen $c$, it follows they are both equal to: 

\vspace{1ex}

$ \displaystyle
\bigvee_{C \mr{f} X} \bigvee_{c \in TC} \bigvee_{y \in T'C} \igu[T(f)(c)]{a} \cdot \igu[T'(f)(y)]{b} \cdot \lambda_C( c,\,y) 
\hspace{2ex} = \hspace{2ex} $

\vspace{-1.5ex}

\hfill $ \displaystyle
\bigvee_{C \mr{f} X} \bigvee_{c \in T'C} \bigvee_{y \in TC} \igu[T'(f)(c)]{b} \cdot \igu[T(f)(y)]{a} \cdot \lambda_C( y,\,c).
$

\vspace{1ex}

2) It suffices to prove that if $\lambda_C$ ($C \in \cc{C}$) is a $\lozenge_1$-cone of $\ell$-functions, so is $\lambda_X$ ($X \in \cc{X}$). Let $X \in \cc{E}$, $a \in TX$, $b_1,b_2 \in T'X$. Take as in item 1) $C \mr{f} X$ and $c \in TC$ such that $a = T(f)(c)$.

\vspace{.5cm}

\noindent$ed) \displaystyle \bigvee_{b \in T'X} \lambda_X( a,\,b) \;\; = \;\; \bigvee_{b \in T'X} \bigvee_{y \in T'C} \igu[T'(f)(y)]{b} \cdot \lambda_C( c,\,y) \;\; = \;\; \bigvee_{y \in T'C} \lambda_C( c,\,y) \;\; \stackrel{ed)}{=} \;\; 1$.

\vspace{.5cm}

\noindent $uv)$ $\displaystyle \lambda_X(a,b_1) \wedge \lambda_X(a,b_2) = \displaystyle  \bigvee_{y_1,y_2 \in T'C} \igu[T'(f)(y_1)]{b_1} \cdot \igu[T'(f)(y_2)]{b_2} \cdot \lambda_C(c,y_1) \wedge \lambda_C(c,y_2) \stackrel{uv)}{\leq} $

\hfill $\displaystyle  \bigvee_{y_1,y_2 \in T'C} \igu[T'(f)(y_1)]{b_1} \cdot \igu[T'(f)(y_2)]{b_2} \cdot \igu[y_1]{y_2} \;\; {\leq} $

\hfill $\displaystyle \! \! \! \bigvee_{y_1,y_2 \in T'C} \! \! \! \igu[T'(f)(y_1)]{b_1} \! \cdot \!  \igu[T'(f)(y_2)]{b_2} \! \cdot \! \igu[T'(f)(y_1)]{T'(f)(y_2)} \;\; {\leq} \;\; \igu[b_1]{b_2}$.

\end{proof}

\begin{assumption}
For the rest of this section we consider a small site $\cc{C}$ (with binary \mbox{products and $1$)} of the topos $\cc{E}$, and cones defined over $\cc{C}$.
\end{assumption}
\end{sinnadastandard}

\begin{sinnadastandard} {\bf Compatible cones.} 
We now introduce the notion of compatible cone. %It is a very useful notion to obtain results for locales from results for sup-lattices, as the following propositions show. 
Any compatible \mbox{$\lozenge$-cone} which covers a commutative algebra $H$ will force $H$ to be a locale, and such a cone will necessarily be a cone of $\ell$-bijections (and vice versa):

\begin{definition} \label{comp}
Let $H$ be a commutative algebra in $s \ell$, with multiplication $*$ and unit $u$. Recall that the product is a map $H \otimes H \mr{*} H$, and that $u \in H$ induces a linear map $\Omega \mr{u} H$.
%(We consider $H \times H \mr{*} H$ bilinear and thus inducing $H \otimes H \mr{*} H$, and $u$ given by $u \in H$, i.e. $1 \mr{u} H$ inducing a linear morphism $\Omega \mr{u} H$). 

Let $TC \times T'C \mr{\lambda_C} H$ be a cone. 
We say that $\lambda$ is \emph{compatible} if the following equations hold: 

\flushleft $[C1] \quad \forall \ a \in TC, a' \in T'C, b \in TD, b' \in T'D, \quad \lambda_C ( a,\,a' ) *  \lambda_D ( b,\,b' ) = \lambda_{C \times D}( (a,\,b),\,(a',\,b') )$

\flushleft $[C2] \quad \lambda_1 = u.$
\end{definition}

Given a compatible cone, consider the diagonal $C \mr{\Delta} C \times C$, the arrow $C \mr{\pi} 1$, and the following $\lozenge_1$ diagrams (see \ref{diagramadiamante12}):
%\begin{equation}\label{dosdiag}
$$
\xymatrix@R=4ex@C=2ex
        { 
         & TC \!\times\! T'C \ar[rd]^{\lambda_C} 
         & & & TC \!\times\! T'C \ar[rd]^{\lambda_C} 
        \\
	     TC \!\times\! (T'C \!\times\! T'C) 
	                      \ar[ru]^{TC \times \Delta^{op}} 
	                      \ar[rd]_{\Delta \times (T'C \times T'C) \ \ } 
	     & \equiv 
	     & \hspace{0ex} H, 
	     & TC \!\times\! 1 \ar[ru]^{TC \ \times \ \pi^{op}} 
	                  \ar[rd]_{\pi \ \times \ 1} 
	     & \!\! \equiv 
	     & \hspace{0ex} H.
	    \\
	     & (TC \!\times\! TC) \!\times\! (T'C \!\times\! T'C) 
	                          \ar[ru]_{\lambda_{C \times C}} 
	     & & & 1 \!\times\! 1 \ar[ru]_{\lambda_1} 
	    }
$$
%\end{equation}
%Let $a,\, b_1,\, b_2 \in TX$, and let $b$ stand for either $b_1$ or $b_2$. Chasing $(a,\,b_1,\,b_2)$ in the first diagram and $(a,*)$ in the second it follows: 

expressing the equations: for each $a \in TC, b_1,b_2 \in T'C,$

\vspace{1ex}

\noindent $\displaystyle \lozenge_1(\triangle) \! : \quad   \lambda_{C \times C}( (a,a),(b_1,b_2) ) \, \stackrel{}{=} \bigvee_{x \in T'C} \igu[(x,x)]{(b_1,b_2)} \cdot \lambda_C ( a,\,x)$,

\vspace{1ex} 

\noindent $\displaystyle \lozenge_1(\pi) \! : \quad   \lambda_1 = \bigvee_{x \in T'C} \lambda_C( a,\,x)  $.

\begin{lemma} \label{lemaparaconocomp}
 Let $TC \times T'C \mr{\lambda} H$ be a compatible $\lozenge_1$-cone (or $\lozenge_2$-cone, or $\lozenge$-cone) with vertex a commutative algebra $H$. Then, for each $a \in TC, b_1,b_2 \in T'C,$
\begin{enumerate}
 \item $\displaystyle \lambda_C ( a,b_1 ) *  \lambda_C ( a,b_2 ) = \igu[b_1]{b_2} \cdot \lambda_C(a,b_1)$.
 \item $\displaystyle u = \bigvee_{x \in T'C} \lambda_C( a,\,x)$.
\end{enumerate}
\end{lemma}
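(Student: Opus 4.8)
The plan is to reduce to the $\lozenge_1$ case throughout: a $\lozenge$-cone is in particular a $\lozenge_1$-cone, and the $\lozenge_2$ case is handled by the mirror argument (interchanging the two variables), which yields the symmetric conclusions $\lambda_C(a_1,b)*\lambda_C(a_2,b)=\igu[a_1]{a_2}\cdot\lambda_C(a_1,b)$ and $u=\bigvee_{x\in TC}\lambda_C(x,b)$. Item~2 is then immediate: since $\cc{C}$ has a terminal object, $C\mr{\pi}1$ is an arrow of the site, so the $\lozenge_1(\pi)$ diagram holds, giving $\lambda_1=\bigvee_{x\in T'C}\lambda_C(a,x)$; combining this with the compatibility equation $[C2]$, namely $\lambda_1=u$, produces $u=\bigvee_{x\in T'C}\lambda_C(a,x)$.

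For item~1 I will use that $\cc{C}$ has binary products, so the diagonal $C\mr{\triangle}C\times C$ is an arrow of the site and $\lozenge_1(\triangle)$ is available. First I rewrite the left-hand side by compatibility: $[C1]$ with $D=C$ gives $\lambda_C(a,b_1)*\lambda_C(a,b_2)=\lambda_{C\times C}((a,a),(b_1,b_2))$. Next I apply $\lozenge_1(\triangle)$ to express the right side as $\bigvee_{x\in T'C}\igu[(x,x)]{(b_1,b_2)}\cdot\lambda_C(a,x)$, and use that $T'=F'$ preserves products, so equality in $T'(C\times C)=T'C\times T'C$ is componentwise and $\igu[(x,x)]{(b_1,b_2)}=\igu[x]{b_1}\wedge\igu[x]{b_2}$. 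At this point the product has become $\bigvee_{x\in T'C}\big(\igu[x]{b_1}\wedge\igu[x]{b_2}\big)\cdot\lambda_C(a,x)$.

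The computational heart is collapsing this supremum, and here Lemma~\ref{ecuacionenL} acts as a substitution principle. Reading the $\Omega$-action associatively, the summand is $\igu[x]{b_1}\cdot\big(\igu[x]{b_2}\cdot\lambda_C(a,x)\big)$; applying Lemma~\ref{ecuacionenL} to the arrow $T'C\mr{f}H$, $f(x)=\igu[x]{b_2}\cdot\lambda_C(a,x)$, at $y=b_1$ replaces it by $\igu[x]{b_1}\cdot\igu[b_1]{b_2}\cdot\lambda_C(a,b_1)$. Taking $\bigvee_{x\in T'C}$ and pulling the $x$-independent factor $\igu[b_1]{b_2}\cdot\lambda_C(a,b_1)$ out of the supremum (the action being a sup-lattice morphism in each variable) leaves $\big(\bigvee_{x\in T'C}\igu[x]{b_1}\big)\cdot\igu[b_1]{b_2}\cdot\lambda_C(a,b_1)$, and $\bigvee_{x\in T'C}\igu[x]{b_1}=1$ since $x=b_1$ is a witness. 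This gives precisely $\lambda_C(a,b_1)*\lambda_C(a,b_2)=\igu[b_1]{b_2}\cdot\lambda_C(a,b_1)$.

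The only delicate point I anticipate is the bookkeeping of the $\Omega$-action in this last step: one must check that the meet $\igu[x]{b_1}\wedge\igu[x]{b_2}$ factors through the action as iterated scalar multiplication, that Lemma~\ref{ecuacionenL} is applied to the correct auxiliary function $f$, and that suprema commute with the action so the constant factor can be extracted. Each of these is routine, but they are the places where an order-theoretic calculation can silently go astray; everything else is a direct appeal to $[C1]$, $[C2]$, and the diagrams $\lozenge_1(\triangle)$ and $\lozenge_1(\pi)$ already recorded above.
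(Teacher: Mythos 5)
Your proof is correct and follows essentially the same route as the paper's: item 2 from $[C2]$ together with $\lozenge_1(\pi)$, and item 1 via $[C1]$, then $\lozenge_1(\triangle)$, then Lemma \ref{ecuacionenL} applied to $f(x)=\igu[x]{b_2}\cdot\lambda_C(a,x)$ to substitute $x\mapsto b_1$, finishing with $\bigvee_{x\in T'C}\igu[x]{b_1}=1$. The reduction of the $\lozenge$- and $\lozenge_2$-cases to the $\lozenge_1$-case by symmetry is also exactly the paper's implicit convention.
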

\begin{proof}
 2. is immediate from [C2] and $\lozenge_1(\pi)$ above. To prove 1. we compute
 
  \noindent $\displaystyle \lambda_C ( a,b_1 ) *  \lambda_C ( a,b_2 ) \stackrel{[C1]}{=} \lambda_{C \times C}( (a,a),(b_1,b_2) ) \stackrel{\lozenge_1(\triangle)}{=} \bigvee_{x \in T'C} \igu[x]{b_1} \cdot \igu[x]{b_2} \cdot \lambda_C ( a,\,x) \stackrel{\ref{ecuacionenL}}{=} $
  
  %\vspace{-1ex}
  
  \flushright $\displaystyle = \bigvee_{x \in T'C} \igu[x]{b_1} \cdot \igu[b_1]{b_2} \cdot \lambda_C ( a,\,b_1) = \displaystyle \igu[b_1]{b_2} \cdot \lambda_C ( a,\,b_1) $.
  
\end{proof}

% 
% $\lambda_C( a,\,b_1) * \lambda_C( a,\,b_2) \, \stackrel{[C1]}{=}$
% 
% \vspace{1ex}
% 
% \hfill $=  = $
% 
% \hfill $= \displaystyle \bigvee_{x \in C} [b_1 \! = \! b_2] \cdot [x \! = \! b_1] \cdot \lambda_C ( a,\,x) \stackrel{\ref{ecuacionenL}}{=} [b_1 \! = \! b_2] \cdot \lambda_C ( a,\,b_1)$.
% 
% \vspace{1ex} 
% 
% \noindent $(2)$ For each $a \in TC, \:
%  \displaystyle \bigvee_{x \in C} \lambda_C( a,\,x) \; \stackrel{\lozenge_1(1)}{=} \;
%  \lambda_1(1,1) \stackrel{[C2]}{=} u(1).
% $

\begin{proposition} \label{compislocale}
Let $\lambda$ be a compatible $\lozenge$-cone with vertex a commutative algebra $(H,*,u)$ such that the elements of the form $\lambda_C( a,\,b )$, $a \in TC, b \in T'C$ are sup-lattice generators of $H$. Then $H$ is a locale and $*=\wedge$. 
\end{proposition}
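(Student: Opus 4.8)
The plan is to reduce the whole statement to two elementary facts --- that the unit $u$ is the top element and that the generators are $*$-idempotent --- after which the frame structure falls out formally. Since $H$ is a sup-lattice it is already a complete lattice, so binary meets $\wedge$ and a top element, which I will call $1$, exist; the entire content is to identify $*$ with $\wedge$. Throughout I would use that the multiplication $*$, being the product of an algebra in $s\ell$, is a morphism of sup-lattices in each variable, hence preserves arbitrary joins and in particular is monotone in each argument. Since $\lambda$ is a $\lozenge$-cone, Lemma \ref{lemaparaconocomp} applies directly.

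First I would show that the unit $u$ equals the top element $1$. By Lemma \ref{lemaparaconocomp}(2), for every $C \in \cc{C}$ and every $a \in TC$ one has $u = \bigvee_{x \in T'C} \lambda_C(a,x)$; in particular each generator $\lambda_C(a,b)$ (with $b \in T'C$) occurs as the summand indexed by $x = b$, so $\lambda_C(a,b) \leq u$. As the elements $\lambda_C(a,b)$ are sup-lattice generators of $H$, the element $u$ bounds all of them and hence bounds every element of $H$, so $u = 1$.

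Next I would record that generators are $*$-idempotent: putting $b_1 = b_2 = b$ in Lemma \ref{lemaparaconocomp}(1) gives $\lambda_C(a,b) * \lambda_C(a,b) = \igu[b]{b} \cdot \lambda_C(a,b) = \lambda_C(a,b)$, using $\igu[b]{b} = 1$. From $u = 1$ and monotonicity it follows for arbitrary $h, h' \in H$ that $h * h' \leq h * u = h$, and symmetrically (by commutativity) $h * h' \leq h'$, so $h * h' \leq h \wedge h'$. Writing a general element as a join of generators $h = \bigvee_i g_i$ and expanding $h * h = \bigvee_{i,j} g_i * g_j$ by bilinearity, the diagonal terms alone give $h * h \geq \bigvee_i g_i * g_i = \bigvee_i g_i = h$; combined with the inequality just obtained (with $h' = h$) this yields $h * h = h$ for all $h$.

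Finally, for the reverse inequality $h \wedge h' \leq h * h'$ I would set $k = h \wedge h'$ and use $k \leq h$, $k \leq h'$ together with monotonicity and idempotency: $k = k * k \leq h * h'$. Hence $* = \wedge$. Since $* = \wedge$ preserves joins in each variable, the infinite distributive law $h \wedge \bigvee_i k_i = \bigvee_i (h \wedge k_i)$ holds, so $H$ is a locale. I do not expect a genuine obstacle here; the only point requiring care is to keep straight that the relevant order is the sup-lattice order of $H$ and that ``generators'' means sup-lattice generators, so that the spanning arguments in the $u = 1$ step and in the idempotency step are legitimate.
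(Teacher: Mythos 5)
Your proof is correct and follows essentially the same route as the paper: both reduce the statement to verifying $w \leq u$ and $w * w = w$ on the generators via Lemma \ref{lemaparaconocomp}, and both extend idempotency to arbitrary elements by writing them as joins of generators and using the diagonal-terms estimate $\bigvee_i w_i * \bigvee_j w_j \geq \bigvee_i w_i * w_i$. The only difference is that the paper then cites \cite[III.1, Proposition 1]{JT} for the fact that an idempotent commutative algebra in $s\ell$ with $w \leq u$ for all $w$ is a locale with $* = \wedge$, whereas you prove that implication directly; your inlined argument is the standard one and is correct.
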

\begin{proof}
By the results of \cite[III.1, p.21, Proposition 1]{JT}, it suffices to show that for all $w \in H$, 
$$(L1) \quad w * w = w \hspace{2cm} and \hspace{2cm} (L2) \quad w \leq u.$$ 

It immediately follows from equations 1. and 2. in the lemma above that (L1) and (L2) hold for $w = \lambda_C( a,\,b )$. Then clearly (L2) holds for any supremum of elements of this form. To show (L1) we do as follows:

$w * w \leq w * u = w$ always holds, and to show $\geq$, if $w = \displaystyle \bigvee_{i \in I} w_i$ satisfying $w_i * w_i = w_i$ we compute:

$$ \displaystyle \bigvee_{i \in I} w_i * \bigvee_{i \in I} w_i \geq \bigvee_{i \in I} w_i * w_i \stackrel{(L1)}{=} \bigvee_{i \in I} w_i.$$
\end{proof}

\begin{proposition} \label{Diamondisbijection}  
Conider a cone $\lambda$ with vertex a locale $H$. 
\begin{enumerate}
 \item If $\lambda$ is a $\lozenge_1$-cone, then $\lambda$ is compatible if and only if it is a $\lozenge_1$-cone of $\ell$-functions.
 \item If $\lambda$ is a $\lozenge_2$-cone, then $\lambda$ is compatible if and only if it is a $\lozenge_2$-cone of $\ell$-op-functions.
 \item If $\lambda$ is a $\lozenge$-cone, then $\lambda$ is compatible if and only if it is a $\lozenge$-cone of $\ell$-bijections.
\end{enumerate}

%A $\lozenge_1$-cone (resp. $\lozenge_2$-cone, $\lozenge$-cone) $\lambda$ over a locale $H$ is compatible if and only if it is a $\lozenge_1$-cone (resp. $\lozenge_2$-cone, $\lozenge$-cone) of $\ell$-functions (resp. $\ell$-op-functions, resp. $\ell$-bijections).

\end{proposition}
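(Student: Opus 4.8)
The plan is to prove all three parts uniformly, exploiting that the interesting content is concentrated in the reverse implications. Since $H$ is a locale, its $s\ell$-algebra structure is the canonical one, so $* = \wedge$ and $u = 1$, and I will use this silently. The forward implications of (1) and (2) are essentially already packaged in Lemma \ref{lemaparaconocomp}, and part (3) will follow formally from (1), (2) and Proposition \ref{dim1ydim2esdim}. Throughout I use that the cone is defined over a site $\cc{C}$ with binary products and $1$, and that $T = F$, $T' = F'$ arise from inverse image functors, hence preserve finite products.

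\textbf{Forward directions.} Assume $\lambda$ is a compatible $\lozenge_1$-cone. Lemma \ref{lemaparaconocomp} then supplies, for all $a \in TC$ and $b_1,b_2 \in T'C$, the identities $\lambda_C(a,b_1)\wedge\lambda_C(a,b_2) = \llbracket b_1 \!=\! b_2\rrbracket\cdot\lambda_C(a,b_1)$ and $\bigvee_{x\in T'C}\lambda_C(a,x)=1$. The first is exactly the equivalent form of axiom uv) and the second is axiom ed), so each $\lambda_C$ is an $\ell$-function. The $\lozenge_2$ case of (2) is the same argument with the two variables exchanged, producing axioms in) and su), i.e. $\ell$-op-functions.

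\textbf{Reverse directions.} Assume $\lambda$ is a $\lozenge_1$-cone of $\ell$-functions; I must verify [C1] and [C2] of Definition \ref{comp}. For [C2], inverse image functors preserve the terminal object, so $T1 = T'1 = 1$ and $\lambda_1$ is a single element of $H$; axiom ed) for $\lambda_1$ then reads $\lambda_1 = 1 = u$. For [C1] I would feed the two projections $C\times D \mr{\pi_1} C$ and $C\times D \mr{\pi_2} D$ (which lie in $\cc{C}$) into the cone. Writing out $\lozenge_1(\pi_1)$ and $\lozenge_1(\pi_2)$, using that $F,F'$ carry these to product projections, and collapsing the inner equality predicate by the singleton-evaluation formula of Proposition \ref{formulainternaparaG}, I obtain the marginal formulas
\[
\lambda_C(a,a') = \bigvee_{v\in F'D}\lambda_{C\times D}\big((a,b),(a',v)\big), \qquad \lambda_D(b,b') = \bigvee_{u\in F'C}\lambda_{C\times D}\big((a,b),(u,b')\big).
\]
Taking the meet, distributing $\wedge$ over the suprema in the locale $H$, and applying the uv) identity to the fixed first argument $(a,b)$ (together with Lemma \ref{ecuacionenL} to shift $a'$ to $u$ inside the surviving factor) gives
\[
\lambda_C(a,a')\wedge\lambda_D(b,b') = \bigvee_{u,v}\llbracket a' \!=\! u\rrbracket\cdot\llbracket v \!=\! b'\rrbracket\cdot\lambda_{C\times D}\big((a,b),(u,v)\big).
\]
Collapsing the $v$-supremum and then the $u$-supremum, again by Proposition \ref{formulainternaparaG}, leaves precisely $\lambda_{C\times D}((a,b),(a',b'))$, which is [C1]. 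The $\lozenge_2$/$\ell$-op-function case of (2) is the mirror image, using $\lozenge_2(\pi_1)$, $\lozenge_2(\pi_2)$ and axiom in).

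Finally, part (3) is formal: by Proposition \ref{dim1ydim2esdim} a $\lozenge$-cone is exactly one that is simultaneously a $\lozenge_1$- and a $\lozenge_2$-cone, and by Definition \ref{function} an $\ell$-bijection is exactly an $\ell$-function that is also an $\ell$-op-function; so a compatible $\lozenge$-cone is, by (1) and (2), a cone of $\ell$-bijections, and conversely a $\lozenge$-cone of $\ell$-bijections is a $\lozenge_1$-cone of $\ell$-functions, hence compatible by (1). I expect the one delicate point to be the reverse direction of [C1]: one must apply the $\lozenge_1$ condition to \emph{both} projections, distribute in the locale, and then collapse the double supremum in the correct order using Lemma \ref{ecuacionenL} and the singleton evaluation, so that the two equality predicates single out $(a',b')$ without leaving a spurious factor such as $\bigvee_u\llbracket a' \!=\! u\rrbracket$.
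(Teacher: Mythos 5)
Your proposal is correct and follows essentially the same route as the paper's proof: the forward implications are read off from Lemma \ref{lemaparaconocomp} after identifying $*=\wedge$ and $u=1$ in the locale $H$, the reverse implication of [C1] is obtained by feeding the two projections $C\times D\to C$, $C\times D\to D$ into the $\lozenge_1$ diagrams, meeting the resulting marginal formulas and collapsing the double supremum via axiom uv) and Lemma \ref{ecuacionenL}, and part 3 is deduced by combining 1 and 2 with Proposition \ref{dim1ydim2esdim}. The one "delicate point" you flag is handled in the paper exactly as you describe, so there is nothing to add.
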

\begin{proof} We prove 1, 2 follows by symmetry and adding 1 and 2 we obtain 3.
%$1+2=3$.
%We prove it for $\lozenge_1$-cones, by symmetry it follows for $\lozenge_2$-cones and combining both statements we obtain it for $\lozenge$-cones.

($\Rightarrow$): Since $\wedge=*$ and $1=u$ in $H$, equations 1. and 2. in lemma \ref{lemaparaconocomp} become the axioms ed) and uv) for $\lambda_X$. %Axioms in) and su) follow by the symmetric argument using the corresponding $\lozenge_2$ diagrams.

($\Leftarrow$) $u = 1$ in $H$, so equation [C2] in definition \ref{comp} is axiom $ed)$ for $\lambda_1$. To prove equation [C1] we consider the projections $C \times D \mr{\pi_1} C$, $C \times D \mr{\pi_2} D$. The $\lozenge_1(\pi_1)$  and $\lozenge_1(\pi_2)$ diagrams express the equations:

\vspace{1ex}

\noindent For each $a \in TC, b \in TD, a' \in T'C, \:\:
 \lambda_C( a,a') = \displaystyle \bigvee_{y \in T'D} \lambda_{C \times D} ( (a,b),(a',y) ),$
 
\noindent For each $a \in TC, b \in TD, b' \in T'D, \:\:
 \lambda_D ( b,b') = \displaystyle \bigvee_{x \in T'C} \lambda_{C \times D} ( (a,b),(x,b') ).
$

\vspace{1ex}

Taking the infimum of these two equations we obtain for each $a \in TC, b \in TD,   a' \in T'C, b' \in T'D$:

\vspace{1ex}

\noindent
$
\lambda_C ( a,a' ) \wedge \lambda_D ( b,b' )  \;=\; \displaystyle \bigvee_{x \in T'C} \bigvee_{y \in T'D} \lambda_{C \times D} ( (a,b),(a',y) )  \wedge \lambda_{C \times D} ( (a,b),(x,b') ) =
$

\flushright $\stackrel{uv)_{\lambda_{C \times D}}}{\;=\;}  \displaystyle \bigvee_{x \in T'C} \bigvee_{y \in T'D} \igu[(a',y)]{(x,b')} \cdot \lambda_{C \times D} ( (a,b),(a',y) ) \stackrel{\ref{ecuacionenL}}{=} \lambda_{C \times D} ( (a,b),(a',b') )$

\end{proof}

\noindent Also, sup-lattice morphisms of cones with compatible domain are automatically locale   \mbox{morphisms:}

\begin{proposition} 
Let $\lambda$ be a  compatible cone with vertex a locale $H$ such that the elements of the form $\lambda_C( a,a' )$, $a \in TC, a' \in T'C$ are sup-lattice generators of $H$. Let $\lambda$ be another compatible cone with vertex a locale $H'$. Then, any sup-lattice morphism $H \mr{\sigma} H'$ satisfying $ \sigma \lambda_C = \lambda_C$ is a locale morphism.
\end{proposition}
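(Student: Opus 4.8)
The plan is to verify that the sup-lattice morphism $\sigma$ preserves binary infima and the top element; since $\sigma$ already preserves arbitrary joins, this is precisely what is needed for $\sigma$ to be a morphism of locales. I write $\lambda'$ for the second cone, so the hypothesis reads $\sigma\lambda_C=\lambda'_C$ for every $C$. Both vertices are locales, so their commutative algebra structures in $s\ell$ are the canonical ones, $*=\wedge$ and $u=1$: on $H$ this is also the conclusion of proposition \ref{compislocale}, whose generating hypothesis is available here, while on $H'$ it is simply the remark that a locale carries a unique such algebra structure.

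I would first settle the top element. By the compatibility equation [C2] of definition \ref{comp} we have $\lambda_1=u=1$ in $H$ and $\lambda'_1=1$ in $H'$, so evaluating the hypothesis at $C=1$ gives $\sigma(1)=\sigma(\lambda_1)=\lambda'_1=1$. This genuinely requires [C2]: a sup-lattice morphism sends $1=\bigvee_{x}x$ only to the top of its own image, which need not be the top of $H'$ unless $\sigma$ is suitably surjective.

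For binary meets I would reduce to generators. Because the $\lambda_C(a,a')$ generate $H$ as a sup-lattice, $\wedge$ distributes over arbitrary joins in the frames $H$ and $H'$, and $\sigma$ preserves joins, it suffices to prove $\sigma(g\wedge h)=\sigma(g)\wedge\sigma(h)$ for two generators $g=\lambda_C(a,a')$, $h=\lambda_D(b,b')$. Using $*=\wedge$ in $H$ together with compatibility [C1],
$$g\wedge h \;=\; \lambda_C(a,a')*\lambda_D(b,b')\;=\;\lambda_{C\times D}\big((a,b),(a',b')\big),$$
so the hypothesis gives $\sigma(g\wedge h)=\lambda'_{C\times D}\big((a,b),(a',b')\big)$; applying [C1] for $\lambda'$ and $*=\wedge$ in $H'$,
$$\lambda'_{C\times D}\big((a,b),(a',b')\big)\;=\;\lambda'_C(a,a')\wedge\lambda'_D(b,b')\;=\;\sigma(g)\wedge\sigma(h),$$
the last equality again by the hypothesis applied to each factor. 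Spreading $\sigma$ over joins and invoking frame distributivity on both sides then upgrades this from generators to arbitrary elements $w_1,w_2\in H$.

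The step I expect to be most delicate is the bookkeeping of the two algebra structures: the compatibility equations are phrased for the multiplication $*$, so the argument only goes through once $*$ has been identified with the lattice meet $\wedge$ on each vertex. On $H$ this identification is exactly where the generating hypothesis enters (via proposition \ref{compislocale}); on $H'$ no generating hypothesis is assumed, and it is worth emphasizing that none is needed, the identification being automatic from $H'$ being a locale. Once this is in place, the remainder is the routine frame-distributivity computation sketched above.
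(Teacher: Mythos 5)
Your proof is correct and follows essentially the same route as the paper: [C2] gives $\sigma(1)=1$, [C1] (with $*$ identified with $\wedge$ on both vertices) gives preservation of meets of generators, and join-preservation plus frame distributivity extends this to arbitrary elements. The extra care you take in justifying $*=\wedge$ on each vertex is a reasonable elaboration of what the paper leaves implicit, though note that the compatibility of $\lambda'$ is simply stated with respect to the locale structure of $H'$, so no uniqueness claim about algebra structures is actually needed there.
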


\begin{proof}
%Proposition \ref{Diamondisbijection} implies that both $\lozenge$-cones are compatible. 
Equation [C2] in defintion \ref{comp} implies immediately that $\sigma u = u'$ (i.e. $\sigma$ preserves $1$).

Equation [C1] implies immediately that the infima $\wedge$ between two sup-lattice generators   \mbox{$\lambda_C( a,a' )$} and $\lambda_D( b,b' )$ is preserved by $\sigma$, which suffices to show that $\sigma$ preserves $\wedge$ between two arbitrary elements since $\sigma$ is a sup-lattice morphism.
\end{proof}

Combining the previous proposition with proposition \ref{Diamondisbijection} we obtain

\begin{corollary} \label{supisloc}
Let $\lambda$ be a $\lozenge$-cone of $\ell$-bijections with vertex a locale $H$ such that the elements of the form $\lambda_C( a,\,b )$, $a \in TC, b \in T'C$ are sup-lattice generators of $H$. Let $\lambda$ be another \mbox{$\lozenge$-cone} of \mbox{$\ell$-bijections} with vertex a locale $H'$. Then, any sup-lattice morphism $H \mr{\sigma} H'$ satisfying $ \sigma \lambda_C = \lambda_C$ is a locale morphism. \qed
\end{corollary}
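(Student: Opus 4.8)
The plan is to reduce the statement to the immediately preceding proposition by reinterpreting the two cones of $\ell$-bijections as \emph{compatible} cones. First I would invoke Proposition \ref{Diamondisbijection}(3): since both $H$ and $H'$ are locales and both cones are $\lozenge$-cones, the hypothesis of being a $\lozenge$-cone of $\ell$-bijections is equivalent to the hypothesis of being compatible. Thus the domain cone becomes a compatible $\lozenge$-cone whose elements $\lambda_C(a,b)$ (for $a \in TC$, $b \in T'C$) generate $H$ as a sup-lattice, and the codomain cone becomes a compatible $\lozenge$-cone with vertex $H'$.

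With this translation in hand, the hypotheses of the preceding proposition are met verbatim: the sup-lattice generation condition on $H$ is unchanged, the codomain cone is compatible, and $\sigma$ is a sup-lattice morphism satisfying $\sigma \lambda_C = \lambda_C$. Applying that proposition directly yields that $\sigma$ preserves $1$ (from equation [C2] in Definition \ref{comp}) and preserves binary infima of the generators, hence of all elements since $\sigma$ is already a sup-lattice morphism (from equation [C1]); that is, $\sigma$ is a locale morphism.

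There is no genuine obstacle here, since the corollary is an immediate combination of the two cited results. The only point that genuinely requires attention is verifying that the hypotheses align exactly when passing through Proposition \ref{Diamondisbijection}: one should confirm that the generation hypothesis on $H$ and the normalization equation $\sigma \lambda_C = \lambda_C$ appear identically in both the preceding proposition and the corollary, so that the reinterpretation of $\ell$-bijections as compatibility is the \emph{only} substantive step and no further computation is needed beyond the two invocations.
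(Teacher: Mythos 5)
Your proposal is correct and is essentially the paper's own proof: the authors likewise obtain the corollary by combining proposition \ref{Diamondisbijection} (to identify $\lozenge$-cones of $\ell$-bijections with compatible cones over a locale) with the immediately preceding proposition on compatible cones. No further comment is needed.
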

\end{sinnadastandard}

\section{The equivalence $Cmd_0(\O(G)) = Rel(\beta^G)$} \label{sec:Cmd=Rel}

\begin{sinnadastandard} \label{sinnadadeequivalence0}
We fix throughout this section a localic groupoid $G$ (i.e. groupoid object in \mbox{$S \! p=Loc^{op}$),} with subjacent structure of localic category (i.e. category object in $S \! p$) given by (\cite[VIII.3 p.68]{JT}) 

\vspace{-2ex}

$$\xymatrix{G \stackrel[G_0]{\textcolor{white}{G_0}}{\times} G \ar[r]^>>>>>>{\circ} & G \ar@<1.3ex>[r]^{\partial_0} \ar@<-1.3ex>[r]_{\partial_1} & G_0 \ar[l]|{\ \! i \! \ }}$$ 

\vspace{-1ex}

\noindent We abuse notation by using the same letter G for the object of arrows of  G.
We denote by $L = \O(G)$, $B = \O(G_0)$ their corresponding locales of open parts, and think of them as (commutative) algebras in the monoidal category $s \ell$. The locale morphisms $\xymatrix{B \ar@<1ex>[r]^{s = \partial_0^{-1}} \ar@<-1ex>[r]_{t = \partial_1^{-1}} & L}$ furnish $L$ with a structure of $B$-bimodule. We establish, following \cite{De}, that $B$ acts on the left via $t$ and on the right via $s$. This is consistent with the pull-back $G \times_{G_0} G$ above which is thought of as the pairs \mbox{$\{(f,g) \in G \times G \ | \ \partial_0(f)=\partial_1(g)\}$} of composable arrows, in the sense that $\O(G \times_{G_0} G) = L \otimes_B L$ (the push-out corresponding to the pull-back is the tensor product of $B$-bimodules).

In this way, the unit $\xymatrix{G_0 \ar[r]^i & G}$ corresponds to a counit $L \mr{e} B$, and the multiplication (composition) $G \times_{G_0} G \mr{\circ} G$ corresponds to a comultiplication $L \mr{c} L \otimes_B L$. Therefore $L$ is a coalgebra in the category $B$-bimod, i.e. a \emph{cog\`ebro\"ide agissant sur B} (\cite[1.15]{De}). In other words, \emph{a localic category structure for $G$ is the same as a cog\`ebro\"ide structure for $L$.}

We define a \emph{localic Hopf algebroid} as the exact formal dual structure of a localic groupoid. The inverse $G \mr{(-)^{-1}} G$ of a localic groupoid corresponds to an \emph{antipode} $L \mr{a} L$. As was observed by Deligne in \cite[p.117]{De}, the structure of cog\`ebro\"ide is the subjacent structure of a Hopf algebroid which is used to define its representations (see definition \ref{defcmd}), exactly like the subjacent localic category structure of the groupoid is the subjacent structure required to define $G$-spaces as $S \! p$-valued functors, namely, actions of the category object on an internal family $X \mr{} G_0$ (see definition \ref{defdeaction}).
\end{sinnadastandard}

\begin{sinnadastandard}
{\bf The category $\beta^G$}  Groupoid objects $G$ in $S \! p$ act on spaces over $G_0$, $X \mr{} G_0$, as groupoids (or categories with object of objects $G_0$) act on families over $G_0$ in sets, defining an internal functor. We consider $G \times_{G_0} X$, the pull-back of spaces over $G_0$ constructed using $\partial_0$, as a space over $G_0$ using $\partial_1$:
\end{sinnadastandard}

%($\xymatrix{G \ar@<1ex>[r]^{\partial_0} \ar@<-1ex>[r]_{\partial_1} & G_0}$)

\begin{\de} \label{defdeaction}
An \emph{action} of a localic groupoid $G$ in a space over $G_0$, $X \mr{} G_0$, is a morphism $G \times_{G_0} X \mr{\theta} X$ of spaces over $G_0$ such that the following diagrams commute.
$$A1: \vcenter{\xymatrix{G \stackrel[G_0]{\textcolor{white}{G_0}}{\times} G \stackrel[G_0]{\textcolor{white}{G_0}}{\times} X \ar[r]^>>>>>{\circ \times X} \ar[d]_{G \times \theta} & G \stackrel[G_0]{\textcolor{white}{G_0}}{\times} X \ar[d]^{\theta} \\
													G \stackrel[G_0]{\textcolor{white}{G_0}}{\times} X \ar[r]^{\theta} & X  } }
					A2: \vcenter{ \xymatrix{G \stackrel[G_0]{\textcolor{white}{G_0}}{\times} X \ar[r]^{\theta} & X \\
												G_0 \stackrel[G_0]{\textcolor{white}{G_0}}{\times} X \ar[u]^{i \times X} \ar[ur]_{\cong} }} $$

We denote the action as $G \stackrel{\theta}{\curvearrowright} X$, omitting sometimes the $\theta$. An action morphism between two actions $G \stackrel{\theta}{\curvearrowright} X$, $G \stackrel{\theta'}{\curvearrowright} X'$ (which corresponds to a natural transformation between the functors) is a morphism $f$ of spaces over $G_0$ such that the following diagram commute.
$$AM: \; \vcenter{ \xymatrix{G \stackrel[G_0]{\textcolor{white}{G_0}}{\times} X \ar[r]^{\theta} \ar[d]_{G \times f} & X \ar[d]^f \\
						G \stackrel[G_0]{\textcolor{white}{G_0}}{\times} X' \ar[r]^{\theta'} & X' }}$$
\end{\de}

\begin{remark}
The reader can easily check that these definitions are equivalent to the ones of \mbox{\cite[VIII.3, p.68]{JT}.} 
\end{remark}

\begin{remark} Recall from \cite[VI.3 p.51, Proposition 3]{JT} (see also proposition \ref{discretespace} and \ref{enumeratedeJT}, item 5), that the functor 
$$sh B \xr{(-)_{dis}} S \! p(shB) \mr{\gamma_*} B \hbox{-} Loc^{op}
$$ 
\vspace{-3ex} 
$$
\xymatrix@C=2.8pc{\quad Y \quad \ar@{|->}[rr] && (\overline{Y_d} \rightarrow \overline{B}),}
$$ where $Y_d = \gamma_*(\Omega^Y) = \gamma_*\cc{O}(Y_{dis})$ (recall definition \ref{defdeXd}), yields an equivalence of categories $sh B \mr{} Et_B$, where $Et_B$ is the category of etale spaces over $\overline{B}$, i.e. $X \mr{p} \overline{B}$ satisfying that $p$ and the diagonal $X \mr{\triangle} X \times_{\overline{B}} X$ are open (see \cite[V.5 p.41]{JT}). 
\end{remark}
 
\begin{\de}
An action $G \curvearrowright X$ is \emph{discrete} if $X \mr{} G_0$ is etale, i.e. in view of last remark if $X = \overline{Y_{d}}$ (or equivalently $\O(X) = Y_d$) with $Y \in shB$. We denote by $\beta^G$ the category of discrete actions of $G$.
\end{\de}
%(recall that we use the correspondence between locales in $shG_0$ and locales (in $\Sat$) over $B$, using \cite{JT}, VII.3 Proposition 2, p.51). 

%\tc{red}{esto va a ir a la seccion 4}
% This equivalence sends $X \in shP$ to $X_{dis}$. $\mathcal{O}(X_{dis}) = \Omega
%_P^X \in Loc(shP) = P$-$Loc$, and we will denote by $X_d$ the locale (in $\Sat$, over $P$) corresponding to $X_{dis}$, in other words $X_d = \Omega^X(1)$.% Then we have $P \mr{} X_d$.
%\tc{red}{hasta acá}

\begin{sinnadastandard} \label{rel}
Consider $s \ell_0(shB)$ the full subcategory of $s \ell(shB)$ with objects of the form $\Omega_B^Y$. Then we have the equivalence $: Rel(shB) \mr{(-)_*} s \ell_0(shB)$. Consider also the restriction of the equivalence \mbox{$s \ell(shB) \cong B$-$Mod$} to $s \ell_0(shB) \cong (B$-$Mod)_0$, where the latter is defined as the full subcategory of \mbox{$B$-$Mod$} consisting of the $B$-modules of the form $Y_d$. Combining both we obtain the equivalence \mbox{$Rel(shB) \cong (B$-$Mod)_0$,} mapping $Y \leftrightarrow Y_d$.
\end{sinnadastandard}

%Consider the full-and-faithful functor $shH \mr{(-)_{dis}} S \! p$.  
%We have showed so far that if $Cmd_0(L)$ is the category of discrete $L$-comodules (i.e. $L$-comodules ), then the objects of the categories $Cmd_0(L)$ and $Rel(\beta^G)$ are in bijective correspondence. Now we start analysing the arrows of the categories.

The objective of this section is to prove that this equivalence lifts to an equivalence 
\mbox{$\cc{R}el(\beta^G) \cong  Cmd_0(L)$} (for the definition of $Cmd_0(L)$ see \ref{defcmd}).

\begin{sinnadastandard}\label{equiv:objects} {\bf The equivalence at the level of objects.} %$Cmd_0(\O(G)) = Rel(\beta^G)$ 
Consider an etale space $X \mr{} G_0$, where \mbox{$\O(X) = Y_d$,} with $Y \in shB$. A (discrete) action $G \stackrel{\theta}{\curvearrowright} X$ %($G \times_{G_0} X \mr{\theta} X$ satisfying A1, A2 in definition \ref{defdeaction})
corresponds to a $B$-locale morphism \mbox{$Y_d \mr{\rho} L \otimes_B Y_d$} satisfying C1, C2 in definition \ref{defcmd}. Therefore, to establish an equivalence between discrete actions $G \stackrel{\theta}{\curvearrowright} X$ and comodules $Y_d \mr{\rho} L \otimes_B Y_d$ we need to prove 
\end{sinnadastandard}

\begin{\prop} \label{locmorphgratis}
Every comodule structure $Y_d \mr{\rho} L \otimes_B Y_d$ is automatically a locale morphism (when $L$ is the cog\`ebro\"ide corresponding to the localic category subjacent to a localic groupoid).   
\end{\prop}

Next we prove this proposition (see \ref{esquema} below for a clarifying diagram). In order to do this, we will work in the category of $B \otimes B$-modules. Since $B$ is commutative, we have an isomorphism of categories $B$-bimod $\cong B \otimes B$-mod, but 
we consider the tensor product $\stackrel[B \otimes B]{}{\otimes}$ of $B \otimes B$-modules via the inclusion $B \otimes B$-mod $\hookrightarrow$ $B \otimes B$-bimod, not to be confused with the tensor product $\otimes_B$ as $B$-bimodules. Via this isomorphism, $L$ is a $B \otimes B$-module whose structure is given by $B \otimes B \mr{(t,s)} L$.

We first notice that $L \stackrel[B]{}{\otimes} Y_d \cong L \stackrel[B \otimes B]{}{\otimes} (B \otimes Y_d)$, and via extension of scalars (using the inclusion $B \mr{} B \otimes B$ in the first copy), $\rho$ corresponds to a morphism \mbox{$Y_d \otimes B \mr{\rho} L \stackrel[B \otimes B]{}{\otimes} (B \otimes Y_d)$} of \mbox{$B \otimes B$-modules.} 
From the equivalence of tensor categories recalled in section \ref{enumeratedeJT} items 5,6, with $P = B \otimes B$, $\rho$ corresponds to a morphism $\varphi$ in $s \ell(sh(B \otimes B))$, $\rho = \gamma_*(\varphi)$, and $\rho$ is a locale morphism if and only if $\varphi$ is so.

From the results of \ref{particular}, $Y_d \otimes B = {(\pi_1^* Y)}_d = \gamma_*(\Omega_{B \otimes B}^{\pi_1^*Y})$, and similarly \mbox{$B \otimes Y_d = \gamma_*(\Omega_{B \otimes B}^{\pi_2^*Y})$,} where $\Omega_{B \otimes B}$ is the subobject classifier of $sh(B \otimes B)$. Then  
$$L \stackrel[B \otimes B]{}{\otimes} (B \otimes Y_d) \;\; \stackrel{\ref{enumeratedeJT}}{=} \;\; \gamma_*(\widetilde{L} \ \stackrel{(1)}{\otimes} \ \Omega_{B \otimes B}^{\pi_2^*Y}) \;\; \stackrel{(2)}{=} \;\; \gamma_* (\widetilde{L}^{\pi_2^*Y}),$$ %\gamma_* ( Hom_{s \ell}(\Omega^{\pi_2^*Y}, G) ) = 

where $\widetilde{L}$ is as in \ref{enumeratedeJT} item 5, $\gamma_* \widetilde{L} = L$, the tensor product marked with $(1)$ is as sup-lattices in $sh(B \otimes B)$ and the equality marked with $(2)$ holds since $\widetilde{L} \otimes \Omega_{B \otimes B}^{\pi_2^*Y}$ and $\widetilde{L}^{\pi_2^*Y}$ are the free $\widetilde{L}$-module in $\pi_2^*Y$ (see proposition \ref{formulainternaparaG}).
%we used the self-duality of $\Omega^{\pi_2^*Y}$. 

Then $\varphi$ is $\Omega_{B \otimes B}^{\pi_1^*Y} \mr{\varphi} \widetilde{L}^{\pi_2^*Y}$, therefore by remark \ref{edyuvdafunctionmedioG} there is an $\ell$-relation    $\pi_1^*Y \times \pi_2^*Y \mr{\lambda} \widetilde{L}$ in the topos $sh(B \otimes B)$ such that $\varphi = \lambda_*$ and, to see that $\rho$ is a locale morphism, we can prove that $\lambda$ is an $\ell$-op-function.

\begin{sinnadastandard} \label{esquema}
 We schematize the previous arguing in the following correspondence
 
 \vspace{.2cm}
 
 \noindent \begin{tabular}{ccc}
    $Y_d \mr{\rho} L \stackrel[B]{}{\otimes} Y_d$ & $B$-module morphism & $B$-locale morphism \\ \noalign{\smallskip} \hline \noalign{\smallskip}
    $Y_d \otimes B \mr{\rho} L \stackrel[B \otimes B]{}{\otimes} (B \otimes Y_d)$ & $(B \otimes B)$-module morphism & $(B \otimes B)${-locale morphism} \\  \noalign{\smallskip}\hline \noalign{\smallskip}
    $\Omega_{B \otimes B}^{\pi_1^*Y} \mr{\varphi} \widetilde{L}^{\pi_2^*Y}$ & $s \ell${ morphism in} $sh(B \otimes B)$ & locale morphism \\ \noalign{\smallskip}\hline \noalign{\smallskip}
    $\pi_1^*Y \times \pi_2^*Y \mr{\lambda} \widetilde{L}$ & $\ell${-relation in} $sh(B \otimes B)$ & $\ell$-op-function 
   \end{tabular}

\end{sinnadastandard}

%As in section \ref{particular}, $\rho = \varphi_1$, with $\Omega^{\pi_2^*Y} \mr{\varphi} G \otimes \Omega^{\pi_1^*Y}$. Since $\Omega^{\pi_1^*Y}$ is self-dual in $s \ell$, $G \otimes \Omega^{\pi_1^*Y} = Hom_{s \ell}(\Omega^{\pi_1^*Y}, G) \cong G^{\pi_1^*Y}$. Then $\varphi$ fits into the equivalence of $\ref{lambdaphipsiG}$ and therefore corresponds to an $\ell$-relation $\pi_1^*Y \times \pi_2^*Y \mr{\lambda} G$ in the topos $sh(G_0 \otimes G_0)$. $\rho$ corresponds to $\psi$ in $\ref{lambdaphipsiG}$ at the level of modules, then to see that $\rho$ is a locale morphism we can see that $\lambda$ is an $\ell$-function and use proposition \ref{edyuvdafunctionG}.

\begin{\prop}[{cf. \cite[5.9]{DSz}}] \label{bijectiongratis}
The $\ell$-relation $\pi_1^*Y \times \pi_2^*Y \mr{\lambda} \widetilde{L}$ corresponding to a comodule structure $Y_d \mr{\rho} L \otimes_B Y_d$, where $L$ is the cog\`ebro\"ide corresponding to the localic category subjacent to a localic groupoid, is an $\ell$-bijection.
\end{\prop}
\begin{proof} We will use the analysis of this particular kind of $\ell$-relations that we did in section \ref{particular}. We have seen that $\lambda$ corresponds to a $B$-bimodule morphism $Y_d \otimes Y_d \mr{\mu} L$. We have also seen, in proposition \ref{propaxiomparamodulos2}, which conditions in $\mu$ are equivalent to the $\ell$-bijection axioms.% for $\lambda$. 

Since any duality induces an internal-hom adjunction and $\Omega^Y$ is self-dual, $\mu$ corresponds to $\rho$ via the duality of modules described in $\ref{adjunciondeldual}$. Then by lemma \ref{lemadeunicidad}, the $B1$ and $B2$ subdiagrams in the following diagram are commutative. Also, the pentagon subdiagram $\pentagon$ is commutative by definition of the localic groupoid $G$, where $a$ is the antipode corresponding to the inverse of $G$.          
%it is easy to see, by using a graphical calculus, that $C1$ and $C2$ for $\rho$ are equivalent to the commutativity of the triangle and square subdiagrams in the following diagram.

\begin{equation} \label{diagramapentagonal}
\xymatrix@C=3pc{
          \ar@{}[dr]|>>>>>>>{B2} & Y_d \otimes Y_d \ar[r]^(.35){Y_d \otimes \eta \otimes Y_d} \ar[d]^\mu \ar[dl]_\eps  & Y_d \otimes Y_d  \stackrel[B]{}{\otimes} Y_d \otimes Y_d \ar@{}[dl]|{B1} \ar[d]^{\mu \otimes_B \mu} \\
          B \ar@<.75ex>[dr]^t \ar@<-.75ex>[dr]_s  & L \ar@{}[dr]|{\pentagon} \ar[r]^c \ar[l]_e  & L \stackrel[B]{}{\otimes} L \ar@<-1ex>[d]_{a \otimes L}
                        \ar@<1ex>[d]^{L \otimes a} \\
          & L & L \stackrel[B \otimes B]{}{\otimes} L. \ar[l]_\wedge  }
\end{equation}

%We will use the following notation, if $h \in H$ and $x \in Y(h)$, then $|x|:=h$. 
%\noindent From the descriptions of  
%it is easy to see that $\eta(1) = \displaystyle \bigvee_{\stackrel{b \in B}{y \in Y(b)}} \delta_w \otimes \delta_w$, and that for each $a,b \in B$, $x \in Y(a),y \in Y(b)$, $\eps(\delta_x \otimes \delta_y) = [x \! = \! y] \in B$.
          
To prove axiom $ed)$, let $b_0 \in B$, $x \in Y(b_0)$. 
Chasing $\delta_x \otimes \delta_x$ in diagram \eqref{diagramapentagonal} all the way down to $L$ using the arrow $L \otimes a$ we obtain (recall our formulae for $\eta$, $\eps$ in proposition \ref{formulaeetaeps})
$\displaystyle \bigvee_{\stackrel{b \in B}{y \in Y(b)}} \mu  ( \delta_x \otimes \delta_y )  \wedge a \mu  ( \delta_y \otimes \delta_x )  = b_0$, which implies the inequality $\displaystyle \bigvee_{\stackrel{b \in B}{y \in Y(b)}} \mu  ( \delta_x \otimes \delta_y ) \geq b_0$, i.e. $\geq$ in $ed)$ in proposition \ref{propaxiomparamodulos2}, but the inequality $\leq$ always holds.
%. We have in particular
%\mbox{ed) $\displaystyle \bigvee_{a} \mu  ( \delta_x \otimes \delta_a )  = b(|x|)$.}

To prove axiom $uv)$, let $b_0, b_1, b_2 \in B$, $x \in Y(b_0)$, $y_1 \in Y(b_1)$, $y_2 \in Y(b_2)$. Chasing $\delta_{y_1} \otimes \delta_{y_2}$, but this time using the arrow $a \otimes L$, we obtain $$\displaystyle \bigvee_{\stackrel{c \in B}{w \in Y(c)}} a  \mu  ( \delta_{y_1} \otimes \delta_w )  \wedge \mu  ( \delta_w \otimes \delta_{y_2} )  = \llbracket y_1 \! = \! y_2 \rrbracket_{B} ,$$
then in particular $ \quad (1) \quad  a  \mu  ( \delta_{y_1} \otimes \delta_x)  \wedge \mu  ( \delta_x \otimes \delta_{y_2} )  \leq \llbracket y_1 \! = \! y_2 \rrbracket_{B} . $

\vspace{1ex}

To deduce $uv)$ from (1) we need to see that $a  \mu  ( \delta_{y_1} \otimes \delta_x) = \mu  ( \delta_x \otimes \delta_{y_1})$. Since $a^2 = id$, it is enough to prove $\leq$:

\vspace{1ex}
 
\noindent $a  \mu  ( \delta_{y_1} \otimes \delta_x) \stackrel{\ref{restringirlosdelta}}{=}  a  \mu  ( \delta_{y_1} \otimes b_0 \cdot \delta_x) = a  \mu  ( \delta_{y_1} \otimes \delta_x) \wedge b_0  \stackrel{ed)}{=} a  \mu  ( \delta_{y_1} \otimes \delta_x) \wedge \displaystyle\bigvee_{\stackrel{b \in B}{y \in Y(b)}}  \mu  ( \delta_x \otimes \delta_y )$

\noindent $ =  \displaystyle\bigvee_{\stackrel{b \in B}{y \in Y(b)}}  a  \mu  ( \delta_{y_1} \otimes \delta_x) \wedge \mu  ( \delta_x \otimes \delta_y ) \stackrel{(1)}{=} 
 \displaystyle\bigvee_{\stackrel{b \in B}{y \in Y(b)}}  a  \mu  ( \delta_{y_1} \otimes \delta_x) \wedge \mu  ( \delta_x \otimes \delta_y ) \wedge \llbracket y_1 \! = \! y \rrbracket_{B}
 \stackrel{\ref{ecuacionenOmegaX}}{=} $
%$ a  \mu  ( \delta_a \otimes \delta_w ) = a  \mu  ( \delta_a \otimes \delta_w ) \wedge b(|w|)  \stackrel{ed)}{=} a  \mu  ( \delta_a \otimes \delta_w )  \wedge \displaystyle\bigvee_{x} \mu  ( \delta_w \otimes \delta_x )  = $

\hfill $ =  a  \mu  ( \delta_{y_1} \otimes \delta_x) \wedge \mu  ( \delta_x \otimes \delta_{y_1} ) $. 
%since all the other terms in the supremum are lower or equal than this one. 

\vspace{1ex}
 
Axioms $su)$ and $in)$ follow symmetrically.
%Then $\iota  \lambda  ( a \otimes b )  \leq \lambda  ( b \otimes a ) = \lambda \iota ( a \otimes b ) $. 
%\vspace{1ex}
%Thus we have $\iota  \lambda  ( a \otimes b )  = \lambda \iota ( a \otimes b ) \; ( = \lambda  ( b \otimes a ))$. With this, it is clear from the equations (1) and (2) above that the four axioms \ref{bijection} of an $\ell$-bijection hold. 
\end{proof}

We have finished the proof of proposition \ref{locmorphgratis}. For future reference, we record the results of this section:

\begin{\prop} \label{equivdeaccion}
Given a localic groupoid $G$ over $G_0$, with subjacent cog\`ebro\"ide $L$ sur $B$, and $Y \in shB$
%a space $X = Y_{dis}$ discrete over $G_0$, with corresponding locale $\O(X) = Y_d$, where 
, the following structures are in a bijective correspondence:

\vspace{1ex}

a. Discrete actions $G \stackrel{\theta}{\curvearrowright} \overline{Y_{d}}$. %$G \stackrel[G_0]{}{\times} \overline{Y_{d}} \mr{\theta} \overline{Y_{d}}$.
 
b. $\ell$-relations $\pi_1^*Y \times \pi_2^*Y \mr{\lambda} \widetilde{L}$ with a corresponding $B$-bimodule morphism   $Y_d \otimes Y_d \mr{\mu} L$ such that the following diagrams commute:
 $$B1: \vcenter{\xymatrix{Y_d \otimes Y_d \ar[r]^{\mu} \ar[d]_{Y_d \otimes \eta \otimes Y_d} & L \ar[d]^{c} \\
	    Y_d \otimes Y_d \stackrel[B]{}{\otimes} Y_d \otimes Y_d \ar[r]^>>>>{\mu \otimes_B \mu} & L \stackrel[B]{}{\otimes} L}}
\hspace{5ex}
B2: \vcenter{\xymatrix{Y_d \otimes Y_d \ar[r]^{\mu} \ar[dr]_{\eps} & L \ar[d]^{e} \\ & B}}$$

c. Comodule structures $Y_d \mr{\rho} L \otimes_B Y_d$. \qed

\end{\prop}

%\tc{red}{No son equivalentes los $B$-locale morphisms $Y_d \mr{\rho} L \otimes_B Y_d$, para que sean equivalentes tendria que poner: ``$B$-locale morphisms which are also comodule structures $Y_d \mr{\rho} L \otimes_B Y_d$''. >Queres que ponga eso?}

\begin{remark} \label{coinciden1}
 %By proposition \ref{monoidaction}, i
 In the case where $G$ is a localic group, actions $Aut(X) \mr{} G$ defined as in \mbox{\cite[7.2]{D1},} also correspond to the previous structures (see \cite[5.9]{DSz}).
\end{remark}

\begin{notation}
 We fix until the end of this paper the following notation: we use the symbols $\theta$, $\rho$, $\lambda$, $\mu$ only for the arrows in the correspondence above, adding a $(-)'$ if neccessary.
\end{notation}

\begin{remark} \label{rightcomodules}
 In \cite{De}, the structure considered is the opposite of \ref{defcmd}, i.e. right \mbox{comodules} \mbox{$Y_d \mr{\rho} Y_d \otimes_B L$} (see note \ref{notasobresimetria}). By considering the inverse image $\lambda^*$ we obtain that this structure is also equivalent to the other three, and so are the right discrete actions $\overline{Y_d} \curvearrowleft G$. This situation is analogous to the correspondence between right and left actions of a group given by $x \cdot g = g^{-1} \cdot x$.
\end{remark}

% \begin{remark}
% Sometimes we will also refer to $Y_d \otimes Y_d \mr{\mu} L$ as $\ell$-relations, $\ell$-functions, $\ell$-bijections. In those cases we mean that the corresponding $\lambda$ has that property. We can also think of ($\lozenge$, $\rhd$, etc.)-cones $Y_d \otimes Y_d \mr{\mu_Y} L$ (with $Y$ variable).
% \end{remark}

\begin{sinnadastandard}\label{equiv:arrows}{ \bf The equivalence at the level of arrows.}  
We start this section with some results in order to better understand the category $Rel(\beta^G)$. We begin with a proposition that relates action morphisms with $\lozenge_2$-cones as in section \ref{sec:cones}. %Recall from \ref{conosconlambdaomu} that we can consider $\lozenge_2$-cones at the level of the $B$-bimodule morphisms. 
\end{sinnadastandard}

\begin{\prop} \label{accionsiidiam2}
Given two discrete actions $G \times_{G_0} \overline{Y_{d}} \mr{\theta} \overline{Y_{d}}$, $G \times_{G_0} \overline{Y'_{d}} \mr{\theta'} \overline{Y'_{d}}$, a space morphism $\overline{Y_{d}} \mr{f} \overline{Y'_{d}}$ is an action morphism if and only if the corresponding arrow $Y \mr{g} Y'$ in $shB$ satisfies 

$$\vcenter{\xymatrix{\lozenge_2(g): \ \ }}
\vcenter{\xymatrix@C=-1.5ex
         {   Y' \dcellb{g^{op}} & & Y \did 
          \\              
             Y & & Y
          \\
            & \ G \ \cl{\lambda} 
         }}
\vcenter{\xymatrix{ \;=\; }}
\vcenter{\xymatrix@C=-1.5ex{   Y' \did & & Y \dcell{g}
          \\              
             Y' & & Y'
          \\
           & \ G \ \cl{\lambda'}         }}
\vcenter{\xymatrix{\hbox{\hspace{5ex}i.e.}: \quad    }}           
\vcenter{\xymatrix@C=-1.5ex
         {  Y'_d \dcellb{g^{\wedge}} & &  Y_d \did
          \\              
             Y_d & & Y_d
          \\
            & \ L \ \cl{\mu} 
         }}
\vcenter{\xymatrix{ \;=\; }}
\vcenter{\xymatrix@C=-1.5ex{  Y'_d \did & & Y_d \dcell{g} 
          \\              
             Y'_d & & Y'_d
          \\
           & \ L \ \cl{\mu'}         }}$$

%with $\mu$ and $\mu'$ the $\ell$-relations corresponding to $\theta$ and $\theta'$.
\end{\prop}

\begin{proof}
$f^{-1}$ (the formal dual of $f$) is the $B$-locale morphism $Y_d \mr{g^{\wedge}} Y_d$, which is computed with the self-duality of $Y_d$ (see \ref{discretespace} and \ref{autodualparaG}), and the correspondence between $\theta$ and $\mu$ in proposition \ref{equivdeaccion} is also given by this duality, i.e.

$$\vcenter{\xymatrix{f^{-1} = g^{\wedge}: \quad   }}
\vcenter{\xymatrix@C=-0.3pc@R=1pc{  Y'_d \did &&& \op{\eta} \\
					Y'_d \did && Y_d \dcell{g} \Brr & \,\,\, & Y_d \did \\
					Y'_d && Y'_d \Brr && Y_d \did \\
					& B \cl{\eps} & \Brr && Y_d }}
\vcenter{\xymatrix{\quad \quad \theta^{-1} = \rho: \quad }}
\vcenter{\xymatrix@C=-0.3pc@R=1pc{Y'_d \did &&& \op{\eta} \\
				  Y'_d && Y'_d \Brr & \,\,\, & Y'_d \did \\
				  & L \cl{\mu} & \Brr && Y'_d }}$$

Then the commutativity of the diagram AM in definition \ref{defdeaction}, expressing that $f$ is an action morphism, is equivalent when passing to the formal dual to the equality of the left and right terms of the equation (and therefore to the equality marked with an (*))
$$\vcenter{\xymatrix@C=-0.3pc@R=1pc{Y'_d \did &&& \op{\eta} \\
	 Y'_d && Y'_d \Brr & \,\,\, & Y'_d \did &&& \op{\eta} \\
	 %& L \did & \Brr && Y'_d \did  \\
	 & L \cl{\mu'} \did & \Brr && Y'_d \did && Y_d \dcell{g} \Brr & \,\,\, & Y_d \did \\
	 & L \did & \Brr && Y'_d && Y'_d \Brr && Y_d \did \\
	 & L & \Brr &&& B \cl{\eps} & \Brr && Y_d }}
\vcenter{\xymatrix{\stackrel{\triangle}{=} }}
\vcenter{\xymatrix@C=-0.3pc@R=1pc{Y'_d \did &&& \op{\eta} \\
	 Y'_d \did && Y_d \dcell{g} \Brr & \,\,\, & Y_d \did \\
	 Y'_d && Y'_d \Brr && Y_d \did \\
	 & L \cl{\mu'} & \Brr && Y_d}}
\vcenter{\xymatrix{\stackrel{(*)}{= }}}
\vcenter{\xymatrix@C=-0.3pc@R=1pc{Y'_d \did &&& \op{\eta} \\
	 Y'_d \did && Y_d \dcell{g} \Brr & \,\,\, & Y_d \did &&& \op{\eta} \\
	 Y'_d && Y'_d \Brr && Y_d && Y_d \Brr & \,\,\, & Y_d \did \\
	 & B \cl{\eps} & \Brr &&&	L \cl{\mu} & \Brr && Y_d}} $$
But the equality (*) is $\lozenge_2(g)$ composed with $\eta$, to recover $\lozenge_2(g)$ compose with $\eps$.
\end{proof}

\begin{corollary}
Using propositions \ref{accionsiidiam2} and \ref{equivdeaccion}, we can think of the category $\beta^G$ of discrete actions of $G$ in a purely algebraic way (without considering spaces over ${G_0}$) as follows: an action is a $B$-bimodule morphism $Y_d \otimes Y_d \mr{\mu} L$ satisfying B1, B2, and an action morphism is an arrow $Y \mr{g} Y'$ in $shB$ such that $\lozenge_2(g)$ holds.
\end{corollary}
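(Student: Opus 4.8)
The plan is to assemble the two preceding propositions into a single functor and check that it furnishes the asserted equivalence. Introduce the ``purely algebraic'' category $\cc{A}$ whose objects are pairs $(Y,\mu)$ with $Y \in shB$ and $Y_d \otimes Y_d \mr{\mu} L$ a $B$-bimodule morphism satisfying $B1$ and $B2$, and whose arrows $(Y,\mu) \to (Y',\mu')$ are the arrows $Y \mr{g} Y'$ of $shB$ for which the diagram $\lozenge_2(g)$ of proposition \ref{accionsiidiam2} commutes. The goal is to produce an equivalence $\beta^G \simeq \cc{A}$.

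First I would treat objects. By the equivalence of clauses a.\ and b.\ of proposition \ref{equivdeaccion}, a discrete action $G \stackrel{\theta}{\curvearrowright} \overline{Y_d}$ is the same datum as a $B$-bimodule morphism $\mu$ satisfying $B1$, $B2$. Combined with the equivalence $shB \cong Et_B$ (from the remark recalling \cite[VI.3 Proposition 3]{JT}), under which every etale space over $\overline{B}=G_0$ is of the form $\overline{Y_d}$, this identifies the objects of $\beta^G$ with those of $\cc{A}$.

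Next I would treat arrows. Under the same equivalence $shB \cong Et_B$, an arrow $Y \mr{g} Y'$ of $shB$ is exactly a morphism $\overline{Y_d} \mr{f} \overline{Y'_d}$ of spaces over $G_0$. Proposition \ref{accionsiidiam2} states that such an $f$ is an action morphism precisely when $\lozenge_2(g)$ holds, and it already expresses $\lozenge_2(g)$ purely in terms of $\mu,\mu'$. Hence on each hom-set the assignment $f \mapsto g$ restricts to a bijection between action morphisms and arrows of $shB$ satisfying $\lozenge_2(g)$, so the functor is full and faithful.

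Finally I would check that these bijections are compatible with the categorical structure, i.e.\ that $\cc{A}$ is genuinely a category and that the identifications respect identities and composition. This is inherited from the functoriality of $shB \cong Et_B$: the identity of $Y$ corresponds to the identity space morphism, which is trivially an action morphism, so $\lozenge_2(\mathrm{id})$ holds; and if $g,g'$ satisfy $\lozenge_2$ then the corresponding $f,f'$ are action morphisms, whence $f'f$ is an action morphism and $g'g$ satisfies $\lozenge_2$. (Alternatively, closure of $\lozenge_2$ under composition follows from the observation in proposition \ref{diamante12igualdiamante} that $\lozenge$ diagrams respect composition of relations.) I do not expect a genuine obstacle here, since the statement is a bookkeeping corollary of \ref{equivdeaccion} and \ref{accionsiidiam2}; the only point demanding care is precisely this closure of $\lozenge_2$ under composition, which the passage through action morphisms settles at once.
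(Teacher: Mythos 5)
Your proposal is correct and matches the paper's (implicit) argument: the corollary is stated without a separate proof precisely because it is the bookkeeping assembly of propositions \ref{equivdeaccion} (for objects) and \ref{accionsiidiam2} (for arrows) that you carry out. Your extra check that $\lozenge_2$ is closed under identities and composition, settled by transporting through action morphisms, is the only point the paper leaves tacit, and you handle it correctly.
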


\begin{remark} \label{coinciden2}
Since $\mu$ is an $\ell$-bijection, $\lozenge_2(g)$ holds if and only if $\rhd(g)$ does, therefore definition \ref{defdeaction} coincides with \cite[definition 7.4]{D1} for the case of a localic group.
\end{remark}

\begin{remark} \label{monoenshB}
Since the forgetful functor $\beta^G \mr{F} shB$, $G \curvearrowright \overline{Y_{d}} \mapsto Y$, is left exact, a monomorphism of discrete $G$-actions $Z \mr{g} Y$ is also a monomorphism in $shB$.
%, i.e., for each $h$, $g_h$ is a monomorphism in $\Sat$. (E.D.??)
\end{remark}

\begin{lemma} \label{monocancela}
Given two actions $Y_d \otimes Y_d \mr{\mu} L$ and $Z_d \otimes Z_d \mr{\mu'} L$ and a monomorphism $Z \mr{g} Y$ of actions, for each $\delta_z$, $\delta_w$ generators of $Z_d$, $\mu'(\delta_z \otimes \delta_w) = \mu(\delta_{g(z)} \otimes \delta_{g(w)})$.%, i.e. in diagram $\rhd(g)$ equality holds.
\end{lemma}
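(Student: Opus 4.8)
The plan is to extract the equation hidden in the diagram $\lozenge_2(g)$ and then to collapse a supremum using that $g$ is monic. Since $g$ is an action morphism, Proposition~\ref{accionsiidiam2} applies with its roles (``$Y$'',``$Y'$'') taken to be (our $Z$, our $Y$), so that its ``$\mu$'' is our $\mu'$ (the action on $Z$) and its ``$\mu'$'' is our $\mu$ (the action on $Y$). Reading off the two elevators of the $\lozenge_2(g)$ diagram, the equality of the composites $Y_d \otimes Z_d \to L$ says exactly that, for all $\eta \in Y_d$ and $\zeta \in Z_d$,
\[
\mu'(g^{\wedge}(\eta) \otimes \zeta) \;=\; \mu(\eta \otimes g(\zeta)),
\]
where $g \colon Z_d \to Y_d$ is the direct image (so $g(\delta_w) = \delta_{g(w)}$ on generators, by Remark~\ref{1erremarkbasico} and Proposition~\ref{formula}) and $g^{\wedge} \colon Y_d \to Z_d$ is the inverse image. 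The first thing I would record is the transfer to $B$-modules of the inverse-image formula $\lambda^*(\{y\}) = \bigvee_{w'} \lambda(w',y)\cdot\{w'\}$, namely
\[
g^{\wedge}(\delta_y) \;=\; \bigvee_{q \in B,\; w' \in Z(q)} \llbracket g(w') \! = \! y \rrbracket_B \cdot \delta_{w'}.
\]

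Next I would specialize the first identity to $\eta = \delta_{g(z)}$ and $\zeta = \delta_w$. Its right-hand side is then $\mu(\delta_{g(z)} \otimes \delta_{g(w)})$, which is precisely the term sought. For the left-hand side, since $g^{\wedge}$ and $\mu'$ are sup-lattice morphisms, the inverse-image formula gives
\[
\mu'(g^{\wedge}(\delta_{g(z)}) \otimes \delta_w) \;=\; \bigvee_{q,\; w' \in Z(q)} \llbracket g(w') \! = \! g(z) \rrbracket_B \cdot \mu'(\delta_{w'} \otimes \delta_w).
\]
Here the decisive input is that $g$ is monic in $shB$ (Remark~\ref{monoenshB}), whence $\llbracket g(w') \! = \! g(z) \rrbracket_B = \llbracket w' \! = \! z \rrbracket_B$; this is the version for the external brackets of Definition~\ref{notacioncorchetes} of the standard fact that a monomorphism reflects the diagonal.

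Finally I would collapse the supremum. By Lemma~\ref{ecuacionenOmegaX} one has $\llbracket w' \! = \! z \rrbracket_B \cdot \delta_{w'} = \llbracket w' \! = \! z \rrbracket_B \cdot \delta_{z}$, so each summand equals $\llbracket w' \! = \! z \rrbracket_B \cdot \mu'(\delta_z \otimes \delta_w)$ and the left-hand side becomes $\big(\bigvee_{w'} \llbracket w' \! = \! z \rrbracket_B\big) \cdot \mu'(\delta_z \otimes \delta_w)$. Taking the summand $w' = z$ (with $z \in Z(p)$) yields $\llbracket z \! = \! z \rrbracket_B = p$, and by Lemma~\ref{restringirlosdelta} we have $p \cdot \mu'(\delta_z \otimes \delta_w) = \mu'(p \cdot \delta_z \otimes \delta_w) = \mu'(\delta_z \otimes \delta_w)$; since every bracket is $\le 1$, the supremum is squeezed between $p \cdot \mu'(\delta_z \otimes \delta_w)$ and $1 \cdot \mu'(\delta_z \otimes \delta_w)$, both equal to $\mu'(\delta_z \otimes \delta_w)$. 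Hence the left-hand side is $\mu'(\delta_z \otimes \delta_w)$, giving $\mu'(\delta_z \otimes \delta_w) = \mu(\delta_{g(z)} \otimes \delta_{g(w)})$. The step I expect to require the most care is the monomorphism identity for the external brackets: it must be checked by restricting to $p \wedge q$, invoking naturality of $g$ and the fact that the diagonal of $Z$ is the pullback of the diagonal of $Y$ along $g \times g$, rather than by appealing to the internal statement directly.
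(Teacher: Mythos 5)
Your argument is correct and follows the paper's own proof essentially verbatim: you unfold $\lozenge_2(g)$ from Proposition~\ref{accionsiidiam2} into the supremum $\bigvee_{x}\llbracket g(x)\!=\!g(z)\rrbracket_B\cdot\mu'(\delta_x\otimes\delta_w)$, replace the bracket by $\llbracket x\!=\!z\rrbracket_B$ using that $g$ is monic in $shB$ (Remark~\ref{monoenshB}), and collapse the supremum via Lemma~\ref{ecuacionenOmegaX}. The only difference is presentational: you make explicit the inverse-image formula for $g^{\wedge}$ and the squeeze $p\cdot\mu'(\delta_z\otimes\delta_w)=\mu'(\delta_z\otimes\delta_w)$ that the paper leaves implicit.
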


\begin{proof} 
$\mu(\delta_{g(z)} \otimes \delta_{g(w)}) \stackrel{\ref{accionsiidiam2}}{=} \displaystyle \bigvee_{\stackrel{b \in B}{x \in Y(b)}} \igu[g(x)]{g(z)}_B \cdot \mu' (\delta_x \otimes \delta_w) \stackrel{\ref{monoenshB}}{=} $

\hfill $= \displaystyle \bigvee_{\stackrel{b \in B}{x \in Y(b)}} \igu[x]{z}_B \cdot \mu' (\delta_x \otimes \delta_w) \stackrel{\ref{ecuacionenOmegaX}}{=} \mu' (\delta_z \otimes \delta_w). $
%, then one of the inequalities we have to show holds because $b_l$ is a possible $y_{l'}$, and the other one because for each $y_{l'}$,
%$$l \cdot \lambda'(a_h \otimes y_{l'}) \stackrel{lemma \ref{monocancela}}{=} l' \cdot \lambda'(a_h \otimes b_l) \leq \lambda'(a_h \otimes b_l).$$
\end{proof}

\begin{lemma}[{cf. \cite[5.8]{DSz}}] \label{unicaaccionposible}
Given an action $Y_d \otimes Y_d \mr{\mu} L$ and a monomorphism $Z \mr{f} Y$, if the restriction of the action to $Z$ is an $\ell$-bijection, then it is an action. This is the only possible action on $Z$ that makes $f$ a morphism of $G$-actions.
\end{lemma}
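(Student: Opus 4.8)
The plan is to realize the restricted action as the $B$-bimodule morphism $\mu'$ determined on generators by $\mu'(\delta_z \otimes \delta_w) = \mu(\delta_{f(z)} \otimes \delta_{f(w)})$. This formula is forced: the $\delta_z$ ($z \in Z(b)$, $b\in B$) generate $Z_d$ as a $B$-module by Proposition \ref{formula}, and by Lemma \ref{monocancela} any action on $Z$ making $f$ a morphism of $G$-actions must agree with it on these generators; hence such an action is unique and must equal $\mu'$, which settles the uniqueness clause. It then remains to check that $\mu'$ is genuinely an action, i.e. that it satisfies the two subdiagrams $B1$ and $B2$ of Proposition \ref{equivdeaccion}(b). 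The hypothesis that $\mu'$ be an $\ell$-bijection enters precisely through axiom $ed)$ of Proposition \ref{propaxiomparamodulos2}.

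Equation $B2$ is immediate. With $\eps$, $e$ as in \eqref{diagramapentagonal}, we get $e\,\mu'(\delta_z \otimes \delta_w) = e\,\mu(\delta_{f(z)} \otimes \delta_{f(w)}) = \llbracket f(z) \! = \! f(w)\rrbracket_B$ by $B2$ for $\mu$, and since $f$ is mono in $shB$ (Remark \ref{monoenshB}) we have $\llbracket f(z) \! = \! f(w)\rrbracket_B = \llbracket z \! = \! w\rrbracket_B = \eps(\delta_z \otimes \delta_w)$.

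The heart of the proof is $B1$. I would first establish the auxiliary identity, for $z \in Z(b_0)$ and any $v \in Y$,
\[
\mu(\delta_{f(z)} \otimes \delta_v) \;=\; \bigvee_{u \in Z} \llbracket v \! = \! f(u)\rrbracket_B \cdot \mu(\delta_{f(z)} \otimes \delta_{f(u)}).
\]
The inequality $\geq$ follows from Lemma \ref{ecuacionenOmegaX}, which gives $\llbracket v \! = \! f(u)\rrbracket_B \cdot \delta_{f(u)} = \llbracket v \! = \! f(u)\rrbracket_B \cdot \delta_v$. For $\leq$, note that $\mu(\delta_{f(z)} \otimes \delta_v) \leq \bigvee_{u \in Z}\mu(\delta_{f(z)} \otimes \delta_{f(u)})$, because the right-hand side equals $b_0$ by $ed)$ for $\mu'$ while the left-hand side is $\leq b_0$ by $ed)$ for $\mu$; intersecting $\mu(\delta_{f(z)} \otimes \delta_v)$ with this supremum, distributing over the frame, and applying $uv)$ for $\mu$ (each meet $\mu(\delta_{f(z)} \otimes \delta_v) \wedge \mu(\delta_{f(z)} \otimes \delta_{f(u)}) \leq \llbracket v \! = \! f(u)\rrbracket_B$) yields the bound.

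Finally I would feed this identity into $B1$ for $\mu$: starting from $c\,\mu'(\delta_z \otimes \delta_w) = \bigvee_{v \in Y}\mu(\delta_{f(z)} \otimes \delta_v) \otimes_B \mu(\delta_v \otimes \delta_{f(w)})$, substitute the identity in the left tensorand, slide each scalar $\llbracket v \! = \! f(u)\rrbracket_B \in B$ across the balanced tensor $\otimes_B$, and use Lemma \ref{ecuacionenOmegaX} again to turn $\delta_v$ into $\delta_{f(u)}$ in the right tensorand; summing over $v$ replaces $\bigvee_{v}\llbracket v \! = \! f(u)\rrbracket_B$ by $c_u$ (with $f(u)\in Y(c_u)$), which Lemma \ref{restringirlosdelta} absorbs into $\delta_{f(u)}$, leaving exactly $\bigvee_{u \in Z}\mu'(\delta_z \otimes \delta_u)\otimes_B \mu'(\delta_u \otimes \delta_w)$, i.e. $B1$ for $\mu'$. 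By Proposition \ref{equivdeaccion} the validity of $B1$ and $B2$ means $\mu'$ is an action. The main obstacle is this last manipulation: one must keep careful track of the left/right $B$-actions so that the scalars genuinely move across $\otimes_B$, and the reduction of the index set from all of $Y$ to the subobject $Z$ must be justified by the surjectivity and univaluedness packaged in the $\ell$-bijection hypothesis rather than by any naive ``image of $f$'' argument.
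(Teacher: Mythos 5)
Your proof is correct and follows essentially the same route as the paper's: uniqueness via Lemma \ref{monocancela}, B2 from monicity of $f$, and B1 by starting from B1 for $\mu$ on $Y$ and collapsing the supremum over $Y$ to a supremum over $Z$, using the ed)/su) axioms of the restriction together with uv) of $\mu$ and Lemmas \ref{ecuacionenOmegaX} and \ref{restringirlosdelta}. The only (cosmetic) difference is that the paper inserts $b_0 \cdot (-) \otimes_B (-) \cdot b'_0$ and expands both tensorands symmetrically via ed) and su) of the restriction, whereas you expand only the left tensorand through your one-sided absorption identity.
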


\begin{proof}
Unicity is clear from the previous lemma. We have to check B1 and B2 in proposition \ref{equivdeaccion} for $Z_d \otimes Z_d \mr{\mu} L$. The only one that requires some care is B1. By hypothesis we have for $b_0,b'_0 \in B$, $x \in Y(b_0), w \in Y(b'_0)$, $$c \mu (\delta_x \otimes \delta_w) = \displaystyle \bigvee_{\stackrel{b \in B}{y \in Y(b)}} \mu (\delta_x \otimes \delta_y) \stackrel[B]{}{\otimes} \mu (\delta_y \otimes \delta_w)$$

\begin{center}
(we specify in the notation if the tensor product is over $B$).
\end{center}

%the generators $x_{h'}$ of $Y_d$. 

We have to see that when $x \in Z(b_0), w \in Z(b'_0)$, this equation still holds when restricting the supremum to $Z$. In fact, in this case we have 

$$\displaystyle \bigvee_{\stackrel{b \in B}{y \in Y(b)}} \mu (\delta_x \otimes \delta_y) \stackrel[B]{}{\otimes} \mu (\delta_y \otimes \delta_w) \stackrel{\ref{restringirlosdelta}}{=}
%\bigvee_{a} \mu (\delta_x \otimes \delta_a) \stackrel[B]{}{\otimes} \mu (\delta_a \otimes \delta_w) = 
\displaystyle \bigvee_{\stackrel{b \in B}{y \in Y(b)}} b_0 \cdot \mu (\delta_x \otimes \delta_y) \stackrel[B]{}{\otimes} \mu (\delta_y \otimes \delta_w) \cdot b'_0 \stackrel{ed), \ su)}{=}  $$

$$= \displaystyle \bigvee_{\stackrel{b \in B}{y \in Y(b)}} \bigvee_{\stackrel{b_1 \in B}{z_1 \in Z(b_1)}} \bigvee_{\stackrel{b_2 \in B}{z_2 \in Z(b_2)}} 
\mu (\delta_x \otimes \delta_{z_1}) \wedge \mu (\delta_x \otimes \delta_y) \stackrel[B]{}{\otimes} \mu (\delta_y \otimes \delta_w) \wedge \mu (\delta_{z_2} \otimes \delta_w)
\stackrel{uv), \ in), \ \ref{ecuacionenOmegaX}}{=} $$

\hfill $ \displaystyle = \bigvee_{\stackrel{b \in B}{z \in Z(b)}} \mu (\delta_x \otimes \delta_z) \stackrel[B]{}{\otimes} \mu (\delta_z \otimes \delta_w).$
\end{proof}

We are ready to prove the equivalence between the categories  
$\cc{R}el(\beta^G)$ and $Cmd_0(L)$.

%Consider the full-and-faithful functor $shB \mr{(-)_{dis}} sp$.  
%We have showed so far that if $Cmd_0(L)$ is the category of discrete $L$-comodules (i.e. $L$-comodules ), then the objects of the categories $Cmd_0(L)$ and $Rel(\beta^G)$ are in bijective correspondence. Now we start analysing the arrows of the categories.

\begin{theorem} \label{Comd=Rel}
For any localic groupoid $G$ as in \ref{sinnadadeequivalence0}, there is an equivalence of categories making the square commutative (both $U$ are forgetful functors):
$$
\xymatrix%@C=0ex
        {
          \cc{R}el(\beta^G) \ar[rr]^{\cong} \ar[d]_{\cc{R}el(U)}
      & & Cmd_0(L) \ar[d]^{U} %(.4)
        \\
         \cc{R}el(shB) \ar[r]^{(-)_*} & s\ell_0(shB) \ar[r]^{\gamma_*} & (B\hbox{-}Mod)_0 .
        }       
$$ 

Note that the commutativity of the square means that the identification between relations   \mbox{$R \subset Y \times Y'$} in $shB$ and $B$-module morphisms \mbox{$Y_d \mr{R} Y'_d$} lifts to the upper part of the square.
%linear maps $PA \to PA$ 
\end{theorem}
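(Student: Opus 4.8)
The plan is to build the top functor by lifting the bottom equivalence $Rel(shB)\cong(B\text{-}Mod)_0$ of \ref{rel} along the two forgetful functors, and then to prove it is an equivalence by checking essential surjectivity and full faithfulness. On objects there is nothing new: by Proposition \ref{equivdeaccion} a discrete action $G\curvearrowright\overline{Y_d}$, a $B$-bimodule morphism $Y_d\otimes Y_d\mr{\mu}L$ satisfying $B1,B2$, and a discrete comodule $Y_d\mr{\rho}L\otimes_B Y_d$ are interchangeable data on one and the same underlying object $Y_d=\gamma_*(\Omega_B^Y)$, and both vertical functors send this object to $Y_d$ (equivalently to $Y$). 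Thus essential surjectivity of the top arrow is exactly the object part of Proposition \ref{equivdeaccion}, and it remains to produce, for fixed discrete actions on $Y$ and $Y'$, a bijection on hom-sets compatible with the bottom equivalence.

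A morphism $\overline{Y_d}\to\overline{Y'_d}$ in $\cc{R}el(\beta^G)$ is a relation in the topos $\beta^G$, i.e. a subobject $R\hookrightarrow\overline{Y_d}\times\overline{Y'_d}$. The product here is the fibre product over $G_0$ with the diagonal $G$-action, and its underlying sheaf is the product $Y\times Y'$ of $shB$; since the forgetful functor $\beta^G\mr{U}shB$ is left exact, $R$ is determined by a $G$-equivariant relation $R\subset Y\times Y'$ in $shB$ (Remark \ref{monoenshB} ensures the underlying arrow is again a mono). Under the equivalence $Rel(shB)\cong(B\text{-}Mod)_0$ this relation corresponds to the $B$-module morphism $R_*\colon Y_d\to Y'_d$. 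As $R\mapsto R_*$ is already a bijection on all of $Rel(shB)$, full faithfulness of the top functor reduces to the single assertion
$$R \text{ is a sub-}G\text{-action of the product}\iff R_*\colon Y_d\to Y'_d \text{ is an } L\text{-comodule morphism.}$$

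To prove this I would express both sides as one and the same $\lozenge$-diagram relating the two actions, exactly as in the function case of Proposition \ref{accionsiidiam2} but with the arrow $g$ (and its formal dual $g^\wedge$) replaced by the relation $R$ (and $R^{op}$, noting $R_*^\vee=(R^{op})_*$ by Corollary \ref{dualintercambiaparaG}). On the Galois side, $R$ being closed under the action is, via Lemmas \ref{unicaaccionposible} and \ref{monocancela}, equivalent to the restriction of the product $\ell$-bijection to $R$ being again an $\ell$-bijection, which by Proposition \ref{combinacion} is precisely the $\lozenge(R)$ diagram built from the $B$-bimodule morphisms $\mu,\mu'$ attached to the two actions. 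On the Tannaka side, the defining equation $\rho'\circ R_*=(L\otimes_B R_*)\circ\rho$ of a comodule morphism, rewritten with the self-duality $\eta,\eps$ of $Y_d$ and $Y'_d$ and the correspondence $\mu\leftrightarrow\rho$ of Proposition \ref{equivdeaccion}, unfolds by the elevator calculus — composing with $\eta$ or $\eps$ and using a triangular identity, just as in the two implications of Proposition \ref{accionsiidiam2} — into the very same $\lozenge(R)$ equation. This chain of rewritings is the heart of the matter and the step I expect to be the main obstacle: one must carry the diagrammatic manipulations of Proposition \ref{accionsiidiam2} through for a general relation, where $R$ is no longer univalued or everywhere defined, so the triangular identities for the self-duality (rather than any property of $g$) must do all the work.

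Finally, functoriality and commutativity of the square are immediate. The square commutes by construction, since the top functor is defined as the restriction of the bottom equivalence along $U$ and uses exactly the identification $R\leftrightarrow R_*$ appearing in the bottom row. Compatibility with composition follows because $\lozenge$-diagrams respect composition of relations (Proposition \ref{diamante12igualdiamante}) while $(-)_*$ and $\gamma_*$ are functors, so equivariant relations compose to equivariant relations and their images compose as comodule morphisms. Together with the hom-set bijection and the essential surjectivity established above, this yields the asserted equivalence $\cc{R}el(\beta^G)\cong Cmd_0(L)$.
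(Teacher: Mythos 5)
Your proposal is correct and follows essentially the same route as the paper's proof: objects via Proposition \ref{equivdeaccion}, arrows by reducing (through Lemma \ref{unicaaccionposible} and Proposition \ref{combinacion}) to the equivalence between the comodule-morphism equation for $R_*$ and the $\lozenge(R,R)$ diagram, which the paper likewise establishes by the elevator calculus using only the triangular identities of the self-dualities of $Y_d$ and $Y'_d$. The diagrammatic step you flag as the main obstacle is exactly the computation the paper carries out, and it goes through as you anticipate.
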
 
\begin{proof}
Since the equivalence $(B\hbox{-}Mod)_0 \cong \cc{R}el(shB)$ maps $Y_d \leftrightarrow Y$, proposition \ref{equivdeaccion} yields a bijection between the objects of $Cmd_0(L)$ and $Rel(\beta^G)$.

We have to show that this bijection respects the arrows of the categories (note that the composition of two relations corresponds to the composition of their direct images). 
Using the lemma \ref{unicaaccionposible}, it is enough to see that for $Y$, $Y'$ any two objects of $\beta^G$, and $R \subset Y \times Y'$ a relation in $shB$, the restriction $\theta$ of the product action $\lambda \boxtimes \lambda'$ to $R$ is a bijection if and only if the corresponding $B$-module map 
$R: Y_d \rightarrow Y'_d$ is a comodule morphism.
%underlines a monomorphism of \mbox{$G$-sets} $R \hookrightarrow Y \times Y'$ 
%Let be the restriction of the product action $\mu \times \mu'$ to $R$. 

We claim that the diagram expressing that $R: Y_d \rightarrow Y_d'$ is a comodule morphism is equivalent to the diagram $\lozenge(R,R)$ in \ref{diagramadiamante12}. The proof follows then by proposition \ref{combinacion}. %Recall \ref{ascensoresconB}:

The comodule morphism diagram is the equality
\begin{equation} \label{commorph}
\vcenter{\xymatrix@C=-0.3pc@R=1.5pc {Y_d \dcell{R} &&& \op{\eta} \\
														Y'_d && Y'_d \Brr & \,\,\, & Y'_d \did \\
														& L \cl{\mu'} & \Brr && Y'_d}}
\vcenter{\xymatrix@C=-0.3pc@R=1.5pc{ \quad = \quad }}
\vcenter{\xymatrix@C=-0.3pc@R=1.5pc {Y_d \did &&& \op{\eta} \\
														Y_d && Y_d \Brr & \,\,\, & Y_d \dcell{R} \\
														& L \cl{\mu} & \Brr && Y'_d}}
\end{equation}
while the diagram $\lozenge$ is
\begin{equation} \label{grafdiam}
\vcenter{\xymatrix@C=-0.3pc@R=1.5pc         {  Y_d \did &&& \op{\eta} &&& Y'_d \did \\
																			Y_d \did && Y_d \did \Brr & \,\,\, & Y_d \dcell{R} && Y'_d \did \\
																			Y_d && Y_d \Brr && Y'_d && Y'_d \\
																			& L \cl{\mu} & \Brr &&& B \cl{\eps} }}
\vcenter{\xymatrix@C=-0.3pc@R=1.5pc         {\quad  = \quad } }
\vcenter{\xymatrix@C=-0.3pc@R=1.5pc         {  Y_d \dcell{R} && Y'_d \did \\
																			Y'_d && Y'_d \\
																			& L \cl{\mu'} }}
\end{equation}

{\it Proof of \eqref{commorph} $\implies$ \eqref{grafdiam}}:
$$
\vcenter{\xymatrix@C=-0.3pc@R=1.5pc         {  Y_d \did &&& \op{\eta} &&& Y'_d \did \\
																			Y_d \did && Y_d \did \Brr & \,\,\, & Y_d \dcell{R} && Y'_d \did \\
																			Y_d && Y_d \Brr && Y'_d && Y'_d \\
																			& L \cl{\mu} & \Brr &&& B \cl{\eps} }}
\vcenter{\xymatrix@C=-0.3pc@R=1.5pc         { \quad \stackrel{\eqref{commorph}}{=} \quad }}
\vcenter{\xymatrix@C=-0.3pc@R=1.5pc         {  Y_d \dcell{R} &&& \op{\eta} &&& Y'_d \did \\
																			Y'_d && Y'_d \Brr & \,\,\, & Y'_d && Y'_d \\
																			& L \cl{\mu'} & \Brr &&& B \cl{\eps} }}
\vcenter{\xymatrix@C=-0.3pc@R=1.5pc         { \quad \stackrel{(\triangle)}{=} \quad }}
\vcenter{\xymatrix@C=-0.3pc@R=1.5pc         {  Y_d \dcell{R} && Y'_d \did \\
																			Y'_d && Y'_d \\
																			& L \cl{\mu'} }}
$$
																			
{\it Proof of \eqref{grafdiam} $\implies$ \eqref{commorph}}:		
$$
\vcenter{\xymatrix@C=-0.3pc@R=1.5pc {Y_d \dcell{R} &&& \op{\eta} \\
														Y'_d && Y'_d \Brr & \,\,\, & Y'_d \did \\
														& L \cl{\mu'} & \Brr && Y'_d}}
\vcenter{\xymatrix@C=-0.2pc @R=1pc{ \quad \stackrel{\eqref{grafdiam}}{=} \quad }}
\vcenter{\xymatrix@C=-0.3pc@R=1.5pc {	Y_d \did &&& \op{\eta} &&&& \op{\eta} \\
																			Y_d \did && Y_d \did \Brr & \,\,\, & Y_d \dcell{R} && Y'_d \did \Brr & \,\,\, & Y'_d \did \\
																			Y_d && Y_d \Brr && Y'_d && Y'_d \Brr && Y'_d \did \\
																			& L \cl{\mu} & \Brr &&& B \cl{\eps} & \Brr && Y'_d}}
\vcenter{\xymatrix@C=-0.2pc @R=1pc{ \quad \stackrel{(\triangle)}{=} \quad }}
\vcenter{\xymatrix@C=-0.3pc@R=1.5pc {Y_d \did &&& \op{\eta} \\
														Y_d && Y_d \Brr & \,\,\, & Y_d \dcell{R} \\
														& L \cl{\mu} & \Brr && Y'_d}}
$$
\end{proof}

\section{The Galois and the Tannakian contexts} \label{sec:Contexts}

{\bf The Galois context associated to a topos.} Consider an arbitrary topos over $\Sat$, \mbox{$\Eat \mr{} \Sat$.} In \mbox{\cite[VII.3 p.59-61]{JT},} the following is proved. There is an open spatial cover of $\Eat$ (referred also as a ``surjective localic point''), i.e. an open surjection of topos $\Xat \mr{q} \Eat$ with $\Xat = shG_0$ for a \mbox{$G_0 \in sp$.} 
We use in this section the notation of $\cite{JT}$ for sheaves on a space, $shG_0 = sh (\cc{O}(G_0))$. %shG = sh (\cc{O}(G)), 

As we mentioned in the introduction, Joyal and Tierney consider the localic point \mbox{$shG_0 \mr{q} \Eat$} as a (general) Galois context, as follows. In VIII.3 p.68-69, they show that the pseudo-kernel pair of $q$, $\xymatrix{\Xat \stackrel[\Eat]{{{}^{\ }}^{\ }}{\times} \Xat \ar@<1ex>[r]^>>>>>>{p_1} \ar@<-1ex>[r]_>>>>>>{p_2} & \Xat}$ satisfies that there is a localic groupoid 
\mbox{$G= \xymatrix{G \stackrel[G_0]{\textcolor{white}{G_0}}{\times} G \ar[r]^>>>>>>{\circ} & G \ar@<1.3ex>[r]^{\partial_0} \ar@<-1.3ex>[r]_{\partial_1} & G_0 \ar[l]|{\ \! i \! \ }}$}
%$\xymatrix{G \ar@<1ex>[r]^{\partial_0} \ar@<-1ex>[r]_{\partial_1} & G_0}$ 
such that 

\begin{equation} \label{igualdadJT} 
\xymatrix{\Xat \stackrel[\Eat]{{{}^{\ }}^{\ }}{\times} \Xat \ar@<1ex>[r]^>>>>>>{p_1} \ar@<-1ex>[r]_>>>>>>{p_2} & \Xat & = & shG \ar@<1ex>[r]^{\partial_0^*} \ar@<-1ex>[r]_{\partial_1^*} & shG_0}
\end{equation}

Joyal and Tierney use this to prove the equivalence $\Eat \cong \beta^G$ (see theorem \ref{fundamentalGT}) via descent techniques. They don't construct $G$, and they don't need to, since their proof relies solely on the fact that open surjections are effective descent morphisms (\cite[VIII.2]{JT}). They make nevertheless the remark (p.70 of op. cit.) that in the case $\cc{X} = \cc{S} \mr{q} \cc{E}$, with $\cc{E}$ an atomic topos, $G$ is the spatial group of automorphisms of the inverse image $F = q^*$ of the point. This idea was developed by Dubuc in \cite[proposition 4.7]{D1}, who explicitly constructed $G = \ell Aut(F)$ in this case, and described it as a universal $\rhd$-cone of $\ell$-bijections. In \cite[section 6]{DSz}, it is shown that this universal property is satisfied by the (neutral) Tannakian coend $End^\wedge(T)$, where $T=Rel(F)$.
%coincides with the one defining the
%corresponding to the neutral case of Tannaka, see )

%of \cite{DSz}, \cite{D1}, and of the remark of \cite{JT}, 
%can be considered as 
Our work in this section is a generalization of these results, since we show that the equation \eqref{igualdadJT} above describes $G$ as a universal $\rhd$-cone of $\ell$-bijections (theorem \ref{JTGeneral}), and that the (non-neutral) Tannakian coend $End^\wedge(T)$ satisfies this property (proposition \ref{LcumpleG}). In this way an explicit construction of the groupoid $G$ is obtained.
%considered by Joyal and Tierney
%construct $G$ in the general case and describe it as (which is a generalization of the description of the localic group of automorphisms of a point made in \cite{D1}, 4, p.152-155).

%We want to find equivalent conditions, given all the previous structure, to the fact that the following diagram is a 2-push out of categories
Equation \eqref{igualdadJT} means that $\xymatrix{shG \ar@<1ex>[r]^{\partial_0^*} \ar@<-1ex>[r]_{\partial_1^*} & shG_0}$ satisfy the universal property that defines the   \mbox{pseudo-kernel} pair of $q$, i.e.
%can be computed as the following 2-push out in the 2-category of topoi with inverse images.

\begin{equation} \label{2po}
	\xymatrix@C=.5pc@R=.3pc{\Eat \ar[rrr]^{q^*} \ar[ddd]_{q^*} &&& shG_0 \ar[ddd]^{\partial_0^*}  \ar@/^3ex/[dddddrrr]^{f_0^*} \\ 
			&& \ar@{=>}[dl]^{\cong}_{\varphi}  &&&&&&  (\hbox{for each } \cc{F}, f_0^*, f_1^*, f_0^*q^* \Mr{\psi} f_1^*q^*,  \\
			&&&&& \ar@{=>}[dl]^{\cong}_{\psi}  &&&		\hbox{there exists a unique } \ell^* \hbox{ such that }   \\
			shG_0 \ar[rrr]_{\partial_1^*} \ar@/_2ex/[ddrrrrrr]_{f_1^*} &&& shG \ar@{.>}[ddrrr]|{\exists ! \ell^*} &&&&& \ell^* \partial_i^* = f_i^* \hbox{ and } id_{\ell^*} \circ \varphi = \psi)\\
			\\
			&&&&&& \cc{F} } 
\end{equation}

\begin{sinnadastandard} \label{cosasconA}
Take, as in section \ref{sec:Cmd=Rel}, $B = \O({G_0})$. By \ref{enumeratedeJT}, items 5,6, $(B \otimes B)$-locales \mbox{$B \otimes B \xr{g = (g_0,g_1)} A$} correspond to locales $\widetilde{A} \in Loc(sh(B \otimes B))$, $\gamma_* \widetilde{A} = A$ and the following diagram commutes.

\begin{equation} \label{dosdanuna}
\xymatrix{shB \ar[r]^{g_0^*} \ar[rd]_{\pi_1^*} & sh\widetilde{A} & shB \ar[l]_{g_1^*} \ar[dl]^{\pi_2^*} \\
						& sh(B\otimes B) \ar[u]^{g^*} }
\end{equation}

%$L = \O(G)$

%. The locale morphisms $\xymatrix{B \ar@<1ex>[r]^{s=\partial_0^{-1}} \ar@<-1ex>[r]_{b=\partial_1^{-1}} & L}$ induce a locale morphism $\xymatrix{B \otimes B \ar[r]^>>>>>{\gamma=(b,s)} & L}$, this way we have $\widetilde{L} \in Loc(sh(B \otimes B))$ (see \ref{enumeratedeJT}, items 5,6) 

% Since $L$ is a locale (in $\cc{S}$), we can consider the topos $shL$, then we have the following commutative diagram
% 
% $$\xymatrix{ shL \ar[rr]^{\cong} && sh\widetilde{L}  \\
% 		    & sh(B \otimes B) \ar[ur]_{\gamma^*} \ar[ul]  \\
% 		    & \cc{S} \ar@/_2ex/[uur]_{\widetilde{\gamma}^*} \ar@/^2ex/[uul]^{\gamma^*} \ar[u]^{\gamma^*}   }$$
% 		    
% where the isomorphism in the first row is obtained because since the composition of spatial morphisms is spatial, then $sh\widetilde{L}$ is spatial (over $\cc{S}$), i.e. $sh\widetilde{L} \cong sh(\widetilde{\gamma}_* \Omega_{\widetilde{L}} )$. But $\widetilde{\gamma}_* \Omega_{\widetilde{L}} = \gamma_* \gamma_* \Omega_{\widetilde{L}} = \gamma_* \widetilde{L} = L$.

Consider also the following commutative diagram 

$$\xymatrix{ sh\widetilde{A}  \\
	    sh(B \otimes B) \ar[u]^{g^*} \\
	    \cc{S} \ar@/_8ex/[uu]_{\gamma^*} \ar[u]^{\gamma^*} }$$

Since the composition of spatial morphisms is spatial (see for example \cite[1.1]{JohnstoneFactorization}), then $sh\widetilde{A}$ is spatial (over $\cc{S}$), i.e. $sh\widetilde{A} \cong sh(\gamma_* \Omega_{\widetilde{A}} )$. But $\gamma_* \Omega_{\widetilde{A}} = \gamma_* {g}_* \Omega_{\widetilde{A}} = \gamma_* \widetilde{A} = A$.

\begin{equation} \label{abusotilde}
 \hbox{\emph{In the sequel, we make no distinction between }} shA \hbox{\emph{ and }} sh\widetilde{A}.
\end{equation}
\end{sinnadastandard}

\begin{sinnadastandard} \label{spatialreflection}
Recall from \cite[VI.5 p.53-54]{JT} the fact that
%for a topos $\cc{E} \mr{p} \cc{S}$, if $A \in Loc(\cc{S})$ then the geometric morphisms $\cc{E} \mr{} shA$ 
%$\xymatrix{\cc{E} \ar[rd]_p \ar[rr] && shA \ar[ld] \\ & \cc{S}}$ 
%over $\cc{S}$ are in bijective correspondence with the locale morphisms $A \mr{} p_*(\Omega_{\cc{E}})$, and that this correspondence yields 
there is a left adjoint $F$ to the full and faithful functor $Loc^{op}(\cc{S}) = S \! p(\cc{S}) \stackrel{sh}{\hookrightarrow} Top/\cc{S}$, %that is referred to as the spatial reflector and 
that maps $\cc{E} \mr{p} \cc{S}$ to $F(\cc{E}) = \overline{p_*(\Omega_{\cc{E}})}$. 
\end{sinnadastandard}

\begin{lemma} \label{2poigual2polocalic}
 The universal property defining the pseudo-push out \eqref{2po} is equivalent to the following universal property for localic topoi:
 
\begin{equation} \label{2polocalic}
	\xymatrix@C=.5pc@R=.2pc{\Eat \ar[rrr]^{q^*} \ar[ddd]_{q^*} &&& shG_0 \ar[ddd]^{\partial_0^*}  \ar@/^3ex/[dddddrrr]^{g_0^*} \\ 
			&& \ar@{=>}[dl]^{\cong}_{\varphi}  &&&&&&  (\hbox{for each } A, g_0^*, g_1^*, g_0^*q^* \Mr{\phi} g_1^*q^*,  \\
			&&&&& \ar@{=>}[dl]^{\cong}_{\phi}  &&&		\hbox{there exists a unique } {{a}}^* \hbox{ such that }   \\
			shG_0 \ar[rrr]_{\partial_1^*} \ar@/_2ex/[ddrrrrrr]_{g_1^*} &&& shG \ar@{.>}[ddrrr]|{\exists ! {{a}}^*} &&&&& {{a}}^* \partial_i^* = g_i^* \hbox{ and } id_{{{a}}^*} \circ \varphi = \phi)\\
			\\
			&&&&&& shA } 
\end{equation}
 
\end{lemma}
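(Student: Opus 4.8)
The plan is to prove the two universal properties equivalent by observing that \eqref{2po} tests the cospan $(shG,\partial_0^*,\partial_1^*,\varphi)$ against \emph{all} topoi over $\cc{S}$ while \eqref{2polocalic} tests it only against \emph{spatial} ones, and then to show that the spatial reflection of \ref{spatialreflection} reduces the general case to the spatial one. One implication is immediate: since every $shA$ is in particular a topos over $\cc{S}$, property \eqref{2po} specializes to \eqref{2polocalic} by taking $\cc{F}=shA$, $f_i^*=g_i^*$ and $\psi=\phi$. All the work is in the converse, so assume \eqref{2polocalic} and let me prove \eqref{2po}.

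First I would fix a topos $\cc{F}\mr{p}\cc{S}$ with the data $f_0^*,f_1^*\colon shG_0\to\cc{F}$ and $\psi\colon f_0^*q^*\Mr{}f_1^*q^*$. I write $\eta\colon\cc{F}\to sh(F\cc{F})$ for the unit of the spatial reflection of \ref{spatialreflection}, and set $A=p_*\Omega_{\cc{F}}$, so that $sh(F\cc{F})=shA$ is spatial. The two properties of $\eta$ I would use are: (i) $\eta^*$ is full and faithful, because $\eta$ is, up to isomorphism, the hyperconnected part of the hyperconnected--localic factorization of $p$ (this is exactly the situation of \ref{remark:relativizarellrelations}, with $\cc{F}$, $p$, $A$ in place of $\cc{H}$, $h$, $P$); and (ii) precomposition with $\eta$ is a bijection $Top/\cc{S}(shA,shX)\to Top/\cc{S}(\cc{F},shX)$ for every spatial $shX$, by the adjunction $F\dashv sh$ together with full faithfulness of $sh$.

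Using (ii) with $X=G_0$, the morphisms $f_i$ factor as $f_i=g_i\circ\eta$ for unique $g_i\colon shA\to shG_0$, i.e.\ $f_i^*=\eta^*g_i^*$. Since $\eta^*$ is full and faithful, left whiskering by $\eta^*$ is a bijection on natural transformations, so there is a unique $\phi\colon g_0^*q^*\Mr{}g_1^*q^*$ with $id_{\eta^*}\circ\phi=\psi$; moreover $\phi$ is an isomorphism because $\eta^*$ reflects isomorphisms. Feeding $(A,g_0^*,g_1^*,\phi)$ into \eqref{2polocalic} gives a unique ${{a}}^*\colon shG\to shA$ with ${{a}}^*\partial_i^*=g_i^*$ and $id_{{{a}}^*}\circ\varphi=\phi$. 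I would then set $\ell^*:=\eta^*{{a}}^*$ and verify the two required equations, namely $\ell^*\partial_i^*=\eta^*g_i^*=f_i^*$ and $id_{\ell^*}\circ\varphi=\eta^*(id_{{{a}}^*}\circ\varphi)=\eta^*\phi=\psi$, using the standard compatibility of horizontal composition with whiskering.

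For uniqueness, given any $\ell^*$ satisfying the conditions of \eqref{2po}, I would use that $shG$ is spatial to factor the corresponding geometric morphism $\ell$ through $\eta$ via (ii), obtaining ${{a}}^*\colon shG\to shA$ with $\ell^*=\eta^*{{a}}^*$; then injectivity in (ii) forces ${{a}}^*\partial_i^*=g_i^*$, and full faithfulness of $\eta^*$ (the whiskering bijection again) forces $id_{{{a}}^*}\circ\varphi=\phi$, so ${{a}}^*$ is the unique morphism produced by \eqref{2polocalic} and $\ell^*$ is thereby determined. The main obstacle, and the real content of the argument, is step (i)--(ii): identifying the unit of the spatial reflection with a hyperconnected morphism so that $\eta^*$ is full and faithful. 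This is precisely what allows the natural isomorphism $\psi$, which lives over the arbitrary topos $\cc{F}$, to be \emph{descended} to an isomorphism $\phi$ over the spatial topos $shA$; once this is in place, everything else is formal $2$-categorical bookkeeping with whiskering and the adjunction $F\dashv sh$.
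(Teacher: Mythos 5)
Your proof is correct and follows essentially the same route as the paper's: reduce the arbitrary test topos $\cc{F}$ to its spatial reflection $shA$ via the unit $\eta$, use that $\eta$ is hyperconnected so that $\eta^*$ is full and faithful to descend $\psi$ to $\phi$, and then transport existence and uniqueness along the bijection given by the adjunction $F\dashv sh$. The only (cosmetic) difference is that you take the spatial reflection of $\cc{F}$ over $\cc{S}$ while the paper takes the hyperconnected--localic factorization of $(f_0,f_1)$ over $sh(G_0\times G_0)$; since $\gamma$ is localic these yield the same $shA$ and the same $\eta$.
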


\begin{proof}
 Of course \eqref{2po} implies \eqref{2polocalic}. To show the other implication, given $\cc{F}, f_0^*, f_1^*, \psi$ as in \eqref{2po}, consider $\cc{F}$ as a topos over $sh (G_0 \times G_0)$ via $\cc{F} \mr{f = (f_0,f_1)} sh (G_0 \times G_0)$ and apply $F$ as in \ref{spatialreflection}. Then $\cc{O}(F(\cc{F})) = f_* \Omega_{\cc{F}}$ is a locale in $sh (G_0 \times G_0)$. Take $A = \gamma_* f_* \Omega_{\cc{F}}$ the corresponding locale over $B \otimes B$, $B \otimes B \mr{g = (g_0,g_1)} A$, i.e. $\widetilde{A} = f_* \Omega_{\cc{F}}$, then we have the commutative diagram \eqref{dosdanuna}.
 
 The hyperconnected factorization of $f$ is $\vcenter{\xymatrix{\cc{F} \ar[rd]_f \ar[rr]^{\eta} && shA \ar[ld]^g \\ & sh(G_0 \times G_0)}}$, where $\eta$ is the unit of the adjunction described in \ref{spatialreflection}. $\eta$ is hyperconnected (see \cite[VI.5 p.54]{JT}), in particular $\eta^*$ is full and faithful (see \cite[1.5]{JohnstoneFactorization}). Then $\eta^* g_0^* q^* \Mr{\psi} \eta^* g_1^* q^*$ determines uniquely $g_0^* q^* \Mr{\phi} g_1^* q^*$ such that $id_{\eta^*} \circ \phi = \psi$ and applying \eqref{2polocalic} we obtain

  $$\xymatrix@C=.5pc@R=.7pc{\Eat \ar[rrr]^{q^*} \ar[ddd]_{q^*} &&& shG_0 \ar[ddd]^{\partial_0^*}  \ar@/^3ex/[dddddrrr]^{g_0^*}  \ar@/^6ex/[dddddddrrrrr]^{f_0^*} \\ 
			&& \ar@{=>}[dl]^{\cong}_{\varphi}  &&&&&&    \\
			&&&&& \ar@{=>}[dl]^{\cong}_{\phi}  &&&		  \\
			shG_0 \ar[rrr]_{\partial_1^*} \ar@/_2ex/[ddrrrrrr]_{g_1^*} \ar@/_6ex/[ddddrrrrrrrr]_{f_1^*} &&& shG \ar@{.>}[ddrrr]|{\exists ! {{a}}^*} &&&&& \\
			\\
			&&&&&& shA \ar[rrdd]^{\eta^*} \\ 
			\\
			&&&&&&&& \cc{F}} $$

 %by \ref{cosasconA}

Now, by the adjunction described in \ref{spatialreflection}, since taking sheaves is full and faithful, we have a bijective correspondence between morphisms ${{a}}^*$ and $\ell^*$ in the following commutative diagram:

\begin{equation} \label{adjuncionconeta}
\xymatrix@R=2.5pc@C=4pc{\cc{F} & shA \ar[l]_{\eta^*} \\
	  & shG \ar[u]_{{{a}}^*} \ar[ul]_{\ell^*} \\
	  & sh(G_0 \times G_0) \ar[uul]^{f^*} \ar[u]_{\partial^*} \ar@/_5ex/[uu]_{g^*}  }
\end{equation}

To finish the proof, we have to show that under this correspondence the conditions of \eqref{2po} and \eqref{2polocalic} are equivalent. The equivalence between $l^* \partial^* = f^*$ and ${{a}}^* \partial^* = g^*$ is immediate considering \eqref{adjuncionconeta}, and the equivalence between $id_{l^*} \circ \varphi = \psi$ and $id_{{{a}}^*} \circ  \varphi = \phi$ follows from $id_{\eta^*} \circ \phi = \psi$ using that $\eta^*$ is full and faithful.

%  
%  take $A = \gamma_* \Omega_{\cc{F}}$ (the locale used to define the spatial reflection of $\cc{F}$ in \cite{JT}, VI.5 p.54). Denote by $\eta$ the unit of the $2$-adjunction given by the spatial reflection (see ), then $shA \mr{\eta^*} \cc{F}$ is full and faithful.
%  
%  The spatial reflection yields for any locale $C$ a one to one correspondence between the inverse images of morphisms $shC \mr{f^*} shA$ and $shC \mr{f^*} \cc{F}$, given by composition with the inverse image $shA \mr{\eta^*} \cc{F}$, which is actually part of an equivalence of categories $[shA,shC] \cong [\cc{F},shC]$. This correspondence, applied to $C = \cc{O}(G_0)$ and $C = \cc{O}(G)$ proves that \eqref{2polocalic} implies \eqref{2po} using the  following diagram
%  

%  
%  Then for any locale $C$, we have the correspondence
%  
% $$\underline{\quad \quad shC \mr{f^*} shA \quad \quad} \quad \quad {}_{\textstyle sh \hbox{ full and faithful}}$$
% 
% \vspace{-3.5ex}
%  
%  $$\underline{\quad \quad \quad C \mr{f} A \quad \quad \quad } \quad \quad {}_{\hbox{Diaconescu}} \quad \quad \quad$$
% 
% \vspace{-3.5ex}
%  
%  $${\quad \quad shC \mr{f} \cc{F} \quad \quad } \quad \quad \quad \quad \quad \quad \quad \quad \quad \quad \quad $$

\end{proof}

\begin{sinnadastandard} \label{correspondenceparaGalois} 
Consider a $B\otimes B$-locale $A$ as in \ref{cosasconA}. We have the correspondence

\renewcommand{\arraystretch}{-3}
\begin{tabular}{c c}
$g_0^* q^* \Mr{\phi} g_1^* q^* \hbox{ a natural isomorphism}$  & ${}_{_{\textstyle \hbox{by } \eqref{dosdanuna} }}$ \\ \cline{1-1} \noalign{\smallskip}
$g^* \pi_1^*  q^* \Mr{\phi} g^* \pi_2^* q^* \hbox{ a natural isomorphism}$ & ${}_{_{\textstyle \hbox{by } \ref{conodeellrelationsesnattransf} }}$ \\ \cline{1-1} \noalign{\smallskip}
$\hbox{A } \lozenge_1\hbox{-cone } \pi_1^* q^* X \times \pi_2^* q^* X \mr{\alpha_X} \widetilde{A} \; \hbox{ of } \ell \hbox{-bijections (in } sh(B \otimes B))$ & ${}_{_{\textstyle \hbox{by } \ref{dim1ydim2esdim} \hbox{, } \ref{trianguloesdiamante} }}$ \\ \cline{1-1} \noalign{\smallskip}
$\hbox{A } \rhd\hbox{-cone } \pi_1^* q^* X \times \pi_2^* q^* X \mr{\alpha_X} \widetilde{A} \; \hbox{ of } \ell \hbox{-bijections (in } sh(B \otimes B))$
\end{tabular}

%  
% $$\underline{\quad \quad \quad \quad \quad \quad  \quad \quad \quad \quad \quad} \quad \quad \quad $$
% 
% \vspace{-0.95cm}
% 
% $$\underline{\quad \quad \quad \quad \quad \quad  \quad \quad \quad \quad \quad } \quad \quad \quad \quad $$
% 
% \vspace{-0.95cm}
% 
% $$\underline{} \quad $$
% 
% \vspace{-0.95cm}
% 
% $$

In particular for $L = \O(G)$, the locale morphisms $\xymatrix{B \ar@<1ex>[r]^{s=\partial_0^{-1}} \ar@<-1ex>[r]_{t=\partial_1^{-1}} & L}$ induce a locale morphism 
  \mbox{$\xymatrix{B \otimes B \ar[r]^>>>>>{\gamma=(b,s)} & L}$,} and $\partial_0^* q^* \Mr{\varphi} \partial_1^* q^*$ correspond to a $\rhd$-cone $\pi_1^* q^* X \times \pi_2^* q^* X \mr{\lambda_X} \widetilde{L}$ of $\ell$-bijections.

%. , this way we have $\widetilde{L} \in Loc(sh(B \otimes B))$ (see \ref{enumeratedeJT}, items 5,6) 

\end{sinnadastandard}

\begin{theorem} \label{JTGeneral}
Given the previous data, \eqref{2po} is a pseudo-push out if and only if $\lambda$ is universal as a $\rhd$-cone of $\ell$-bijections (in the topos $sh(B \otimes B)$) in the following sense:

\begin{equation} \label{AutF}
\xymatrix
        {
         \pi_1^* q^* X \times \pi_2^* q^* X \ar[rd]^{\lambda_{X}}  
                      \ar@(r, ul)[rrd]^{\alpha_{X}} 
                      \ar[dd]_{\pi_1^* q^* (f) \times \pi_2^* q^* (f)}  
        \\
         {} \ar@{}[r]^(.3){\geq}
         & \;\; \widetilde{L} \;\; \ar@{-->}[r]^{\exists ! {{a}}} 
         & \;\widetilde{A}.  
        \\
         \pi_1^* q^* Y \times \pi_2^* q^* Y \ar[ru]^{\lambda_{Y}} 
         \ar@(r, dl)[rru]^{\alpha_{Y}} 
         && {({{a}} \; \text{is a locale morphism})}
        } 
\end{equation}
\end{theorem}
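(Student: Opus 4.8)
The plan is to reduce to the localic reformulation \eqref{2polocalic} of the pseudo-push out property and then transport it, through the correspondences already established, onto the universal property \eqref{AutF}. First I would invoke Lemma \ref{2poigual2polocalic}, so that it suffices to show that \eqref{2polocalic} holds if and only if $\lambda$ is universal in the sense of \eqref{AutF}. Throughout I would take the base topos of Section \ref{sec:cones} to be $sh(B\otimes B)$ and the two fibre functors to be $\pi_1^* q^*, \pi_2^* q^* : \cc{E} \to sh(B\otimes B)$, so that the cones in play are exactly those of \ref{correspondenceparaGalois}.

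Next I would read off the dictionary of \ref{correspondenceparaGalois}: a test datum $(A, g_0^*, g_1^*, \phi)$ of \eqref{2polocalic}, i.e. a $B\otimes B$-locale $A$ together with a natural isomorphism $g_0^* q^* \Mr{\phi} g_1^* q^*$, corresponds bijectively --- via \eqref{dosdanuna}, Corollary \ref{conodeellrelationsesnattransf} and Propositions \ref{dim1ydim2esdim}, \ref{trianguloesdiamante} --- to a $\rhd$-cone $\alpha$ of $\ell$-bijections with vertex $\widetilde{A}$, and the distinguished datum coming from $G$ corresponds in the same way to $\lambda$, with vertex $\widetilde{L}$. By \eqref{abusotilde} and the equivalence $Loc(sh(B\otimes B)) \cong (B\otimes B)\text{-}Loc$ of \ref{enumeratedeJT} (item 6), the inverse images ${{a}}^* : shG \to shA$ of spatial morphisms over $sh(B\otimes B)$ are in bijection with locale morphisms ${{a}} : \widetilde{L} \to \widetilde{A}$, and the constraint ${{a}}^* \partial_i^* = g_i^*$ translates exactly into the requirement that ${{a}}$ commute with the two structure maps out of $B\otimes B$; this ${{a}}$ is the morphism named in \eqref{AutF}, and it is a locale morphism precisely because ${{a}}^*$ is an inverse image functor.

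The decisive point is to match the leftover clause $id_{{{a}}^*}\circ\varphi = \phi$ of \eqref{2polocalic} with the commutativity ${{a}}\circ\lambda_X = \alpha_X$ of the triangle in \eqref{AutF}. For this I would apply Corollary \ref{naturaligualconeconrhoatravesdeunmorfismodetopos} verbatim, with $\varphi$ the natural transformation attached to $\lambda$ and ${{a}}^*$ the inverse image just described: the corollary identifies the cone attached to the horizontal composite $id_{{{a}}^*}\circ\varphi$ with $\lambda$ post-composed with the locale morphism on vertices induced by ${{a}}^*$, which is exactly ${{a}}\circ\lambda_X$. Since the assignment of cones to natural isomorphisms is a bijection (Proposition \ref{naturaligualconeconrho}, as used in \ref{correspondenceparaGalois}), the equation $id_{{{a}}^*}\circ\varphi = \phi$ is equivalent to ${{a}}\circ\lambda_X = \alpha_X$ for every $X$.

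Assembling these three bijections, the existence-and-uniqueness of ${{a}}^*$ for each $(A, g^*, \phi)$ asserted by \eqref{2polocalic} becomes, word for word, the existence-and-uniqueness of a locale morphism ${{a}}$ factoring every $\rhd$-cone $\alpha$ of $\ell$-bijections through $\lambda$, which is \eqref{AutF}; and because every step is an equivalence the two universal properties are interderivable. I expect the only genuine difficulty to be bookkeeping rather than conceptual: one must check that the three dictionaries are mutually compatible, and in particular that the structural equations ${{a}}^*\partial_i^* = g_i^*$ are faithfully preserved under the identifications $shG = sh\widetilde{L}$, $shA = sh\widetilde{A}$ of \eqref{abusotilde} and correspond exactly to ${{a}}$ being a morphism of $(B\otimes B)$-locales.
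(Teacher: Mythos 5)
Your proposal is correct and follows essentially the same route as the paper's proof: reduction via Lemma \ref{2poigual2polocalic}, the dictionary of \ref{correspondenceparaGalois} matching $(\varphi,\phi)$ with $(\lambda,\alpha)$, the identification of locale morphisms ${{a}}$ with inverse images ${{a}}^*$ via full faithfulness of taking sheaves, and Corollary \ref{naturaligualconeconrhoatravesdeunmorfismodetopos} to equate $id_{{{a}}^*}\circ\varphi=\phi$ with ${{a}}\lambda_X=\alpha_X$. The only difference is presentational: you spell out the bookkeeping the paper leaves implicit.
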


\begin{proof} By lemma \ref{2poigual2polocalic} it suffices to show that \eqref{2polocalic} is equivalent to \eqref{AutF}. We have shown in \ref{correspondenceparaGalois} that $\varphi$, $\phi$ in \eqref{2polocalic} correspond to $\lambda$, $\alpha$ in \eqref{AutF}.
%The same computations used to show the equivalence between the data $\{L \in Loc, \partial_0^*, \partial_1^*, \varphi\}$ and $\{\widetilde{L} \in Loc(sh (B \otimes B)), \lambda\}$ show the equivalence between $\{A \in Loc, f_0^*, f_1^*, \psi\}$ and $\{\widetilde{A} \in Loc(sh (B \otimes B)), \phi\}$. 

%If $A \in Loc(sh (B \otimes B))$, then . 
%By \ref{enumeratedeJT}, items 5,6, corresponds to a morphism of $B \otimes B$-locales $L \mr{h} A$. 
Since taking sheaves is full and faithful, a morphism $\widetilde{L} \mr{{{a}}} \widetilde{A}$ of locales in $sh(B \otimes B)$ corresponds to the inverse image $shL \mr{{{a}}^*} shA$ (recall \eqref{abusotilde}) of a topoi morphism over $sh(B \otimes B)$, i.e. ${{a}}^*$ as in \eqref{2polocalic} satisfying ${{a}}^* \partial_i^* = f_i^*$, $i=0,1$. It remains to show that ${{a}} \lambda_X = \alpha_X$ for each $X$ in \eqref{AutF} if and only if $id_{{{a}}^*} \circ \varphi = \psi$ in \eqref{2polocalic}.

In the correspondence between ${{a}}$ and ${{a}}^*$ above, $\widetilde{L} \mr{{{a}}} \widetilde{A}$ is given by the value of ${{a}}^*$ in the subobjects of $1$ ($\widetilde{L} = \gamma_* \Omega_{shL}$, $\widetilde{A} = f_* \Omega_{shA}$), then we are in the hypothesis of \ref{atravesdeunmorfismodetopos} as the following diagram shows 

$$\xymatrix@R=4.5pc{& shL  \df{rr}{{{a}}^*}{{{a}}_*} && shA  \\
	  \cc{E} \ar@<.5ex>[rr]^{\pi_1^* q^*} \ar@<-.5ex>[rr]_{\pi_2^* q^*} && sh(B \otimes B), \dfbis{ul}{\gamma^*}{\gamma_*} \dfbis{ur}{f^*}{f_*}  }$$

and the proof finishes by corollary \ref{naturaligualconeconrhoatravesdeunmorfismodetopos}.%, $h \lambda_X = \alpha_X$ for each $X$ if and only if $id_{h^*} \varphi = \psi$.
% For any locale $A$, if 
% 
% 
% To prove \eqref{2po} $\implies$ \eqref{AutF}, we will take $\cc{F} = sh A$. To prove $\Leftarrow$, we will take $A = f_* \Omega_{\cc{F}}$ (where $f$ is induced by $f_1, f_2$ as in \eqref{dosdanuna}). Then in both situations we have $A = f_* \Omega_{\cc{F}}$, so we can apply proposition \ref{naturaligualconeconrho} as in \ref{correspondenceparaGalois} to obtain the equivalences $\lambda \leftrightarrow \varphi$, $\phi \leftrightarrow \psi$. 
% 
% By Diaconescu,  
\end{proof}

\begin{remark} \label{levdeGalois}
 From proposition \ref{equivdeaccion}, we have that for each $X \in \Eat$, $\pi_1^* q^* X \times \pi_2^* q^* X \mr{\lambda_{X}} \widetilde{L}$ is equivalent to a discrete action $G \times_{G_0} X_{dis} \mr{\theta} X_{dis}$. In this way we can construct a lifting $\Eat \mr{\widetilde{q^*}} \beta^G$. This is the lifting $\Eat \mr{\phi} Des(q)$ of \cite[VIII.1 p.64]{JT}, composed with the equivalence $Des(q) \mr{\cong} \beta^G$ given by the correspondence in \ref{correspondenceparaGalois} for each $X$ (see \cite[VIII.3 proof of theorem 2, p.69]{JT}).
\end{remark}

%Propiedades para la sección de conos:
%$\rhd$-cone of bijections implies $\lozenge$-cone

\begin{sinnadastandard} {\bf The Tannakian context associated to a topos.} \label{Tannakacontext}%(Neutral) 
For generalities and notation concerning Tannaka theory see appendix \ref{sec:Tannaka}. %and terminology  
Consider the fiber functor associated to the topos $\cc{E}$ (see \ref{rel}): 
$$T: \cc{R}el(\cc{E}) \xr{\cc{R}el(q^*)} \cc{R}el(shB) \mr{(-)_*} s\ell_0(shB) \mr{\gamma_*} (B \hbox{-} Mod)_0, \quad TX = (q^*X)_d.$$
%We consider a  topos with a point $\cc{E}ns \mr{f} \cc{E}$, with inverse image $f^* = F$, $\cc{E} \mr{F} \cc{E}ns$. 
%We have a diagram (see \ref{rel}):
%$$
%\xymatrix
%        {
%           \cc{E} \ar[r] \ar[d]_F
%         & \cc{R}el(\cc{E}) \ar[d]^{\cc{R}el(F)}
%         \\
%           \cc{E}ns \ar[r] 
%         & \cc{R}el
%         & \hspace{-7ex} = s \ell_0 
%        }
%$$
This determines a Tannakian context as in appendix \ref{sec:Tannaka}, with 
$\cc{X} = \cc{R}el(\cc{E})$, $\cc{V} = s \ell$.%, $B=H$.
%(A TRABAJAR:) Furthermore, in this case $\cc{X}$, $\Vat$ are symmetric, $F$ is monoidal (\ref{tensoriso}, \ref{rel}),
%and every object of $\cc{X}$ has a right dual. Thus, the (large) coend $End^\wedge(T)$ (which exists, as we shall see) is a (commutative) Hopf algebra (proposition \ref{hopf}). %$\cc{V}_0 = \cc{R}el = s \ell_0$ and $T = \cc{R}el(F)$
\end{sinnadastandard}

The universal property which defines the coend $End^\wedge(T)$ is that of a universal $\lozenge$-cone in the category of $(B \otimes B)$-modules, as described in the following diagram: 
$$
\xymatrix
        {
         & TX \otimes TX \ar[rd]^{\mu_{X}}  
                      \ar@(r, ul)[rrd]^{\phi_{X}}  
        \\
         TX \otimes TY \ar[rd]_{T(R) \otimes TY \quad} 
			           \ar[ru]^{TX \otimes T(R)^{\wedge} \;} 
	     & \equiv 
	     & \;\;End^\wedge(T)\;\; \ar@{-->}[r]^{\phi}  
         & \;Z.  
        \\
         & TY \otimes TY \ar[ru]^{\mu_{Y}} 
                          \ar@(r, dl)[rru]^{\phi_{Y}} 
         && {(\phi \; \text{is a linear map})}
        } 
$$

Via the equivalence $B\otimes B$-$Mod \cong s \ell(sh(B \otimes B))$, we can also think of this coend internally in the topos $sh(B \otimes B)$ as
%by considering $T = Rel(q^*)$ 

$$
\xymatrix
        {
         & \pi_1^* T X \times \pi_2^* T X \ar[rd]^{\lambda_{X}}  
                      \ar@(r, ul)[rrd]^{\phi_{X}}  
        \\
         \pi_1^* T X \times \pi_2^* T Y \ar[rd]_{\pi_1^*T(R) \times \pi_2^*TY \quad} 
			           \ar[ru]^{\pi_1^*TX \times \pi_2^*T(R)^{\wedge} \;} 
	     & \equiv 
	     & \;\;End^\wedge(T)\;\; \ar@{-->}[r]^{\phi}  
         & \;Z.  
        \\
         & \pi_1^* T Y \times \pi_2^* T Y \ar[ru]^{\lambda_{Y}} 
                          \ar@(r, dl)[rru]^{\phi_{Y}} 
         && {(\phi \; \text{a linear map})}
        } 
$$

Depending on the context, it can be convenient to think of $End^\wedge(T)$ as a \mbox{$(B \otimes B)$-module} or as a sup-lattice in $sh(B \otimes B)$: to use general Tannaka theory, we consider modules, but to use the theory of $\lozenge$-cones developed in section \ref{sec:cones} we work internally in the topos $sh(B \otimes B)$.

We apply proposition \ref{extension} to obtain: 
%From remark \ref{piestrellafullandfaithful} we deduce that $\pi_1^* T(f) = \pi_1^* T(g)$ if and only if $\pi_2^* T(f) = \pi_2^* T(g)$. 
%Then we can apply proposition \ref{extension} to obtain: %and \ref{dim1ydim2esdim} (PONER UNA SOLA??) 
\begin{proposition} \label{EndhasX}
The large coend defining $End^\wedge(T)$ exists and can be computed by the coend corresponding to the restriction of $T$ to the full subcategory of $Rel(\Eat)$ whose objects are in any small site $\Cat$ of definition of $\cc{E}$. \qed
\end{proposition}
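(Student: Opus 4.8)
The plan is to reduce the existence of the large coend to an ordinary small colimit and then to transport its universal property along the extension theorem of Proposition~\ref{extension}. First I would restrict $T$ to the full subcategory of $\cc{R}el(\Eat)$ spanned by the objects of the small site $\Cat$, and form the coend $H = \int^{C \in \Cat} TC \otimes TC$. Since $\Cat$ is small and the category of $(B \otimes B)$-modules (equivalently $s\ell(sh(B \otimes B))$) is cocomplete, this coend exists as a genuine small colimit, and by the conventions of this paper it is exactly the universal $\lozenge$-cone $TC \otimes TC \mr{\mu_C} H$ indexed over $\Cat$. This $H$ is the object that the statement proposes as $End^\wedge(T)$.

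Next I would promote this small universal cone to a cone over all of $\Eat$. By Proposition~\ref{extension}(1), the $\lozenge$-cone $\mu_C$ over $\Cat$ extends \emph{uniquely} to a $\lozenge$-cone $TX \otimes TX \mr{\mu_X} H$ defined for every $X \in \cc{R}el(\Eat)$, with the same vertex $H$. It then remains to verify that $(H,\mu_X)$ enjoys the large universal property displayed just before the proposition, i.e. that it is a coend over the whole of $\cc{R}el(\Eat)$.

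For universality, let $TX \otimes TX \mr{\phi_X} Z$ be any $\lozenge$-cone with vertex a $(B \otimes B)$-module $Z$. Restricting it to $\Cat$ yields a $\lozenge$-cone $\phi_C$, so the universal property of the small coend provides a unique linear map $H \mr{\phi} Z$ with $\phi \circ \mu_C = \phi_C$ for all $C \in \Cat$. I would then upgrade this to $\phi \circ \mu_X = \phi_X$ for \emph{all} $X \in \Eat$: because $\phi$ is a sup-lattice morphism it preserves the suprema and meets occurring in the $\lozenge$ equations of \ref{equations}, so the family $\phi \circ \mu_X$ is again a $\lozenge$-cone over $\Eat$; it agrees with $\phi_X$ on $\Cat$, and both are $\lozenge$-cones extending the single cone $\phi_C$. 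The uniqueness clause of Proposition~\ref{extension} then forces $\phi \circ \mu_X = \phi_X$ on all of $\Eat$. Conversely, any linear map with this property is determined already by its restriction to $\Cat$, so $\phi$ is unique, and the coend is computed by the small site.

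The step I expect to be the main obstacle is this last one: showing that a factorization valid over the small site $\Cat$ is automatically valid over the whole topos $\Eat$. Everything hinges on two facts working in tandem — that postcomposition of a $\lozenge$-cone with a linear (sup-lattice) map is again a $\lozenge$-cone, which must be checked against the explicit equations of \ref{equations}, and the uniqueness of extension in Proposition~\ref{extension}, which is precisely what upgrades equality-on-$\Cat$ to equality-on-$\Eat$. Once these are in place, the existence and the computability of $End^\wedge(T)$ follow together.
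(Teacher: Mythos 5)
Your proof is correct and follows essentially the same route as the paper, which simply invokes Proposition~\ref{extension}: form the small coend over the site, extend its universal cone uniquely to all of $\Eat$, and use the uniqueness clause of the extension (together with the fact that postcomposing a $\lozenge$-cone with a sup-lattice morphism yields a $\lozenge$-cone) to transport the universal property. Your write-up just makes explicit the steps the paper leaves to the reader.
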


We fix a small site $\Cat$ (with binary products and $1$) of the topos $\cc{E}$. Then $End^\wedge(T)$ can be constructed as a $(B \otimes B)$-module with generators $\mu_C(\delta_a \otimes \delta_b)$, where $\delta_a$, $\delta_b$ are the generators of $TC = (q^*C)_d$ (see proposition \ref{formula}), subject to the relations that make the $\lozenge$-diagrams commute. We will denote $[C,\delta_a,\delta_b] = \mu_C ( \delta_a \otimes \delta_b )$.% = \lambda_C ( a, b )$.
%with $a \in q^*(h)$, $b \in q^*(l)$, $h,l \in H$

By the general Tannaka theory we know that $End^\wedge(T)$ is a cog\`ebro\"ide agissant sur $B$ and a $(B \otimes B)$-algebra. 
The description of the multiplication $m$ and the unit $u$ given below proposition \ref{bialg} yields in this case, for $C, \, D \in \cc{C}$ (here, $T(I) = T(1_\Cat) = B$):
\begin{equation} 
m([C,\, \delta_a,\delta_{a'}],\, [D, \,\delta_b,\delta_{b'}]) \;= \; [C \times D,\, (\delta_a \otimes \delta_b),(\delta_{a'} \otimes \delta_{b'})],\; \;\; u = \lambda_1.
\end{equation} % [1_\Cat, \, *,*] 
When interpreted internally in $sh(B \otimes B)$, this shows that $\pi_1^* q^* C \times \pi_2^* q^* C \mr{\lambda_C} End^\wedge(T)$ is a compatible 
$\lozenge$-cone, with $End^\wedge(T)$ generated as a sup-lattice in $sh(B \otimes B)$ by the elements $\lambda_C ( a, b )$, thus by proposition \ref{compislocale} it follows that $End^\wedge(T)$ is a locale.%,  with top element $[1_\Cat, \, *,*]$ and infimum 
%$[C,\, a,a'] \wedge [D, \,b,b'] =  [C \times D,\, (a,\,b),(a',\,b')]$.

By proposition \ref{hopf}, we obtain that $End^\wedge(T)$ is also a (localic) \textit{Hopf cog\`ebro\"ide}, i.e. the dual structure in $Alg_{s \ell}$ of a localic groupoid.

\vspace{1ex}

%We let the reader check the following:
%\begin{sinnadaitalica} \label{groupEnd}
%The descriptions in the general Tannaka theory of the comultiplication $w$, the counit $\varepsilon$ and the antipode $\iota$ (see appendix \ref{sec:Tannaka}) %yield in this case the formulae 
%\mbox{$w_X(a,\,b) = \displaystyle\bigvee_{x \in FX} 
%[X, \,a,x] \otimes [X, \,x,b]$,} 
%$\iota_X(a,\,b) = [X, \,b,a]$, \mbox{and 
%$\varepsilon_X(a,\,b) = \delta_{a=b}$.} \qed 
%\end{sinnadaitalica} 

%\vspace{1ex}

\begin{sinnadastandard} 
{\bf The construction of $G$.} %$End^\wedge(T) \cong Aut(F)$.}
\end{sinnadastandard} 

\begin{\prop} \label{LcumpleG}
 Take $L = End^\wedge(T)$. Then $G = \overline{L}$ satisfies \eqref{AutF}, i.e. (by theorem \ref{JTGeneral}) satisfies \eqref{2po}.
\end{\prop}

\begin{proof}
 Given a $\rhd$-cone of $\ell$-bijections over a locale $A$, by proposition \ref{trianguloesdiamante} it factors uniquely through a $s \ell$-morphism which by proposition \ref{supisloc} is a locale morphism.
\end{proof}

We show now that $G$ actually is the localic groupoid considered by Joyal and Tierney. By theorem \ref{JTGeneral}, the dual $L$ of a groupoid $G$ satisfying \eqref{2po} is unique as a locale in    $sh(B \otimes B)$, and so are the $\lambda_X$ corresponding to the $\varphi$ in \eqref{2po}.

Now, remark \ref{remarkdeunicidad}, interpreted for $G = \overline{L}$ using proposition \ref{equivdeaccion}, states that $i = \overline{e}$, $\circ = \overline{c}$ are the only possible localic groupoid structure (with inverse given as $(-)^{-1} = \overline{a}$, see proposition \ref{hopf}) such that the lifting $\widetilde{q^*}$ lands in $\beta^G$ (see remark \ref{levdeGalois}). We have proved:

%It follows that $G$ and $L$ are isomorphic locales respecting the cone maps $\lambda_X$. 
%Also, since there is only one possible coalgebra structure for $L$ such that the lifting $\widetilde{F}$ lands in $Cmd(L)$ (see remark \ref{remarkdeunicidad}), we have

%Thus, we have: 
\begin{theorem} \label{G=H}
Given any topos $\Eat$ over a base topos $\Sat$, and a spatial cover \mbox{$sh{G_0} \mr{q} \Eat$,} the localic groupoid $\; G \; = \; \xymatrix{G \stackrel[G_0]{\textcolor{white}{G_0}}{\times} G \ar[r]^>>>>>>{\circ} & G \ar@<1.3ex>[r]^{\partial_0} \ar@<-1.3ex>[r]_{\partial_1} & G_0 \ar[l]|{\ \! i \! \ }}$ considered in \cite{JT} can be constructed as \mbox{$G = \overline{End^\wedge(Rel(q^*))}$,} with $i = \overline{e}$, $\circ = \overline{c}$ and inverse $(-)^{-1} = \overline{a}$. The lifting $\Eat \mr{\widetilde{q^*}} \beta^G$ is also unique and defined as in remark \ref{levdeGalois}. \qed  % The isomorphism commutes with the $\lambda_X$. 
%there is a unique isomorphism of localic groups $End^\wedge(T) \cong Aut(F)$ commuting with the $\lambda_X$. \qed
%is unique and 
\end{theorem}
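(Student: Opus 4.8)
The plan is to assemble Theorem \ref{G=H} from the pieces already proved in this section, so that essentially no new computation is required. The statement has three independent assertions: that $G = \overline{End^\wedge(Rel(q^*))}$ recovers the Joyal--Tierney groupoid, that its structure maps are $i = \overline{e}$, $\circ = \overline{c}$, $(-)^{-1} = \overline{a}$, and that the lifting $\widetilde{q^*}$ is unique. First I would invoke Proposition \ref{EndhasX}, which guarantees that the large coend defining $L := End^\wedge(Rel(q^*))$ exists (computed over a small site $\cc{C}$). Setting $T = Rel(q^*)$ as in \ref{Tannakacontext}, the discussion following Proposition \ref{EndhasX} shows that $L$ is a locale in $sh(B\otimes B)$ (via the compatible $\lozenge$-cone and Proposition \ref{compislocale}) and, by Proposition \ref{hopf}, a Hopf cog\`ebro\"ide, hence $\overline{L}$ carries a localic groupoid structure with $i = \overline{e}$, $\circ = \overline{c}$, $(-)^{-1} = \overline{a}$.

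Next I would identify $\overline{L}$ with the Joyal--Tierney groupoid. By Proposition \ref{LcumpleG}, the cone $\lambda$ associated to $L$ satisfies the universal property \eqref{AutF}, and Theorem \ref{JTGeneral} translates this into the statement that \eqref{2po} is a pseudo-push out. But \eqref{2po} is precisely the universal property \eqref{igualdadJT} characterizing the pseudo-kernel pair of $q$, which by \cite{JT} defines $\partial_0^*,\partial_1^*: shG \rightrightarrows shG_0$. Since the dual locale $L$ of a groupoid satisfying \eqref{2po} is unique (as a locale in $sh(B\otimes B)$, the $\lambda_X$ being determined by $\varphi$), it follows that $\overline{L}$ is isomorphic to Joyal and Tierney's $G$, yielding $\cc{O}(G) \cong L$.

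For the uniqueness of the groupoid structure maps and of the lifting, I would appeal to Remark \ref{remarkdeunicidad} interpreted through $G = \overline{L}$ via Proposition \ref{equivdeaccion}: the equivalence $\cc{R}el(\beta^G) \cong Cmd_0(L)$ of Theorem \ref{Comd=Rel} forces $i = \overline{e}$ and $\circ = \overline{c}$ to be the only groupoid structure (with inverse $\overline{a}$) for which the lifting $\widetilde{q^*}$ of Remark \ref{levdeGalois} lands in $\beta^G$. The lifting itself is unique because it is pinned down by the cone $\lambda$ together with the correspondence of \ref{correspondenceparaGalois}, which is bijective.

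The step I expect to be the main obstacle is the clean identification in the second paragraph: matching the abstract universal property \eqref{AutF} proved for $End^\wedge(T)$ with the concrete pseudo-kernel-pair description \eqref{igualdadJT} of $G$, and arguing that the uniqueness clause of Theorem \ref{JTGeneral} really does force $\overline{L} \cong G$ as \emph{localic groupoids} and not merely as locales. The subtlety is that Theorem \ref{JTGeneral} delivers uniqueness of $L$ only as a locale equipped with the universal cone; promoting this to an isomorphism of the full groupoid (respecting $\circ$, $i$, and $(-)^{-1}$) is exactly what Remark \ref{remarkdeunicidad} and the functoriality of the $\overline{(-)}$ duality are needed for, and care must be taken that the structure maps transported across the isomorphism agree on both sides.
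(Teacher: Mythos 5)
Your proposal is correct and follows essentially the same route as the paper: existence of the coend via Proposition \ref{EndhasX}, the locale and Hopf cog\`ebro\"ide structure via Propositions \ref{compislocale} and \ref{hopf}, identification with the Joyal--Tierney groupoid via Proposition \ref{LcumpleG} and the uniqueness clause of Theorem \ref{JTGeneral}, and uniqueness of the structure maps and the lifting via Remark \ref{remarkdeunicidad} interpreted through Proposition \ref{equivdeaccion}. The subtlety you flag at the end is resolved exactly by the mechanism you already cite: Remark \ref{remarkdeunicidad} pins down the cog\`ebro\"ide structure from the comodule structures $\rho_X$, which are themselves determined by the universal cone, so the locale isomorphism automatically respects $\circ$, $i$ and $(-)^{-1}$.
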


\section{The main theorems} \label{sec:MainTheorems}
 A topoi morphism $shB \mr{q} \cc{E}$, with inverse image $\cc{E} \mr{q^*} shB$, determines by theorem \ref{G=H} a situation described in the following diagram
\begin{equation} \label{diagramacompleto}
\xymatrix
        {\beta^G \ar[rr] \ar[dd]_U   &&    \cc{R}el(\beta^G) \ar[rr]^{\cong} \ar[dd]_<<<<<<{Rel(U)}   &&    Cmd_0(L) \ar[dd]^U   \\
& \cc{E} \ar[dl]^{q^*} \ar[rr] \ar[lu]_{\widetilde{q^*}} && \cc{R}el(\cc{E}) \ar[ul]_{\cc{R}el(\widetilde{q^*})}  \ar[dr]^{T} \ar[dl]_{Rel(q^*)} \ar[ur]^{\widetilde{T}}  \\
          shB \ar[rr] && Rel(shB)\ar[r]^\cong_{(-)_*} & s\ell_0(shB) \ar[r]^\cong_{\gamma_*} & (B\hbox{-}Mod)_0 . }
\end{equation}
%        \xymatrix
%         {
%           \beta^G            \ar[r] \ar[rdd]
%         & \cc{R}el(\beta^G)  \ar[rr]^{\cong} \ar[rdd]
%         &           
%         & Cmd_0(L)          \ar[ldd]
%         \\
%         & \cc{E}             \ar[d]^{q^*} \ar[r] \ar[lu]_{\widetilde{q^*}}
%         & \cc{R}el(\cc{E})   \ar[ul]_{\cc{R}el(\widetilde{q^*})}  
%                              \ar[d]^T \ar[ur]^{\widetilde{T}}
%         \\
%         & shB            \ar[r]
%         & Rel(shB) \cong (B\hbox{-}Mod)_0 .
%         %&  \hspace{-8ex} = s \ell_0 \subset s \ell.
%        }
\noindent
%where $G = Aut(F)$, $T = \cc{R}el(F)$, $H = End^\wedge(T)$ and
where $L = End^\wedge(T)$, $G = \overline{L}$ and the isomorphism in the first row of the diagram is given by \mbox{Theorem \ref{Comd=Rel}.} Note that the triangle on the left is the Galois context associated to the topos, and the triangle on the right is the one of the Tannakian context. Using \ref{dcrequivalence} we obtain

\begin{theorem} \label{nonneutralAA}
The (Galois) lifting functor $\widetilde{q^*}$ is an equivalence if and only if the (Tannaka) lifting functor $\widetilde{T}$ is such. \qed
\end{theorem}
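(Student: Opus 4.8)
The plan is to read the statement off the commutative diagram \eqref{diagramacompleto}, transporting the property of being an equivalence between its two horizontal layers. The two liftings sit in parallel: $\widetilde{q^*}$ lives in the bottom (Galois) layer and $\widetilde{T}$ in the top (Tannaka) layer, and they are linked by applying the relations functor $\cc{R}el(-)$ and by the equivalence of Theorem \ref{Comd=Rel}. Concretely, I would split the biconditional ``$\widetilde{q^*}$ is an equivalence $\Leftrightarrow \widetilde{T}$ is an equivalence'' into two independent reductions and chain them.

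First I would use the upper prism of \eqref{diagramacompleto}, whose commutativity expresses that
\[
\widetilde{T} \;=\; \big(\cc{R}el(\beta^G) \xrightarrow{\ \cong\ } Cmd_0(L)\big)\circ \cc{R}el(\widetilde{q^*}),
\]
i.e. the Tannaka lifting factors as the relation functor of the Galois lifting followed by the equivalence of Theorem \ref{Comd=Rel}. Here one must know that the equivalence $\cc{R}el(\beta^G)\cong Cmd_0(L)$, produced by Theorem \ref{Comd=Rel} over $Rel(shB)\cong(B\hbox{-}Mod)_0$, is compatible with the two liftings out of $\cc{R}el(\cc{E})$; this is precisely the commutativity already recorded in \eqref{diagramacompleto}. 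Granting it, and since the right-hand map is an equivalence of categories, the two-out-of-three property for equivalences yields: $\widetilde{T}$ is an equivalence if and only if $\cc{R}el(\widetilde{q^*})$ is an equivalence.

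Second I would invoke \ref{dcrequivalence}, which asserts that $\cc{R}el(-)$ detects equivalences: the induced $s\ell$-functor $\cc{R}el(\widetilde{q^*})\colon \cc{R}el(\cc{E})\to \cc{R}el(\beta^G)$ is an equivalence precisely when the underlying inverse image $\widetilde{q^*}\colon \cc{E}\to\beta^G$ is an equivalence of toposes. (Since $\widetilde{q^*}$ is the inverse image of a geometric morphism $\beta^G \to \cc{E}$, the functor $\cc{R}el(\widetilde{q^*})$ is defined exactly as in \ref{extensionarel}.) Composing the two reductions gives the theorem: $\widetilde{q^*}$ is an equivalence $\Leftrightarrow \cc{R}el(\widetilde{q^*})$ is an equivalence $\Leftrightarrow \widetilde{T}$ is an equivalence.

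The main obstacle is not the bookkeeping above but the input \ref{dcrequivalence}: its genuine content is that passing to categories of relations neither creates spurious equivalences nor destroys existing ones, so that an equivalence can be tested at the level of $\cc{R}el$. Everything else is a formal consequence of the commutativity of \eqref{diagramacompleto} together with Theorem \ref{Comd=Rel}; in particular, the only point to check by hand is that the equivalence of Theorem \ref{Comd=Rel} intertwines $\cc{R}el(\widetilde{q^*})$ with $\widetilde{T}$, which is built into the construction of the diagram.
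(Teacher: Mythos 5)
Your proposal is correct and follows exactly the paper's (terse) argument: the theorem is read off the commutativity of diagram \eqref{diagramacompleto} by factoring $\widetilde{T}$ as the equivalence of Theorem \ref{Comd=Rel} composed with $\cc{R}el(\widetilde{q^*})$, and then invoking \ref{dcrequivalence} to transfer the equivalence property between $\widetilde{q^*}$ and $\cc{R}el(\widetilde{q^*})$. The paper leaves all of this implicit behind ``Using \ref{dcrequivalence} we obtain''; you have simply spelled out the intended two-out-of-three chain.
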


In \cite[VIII.3, theorem 2, p.68]{JT} (see remark \ref{levdeGalois}) it is proved the following:

\begin{theorem} \label{fundamentalGT}
If $q$ is an open surjection then the Galois lifting functor $\widetilde{q^*}$ is an equivalence. \qed
\end{theorem}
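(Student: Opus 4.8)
The plan is to deduce this from the effective descent theorem for open surjections, which is the technical core of \cite[VIII.2]{JT}. The first step is to factor the lifting. By remark \ref{levdeGalois}, $\widetilde{q^*}$ is the composite of the canonical comparison functor $\Eat \mr{\phi} Des(q)$ into the category of descent data for $q$, followed by the equivalence $Des(q) \mr{\cong} \beta^G$ furnished objectwise by the correspondence in \ref{correspondenceparaGalois}. This second functor is an equivalence for \emph{any} spatial cover, with no openness hypothesis. Hence it suffices to prove that $\phi$ is an equivalence, that is, that $q$ is a morphism of \emph{effective} descent.

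The second step is to recast effective descent as comonadicity. The adjunction $q^* \dashv q_*$ induces a comonad $q^* q_*$ on $shB$ whose coalgebras are precisely the descent data, so that $Des(q)$ is the Eilenberg--Moore category of $q^* q_*$-coalgebras and $\phi$ is the comparison functor sending $X$ to $q^*X$ with its canonical coaction. By the dual of Beck's monadicity theorem, $\phi$ is an equivalence as soon as $q^*$ is conservative and $\Eat$ admits, and $q^*$ preserves, equalizers of those parallel pairs that $q^*$ carries to split pairs. Surjectivity of $q$ gives at once that $q^*$ is faithful and conservative, so the whole weight of the argument falls on the preservation of these equalizers.

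This preservation is exactly where openness is indispensable, and verifying it is the main obstacle. The decisive feature of an open map $f$ is that $f^*$ admits a left adjoint $\exists_f$ satisfying Frobenius reciprocity and the Beck--Chevalley condition for pullback squares (the same package of conditions that appears, in localic guise, in \ref{enumeratedeJT}). For an open surjection these left adjoints are available along every structure map of the groupoid $G$, and the Beck--Chevalley condition for the pullback squares defining $G$ is precisely what renders the descent data effective: it allows a family of objects of $shB$ compatible over the cover to be glued into a genuine object of $\Eat$, and it forces the comparison $\phi$ to be fully faithful. The substantial work, carried out in \cite[VIII.2]{JT}, is to show that open surjections are stable under pullback and that the associated \v{C}ech resolution recovers $\Eat$; granting this, comonadicity of $q^*$ follows and $\phi$ is an equivalence, completing the proof.
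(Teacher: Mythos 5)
Your proposal is correct and follows essentially the same route as the paper, which proves nothing new here but simply invokes \cite[VIII.3, Theorem 2]{JT} after observing (remark \ref{levdeGalois}) that $\widetilde{q^*}$ factors as the descent comparison $\Eat \mr{\phi} Des(q)$ followed by the equivalence $Des(q) \cong \beta^G$. Your additional unpacking of the Joyal--Tierney argument (comonadicity of $q^*$ via the dual Beck theorem, with openness supplying the left adjoints, Frobenius and Beck--Chevalley needed for effectivity) is an accurate gloss of the cited \cite[VIII.2]{JT}, to which both you and the paper ultimately defer the substantial work.
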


Combining both results we obtain the following Tannakian recognition type theorem:  

\begin{theorem} \label{AA}
The Tannaka lifting functor $\widetilde{T}$ corresponding to an open surjection $q$ is an equivalence. \qed
\end{theorem} 

\begin{sinnadastandard}{\bf Tannaka theory for dcr.} 
 We show now that theorem \ref{AA} can be stated purely in the language of $s\ell$ categories as a Tannakian recognition theorem for a particular type of $s\ell$-categories. We also describe our current progress regarding the extension of this theorem to general $s\ell$-categories.
 
 In \cite[chapters 1 and 2]{Pitts}, it is shown that the construction of \ref{extensionarel} extends to a $2$-fully faithful functor $Top^{op} \mr{Rel} DCR$. Here $DCR$ is the $2$-category with
\end{sinnadastandard}

\begin{itemize}
 \item Objects: distributive categories of relations (dcr). These generalize the $s \ell$-categories $Rel(\cc{E})$. By definition, a dcr is a cartesian $s \ell$-category in which every object is discrete (see \mbox{\cite[2.1 p.444]{Pitts},} for details). Dcr are also a generalization (horizontal categorification) of locales, in the sense that a locale is precisely a dcr with a single object. More generally, in any cartesian $s \ell$-category, the hom sup-lattices are actually locales.
 \item Morphisms: a morphism of dcr is a $s \ell$-functor that preserves this structure (see   \mbox{\cite[2.4 p.447]{Pitts}} for details). In op. cit. it is shown that a $s \ell$-functor $\cc{X} \mr{T} \cc{Y}$ between dcr is a morphism of dcr if and only if the sup-lattice morphisms $\cc{X}(X,X') \mr{} \cc{Y}(TX,TX')$ are locale morphisms.
 %it preserves finite infima of morphisms and preserves $I$ (the neutral object for the tensor product). 
 Then, in particular, an equivalence of $s \ell$-categories is automatically a morphism of dcr (since an isomorphism in $s \ell$ is automatically a locale morphism).
 \item $2$-cells: they are the lax natural transformations $T \mr{\varphi} S$ (\cite[1.3(iii)]{Pitts}) whose components $TX \mr{\varphi} SX$ are \emph{maps}. The notion of map in a dcr (which is the same as in any $2$-category, namely to be a left adjoint) coincides with the notion of function in the dcr $Rel(\cc{E})$. An invertible arrow in a dcr is always a map (just like an invertible relation is a function), and a lax natural isomorphism is automatically natural. Then, the notion of equivalence $T \mr{\varphi} S$ in the $2$-category DCR coincides with the notion of natural equivalence.
\end{itemize}

\begin{sinnadastandard}\label{dcrequivalence} It follows  that the inverse image $q^*$ of a topoi morphism is an equivalence if and only if $T = Rel(q^*)$ is an equivalence in $DCR$, which by the previous observations happens if and only if $T$ is an equivalence of $s \ell$-categories (a fully faithful, essentially surjective $s\ell$-functor).
\end{sinnadastandard}

% \begin{\de}
%  A distributive category of relations (DCR) $\cc{A}$ is a cartesian $s \ell$-category in which every object is discrete (see \cite{Pitts}, 2.1 p.444 for details). A morphism of DCRs is a $s \ell$-functor which preserves this structure (see \cite{Pitts}, 2.4 p.447 for details). A DCR $\cc{A}$ is complete if it has small coproducts (as a category) and if all symmetric idempotents in $\cc{A}$ split (see \cite{Pitts}, p.448 for details).
% \end{\de}
% 
% \begin{sinnadastandard} \label{ejemplodeDCR}
% The motivating example is: if $\cc{E}$ is a (Grothendieck) topos, then any full subcategory of $Rel(\cc{E})$ whose objects are closed under finite products in $\cc{E}$ is a DCR. In 
% \cite{CW}, p.31, Theorem 6.3 (see also \cite{Pitts}, 2.5 p.448), Carboni and Walters prove that a DCR $\cc{A}$ is isomorphic to $Rel(\Eat)$ for a Grothendieck topos $\cc{E}$ if and only if $\cc{A}$ is bounded and complete. Furthermore, $Rel$ yields an equivalence of 2-categories between the dual of the category of Grothendieck topoi and the category of bounded, complete DCRs (\cite{Pitts}, 2.5).
% \end{sinnadastandard}
 
 By identifying the essential image of the functor $Top^{op} \mr{Rel} DCR$, Carboni and Walters \cite{CW} obtain an equivalence of $2$-categories $Top^{op} \mr{Rel} bcDCR$, where $bcDCR$ is the full subcategory of $DCR$ consisting of the bounded and complete dcr (see \cite[2.5]{Pitts}). In \cite[lemma 4.3]{Pitts}, it is shown that under this equivalence (the inverse image of) an open surjection corresponds to an open morphism of DCRs (see \cite[2.4 (ii)]{Pitts}) that is faithful as a functor. Combining these results with theorem \ref{AA} we obtain the following recognition theorem.

\begin{theorem} 
 Let $\cc{A} \in bcDCR$, $B \in Loc$, $\cc{A} \mr{T} (B$-$Mod)_0$ a morphism of dcr. Then the coend \mbox{$L = End^\wedge(T)$} of \ref{defdeL} exists, and if $T$ is open and faithful then the lifting $\widetilde{T}$ of \ref{lifting} is an equivalence. \qed
\end{theorem}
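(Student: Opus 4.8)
The plan is to reduce the statement to Theorem \ref{AA} by transporting all the data along the Carboni--Walters $2$-equivalence $Top^{op} \mr{Rel} bcDCR$. First I would use this equivalence to realize the abstract setting geometrically: since $\cc{A} \in bcDCR$, there is a topos $\cc{E}$ together with an equivalence $\cc{A} \cong \cc{R}el(\cc{E})$, and by the equivalence $(B\text{-}Mod)_0 \cong \cc{R}el(shB)$ of \ref{rel} the codomain is also of the form $\cc{R}el$ of a topos. Because $Rel$ is $2$-fully faithful, the dcr morphism $T$ is then (isomorphic to) $\cc{R}el(q^*)$ for a geometric morphism $q : shB \to \cc{E}$, the contravariance of $Rel$ on $1$-cells fixing the direction of $q$. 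This places us exactly in the situation of \ref{Tannakacontext}, with $T = \cc{R}el(q^*)$ and $\cc{X} = \cc{R}el(\cc{E})$.

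Next I would settle the existence of $L = End^\wedge(T)$. Since this coend is an equivalence-invariant of the fiber functor (it is defined by the universal property of a $\lozenge$-cone, which transports across the equivalence $\cc{A} \cong \cc{R}el(\cc{E})$), it suffices to compute it for $\cc{R}el(q^*)$; and Proposition \ref{EndhasX} guarantees that this large coend exists, being computable over any small site $\Cat$ of $\cc{E}$, with Proposition \ref{extension} supplying the comparison between the cone over $\Cat$ and the cone over all of $\cc{E}$. This already yields the first assertion, with no hypothesis on $T$.

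For the recognition statement I would translate the hypotheses. By \cite[lemma 4.3]{Pitts}, under the Carboni--Walters equivalence an open surjection of topoi corresponds precisely to a morphism of dcr that is open and faithful; hence ``$T$ open and faithful'' is equivalent to ``$q$ is an open surjection.'' At this point Theorem \ref{AA} applies verbatim: for an open surjection $q$ the Tannaka lifting $\widetilde{T}$ is an equivalence. Transporting back along $\cc{A} \cong \cc{R}el(\cc{E})$ gives the claim for the original $T$.

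The main obstacle is not any single hard computation but the careful bookkeeping of the transport of structure. One must verify that the equivalences $\cc{A} \cong \cc{R}el(\cc{E})$ and $(B\text{-}Mod)_0 \cong \cc{R}el(shB)$ send the given $T$ to a genuine $\cc{R}el(q^*)$ (using the $2$-full-faithfulness of $Rel$ together with the observation in \ref{dcrequivalence} that an equivalence of $s\ell$-categories is automatically a dcr morphism), that the coend is preserved under these equivalences so that $End^\wedge(T)$ may be read off from $\cc{E}$, and that the open-and-faithful condition is stable under the equivalence so that \cite[lemma 4.3]{Pitts} can be invoked. Once this dictionary is in place, the theorem follows formally from \ref{EndhasX}, \cite[lemma 4.3]{Pitts}, and Theorem \ref{AA}.
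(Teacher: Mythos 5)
Your proposal is correct and follows exactly the paper's own (implicit) argument: the theorem is stated there as an immediate consequence of the Carboni--Walters equivalence $Top^{op} \cong bcDCR$, the identification of open surjections with open faithful dcr morphisms from \cite[lemma 4.3]{Pitts}, and Theorem \ref{AA}, with existence of the coend supplied by Proposition \ref{EndhasX}. The only difference is that you spell out the transport-of-structure bookkeeping which the paper leaves tacit.
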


\begin{sinnadastandard} {\bf Tannaka theory over sup-lattices.} 
We have shown that theorem \ref{AA} corresponds to a Tannakian recognition theorem for $dcr$, which are a particular case of $s \ell$-enriched categories. But we should note that in a sense, this theorem combines a purely recognition theorem (the lifting is an equivalence) and an ``additional structure'' theorem 
(where extra structure is given to the coend $L$ under extra hypothesis, in this case the cog\"ebro\`ide $L$ is a (localic) Hopf cog\"ebro\`ide).
%$G$ is a localic groupoid instead of just a localic category). 

But we can also consider a Tannakian context for a general $s \ell$-enriched category, not necessarily the category of relations of a topos. This general Tannakian context doesn't correspond to a Galois context, so a priori we don't have a recognition theorem, and we can't obtain one from the results of \cite{JT}. The (more ambitious) objective here is to obtain the results of \cite{JT} via Tannakian methods, since we have shown that they correspond to the particular case of dcr.

Note that the definition of an open morphism between DCRs (\cite[4.1]{Pitts}) uses only their underlying structure of $s \ell$-enriched categories, therefore we may consider open faithful $s \ell$-functors between $s \ell$-categories. The same happens for the definitions of bounded and complete.

We have been able to generalize the result of proposition \ref{extension} to bounded $s \ell$-categories   \mbox{(\cite[8.7]{tesis}),} which allows us to construct the Tannakian coend $L = End^\wedge(T)$ given a fiber functor $T$ from a bounded $s \ell$-category $\cc{A}$:

\begin{theorem} \label{contextoparasl}
 Let $\cc{A}$ be a bounded $s \ell$-category, $B \in Alg_{s \ell}$, $\cc{A} \mr{T} (B$-$Mod)_0$ a functor. Then the coend $L = End^\wedge(T)$ of \ref{defdeL} exists, therefore so does the lifting $\widetilde{T}$ of \ref{lifting}.
 \qed
\end{theorem}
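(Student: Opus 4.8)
The plan is to follow verbatim the strategy used to prove Proposition~\ref{EndhasX} in the topos case, replacing the extension result of Proposition~\ref{extension} by its generalization to bounded $s\ell$-categories recorded in \cite[8.7]{tesis}. Recall from \ref{defdeL} that $End^\wedge(T)$ is by definition the vertex of the \emph{universal} $\lozenge$-cone $TX \otimes TX \mr{\mu_X} L$ in the category of $B\otimes B$-modules (equivalently, via the tensor equivalence $B\otimes B$-$Mod \cong s\ell(sh(B\otimes B))$, the universal $\lozenge$-cone of $\ell$-relations internal to $sh(B\otimes B)$); the tensors $TX\otimes TX$ and the dual maps $T(R)^\wedge$ appearing in the $\lozenge$-diagrams make sense precisely because every object of $(B\hbox{-}Mod)_0$ admits a right dual. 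The only obstruction to the existence of $L$ is that this universal property is quantified over the a priori large class of objects of $\cc{A}$.

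First I would use boundedness to cut the indexing class down to a small one. By hypothesis $\cc{A}$ is a bounded $s\ell$-category, which by \cite[8.7]{tesis} means exactly that there is a small full subcategory $\cc{A}_0 \hookrightarrow \cc{A}$ such that every $\lozenge$-cone defined over $\cc{A}_0$ extends uniquely to a $\lozenge$-cone over $\cc{A}$. Consequently, for any fixed vertex $Z$, restriction along $\cc{A}_0 \hookrightarrow \cc{A}$ is a bijection between $\lozenge$-cones $\{TX\otimes TX \to Z\}_{X\in\cc{A}}$ and $\lozenge$-cones $\{TC\otimes TC \to Z\}_{C\in\cc{A}_0}$, and this bijection is natural in $Z$.

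Next I would construct the small coend and transport its universal property back. Since restriction identifies $\lozenge$-cones over $\cc{A}$ with $\lozenge$-cones over $\cc{A}_0$ naturally in the vertex, a universal $\lozenge$-cone over $\cc{A}$ exists if and only if a universal $\lozenge$-cone over $\cc{A}_0$ does, and the two share the same vertex. The latter is an honest coend over a \emph{small} diagram, hence exists because $s\ell$ (and therefore $B\otimes B$-$Mod$) is cocomplete: concretely it is the quotient of $\coprod_{C\in\cc{A}_0} TC\otimes TC$ by the relations imposed by the $\lozenge(R)$-diagrams for the small set of arrows $R$ of $\cc{A}_0$, exactly as in the description of the generators $[C,\delta_a,\delta_b]$ given after Proposition~\ref{EndhasX}. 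This produces $L = End^\wedge(T)$. Once $L$ exists, the lifting $\widetilde{T}$ of \ref{lifting} is delivered immediately by the general Tannaka construction of Appendix~\ref{sec:Tannaka}, which requires only the existence of the coend.

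The main obstacle is the extension step: showing that boundedness forces $\lozenge$-cones over the small $\cc{A}_0$ to extend uniquely over all of $\cc{A}$. In the topos case this rested on the explicit presentation $FX \cong \colim{(c,C)\in\Gamma_F}{XC}$ of \eqref{yonedaII} together with the cofilteredness of the diagram $\Gamma_F$ of a point (remarks~\ref{rhoepi} and~\ref{FX}), which is what made the defining formulae $(1)$ and $(2)$ in the proof of Proposition~\ref{extension} well defined and independent of the choices of $C$, $c$ and $f$. For a general bounded $s\ell$-category there is no underlying topos to supply such a colimit presentation, so the delicate point --- carried out in \cite[8.7]{tesis} --- is to isolate from the abstract notion of boundedness exactly the covering and filtered data needed to run the same interpolation argument purely in terms of the $s\ell$-enrichment.
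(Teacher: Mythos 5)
Your proposal matches the paper's intended argument exactly: the paper offers no proof beyond the sentence preceding the theorem, which delegates the whole content to the generalization of Proposition~\ref{extension} to bounded $s\ell$-categories in \cite[8.7]{tesis}, after which the small coend exists by cocompleteness of $B\otimes B$-$Mod$ and the lifting comes from Appendix~\ref{sec:Tannaka}. Your reconstruction of this reduction, and your identification of the extension step as the sole non-formal ingredient (supplied by the cited thesis result, not reproved here), is correct and is essentially the same route.
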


Based on our previous developments, we end this paper with the conjecture that a following more general recognition theorem may hold, which would imply theorem \ref{AA} (and therefore theorem \ref{fundamentalGT} of Joyal-Tierney), for $s \ell$-enriched categories and comodules of a (not necessarily localic Hopf) cog\"ebro\`ide. Note that an analysis of the  properties of $Cmd_0(L)$ and of the forgetful functor $Cmd_0(L) \mr{} (B$-$Mod)_0$ could lead to adding some extra hypothesis to this conjecture. 

\begin{conjecture} \label{conjetura}
 In the hypothesis of theorem \ref{contextoparasl}, if $T$ is a \mbox{$s \ell$-enriched} open and faithful functor then  $\widetilde{T}$ is an equivalence.
 %Let $A$ be a bounded $s \ell$-category, $B \in Alg_{s \ell}$, $A \mr{F} (B-Mod)_0$ a functor. Then $L = End^\wedge(F)$, $\widetilde{F}$ as in \ref{defdeL}, \ref{lifting} exist, and $\widetilde{F}$ is an equivalence if and only if $F$ is a $s \ell$-enriched open and faithful functor. 
\end{conjecture}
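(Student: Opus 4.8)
The plan is to prove that the lifting $\widetilde{T}$ is fully faithful and essentially surjective directly by the $s\ell$-enriched Tannakian machinery developed above, since in the absence of an underlying topos we can no longer route the argument through the Joyal--Tierney descent theorem \ref{fundamentalGT} as was done for the $bcDCR$ case in \ref{AA}. Thus openness and faithfulness of $T$ must play, by themselves, the role that open surjectivity of $q$ played before. As usual in reconstruction arguments, I expect the fully faithful half to be accessible and the essentially surjective half to be the genuine difficulty.

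For fully faithfulness I would first observe that faithfulness of $T$ immediately yields faithfulness of $\widetilde{T}$, since the underlying arrow of a comodule morphism $\widetilde{T}X \mr{} \widetilde{T}Y$ is $T$ applied to a morphism of $\cc{A}$. For fullness, the key is to reinterpret a comodule morphism as a cone: exactly as in the proof of Theorem \ref{Comd=Rel}, a morphism $\widetilde{T}X \mr{} \widetilde{T}Y$ in $Cmd_0(L)$ is equivalent to the $\lozenge$-cone condition relating the structure maps $\mu_X,\mu_Y$ (the comodule-morphism diagram \eqref{commorph} being equivalent to a diagram of type $\lozenge(R,R)$, via Proposition \ref{combinacion}), and by Proposition \ref{naturaligualcone} such data correspond to a $\lozenge_1$-cone, i.e. to a natural family. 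The point is then that the universal property of $L=End^\wedge(T)$ (Theorem \ref{contextoparasl}) forces every such cone to factor through $\cc{A}$ itself, openness of $T$ being used to guarantee that the factorising sup-lattice morphism on hom-objects is a genuine arrow of $\cc{A}$ --- openness being precisely the condition of \cite[4.1]{Pitts} that makes the hom-maps $\cc{A}(X,Y)\mr{}(B\hbox{-}Mod)_0(TX,TY)$ behave like the inverse-image/locale morphisms to which Proposition \ref{functionconallegories} and Corollary \ref{supisloc} apply. I expect fully faithfulness to go through with care along these lines.

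The genuinely hard step, and the reason this remains a conjecture, is essential surjectivity. Given a discrete comodule $(M,\rho)\in Cmd_0(L)$ one must reconstruct $X\in\cc{A}$ with $\widetilde{T}X\cong (M,\rho)$. In the topos case this is exactly the effectivity of descent data along the open surjection $q$, supplied by Joyal--Tierney's theorem; here there is no $q$ and no descent theorem to invoke. The natural attempt is to present $(M,\rho)$ as a (co)equalizer of cofree comodules, i.e. of comodules of the form $\widetilde{T}$ of objects of $\cc{A}$, and then to realise $X$ as the corresponding (co)limit in $\cc{A}$, using the bounded extension result \cite[8.7]{tesis} (generalising Proposition \ref{extension}) to control the cones of $\ell$-bijections involved and to check that the reconstructed cone is again universal and lands on an actual object. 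The obstruction is that this requires $\cc{A}$ to possess, and $T$ to interact correctly with, enough (co)limits for the reconstruction to close up; in the topos setting openness guarantees precisely this effectivity, but whether \emph{open plus faithful} suffices for it in an arbitrary bounded $s\ell$-category is exactly the open question.

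Accordingly, to carry the essential-surjectivity step I would expect one must either strengthen the hypotheses to include completeness of $\cc{A}$ (as in the $bcDCR$ recognition theorem preceding the conjecture, where only boundedness is assumed in Theorem \ref{contextoparasl}) or impose conditions on the forgetful functor $Cmd_0(L)\mr{}(B\hbox{-}Mod)_0$ ensuring that it is comonadic, so that comodules are recovered as coalgebras for a comonad which $\widetilde{T}$ is seen to split. Identifying the minimal such hypothesis --- the $s\ell$-enriched analogue of ``open surjections are of effective descent'' --- is the main obstacle, and is precisely what separates the provable fully-faithful half from the conjectural essentially-surjective half.
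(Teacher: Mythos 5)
The statement you were asked to prove is Conjecture \ref{conjetura}: the paper itself offers no proof of it, and explicitly says so --- the authors end the paper by \emph{conjecturing} the result and even warn that ``an analysis of the properties of $Cmd_0(L)$ and of the forgetful functor $Cmd_0(L)\mr{}(B$-$Mod)_0$ could lead to adding some extra hypothesis to this conjecture.'' So there is no proof in the paper to compare yours against, and your proposal, read as a proof, has a genuine and unfillable gap --- which, to your credit, you identify yourself: essential surjectivity of $\widetilde{T}$ is exactly the $s\ell$-enriched analogue of ``open surjections are of effective descent,'' and nothing in the paper (nor in your sketch) supplies it once the topos $\cc{E}$ and the morphism $q$ are gone. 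Theorem \ref{AA} is proved only by routing through Joyal--Tierney's Theorem \ref{fundamentalGT}, and the whole point of the conjecture is that this route is unavailable for a general bounded $s\ell$-category $\cc{A}$. Your diagnosis of the obstruction (no descent theorem, possible need for completeness of $\cc{A}$ or comonadicity of the forgetful functor) matches the authors' own assessment.

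Beyond that, even the half you label ``accessible'' is not actually established. Faithfulness of $\widetilde{T}$ from faithfulness of $T$ is fine, since the forgetful functor $Cmd_0(L)\mr{}(B$-$Mod)_0$ is faithful and $U\widetilde{T}=T$. But your fullness argument leans on Theorem \ref{Comd=Rel}, Proposition \ref{combinacion}, and Corollary \ref{supisloc}, all of which are proved in the paper for $\cc{A}=Rel(\cc{E})$ (or over a site of a topos) and use the self-duality of $\Omega_P^X$, the $\delta_x$ generators of $X_d$, and the identification of functions with arrows of the topos --- none of which is available for an abstract bounded $s\ell$-category. The universal property of $L=End^\wedge(T)$ gives you a factorization of cones through $L$, not a factorization of comodule morphisms through hom-objects of $\cc{A}$; turning the latter into an actual arrow of $\cc{A}$ is where fullness would have to be earned, and openness of $T$ in the sense of \cite[4.1]{Pitts} has not been shown to do that job outside the $dcr$ setting. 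In short: your write-up is an honest and accurate research plan, consistent with the paper's own framing, but it is not a proof, and the statement remains open.
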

\end{sinnadastandard}

\begin{appendices}
\section{Non-neutral Tannaka theory} \label{sec:Tannaka} %for a general $\Vat$

In this section we make the constructions needed to develop a Non-neutral Tannaka theory (as in \cite{De}), over a general tensor category ($\Vat, \otimes, k)$. Let $B', B \in Alg_\Vat$.%,   $M \in B$-Mod, $N \in$ Mod-$B$, then we have $M \otimes N \in B$-Mod-$B$, $N \otimes_B M \in \Vat = k$-Mod. Consider the coequalizer $N \otimes M \stackrel{c}{\rightarrow} N \otimes_B M$.

% \begin{\prop} \label{subec}
% $M \mr{f} M'$ in $B$-Mod, $N \mr{g} N'$ in Mod-$B$, then 
% $$\vcenter{\xymatrix{N \otimes M \ar[r]^{g \otimes f} \ar[d]^c & N' \otimes M' \ar[d]^c \\ N \stackrel[B]{\textcolor{white}{B}}{\otimes} M \ar[r]^{g \stackrel[B]{}{\otimes} f} & N' \stackrel[B]{\textcolor{white}{B}}{\otimes} M'}}
% , \hbox{ i.e. (recall \ref{ascensoresconB}) } \quad
% \vcenter{\xymatrix@C=-0.3pc@R=1pc{N \dcell{g} && M \dcell{f} \\ N' \dl & \dc{c} & \dr M' \\ N' \Brr & \,\,\, & M'}} 
% =
% \vcenter{\xymatrix@C=-0.3pc@R=1pc{N \dl & \dc{c} & \dr M \\ N \dcell{g} \Brr & \,\,\, & M \dcell{f} \\ N' \Brr & \,\,\,& M'}}$$
% 
% \qed
% %\quad \quad \quad \quad \quad \quad \quad \quad \xymatrix@C=-0.3pc@R=1pc{\\ \\ \square}$
% \end{\prop}
% 
% \begin{\prop} \label{isomB}
% The isomorphism $M \cong B \otimes_B M$ is given by $$\vcenter{\xymatrix@C=-0.3pc@R=1pc{& M \op{\cong} \\ B \Brr & \,\,\, & M}} = \vcenter{\xymatrix@C=-0.3pc@R=1pc{ \du{u} && M \did \\ B \dl & \dc{c} & \dr M \\ B \Brr & \,\,\, & M}}$$
% \qed
% \end{\prop}

\begin{sinnadastandard}{\bf Duality of modules.}
 
\end{sinnadastandard}

\begin{\de}\label{moddual} Let $M \in B$-Mod. We say that $M$ has a right dual (as a $B$-module) if there exists $M^\wedge \in$ Mod-$B$, $M \otimes M^\wedge \stackrel{\eps}{\rightarrow} B$ morphism of $B$-Mod-$B$ and $k \stackrel{\eta}{\rightarrow} M^\wedge \otimes_B M$ morphism of $\Vat$ such that the triangular equations

\begin{equation} \label{triangular}
\xymatrix@C=-0.3pc@R=0.1pc
         { 			
          &  \ar@{-}[ldd] \ar@{-}[rdd] \ar@{}[dd]|{\eta} 
          & & &  M^{\!\wedge} \ar@2{-}[dd] 					
          & & & & & & & & &  M \ar@2{-}[dd] 
          & & & \ar@{-}[ldd] \ar@{-}[rdd] \ar@{}[dd]|{\eta} 
          & & & &  
          \\
		  & & & & & & & M^{\!\wedge} \ar@2{-}[dd] 
		  & & & & & & & & & & & & &  M \ar@2{-}[dd] 
		  \\
		    M^{\!\wedge} \ar@2{-}[dd] \ar@{}[rr]|*+<.6ex>[o][F]{\scriptscriptstyle{B}}
		  & \,\,\, & M \ar@{-}[rdd] 
		  & \ar@{}[dd]|{\varepsilon} 
		  & M^{\!\wedge} \ar@{-}[ldd] 
		  & \quad = \quad 
		  & & & & & \quad \quad \hbox{and} \quad\quad 
		  & & & M \ar@{-}[ddr] 
		  & \ar@{}[dd]|{\varepsilon} 
		  & M^{\!\wedge} \ar@{-}[ldd] \ar@{}[rr]|*+<.6ex>[o][F]{\scriptscriptstyle{B}} 
		  & \,\,\, & M \ar@2{-}[dd] 
		  & \quad = \quad 
		  & &  
		  \\
		  & & & & & & & M^{\!\wedge}  & & & & & & & & & & & & &  M. 
		  \\
			M^{\!\wedge} \ar@{}[rrr]|*+<.6ex>[o][F]{\scriptscriptstyle{B}}
		  & & & B & & & & & & & & & & & B \ar@{}[rrr]|*+<.6ex>[o][F]{\scriptscriptstyle{B}} & & & M	
		 }
\end{equation}
hold. In this case, we say that $M^\wedge$ is \emph{the} right dual of $M$ and we denote $M \dashv M^\wedge$.
\end{\de}

\begin{remark} \label{nuevodualcomobimod}
 If $B$ is commutative, the notion of dual as a $B$-module coincides via the inclusion $B$-mod $\hookrightarrow B$-bimod with the notion of dual in the monoidal category $B$-bimod.
\end{remark}

\begin{\prop}
A duality $M \dashv M^\wedge$ yields an adjunction $$\xymatrix { B'\hbox{-Mod} \ar@/^2ex/[rr]^{(-)\otimes M^\wedge} \ar@{}[rr]|{\bot} && B'\hbox{-Mod-}B \ar@/^2ex/[ll]^{(-) \stackrel[B]{}{\otimes}  M} }$$
given by the binatural bijection between morphisms

\begin{equation}\label{adjunciondeldual}
\begin{tabular}{c}
 $N \otimes M^\wedge \stackrel{\lambda}{\rightarrow} L$ of $B'$-$Mod$-$B$ \\ \hline \noalign{\smallskip}
 $N \stackrel{\rho}{\rightarrow} L \stackrel[B]{}{\otimes} M$ of $B'$-$Mod$
\end{tabular}
\end{equation}

for each $N \in B'$-Mod, $L \in B'$-Mod-$B$.
\end{\prop}

\begin{proof}
The bijection is given by
\begin{equation} \label{lambdarhoconascensores} \xymatrix@C=-0.3pc@R=1pc{&& N \op{\rho} &&& M^{\!\wedge} \did \\
						\lambda: \quad & L \did \ar@{}[rr]|*+<.6ex>[o][F]{\scriptscriptstyle{B}} && M && M^{\!\wedge} & \quad , \quad \quad \\
						& L \ar@{}[rrr]|*+<.6ex>[o][F]{\scriptscriptstyle{B}} &&& B \cl{\eps} }
\xymatrix@C=-0.3pc@R=1pc{& N \did &&& \op{\eta} \\
\rho: \quad & N && M^{\!\wedge} \ar@{}[rr]|*+<.6ex>[o][F]{\scriptscriptstyle{B}} & \,\,\, & M \did \\
& & L \cl{\lambda} \ar@{}[rrr]|*+<.6ex>[o][F]{\scriptscriptstyle{B}} &&& M.} \end{equation}
All the verifications are straightforward.
\end{proof}

% \begin{remark} \label{homesdual}
% Considering $B'=k$, we see that $M^\wedge$, if it exists, is unique (except for a canonical isomorphism), since it can be retrieved from the functor $(-) \otimes M^\wedge$. $\eta$ and $\eps$ can also be retrieved from the unit and counit of the adjunction, therefore are also unique. This can as well be proved explicitly by computing the isomorphism between two right duals of $M$.
% % \in $Mod-$B$ yielding the adjunction $(-)\otimes M^\wedge \dashv (-) \otimes_B M$, then 
% 
% Also, from the adjunction we see that if $M^\wedge$ exists, then $(-) \otimes_B M$ and $Hom_B(M^\wedge,-)$ are right adjoints of $(-) \otimes M^\wedge$, so we have $Hom_B(M^\wedge,L) = L \otimes_B M^\wedge$ for each   $L \in B'$-Mod-$B$.
% \end{remark}

\begin{\de} \label{catconduales}
We will denote by $(B$-Mod$)_r$ the full subcategory of $B$-Mod consisting of those modules that have a right dual.
\end{\de}

\begin{\prop} \label{dualesfuntor}
There is a contravariant functor $(-)^\wedge: (B$-Mod$)_r \rightarrow $Mod-$B$ defined on the arrows $M \mr{f} N$ as
$$\xymatrix@C=-0.3pc@R=1pc{&&\op{\eta} &&  & N^{\!\wedge} \did \\
f^\wedge: \quad & M^{\!\wedge} \did \Brr & \,\,\, & M \dcell{f} && N^{\!\wedge} \did \\
&M^{\!\wedge} \did \Brr && N && N^{\!\wedge} \\
&M^{\!\wedge} \Brrr &&& B \cl{\eps}}$$ 
\qed
\end{\prop}

\begin{sinnadastandard}\label{sub:natpredual} {\bf The $Nat^\lor$ adjunction} 
 
\end{sinnadastandard}

Consider now a category $\Cat$ and a functor $H: \Cat \rightarrow $Mod-$B$. We have an adjunction 
\begin{equation} \label{homadjunction}
\xymatrix { (B'\hbox{-Mod})^\Cat \ar@/^2ex/[rr]^{(-) \stackrel[\Cat]{}{\otimes} H} \ar@{}[rr]|{\bot} && B'\hbox{-Mod-}B \ar@/^2ex/[ll]^{Hom_B (H,-)} }
\end{equation}
where the functors are given by the formulae $$F \otimes_\Cat H = \int^{X \in \Cat} FX \otimes HX, \quad Hom_B(H,M)(C)=Hom_B(HC,M).$$

Assume now we have a full subcategory $(B$-Mod$)_0$ of $(B$-Mod$)_r$ (recall definition \ref{catconduales}), i.e. a full subcategory $(B$-Mod$)_0$ of $B$-$Mod$ such that every object has a right dual. Given \mbox{$G: \Cat \rightarrow (B$-Mod$)_0$,} using proposition \ref{dualesfuntor} we construct $G^\wedge: \Cat \rightarrow $Mod-$B$.

\begin{\de} \label{defnatpredual}
Given $G: \Cat \rightarrow (B$-Mod$)_0$, $F: \Cat \rightarrow B'$-Mod, we define $$Nat^\wedge(F,G) = F \otimes_\Cat G^\wedge = \int^{X \in \Cat} FX \otimes GX^\wedge.$$
\end{\de}

\begin{sinnadastandard} \label{notasobresimetria}
 A note regarding left and right duality is in order here. There are two possible different (symmetric) definitions of the Tannakian fundamental object, namely the one above and the one we will denote by $Nat^\lor(F,G) = \int^{X \in \Cat} GX^\lor \otimes FX$, which is constructed under the appropriate hypothesis, symmetric to the ones in definition \ref{defnatpredual}. 
 Tannakian theory can be developed using either one of the constructions, and by considering the opposite of the tensor products each one becomes the other. In the commutative case both constructions (and their variations present in the literature) coincide, but as Deligne points out in \cite{De}, considering the non-commutative case helps us not to mistake left for right. %p.118
 
  If we consider the motivation in \cite{JS} for the definition of a \emph{predual} of $[F,G]$, we observe that both possible definitions satisfy preduality in the following sense. Consider an object $C$ in a tensor category, then a left adjoint to the functor $(-) \otimes C$ is the usual internal Hom functor $Hom(C,-)$, but the functor $C \otimes (-)$ can be considered instead, let's denote by $Hom^r(C,-)$ this Hom functor. Note that both Hom functors satisfy $[I,Hom(C,D)] = [C,D]$, there is no objective reason to prefer one over the other. A left dual $C^\lor$ of $C$ yields the equality $Hom(C,D) = D \otimes C^\lor$, and in this way $Hom(C,I)$ is the unique possible candidate for a left dual of $C$, even if $C$ doesn't admit one. Similarly, $Hom^r(C,I)$ is the unique possible candidate for a right dual of $C$. It can be seen that $Hom(\int^{X \in \Cat} FX \otimes GX^\wedge,I) = [F,G]$, therefore $Nat^\wedge(F,G)$ is the unique possible candidate for a right dual of $[F,G]$, and also $Hom^r(\int^{X \in \Cat}
 GX^\lor \otimes FX,I) = [F,G]$, therefore $Nat^\lor(F,G)$ is the unique possible candidate for a left dual of $[F,G]$.
 
 The reason why we use definition \ref{defnatpredual} in this paper is because, as the reader can see below, it corresponds to left comodules, which in turn correspond to actions of the groupoid as we showed in the beginning of section \ref{equiv:objects}. We note however that, since we're dealing with the commutative case, the other definition is also possible (see remark \ref{rightcomodules}).
 
%   The use of symmetric in the paragraph 
%  
%  dual in the sense that they invert left and right, more precisely 
%  and we use the one in definition \ref{defnatpredual}, which 
%  for right comodules below.
%  
%  (see the beginning of section \ref{equiv:objects})

\end{sinnadastandard}

\begin{\prop} \label{natpredualadjunction}
Given $G: \Cat \rightarrow (B$-Mod$)_0$, we have an adjunction
\begin{equation} %\label{homadjunction}
\xymatrix { (B'\hbox{-Mod})^\Cat \ar@/^2ex/[rr]^{Nat^\wedge((-),G)} \ar@{}[rr]|{\bot} && B'\hbox{-Mod-}B \ar@/^2ex/[ll]^{(-) \stackrel[B]{}{\otimes} G} }
\end{equation}
where the functor $(-) \otimes_B G$ is given by the formula $(M \otimes_B G) (C) = M \otimes_B (GC)$.
\end{\prop}
\begin{proof}
The value of the functor $Nat^\wedge((-),G)$ in an arrow $F \stackrel{\theta}{\Rightarrow} H$ of $(B'\hbox{-Mod})^\Cat$ is the   \mbox{$B'$-$B$-bimodule} morphism induced by
$$FX \otimes GX^\wedge \mr{\theta_X \otimes (GX)^\wedge} HX \otimes GX^\wedge \mr{\lambda_X} Nat^\wedge(H,G).$$
The adjunction is given by the binatural bijections

\begin{center}
 \begin{tabular}{c}
  $Nat^\wedge (F,G) \rightarrow C$ \\ \hline \noalign{\smallskip}
  $F \stackrel[\Cat]{}{\otimes} G^\wedge \rightarrow C$ \\ \hline \noalign{\smallskip}
  $F \Rightarrow Hom_B(G^\wedge, C)$ \\ \hline \noalign{\smallskip}
  $F \Rightarrow C \stackrel[B]{}{\otimes} G$ 
 \end{tabular}

\end{center}

\noindent justified by the adjunction \eqref{homadjunction}. We leave the verifications to the reader.
\end{proof}

The unit of the adjunction is called the \emph{coevaluation} $F \Mr{\rho = \rho_F} Nat^\wedge(F,G) \otimes_B G$. It can be checked that it is given by $$\rho_C: FC \mr{FC \otimes \eta} FC \otimes GC^\wedge \otimes_B GC \mr{\lambda_C \otimes GC} Nat^\wedge(F,G) \otimes_B GC,$$
i.e. that it corresponds to $\lambda_C$ via the correspondence \eqref{lambdarhoconascensores}.

We also have the counit $Nat^\wedge(L \otimes_B G, G) \mr{e = e_L} L$. It is induced by the arrows   $L \otimes_B GC \otimes GC^\wedge \mr{L \otimes_B \eps} L$.

\begin{center}
We now restrict to the case $B'=B$. 
\end{center}

\begin{\de} \label{defdeL}
Given $F: \Cat \rightarrow (B$-$Mod)_0$ , we define $$L = L(F) = End^\wedge(F) = Nat^\wedge(F,F).$$
\end{\de}

% \begin{remark} \label{ecuaciondeL}
% $L$ is universal among those $B$-bimodules satisfying the equation
% $$\vcenter{\xymatrix@C=-0.3pc@R=1pc{ FC \dcellbb{F(f)} & & FD^\wedge \did 	\\
% 											FD  &  & FD^\wedge 															\\
% 											& L \cl{\lambda_{D}} &}}
% \vcenter{\xymatrix{=}}
% \vcenter{\xymatrix@C=-0.3pc@R=1pc{
% 	FC \did & & & \op{\eta} & & & FD^\wedge \did \\
% 	FC \did & & FC^\wedge \did \Brr & \,\,\, & FC \dcellbb{F(f)} & & FD^\wedge \did \\
% 	FC \did & & FC^\wedge \did \Brr & \,\,\, & FD & & FD^\wedge \\
% 	FC && FC^\wedge \Brrr & & & B \cl{\eps}   \\		
% 	& L \cl{\lambda_C} }	}$$
% 	for each $C \mr{f} D$ in $\Cat$.
% \end{remark}

As usual, given $F,G,H: \Cat \rightarrow (B$-Mod$)_0$ we construct from the coevaluation a \emph{cocomposition} $$Nat^\wedge(F,H) \stackrel{c}{\rightarrow} Nat^\wedge(F,G) \stackrel[B]{\textcolor{white}{B}}{\otimes} Nat^\wedge(G,H)$$
This is a $B$-bimodule morphism induced by the arrows $$FC \otimes HC^\wedge \mr{FC \otimes \eta \otimes HC^\wedge} FC \otimes GC^\wedge \stackrel[B]{\textcolor{white}{B}}{\otimes} GC \otimes HC^\wedge \mr{\lambda_C \otimes \lambda_C} Nat^\wedge(F,G) \stackrel[B]{\textcolor{white}{B}}{\otimes} Nat^\wedge(G,H)$$

The structure given by $c$ and $e$ is that of a \emph{cocategory enriched over} $B$-Bimod. Therefore, \mbox{$L = L(F)$} is a coalgebra in the monoidal category $B$-Bimod, i.e. a $B$-bimodule with a coassociative comultiplication $L \mr{c} L \otimes_B L$ and a counit $L \mr{e} B$. This is called a \emph{cog\'ebro\"ide agissant sur B} in \cite{De}. Cog\'ebro\"ides act on $B$-modules as follows

\begin{\de} \label{defcmd}
Let $L$ be a \emph{cog\'ebro\"ide agissant sur B}, i.e. a coalgebra in $B$-Bimod. A (left) representation of $L$, which we will also call a (left) $L$-comodule, is a $B$-module $M$ together with a coaction, or comodule structure $M \mr{\rho} L \otimes_B M$, which is a morphism of $B$-modules such that
$$C1: \vcenter{\xymatrix@C=-0.3pc@R=1pc{&&& & M \ar@{-}[d] \ar@{-}[dlll] \ar@{}[dll]|{\ \ \rho} \\
																		& L \op{c} & \Brr & \,\,\, & M \did \\
																		L \Brr && L \Brr && M}}
\vcenter{\xymatrix{=}}
\vcenter{\xymatrix@C=-0.3pc@R=1pc{&&&& M \ar@{-}[d] \ar@{-}[dllll] \ar@{}[dll]|{\rho} \\
																	L \did & \Brr &&& M \drho{\rho} & \quad \quad \quad \quad \\
																		L \Brr & \,\,\, & L \Brr & \,\,\, & M}}
C2: \vcenter{\xymatrix@C=-0.3pc@R=1pc{&& M \drho{\rho} \\ L \dcell{e} \Brr & \,\,\, & M \did \\ B \Brr && M}}
\vcenter{\xymatrix{=}}
\vcenter{\xymatrix@C=-0.3pc@R=1pc{M \did \\ M}}$$
We define in an obvious way the comodule morphisms, and we have that way a category $Cmd(L)$. We denote by $Cmd_0(L)$ the full subcategory of those comodules whose subjacent $B$-module is in $(B$-Mod$)_0$.
\end{\de}

\begin{\prop} \label{lifting}
Given $F: \Cat \rightarrow (B$-Mod$)_0$, the unit $FC \mr{\rho_C} L \otimes_B FC$ yields a comodule structure for each $FC$. Then we obtain a lifting of the functor $F$ as follows
$$\xymatrix{\Cat \ar@{.>}[r]^{\tilde{F}} \ar[d]_F & Cmd_0(L) \ar[dl]^U \\ (B\hbox{-Mod})_0}$$
\qed
\end{\prop}

% \begin{proof}
% Since we know explicitly what $\rho$, $e$ and $c$ are, it is easy to check both equations on definition \ref{defcmd}. Both sides of the first equation are equal to the composition
% $$\vcenter{\xymatrix@C=-0.3pc@R=1pc{FC \did &&& \op{\eta} &&&& \op{\eta} \\
% 																		FC && FC^{\!\wedge} \Brr & \,\,\, & FC && FC^{\!\wedge} \Brr & \,\,\, & FC \did \\
% 																		& L \cl{\lambda_C} & \Brr &&& L \cl{\lambda_C} \Brrr &&& FC,  }}$$
% and the second equation is just a triangular equation for $FC \dashv FC^\wedge$.
% 
% We now verify that given an arrow $C \mr{f} D$ in $\Cat$, $F(f)$ is a comodule morphism (recall remark \ref{ecuaciondeL})
% 
% $$\vcenter{\xymatrix@C=-0.3pc@R=1pc{FC \dcellbb{F(f)} &&& \op{\eta} \\
% 																		FC && FD^\wedge \Brr & \,\,\, & FD \did \\
% 																		& L \cl{\lambda_D} &&& FD}}
% \vcenter{\xymatrix{=}}
% \vcenter{\xymatrix@C=-0.3pc@R=1pc{FC \did &&& \op{\eta} &&&& \op{\eta} \\
% 																	FC && FC^{\!\wedge} \Brr & \,\,\, & FC \dcellbb{F(f)} && FD^{\!\wedge} \did \Brr & \,\,\, & FD \did \\
% 																	& L \cl{\lambda_C} \did \Brrr &&& FD && FD^{\!\wedge} \Brr & \,\,\, & FD \did \\
% 																	& L & \Brr &&& B \cl{\eps} \Brrr &&& FD}}
% \vcenter{\xymatrix{=}}
% \vcenter{\xymatrix@C=-0.3pc@R=1pc{FC \did &&& \op{\eta} \\
% 																	FC && FC^{\!\wedge} \Brr & \,\,\, & FC \dcellbb{F(f)} \\
% 																	& L \cl{\lambda_C} \Brrr &&& FD}}$$
% \end{proof}

%Now we analyse in which sense $c$ and $e$ are unique.

\begin{lemma} \label{lemadeunicidad}
Let $M \in (B$-$Mod)_r$, $L \in B$-$Bimod$, $M \otimes M^\wedge \mr{\lambda} L$ in $B$-$Bimod$, and $\rho$ the corresponding $B$-module morphism via \eqref{lambdarhoconascensores}. Let $L \mr{e} B$, $L \mr{c} L \otimes_B L$ be a structure of cog\'ebro\"ide sur $B$. Then $\rho$ is a comodule structure for $M$ if and only if the following diagrams commute:
$$B1: \vcenter{\xymatrix{ M \otimes M^\wedge \ar[r]^{\lambda} \ar[d]_{M \otimes \eta \otimes M^\wedge } & L \ar[d]^{c} \\
M \otimes M^\wedge \stackrel[B]{\textcolor{white}{B}}{\otimes} M \otimes M^\wedge  \ar[r]^>>>>>{\lambda \stackrel[B]{}{\otimes} \lambda} & L \stackrel[B]{\textcolor{white}{B}}{\otimes} L}}
\quad \quad \quad B2: \vcenter{\xymatrix{M \otimes M^\wedge \ar[r]^{\lambda} \ar[d]_{\eps} & L \ar[dl]^{e} & \quad \quad \quad \quad \\ B}}$$
\end{lemma}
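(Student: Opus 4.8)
The plan is to use that the assignment $\lambda\mapsto\rho$ of \eqref{lambdarhoconascensores} is one half of the natural bijection of Proposition \ref{natpredualadjunction}, specialized to $N=M$ and to the duality $M\dashv M^\wedge$, and to check that under this bijection the comodule axioms $C1$ and $C2$ become exactly the diagrams $B1$ and $B2$. Concretely, \eqref{lambdarhoconascensores} reads $\lambda=(L\otimes_B\eps)\circ(\rho\otimes M^\wedge)$ and $\rho=(\lambda\otimes_B M)\circ(M\otimes\eta)$, and the two triangular identities \eqref{triangular} for $M\dashv M^\wedge$ are precisely what makes these mutually inverse, so that the correspondence $\lambda\leftrightarrow\rho$ is a bijection. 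I will also use that this bijection is natural in the codomain: post-composing $\rho$ with a $B$-bimodule morphism $L\mr{g}L'$ corresponds to post-composing $\lambda$ with $g$. Both $c$ and $e$ are $B$-bimodule morphisms by hypothesis, so naturality may be invoked along them. Since the correspondence is bijective, equality of the morphisms corresponding to the two sides of each axiom is equivalent to equality of those two sides, so it suffices to match them up.

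First I would dispatch the counit axiom. By naturality applied to the bimodule map $e\colon L\to B$, the $B$-module morphism $(e\otimes_B M)\circ\rho\colon M\to B\otimes_B M=M$ corresponds to $e\circ\lambda$. On the other hand, taking $\rho=\mathrm{id}_M$ in the formula for $\lambda$ gives $(B\otimes_B\eps)=\eps$. Hence $C2$, which asserts $(e\otimes_B M)\circ\rho=\mathrm{id}_M$, holds if and only if $e\circ\lambda=\eps$, which is $B2$.

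For the coassociativity axiom I would compute the morphism corresponding to each side of $C1$, now with codomain $L\otimes_B L$. The left side $(c\otimes_B M)\circ\rho$ corresponds, by naturality along $c\colon L\to L\otimes_B L$, to $c\circ\lambda$, which is the left side of $B1$. For the right side $(L\otimes_B\rho)\circ\rho$ I would compute its corresponding morphism directly as $(L\otimes_B L\otimes_B\eps)\circ\big((L\otimes_B\rho)\circ\rho\otimes M^\wedge\big)$; pulling the outer factor $L\otimes_B(-)$ out and recognizing the inner composite $(L\otimes_B\eps)\circ(\rho\otimes M^\wedge)$ as $\lambda$ collapses it to $(L\otimes_B\lambda)\circ(\rho\otimes M^\wedge)$. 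Substituting $\rho=(\lambda\otimes_B M)\circ(M\otimes\eta)$ and regrouping the two occurrences of $\lambda$ into $\lambda\otimes_B\lambda$ then yields $(\lambda\otimes_B\lambda)\circ(M\otimes\eta\otimes M^\wedge)$, the right side of $B1$. By bijectivity of the correspondence this gives $C1\iff B1$, and together with the previous paragraph this proves the lemma.

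The only substantive work is the last computation, and it is pure bookkeeping: keeping track of which tensor products are over $B$ and which over $k$, and checking that the regrouping $(L\otimes_B\lambda)\circ(\lambda\otimes_B M\otimes M^\wedge)=\lambda\otimes_B\lambda$ is legitimate. In the elevators calculus of Appendix \ref{ascensores} each of these steps is a one-line move (the definition of the correspondence \eqref{lambdarhoconascensores}, an interchange, or a triangular identity from \eqref{triangular}), so I would present the equivalence diagrammatically. The main, and only mild, obstacle is to make sure every morphism to which naturality is applied is genuinely a $B$-bimodule morphism, which is guaranteed since $c$ and $e$ are.
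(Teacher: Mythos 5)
Your proof is correct and follows essentially the same route as the paper: both rest on the correspondence \eqref{lambdarhoconascensores} together with the triangular identities \eqref{triangular}, and the substitution and regrouping $(L\otimes_B\lambda)\circ(\lambda\otimes_B M\otimes M^\wedge)=\lambda\otimes_B\lambda$ that you describe is exactly the paper's graphical computation. The only (cosmetic) difference is that you invoke bijectivity and binaturality of the adjunction to obtain both directions of each equivalence at once, whereas the paper chases one implication ($C1\Rightarrow B1$) explicitly in the elevators calculus and notes that the remaining implications are proved similarly.
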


\begin{proof}
We can prove $B1 \iff C1$, $B2 \iff C2$. All the implications can be proved in a similar manner when using a graphical calculus, we show $C1 \implies B1$:

$$\vcenter{\xymatrix@C=-0.3pc@R=1pc{&& M \did &&&& \ar@{-}[dll] \ar@{}[d]|{\eta} \ar@{-}[drr] &&&& M^\wedge \did \\
				   && M \ar@{-}[d] \ar@{-}[dll] \ar@{}[dl]|{\rho} && M^\wedge \did \Brr &&&& M \ar@{-}[d] \ar@{-}[dll] \ar@{}[dl]|{\rho} && M^\wedge \did \\
				   L \did \Brr & \quad & M && M^\wedge \Brr & \quad & L \did \Brr & \quad & M && M^\wedge \\
				   L \Brr &&& B \cl{\eps} & \Brr && L \Brr &&& B \cl{\eps}}}
\vcenter{\xymatrix{\stackrel{\triangle}{=}}}
\vcenter{\xymatrix@C=-0.3pc@R=1pc{&&&& M \ar@{-}[d] \ar@{-}[dllll] \ar@{}[dll]|{\ \ \rho} && M^\wedge \did \\
	    L \did & \Brr &&& M \ar@{-}[d] \ar@{-}[dll] \ar@{}[dl]|{\rho} && M^\wedge \did \\
	    L \did \Brr & \quad & L \did \Brr & \quad & M && M^\wedge \\
	    L \Brr && L \Brrr &&& B \cl{\eps}}}
    \vcenter{\xymatrix{\stackrel{C1}{=}}}
	    \vcenter{\xymatrix@C=-0.3pc@R=1pc{&&& & M \ar@{-}[d] \ar@{-}[dlll] \ar@{}[dll]|{\ \ \rho} && M^\wedge \did \\
	    & L \op{c} & \Brr & \,\,\, & M && M^\wedge \\
	    L \Brr && L \Brrr &&& B \cl{\eps}}}$$

\end{proof}

\begin{remark} \label{remarkdeunicidad}
The previous lemma implies that $L \mr{e} B$, $L \mr{c} L \otimes_B L$ as defined before is the only possible cog\`ebro\"ide structure for $L$ that make each $\rho_X$ a comodule structure.
\end{remark}

We now give $L$ additional structure under some extra hypothesis %(cf. propositions \ref{neutralbialg}, \ref{neutralhopf}) % (FALTA PENSAR SI SE PUEDE HABLAR DE BIGEBROIDS Y HOPF COGEBROIDS):

\begin{proposition}\label{bialg}
 If $\Cat$ and $F$ are monoidal, and $\Vat$ has a symmetry, then $L$ is a   $B \otimes B$-algebra. If in addition $\Cat$ has a symmetry and $F$ respects it, $L$ is commutative (as an algebra). \qed %(REVISAR ESTO) 
\end{proposition}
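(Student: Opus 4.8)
The plan is to exhibit an explicit multiplication and unit on the coend $L=End^\wedge(F)=\int^{X\in\Cat}FX\otimes FX^\wedge$ and to verify the monoid axioms leg-by-leg, using the universal property of the coend. First I would define, for each pair $C,D\in\Cat$, the arrow
$$m_{C,D}\colon (FC\otimes FC^\wedge)\otimes(FD\otimes FD^\wedge)\xrightarrow{\,1\otimes\sigma\otimes 1\,}(FC\otimes FD)\otimes(FC^\wedge\otimes FD^\wedge)\xrightarrow{\ \kappa\ }F(C\otimes D)\otimes F(C\otimes D)^\wedge\xrightarrow{\ \lambda_{C\otimes D}\ }L,$$
where $\sigma$ is the symmetry of $\Vat$, the map $\kappa$ is built from the monoidal constraint of $F$ (relating $FC\otimes FD$ to $F(C\otimes D)$) together with the constraint it induces on the duality functor $(-)^\wedge$ of Proposition \ref{dualesfuntor} (relating $FC^\wedge\otimes FD^\wedge$ to $F(C\otimes D)^\wedge$), and $\lambda_{C\otimes D}$ is the universal leg of the coend. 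The role of $\sigma$ is precisely to regroup the four factors so that the two ``module'' factors and the two ``dual'' factors are brought together; it is also what absorbs the order reversal inherent in dualizing a tensor product. I would then check that $m_{C,D}$ is dinatural in $C$ and in $D$ (reducing to naturality of $\sigma$ and of the monoidal constraints, plus dinaturality of the $\lambda$'s), so that it descends to a single $m\colon L\otimes L\to L$. On generators this is the formula $m([C,a,a'],[D,b,b'])=[C\otimes D,\,a\otimes b,\,a'\otimes b']$ recorded in Section \ref{sec:Contexts}.

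The unit is induced by the leg $\lambda_I$ at the monoidal unit $I=1_\Cat$ together with the unitality isomorphism $F(I)\cong B$ of the monoidal functor $F$; this is the $u=\lambda_1$ of Section \ref{sec:Contexts}. I would verify associativity and both unit laws for $(m,u)$. By the universal property it suffices to check these identities after precomposition with the appropriate legs $\lambda_C\otimes\lambda_D\otimes\lambda_E$ (resp. $\lambda_C$), whereupon they become diagrams in $\Vat$ that follow from the associativity and unit coherences of the monoidal structure on $\Cat$, the coherence of the monoidal constraint of $F$, and the hexagon and naturality axioms for $\sigma$. The elevators calculus of Appendix \ref{ascensores} is the most economical way to organise this bookkeeping.

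Next I would record the $B\otimes B$-algebra structure. Each leg $\lambda_X$ is left-$B$-linear in its $FX$ slot and right-$B$-linear in its $FX^\wedge$ slot, so $L$ is a $B$-bimodule; the two unit insertions $t,s\colon B\to L$ (left, resp. right, multiplication by the image of $\lambda_I$) are algebra maps whose images commute for $m$, the commutation being an instance of naturality of $\sigma$ together with the bimodule identities. Using the symmetry of $\Vat$ to endow $B\otimes B$ with its algebra structure, $t$ and $s$ assemble into a single algebra homomorphism $B\otimes B\to L$; this is exactly what it means for $L$ to be a $B\otimes B$-algebra.

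Finally, for commutativity, assume $\Cat$ carries a symmetry $\tau_{C,D}\colon C\otimes D\xrightarrow{\sim}D\otimes C$ and that $F$ respects it, so that the monoidal constraint of $F$ intertwines $\tau$ with $\sigma$. Comparing $m$ with the opposite product $m\circ\sigma_{L,L}$ leg-by-leg, the first routes $\lambda_C\otimes\lambda_D$ through $\lambda_{C\otimes D}$ and the second through $\lambda_{D\otimes C}$; the hypothesis that $F$ is symmetric monoidal makes these two composites differ exactly by $F(\tau_{C,D})$, so dinaturality of the coend along $\tau_{C,D}$ identifies them. Hence $m=m\circ\sigma_{L,L}$ and $L$ is commutative. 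The main obstacle I anticipate is not any single step but the disciplined tracking of $\sigma$ and of the two monoidal constraints (that of $F$ and the derived one on $(-)^\wedge$) through the dinaturality and coherence verifications; getting the order of the dual factors right in $F(C\otimes D)^\wedge\cong FC^\wedge\otimes FD^\wedge$ is exactly the place where the symmetry of $\Vat$ is indispensable.
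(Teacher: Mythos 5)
Your construction of $m$ and $u$ is exactly the one the paper records — the paper in fact states this proposition without proof, giving only those two formulas and leaving the dinaturality, coherence, and commutativity verifications to the reader, so your outline is the standard completion of what the paper merely sketches. The one discrepancy is that the paper's intermediate object is $(FX\otimes_B FY)\otimes(FY^{\wedge}\otimes_B FX^{\wedge})$ with the dual factors in \emph{reversed} order, consistent with $(M\otimes_B N)^{\wedge}\cong N^{\wedge}\otimes_B M^{\wedge}$, whereas you write $FC^{\wedge}\otimes FD^{\wedge}$ unreversed; since $\Vat$ is symmetric and you explicitly flag this order-reversal as the point where $\sigma$ is needed, the difference is immaterial here.
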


We will not prove this proposition here, but show how the multiplication and the unit are constructed, since they are used explicitly in \ref{Tannakacontext}.
The multiplication \mbox{$L\stackrel[B \otimes B]{\textcolor{white}{B}}{\otimes}L \mr{m} L$} is induced by the composites
$$
m_{X,Y}: (FX \otimes FX^\wedge)\stackrel[B \otimes B]{\textcolor{white}{B}}{\otimes}(FY \otimes FY^\wedge) \mr{\cong} (FX \stackrel[B]{}{\otimes} FY) \otimes (FY^\wedge \stackrel[B]{}{\otimes} FX^\wedge)$$

\hfill $\mr{\cong} F(X \otimes Y) \otimes F(X \otimes Y)^\wedge \mr{\lambda_{X \otimes Y}} L.$

The unit is given by the composition
$$
u: B \otimes B \mr{\cong} F(I) \otimes F(I)^\wedge \mr{\lambda_{I}} L.
$$
  
\begin{proposition}\label{hopf}
 If in addition $\Cat$ has a duality, then $L$ has an antipode. \qed %\tc{red}{(Agregar info)}
\end{proposition}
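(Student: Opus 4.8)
The final statement to prove is Proposition \ref{hopf}:

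\begin{quote}
If in addition $\cc{C}$ has a duality, then $L$ has an antipode.
\end{quote}

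\textbf{The plan.} The strategy is to dualize, at the level of the coend $L = End^\wedge(F) = Nat^\wedge(F,F)$, the duality functor on $\cc{C}$, exactly as the monoidal structure of $\cc{C}$ was dualized to produce the multiplication $m$ and unit $u$ in Proposition \ref{bialg}. A duality on $\cc{C}$ assigns to each object $X$ a dual $X^*$ together with evaluation/coevaluation data; applying $F$ (which respects the tensor, by the hypotheses of \ref{bialg}) sends these to duality data in $(B\text{-}Mod)_0$. The antipode $L \mr{a} L$ should be the $B$-bimodule morphism induced by a family of arrows $a_X : FX \otimes FX^\wedge \to L$ built by transporting $\lambda_{X^*}$ along the canonical isomorphisms relating $F(X^*)$, $FX^\wedge$ and $F(X)$ coming from the duality. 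The defining property to extract is that $a$ reverses the cog\`ebro\"ide structure, i.e. it is an anti-coalgebra morphism that satisfies the antipode axiom against the counit $e$ and comultiplication $c$.

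\textbf{Key steps, in order.} First I would record precisely what ``$\cc{C}$ has a duality'' gives: for each $X$ an object $X^*$ and a natural isomorphism relating $\cc{C}(X \otimes Y, Z)$ with $\cc{C}(Y, X^* \otimes Z)$ (or the unit/counit version), together with the canonical iso $(X \otimes Y)^* \cong Y^* \otimes X^*$ and $I^* \cong I$. Second, since $F$ is monoidal and $\cc{V}$ symmetric, I would check that $F$ carries this duality to the module duality $(-)^\wedge$ of \ref{moddual}, so that $F(X^*) \cong (FX)^\wedge$ compatibly with the dualities; this is the crux that makes the two notions of dual agree. Third, I would define $a$ on generators by
$$
a_X : FX \otimes FX^\wedge \xrightarrow{\ \cong\ } FX^* \otimes F(X^*)^\wedge \xrightarrow{\lambda_{X^*}} L,
$$
using the iso from step two, and verify these arrows are dinatural so that by the universal (coend) property of $L = Nat^\wedge(F,F)$ they induce a unique $B$-bimodule morphism $L \mr{a} L$. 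Fourth, I would verify the antipode equations, namely that the two composites $L \mr{c} L \otimes_B L \mr{a \otimes L} L \otimes_{B\otimes B} L \mr{\wedge} L$ and the symmetric one with $L \otimes a$ both equal $L \mr{e} B \to L$ (through the appropriate unit), checking this on the generators $\mu_C(\delta_a \otimes \delta_b)$ using the triangular identities \eqref{triangular} and the description of $c$, $e$ given after Definition \ref{defdeL}. This last verification is precisely the algebraic shadow of the groupoid inverse law; indeed, in the localic case it is dual to $(-)^{-1}$ satisfying $\partial_0 \circ ((-)^{-1}, id) = i \circ \partial_1$.

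\textbf{Main obstacle.} The hardest part will be step two together with the coherence bookkeeping in step four: making the identification $F(X^*) \cong (FX)^\wedge$ genuinely compatible with the evaluation/coevaluation data on both sides, so that the triangular identities \eqref{triangular} can be applied cleanly when reducing the antipode axiom on generators. Because everything must be tracked through the symmetry of $\cc{V}$ and the $B\otimes B$-module versus $B$-bimodule tensor distinctions (as flagged around \ref{esquema} and \ref{bialg}), the risk is sign/side errors rather than conceptual difficulty; a disciplined use of the elevators calculus of the appendix, in the style of the graphical proofs of Lemma \ref{lemadeunicidad} and Theorem \ref{Comd=Rel}, should make each equality a short diagram manipulation. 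I would therefore present the proof graphically, exhibiting $a_X$ and then the two antipode triangles as string-diagram identities, leaving the remaining naturality and dinaturality checks to the reader as is done throughout the paper.
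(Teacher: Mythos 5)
Your construction of the antipode is exactly the paper's: the paper defines $a$ as induced by the composites $a_X: FX \otimes FX^\wedge \mr{\cong} F(X^\wedge) \otimes FX \mr{\lambda_{X^\wedge}} L$, which is your $a_X$ once $F(X^*)^\wedge$ is identified with $FX$ via the compatibility of $F$ with dualities. The paper in fact stops there and omits all the verifications you outline (dinaturality, the antipode axioms against $c$ and $e$), so your proposal is, if anything, more explicit about what remains to be checked.
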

  
The antipode $L \mr{a} L$ is induced by the composites
$$
a_X: FX \otimes FX^\wedge  \mr{\cong} F(X^\wedge) \otimes FX \mr{\lambda_{X^\wedge}} L.
$$
%\end{appendices}  

\section{Elevators calculus} \label{ascensores}
This is a graphic notation invented by the first author in 1969 to write equations in monoidal categories, ignoring associativity and suppresing the neutral object $I$. Given an algebra $B$ 
 we specify with a $\xymatrix@C=0pc{{\ar@{}[rr]|*+<.6ex>[o][F]{\scriptscriptstyle{B}}} && }$ the tensor product $\otimes_B$ over $B$, and leave the tensor product $\;\otimes\;$ of the monoidal category unwritten. Arrows are written as cells, the identity arrow as a double line, and the symmetry as crossed double lines. This notation exhibits clearly the permutation associated 
to a composite of different symmetries, allowing to see if any two composites are the same by simply checking that they codify the same permutation\footnote
         { This is justified by a simple coherence theorem for symmetrical categories, particular case of \cite{JS2} Corollary 2.2 for braided categories.
         }. Compositions are read from top to bottom. 

Given arrows $f:C \rightarrow D$, $f':C' \rightarrow D'$, the  bifunctoriality of the tensor product is the basic equality:
\begin{equation} \label{ascensor}
\xymatrix@C=0ex
         {
             C \dcell{f} & C' \did
          \\
             D \did & C' \dcell{f'}
          \\
             D  &  D' 
         }
\xymatrix@R=6ex{\\ \;\;\;=\;\;\; \\}
\xymatrix@C=0ex
         {
             C \did & C'\dcell{f'}
          \\
             C \dcell{f} & D' \did
          \\
             D & D' 
         }
\xymatrix@R=6ex{ \\ \;\;\;=\;\;\; \\}
\xymatrix@C=0ex@R=0.9ex
         {
             {} & {}
          \\
               C   \ar@<4pt>@{-}'+<0pt,-6pt>[ddd] 
                   \ar@<-4pt>@{-}'+<0pt,-6pt>[ddd]^{f}
             & C'  \ar@<4pt>@{-}'+<0pt,-6pt>[ddd] 
                   \ar@<-4pt>@{-}'+<0pt,-6pt>[ddd]^{f'}
          \\ 
             {} & {}
          \\ 
             {} & {}
          \\
             D & D' 
         }
\end{equation}
This allows to move cells up and down when there are no obstacles, as if they were elevators. There are also  similar elevators with the symbol $\xymatrix@C=0pc{{\ar@{}[rr]|*+<.6ex>[o][F]{\scriptscriptstyle{B}}} && }$.

The naturality of the symmetry is the basic equality:
\begin{equation} \label{swap}
\xymatrix@C=0ex
         {
             C \dcell{f} & C' \did
          \\
             D \did & C' \dcell{f'}
          \\
             D \ar@{=} [dr] & D' \ar@{=} [dl]
          \\
             D' & D 
         }
\xymatrix@R=10ex{ \\ \;\;\;=\;\;\; \\}
\xymatrix@C=0ex
         {
             C \dcell{f} & C' \did
          \\
             D \ar@{=} [dr] & C' \ar@{=} [dl]
          \\
             C' \dcell{f'} & D \did
          \\
             D' & D 
         }
\xymatrix@R=10ex{ \\ \;\;\;=\;\;\; \\}
\xymatrix@C=0ex
         {
             C \ar@{=} [dr] & C' \ar@{=} [dl]
          \\
             C' \did & C \dcell{f}
          \\
             C' \dcell{f'} & D \did
          \\
             D' & D 
         }
\end{equation}
Cells going up or down pass through symmetries by changing the column.  

\vspace{1ex}

\emph{Combining the basic moves \eqref{ascensor} and \eqref{swap} we form configurations of cells that fit valid equations in order to prove new equations.}

\end{appendices}

\bibliographystyle{model1-num-names}
\bibliography{<your-bib-database>}

\end{document}